\documentclass[
    12pt,
     final
    ]{amsart}

    
    \usepackage[margin=1in]{geometry}
    \usepackage{diagbox} 

    \usepackage{float}
        \newfloat{diag}{htp}{diag}
        \floatname{diag}{Diagram}

        \usepackage{stackengine} 
        \usepackage{stix} 

    \usepackage{amsmath,
    amsxtra,amsthm,hyperref}
    \usepackage{multicol}
    \usepackage{mathtools}
    \usepackage[final]{graphicx}
    \usepackage{tikz}
    \usetikzlibrary{calc,decorations.markings,arrows.meta}
    \usepackage{enumitem} 
    \usepackage{tensor} 
    \usepackage{tikz-cd} 
    \usepackage{upref}  

    \newcommand{\assumption}[1]{#1}
    
    \usepackage[normalem]{ulem}


    
    \newcommand{\CitationFromZStwo}{
        Proposition 6.13
    }

        \newcommand{\wtbeta}{\beta}
        \newcommand{\wtcB}{\cH\backslash \cB}
    
    \newcommand{\usc}{up\-per se\-mi-con\-ti\-nuous}
    \newcommand{\USCBb}{USC Ba\-nach bun\-dle}
    \newcommand{\ZS}{Zappa--Sz\'{e}p}
    \newcommand{\DR}{Deaconu--Renault}
    \newcommand{\CK}{Cuntz-Krieger}
    \newcommand{\etale}{\'{e}tale}
    \newcommand{\LCH}{locally compact Haus\-dorff}
    \newcommand{\ib}{im\-prim\-i\-tiv\-ity bi\-mod\-u\-le}
    \newcommand{\Endo}{\operatorname{End}}
    \renewcommand{\ss}{self-similar}
    \newcommand{\Ss}{Self-similar}
    \newcommand{\ssla}{\ss\ left action}
    \newcommand{\ssra}{\ss\ right action}
    \newcommand{\ssp}{\ss\ product}
    \newcommand{\sspa}{\ss\ para-equivalence}
    \newcommand{\Sspa}{\Ss\ Para-Equivalence}
    \newcommand{\intune}{in tune}
    \newcommand{\intuneadj}{in-tune}
      


    \newtheorem{theorem}{Theorem}[section]
    \newtheorem{corollary}[theorem]{Corollary}
    \newtheorem{proposition}[theorem]{Proposition}   
    \newtheorem{conjecture}[theorem]{Conjecture} 
    \newtheorem{lemma}[theorem]{Lemma}
    
    \theoremstyle{definition}
    \newtheorem{notation}[theorem]{Notation}
    \newtheorem{definition}[theorem]{Definition}
    \newtheorem{assumptionthm}[theorem]{Assumption}
    
    \newtheorem{example}[theorem]{Example}
    \newtheorem{remark}[theorem]{Remark}
        \numberwithin{equation}{section}


    \let\ipscriptstyle=\scriptscriptstyle
    \def\lipsqueeze{{\mskip -2.0mu}}
    \def\ripsqueeze{{\mskip -2.0mu}}
    \def\ipcomma{\nobreak \mid\nobreak} 
    \newbox\ipstrutbox
    \setbox\ipstrutbox=\hbox{\vrule
        height5.5pt
        depth 2pt
        width 0pt
    }

    \newcommand{\norm}[1]{\left\| #1 \right\|}
    \newcommand{\abs}[1]{\left| #1 \right|}
    
    \newcommand{\linner}[4][]{
        {
        \relax_{\ipscriptstyle
                    #2
                    }^{\ipscriptstyle
                    #1}
                    \lipsqueeze
                    \langle #3 \ipcomma #4 \rangle
        }
    }
        \def\lip#1<#2,#3>{\linner{#1}{#2}{#3}}

    \newcommand{\rinner}[4][]{
        {
         \langle #3
            \ipcomma
            #4 \rangle_{\ripsqueeze
            \ipscriptstyle
            #2
            }^{\ripsqueeze
            \ipscriptstyle
            #1}
        }
    }
        \def\rip#1<#2,#3>{\rinner{#1}{#2}{#3}}



    
    \newcommand{\HleftX}{
        \mathchoice
              {\displaystyle\mathbin\smalltriangleright}
              {\textstyle\mathbin\smalltriangleright}
              {\scriptstyle\mathbin\smalltriangleright}
              {\scriptscriptstyle\mathbin\smalltriangleright}
        }

    \newcommand{\HrightX}{
        \mathchoice
              {\displaystyle\mathbin\smalltriangleleft}
              {\textstyle\mathbin\smalltriangleleft}
              {\scriptstyle\mathbin\smalltriangleleft}
              {\scriptscriptstyle\mathbin\smalltriangleleft}
        }

    \newcommand{\XleftG}{
        \mathchoice
              {\displaystyle\mathbin\smallblacktriangleright}
              {\textstyle\mathbin\smallblacktriangleright}
              {\scriptstyle\mathbin\smallblacktriangleright}
              {\scriptscriptstyle\mathbin\smallblacktriangleright}
        }

    \newcommand{\XrightG}{
        \mathchoice
              {\displaystyle\mathbin\smallblacktriangleleft}
              {\textstyle\mathbin\smallblacktriangleleft}
              {\scriptstyle\mathbin\smallblacktriangleleft}
              {\scriptscriptstyle\mathbin\smallblacktriangleleft}
        }
    
            \newcommand{\qHrightX}{
                \mathchoice
                  {\stackMath\mathbin{\stackinset{c}{-0.1ex}{c}{0ex}{\displaystyle\HrightX}{\displaystyle\bigcirc}}}
                  {\stackMath\mathbin{\stackinset{c}{-0.1ex}{c}{0ex}{\textstyle\HrightX}{\textstyle\bigcirc}}}
                  {\stackMath\mathbin{\stackinset{c}{-0.1ex}{c}{0ex}{\scriptstyle\HrightX}{\scriptstyle\bigcirc}}}
                  {\stackMath\mathbin{\stackinset{c}{-0.1ex}{c}{0ex}{\scriptscriptstyle\HrightX}{\scriptscriptstyle\bigcirc}}}
                }
        
            \newcommand{\qHleftX}{
                \mathchoice
                  {\stackMath\mathbin{\stackinset{c}{.1ex}{c}{0ex}{\displaystyle\HleftX}{\displaystyle\bigcirc}}}
                  {\stackMath\mathbin{\stackinset{c}{.1ex}{c}{0ex}{\textstyle\HleftX}{\textstyle\bigcirc}}}
                  {\stackMath\mathbin{\stackinset{c}{.1ex}{c}{0ex}{\scriptstyle\HleftX}{\scriptstyle\bigcirc}}}
                  {\stackMath\mathbin{\stackinset{c}{.1ex}{c}{0ex}{\scriptscriptstyle\HleftX}{\scriptscriptstyle\bigcirc}}}
                }
                
            \newcommand{\qXleftG}{
                \mathchoice
                    {\stackMath\mathbin{\stackinset{c}{0.1ex}{c}{0ex}{\displaystyle\XleftG}{\displaystyle\bigcirc}}}
                    {\stackMath\mathbin{\stackinset{c}{0.1ex}{c}{0ex}{\textstyle\XleftG}{\textstyle\bigcirc}}}
                    {\stackMath\mathbin{\stackinset{c}{0.1ex}{c}{0ex}{\scriptstyle\XleftG}{\scriptstyle\bigcirc}}}
                    {\stackMath\mathbin{\stackinset{c}{0.1ex}{c}{0ex}{\scriptscriptstyle\XleftG}{\scriptscriptstyle\bigcirc}}}
                }
        
            \newcommand{\qXrightG}{
                \mathchoice
                  {\stackMath\mathbin{\stackinset{c}{-0.1ex}{c}{0ex}{\displaystyle\XrightG}{\displaystyle\bigcirc}}}
                  {\stackMath\mathbin{\stackinset{c}{-0.1ex}{c}{0ex}{\textstyle\XrightG}{\textstyle\bigcirc}}}
                  {\stackMath\mathbin{\stackinset{c}{-0.1ex}{c}{0ex}{\scriptstyle\XrightG}{\scriptstyle\bigcirc}}}
                  {\stackMath\mathbin{\stackinset{c}{-0.1ex}{c}{0ex}{\scriptscriptstyle\XrightG}{\scriptscriptstyle\bigcirc}}}
                }

        \newcommand{\HleftB}{
                    \mathchoice
                          {\mathbin{\raisebox{-.9pt}{$\includegraphics[scale=1.3]{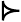}$}}}
                          {\mathbin{\raisebox{-.9pt}{$\includegraphics[scale=1.3]{semithickHleftB.pdf}$}}}
                          {\mathbin{\raisebox{-0.6pt}{$\includegraphics[scale=.8]{semithickHleftB.pdf}$}}}
                          {\mathbin{\raisebox{-.2pt}{$\includegraphics[scale=.6]{semithickHleftB.pdf}$}}}
                    }
                    
        \newcommand{\HrightB}{
                    \mathchoice
                          {\mathbin{\raisebox{-.9pt}{$\includegraphics[scale=1.3,angle=180,origin=c]{semithickHleftB.pdf}$}}}
                          {\mathbin{\raisebox{-.9pt}{$\includegraphics[scale=1.3,angle=180,origin=c]{semithickHleftB.pdf}$}}}
                          {\mathbin{\raisebox{-0.6pt}{$\includegraphics[scale=.8,angle=180,origin=c]{semithickHleftB.pdf}$}}}
                          {\mathbin{\raisebox{-.2pt}{$\includegraphics[scale=.6,angle=180,origin=c]{semithickHleftB.pdf}$}}}
                    }
                    
        \newcommand{\BrightG}{
                    \mathchoice
                          {\mathbin{\raisebox{-.9pt}{$\includegraphics[scale=1.3]{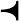}$}}}
                          {\mathbin{\raisebox{-.9pt}{$\includegraphics[scale=1.3]{semithickBrightG.pdf}$}}}
                          {\mathbin{\raisebox{-0.6pt}{$\includegraphics[scale=.8]{semithickBrightG.pdf}$}}}
                          {\mathbin{\raisebox{-.4pt}{$\includegraphics[scale=.6]{semithickBrightG.pdf}$}}}
                    }
        
        \newcommand{\BleftG}{
                    \mathchoice
                          {\mathbin{\raisebox{-.9pt}{$\includegraphics[scale=1.3,angle=180,origin=c]{semithickBrightG.pdf}$}}}
                          {\mathbin{\raisebox{-.9pt}{$\includegraphics[scale=1.3,angle=180,origin=c]{semithickBrightG.pdf}$}}}
                          {\mathbin{\raisebox{-0.6pt}{$\includegraphics[scale=.8,angle=180,origin=c]{semithickBrightG.pdf}$}}}
                          {\mathbin{\raisebox{-.4pt}{$\includegraphics[scale=.6,angle=180,origin=c]{semithickBrightG.pdf}$}}}
                    }

        \newcommand{\BbowtieHplaceholder}[2]{\includegraphics[scale= #1]{semithickHleftB.pdf} \mkern-#2mu \includegraphics[scale=#1 ,angle=180,origin=c]{semithickHleftB.pdf}}
        
        \newcommand{\BbowtieH}{%
            \mathchoice
                {\mathbin{\raisebox{-.9pt}{$\BbowtieHplaceholder{1.3}{6}$}}}
                {\mathbin{\raisebox{-.9pt}{$\BbowtieHplaceholder{1.3}{6}$}}}
                {\mathbin{\raisebox{-0.6pt}{$\BbowtieHplaceholder{.8}{4}$}}}
                {\mathbin{\raisebox{-.2pt}{$\BbowtieHplaceholder{.6}{3}$}}}
            }

        \newcommand{\GbowtieBplaceholder}[2]{\includegraphics[scale= #1,angle=180,origin=c]{semithickBrightG.pdf} \mkern-#2mu \includegraphics[scale=#1 ]{semithickBrightG.pdf}}
        
        \newcommand{\GbowtieB}{%
            \mathchoice
                {\mathbin{\raisebox{-.9pt}{$\GbowtieBplaceholder{1.3}{6}$}}}
                {\mathbin{\raisebox{-.9pt}{$\GbowtieBplaceholder{1.3}{6}$}}}
                {\mathbin{\raisebox{-0.6pt}{$\GbowtieBplaceholder{.8}{4}$}}}
                {\mathbin{\raisebox{-.2pt}{$\GbowtieBplaceholder{.6}{3}$}}}
            }

            \newcommand{\qHleftB}{
                \mathchoice
                  {\stackMath\mathbin{\stackinset{c}{.1ex}{c}{0ex}{\displaystyle\HleftB}{\displaystyle\bigcirc}}}
                  {\stackMath\mathbin{\stackinset{c}{.1ex}{c}{0ex}{\textstyle\HleftB}{\textstyle\bigcirc}}}
                  {\stackMath\mathbin{\stackinset{c}{.1ex}{c}{0ex}{\scriptstyle\HleftB}{\scriptstyle\bigcirc}}}
                  {\stackMath\mathbin{\stackinset{c}{.1ex}{c}{0ex}{\scriptscriptstyle\HleftB}{\scriptscriptstyle\bigcirc}}}
                }
                
            \newcommand{\qBrightG}{
                \mathchoice
                  {\stackMath\mathbin{\stackinset{c}{-0.1ex}{c}{0ex}{\displaystyle\BrightG}{\displaystyle\bigcirc}}}
                  {\stackMath\mathbin{\stackinset{c}{-0.1ex}{c}{0ex}{\textstyle\BrightG}{\textstyle\bigcirc}}}
                  {\stackMath\mathbin{\stackinset{c}{-0.1ex}{c}{0ex}{\scriptstyle\BrightG}{\scriptstyle\bigcirc}}}
                  {\stackMath\mathbin{\stackinset{c}{-0.1ex}{c}{0ex}{\scriptscriptstyle\BrightG}{\scriptscriptstyle\bigcirc}}}
                }

    \newcommand{\calt}{\curvearrowleft  }
    \newcommand{\cart}{\curvearrowright }
    \newcommand{\calb}{\mathbin{\rotatebox[origin=c]{180}{$\cart$}}}
    \newcommand{\carb}{\mathbin{\rotatebox[origin=c]{180}{$\calt$}}}
    \newcommand{\arrowlssa}{\mathbin{\substack{\textstyle \cart  \\ \calb }}}
    \newcommand{\arrowrssa}{\mathbin{\substack{\textstyle \calt  \\ \carb }}}


    \newcommand{\cA}{\mathscr{A}}
    \newcommand{\cB}{\mathscr{B}}
    \newcommand{\cC}{\mathscr{C}}
    \newcommand{\cM}{\mathscr{M}}
    
    \newcommand{\cG}{\mathcal{G}}
    \newcommand{\cK}{\mathcal{K}}
    \newcommand{\cH}{\mathcal{H}}
    \newcommand{\cX}{\mathcal{X}}


    \newcommand{\bfp}[2]{\lipsqueeze\tensor*[_{\ipscriptstyle #1}]{\ast}{_{\ipscriptstyle #2}}\ripsqueeze} 

    \newcommand{\inv}{^{-1}}

    \newcommand{\z}{^{(0)}}

    \newcommand*\dif{\mathop{}\nobreak    \mskip-\thinmuskip\nobreak    \mathrm{d} }

    \newcommand{\sme}{\,\mathord{\mathop{\text{--}}\nolimits_{\relax}}\,}

    \DeclareMathOperator{\supp}{supp}
    
    \newcommand\mvisiblespace[1][.7em]{%
        	\makebox[#1]{%
        		\kern.07em
        		\vrule height.3ex
        		\hrulefill
        		\vrule height.3ex
        		\kern.07em
        	}
        }


\makeatletter
    \newcommand{\repeatable}[2]{%
        \label{#1}\global\@namedef{repeatable@#1}{#2}#2%
    }

    \newcommand{\txtrepeat}[1]{%
        \@ifundefined{repeatable@#1}{NOT FOUND}{\@nameuse{repeatable@#1}}%
    }

    \newcommand{\eqrepeat}[1]{%
        \@ifundefined{repeatable@#1}{NOT FOUND}{\begin{align*}\@nameuse{repeatable@#1}\tag{\ref{#1}}\end{align*}}%
    }
    
    \newcommand{\eqrepeatnn}[1]{%
        \@ifundefined{repeatable@#1}{NOT FOUND}{\begin{align*}\@nameuse{repeatable@#1}\end{align*}}%
    }
\makeatother

\title{Imprimitivity theorems and self-similar actions on Fell bundles}

\author{Anna Duwenig}
\address{KU Leuven, Department of Mathematics, Leuven (Belgium)}
\email{anna.duwenig@kuleuven.be}

\author{Boyu Li}
\address{Department of Mathematical Sciences, New Mexico State University, Las Cruces, New Mexico, 88003, USA}
\email{boyuli@nmsu.edu}
\date{\today}

\subjclass[2010]{46L55, 46L05, 22A22}
\keywords{\Ss\ action, Fell bundle C*-algebra, Morita equivalence, groupoid}

\begin{document}

\begin{abstract} We introduce the notion of \ss\ actions of groupoids on other groupoids and Fell bundles. This leads to a new im\-prim\-i\-tiv\-ity theorem arising from such dynamics, generalizing many earlier im\-prim\-i\-tiv\-ity theorems involving group and groupoid actions. 
\end{abstract}

\thanks{The authors would like to thank Alex Mundey for helping with coding various symbols. The first-named author was supported by a RITA Investigator grant \mbox{(IV017)}
    from the University of Wollongong.
The second-named author was partially supported by Prof.\ Dilian Yang from University of Windsor.}

\maketitle
\setcounter{tocdepth}{1}
\tableofcontents

 \section{Introduction}

The dynamics between groups and operator algebras encompass a vast literature in the study of operator algebras. 
They trace back to the pioneering work of Murray and von Neumann \cite{MurrayVN1936}
where they encode group dynamics as operators on Hilbert spaces.
In its simplest form, a C*-dynamical system arises from 
a group acting by $*$-automorphism on a C*-algebra.
This system is then encoded by the C*-crossed product, where both the group and the C*-algebra are represented as operators on a Hilbert space. 
One may refer to Williams's book \cite{WilliamsBook} for a thorough discussion of the subject.

The C*-crossed product construction bears a strong resemblance to the semi-direct product of groups, in which one group $H$ acts on another group $G$ by automorphisms. 
Their semi-direct product $G\rtimes H$ is a group that encodes both groups and their interaction. 
But what happens if the group $G$ also acts on $H$? 
This leads to a more general construction called the {\em \ZS\ product} of groups (also known as {\em bicrossed product} or {\em knit product}), which encodes a two-way action between two groups. 
Such a two-way action may arise when a group $K$ contains two subgroups $H,G$ such that every element $k\in K$ decomposes uniquely as a product $k=gh$ where $g\in G, h\in H$ (equivalently, $K=G\cdot H$ and $G\cap H=\{e\}$). 
In this case, for each $g\in G$ and $h\in H$, there exists unique $g'\in G$ and $h'\in H$ such that $hg=g'h'$. 
This leads to an $H$-action on $G$ via $(h,g)\mapsto g'$ and a $G$-action on $H$ via $(h,g)\mapsto h'$. 
These two actions need to satisfy certain compatibility conditions, and one may recover the enveloping group $K$ as the \ZS\ product group $G\bowtie H$ from these compatible actions.

In the realm of operator algebras, the  analogous study of \ZS\ products is scarce.
Representations of \ZS\ products of matched pair of groupoids were 
studied in \cite{AA2005}. The \ZS\ product of \etale\ groupoids and their C*-algebras were first studied in \cite{BPRRW2017}. Recently, we defined and studied 
an operator algebraic analogue of such products \cite{DL2021}. Just like the C*-crossed product $A\rtimes H$ 
is an operator algebraic analogue of the semi-direct product of two groups $G\rtimes H$, so is our construction an analogue of the \ZS\ product $\cG\bowtie\cH$ of two groupoids. 
To achieve this, the operator algebraic data has to `act' on the groupoid $\cH$; this is achieved by replacing the C*-algebra $A$ by a Fell bundle $\cB\to\cG$ 
on which the groupoid $\cH$ acts in an appropriate sense to form the Fell bundle $\cB\bowtie\cH\to \cG\bowtie\cH$.
The resulting Fell bundle C*-algebra of these \ZS\ dynamics is a generalization of the classical C*-crossed product, and we proved that several properties of the C*-crossed product hold similarly in the \ZS\ construction.

Given the vast literature on C*-dynamical systems, our study unlocks a trove of intriguing questions on what properties of C*-crossed products can be generalized to the \ZS\ product context. 
In this paper, we prove a \ZS\ analogue of the im\-prim\-i\-tiv\-ity theorems arising from groupoid actions.
Imprimitivity theorems originated from Mackey's study on inducing representations of a locally compact group $G$ from its closed subgroups and giving criteria
to identify such representations, known as {\em Mackey's machine} \cite{Mackey:MackeyMachine}.
Along with the rapid development of
the C*-algebra theory, Mackey's im\-prim\-i\-tiv\-ity theorems were soon recast in terms of C*-algebras in the early 1970s by Rieffel \cite{Rieffel:Mackey1, Rieffel:Mackey2}, where he introduced the notion of Morita equivalence for
C*-algebras \cite{Rieffel1974}. 
One may refer to Rosenberg's survey paper \cite{Rosenberg:Mackey} on the rich history of this subject. 
Since then, the theory of im\-prim\-i\-tiv\-ity theorems and Morita equivalence among C*-algebras has been further developed. For im\-prim\-i\-tiv\-ity theorems arising from group dynamics, notable works includes Green's \cite{Green1978} and Raeburn's \cite{Raeburn1988} symmetric im\-prim\-i\-tiv\-ity theorems. One may refer to \cite[Chapter 4]{WilliamsBook} for various versions and applications of these results. In \cite{MRW:Grpd}, Muhly, Renault, and Williams introduced the notion of {\em equivalent groupoids} which implies the existence of a Morita equivalence between their C*-algebras. 
This was generalized to Fell bundle C*-algebras by Muhly and Williams in \cite{MW2008} (see also \cite{Yamagami:preprint}). Applying the technique developed by Muhly and Williams, Kaliszewski et.\ al.\ \cite{KMQW2013} recovered and extended \emph{``all known im\-prim\-i\-tiv\-ity theorems involving groups''} by using a semi-direct product construction of Fell bundles by locally compact groups. 

The main theorem of this paper (Theorem~\ref{thm.imprimitivity.main}) further generalizes the im\-prim\-i\-tiv\-ity theorem of Kaliszewski et.\ al.\ beyond the realm of semi-direct products and to 
the realm of \ZS\ products. This opens a new world of study on the \ZS-type two-way interactions between
groupoids and Fell bundles.

\smallskip 

We briefly outline the key ideas and constructions of this paper. 
We first introduce the notion of \emph{\ss\ actions} of a groupoid $\cH$ on another groupoid $\cX$ in Section~\ref{sec.ssa} and construct their \ssp\ groupoid $\cX\bowtie \cH$. We adopted this terminology in order to
differentiate our new construction from earlier, more restrictive \ZS\ product constructions \cite{AA2005, BPRRW2017}: 
we no longer require the groupoids to have the same unit space. 
Rather, the groupoids are connected using a momentum map, similar to the idea of a semi-direct product of groupoids in \cite{HKQW2021}. 
This allows us to study many interesting examples such as group actions on groupoids. 
We also removed the requirement imposed in our earlier paper \cite{DL2021} that the groupoids be
\etale: unless stated otherwise, all groupoids are merely assumed to be 
{\em \LCH\ and second countable.} 
Consequently, our new construction is an honest generalization of that in \cite{KMQW2013}, and our notion of a \ss\ action is
a generalization of self-similar group actions whose close relationship to \ZS\ products has already been studied \cite{EP2017, LY2019b, Nekrashevych2009}. At the end of Section~\ref{sec.ssa}, we induce Haar systems from $\cX$ and $\cH$ to a Haar system on $\cX\bowtie\cH$ under mild assumptions.

In Section~\ref{sec.orbit}, we start by studying the orbit space $\cH\backslash \cX$ of a \ssla\ of $\cH$ on $\cX$, which is also a groupoid as long as the action is free and proper.
In the setup of most symmetric im\-prim\-i\-tiv\-ity theorems, it is standard to assume that the left $\cH$ action on $\cX$ commutes with a right action of another groupoid, $\cG$, yielding two groupoids of the form $(\cX/\cG)\rtimes \cH$ and $\cG\ltimes(\cH\backslash\cX)$ that are equivalent. This assumption is not quite 
enough in the \ssp\ setting. We therefore introduce the notion of \emph{\intune} actions (Definition~\ref{df.compatible}), and we
call $\cX$ a {\em $(\cH,\cG)$-\sspa} if the $\cH$ and $\cG$ actions are free, proper, and \intune, and if $\cX$ has open source map. 
Under such assumptions, the $\cH$- and $\cG$-actions on $\cX$ factor through the respective opposite quotient: 
$\cH$ naturally has a 
\ssla\ on $\cX/\cG$ and 
$\cG$ a \ssra\ on $\cH\backslash\cX$, allowing
us to build their \ssp\ groupoids $(\cX/\cG)\bowtie \cH$ and $\cG\bowtie(\cH\backslash\cX)$. We prove (Theorem~\ref{thm.groupoid.eq}) that these two groupoids are equivalent in the sense of \cite{MRW:Grpd}. Moreover, 
the existence of a Haar system on $\cX$ that is equivariant in an appropriate sense allows us to build Haar systems for these equivalent groupoids, so that their groupoid C*-algebras are Morita equivalent. 

In Sections~\ref{sec.ssa.Fell} and \ref{sec.orbit.Fell},
we bootstrap our construction to the more operator algebraic setting of {\em \ss\ actions on Fell bundles} $\cB\to \cX$ for $\cX$
a $(\cH,\cG)$-\sspa. We define the notions of \ss\ left and right actions on $\cB$ following similar ideas as in \cite{DL2021}.
This allows two constructions: that of their 
\ssp s $\cB\BbowtieH\cH$ and $\cG\GbowtieB\cB$, where the color of the symbol distinguishes between left- resp.\ right-actions, and  that of the orbit spaces $\cH\backslash\cB$ and $\cB/\cG$. Assuming the actions are free, proper, and \intune, the orbit spaces become Fell bundles themselves. By iterating these constructions, 
we obtain two Fell bundles, $(\cB/\cG)\BbowtieH \cH$ and $\cG\GbowtieB(\cH\backslash\cB)$. 

Our main theorem (Theorem~\ref{thm.imprimitivity.main}) in Section~\ref{sec.main}  states that these two Fell bundles are equivalent in the sense of \cite{MW2008}. Again, under suitable additional assumptions regarding Haar systems,
their Fell bundle C*-algebras are therefore Morita equivalent. 

We note that the im\-prim\-i\-tiv\-ity theorem of Kaliszewski et.\ al.\ can be recovered by requiring that half of our two two-way actions be trivial (namely, that $\cX$ does not act on $\cH$ or $\cG$). There are other examples where the $\cX$ actions on $\cG$ and $\cH$ are non-trivial, some of which are briefly discussed (Examples~\ref{ex.example1 - part 1}, ~\ref{ex.example1 - part 2},~\ref{ex.example1 - part 3},
and~\ref{ex.example.concrete}).

Finally, we apply our result to a certain class of \DR\ groupoids generated by $*$-commuting endomorphisms in Section~\ref{sec.Deaconu}. We specialize this example to a class of 2-graphs and prove that their higher rank graph C*-algebras are all Morita equivalent to $C(\mathbb{T})$ (Corollary~\ref{cor.rank2graph.main}). 

\smallskip 

Due to the sheer number of actions involved, we try our best to assign each action a unique symbol to best avoid confusion. By convention, the arrow of each action symbol will point to the  element of the space that is acted upon.
We include a summary of all actions, together with their notation, assumptions, and key properties, in Appendix~\ref{sec.appendix.cheatsheet} for quick reference. 

 \section{\Ss\ actions}\label{sec.ssa}

\Ss\ groups originated from Grigorchuk's construction of finitely generated groups of intermediate growth \cite{Grigorchuk1984, Grigorchuk1980}. Its application in operator algebra was first explored by Nekrashevych \cite{Nekrashevych2009} where he studied a self-similar group acting
on a set. The distinctive feature that set it apart from other group actions is that the set also acts back on the group; this action is often called the {\em restriction map}.
Such a two-way interaction has since been generalized to various contexts; 
for example to self-similar actions on directed graphs \cite{EP2017}, $k$-graphs \cite{LY2019b}, and semigroups \cite{BRRW, Starling2015}. In this section, we define 
\ss\ groupoid actions on groupoids. Again, the key feature that
sets our definition apart from classical groupoid actions is the two-way interactions recorded in these \ss\ dynamics. 

\subsection{\Ss\ left actions on groupoids}{}

\begin{notation}
    Given continuous maps $f\colon X\to Z$ and $g\colon Y\to Z$ between topological spaces, we write 
    \[
    X\bfp{f}{g}Y
    \coloneq 
    \{(x,y)\in X\times Y \mid f(x)=g(y)
    \}
    \]
    and equip this space with the subspace topology.
\end{notation} 

\begin{definition}\label{df.left.selfsimilar}\label{df.X-H-actions}
Let~$\cH$ and~$\cX$ be two \LCH\ groupoids.
We say~$\cH$ has a \textit{\ssla} on~$\cX$ if there exist a continuous surjection $\rho_{\cX}\z\colon \cX\z\to\cH\z$ and, using the momentum map $\rho_{\cX}\coloneqq \rho_{\cX}\z\circ r_{\cX}$, 
two continuous maps
\begin{align*}
\repeatable{eq:X-H-actions}{
    \cH\cart \cX\colon &&\cH
        \bfp{s_{\cH}}{\rho_{\cX}}
    \cX\ni (h,x) &\mapsto h\HleftX x \in \cX \\
    \cH\calb \cX\colon &&\cH
        \bfp{s_{\cH}}{\rho_{\cX}}
    \cX\ni (h,x) &\mapsto h\HrightX x \in \cH}
\end{align*}
such that the following hold.
\begin{itemize}
    \item
    For any $h\in \cH$ and $x\in \cX$ such that $s_{\cH}(h)=\rho_{\cX}(x)$, we have:
    \begin{equation}\tag{L1}
        \repeatable{item:L1}
        {
            \begin{aligned}
                r_{\cH}(h)&=\rho_{\cX}(h\HleftX x)
                &&&
                s_{\cH}(h\HrightX x)&=\rho_{\cX}(x\inv)
                &&&
                r_{\cH}(h\HrightX x)&= \rho_{\cX}((h\HleftX x)\inv)
            \end{aligned}%
        }
    \end{equation}%
    \item
    
    For all $h\in \cH$ and $v\in \cX\z $ such that $s_{\cH}(h)=\rho_{\cX}(v)$,
    and for all $x\in \cX$, we have:
    \begin{equation}
        \tag{L2}
        \repeatable{item:L2}
        {
            h\HrightX v=h
            \quad\text{and}\quad
            \rho_{\cX}(x)
        \HleftX x=x
        }
    \end{equation}

    \item 
    For all $h\in \cH$ and $(x,y)\in \cX^{(2)}$ such that $s_{\cH}(h)=\rho_{\cX}(x)$, we have 
    $s_{\cX}(h\HleftX x)= r_{\cX}((h\HrightX x)\HleftX y)$ and
    \begin{align}
        \tag{L3}\repeatable{item:L3}
        {
        h\HrightX (xy)
        &
        =(h\HrightX x)\HrightX y
        }
        \\
        \tag{L4}\repeatable{item:L4}
        {        
        h\HleftX (xy)
        &
        =(h\HleftX x)[(h\HrightX x)\HleftX y]
        }
    \end{align}
    \item
    For all $(h,k)\in \cH^{(2)}$ and $x\in \cX$ such that $s_{\cH}(k)=\rho_{\cX}(x)$, we have:
    \begin{align}
        \tag{L5}\repeatable{item:L5}  
        {
            (hk)\HleftX x&=h\HleftX (k\HleftX x)
        }
        \\
        \tag{L6}\repeatable{item:L6}
        {
        (hk)\HrightX x&=[h\HrightX (k\HleftX x)](k\HrightX x)
        }
    \end{align}
\end{itemize}
\end{definition}

We will often write 
    $\cH\bfp{s
    }{\rho
    } \cX $ instead of  $\cH\bfp{s_{\cH}}{\rho_{\cX}}\cX$ when the subscripts are clear from context.

\begin{remark}
    The two maps $\HleftX$ and $\HrightX$ plus the equalities in \eqref{item:L1} can also be summarized by commutativity of the following diagram:
    \[  
    \begin{tikzcd}[ampersand replacement=\&, column sep = small]
    \cH
    \bfp{s}{\rho\circ r}
    \cX
    \ar[rrrr, "{\displaystyle(\mvisiblespace\HleftX\mvisiblespace \ , \ \mvisiblespace\HrightX\mvisiblespace)}"]
\ar[dddrr,  rounded corners, to path={ 
      --([yshift = -20ex]\tikztostart.south) 
      --node[right]{$(r_{\cH} \ ,\ \rho_{\cX}\circ s_{\cX})$} 
      (\tikztostart.south) 
      |- (\tikztotarget.west)
      }
      ]
    \&
    \&
    \&
    \&
    \cX
    \bfp{\rho\circ s}{r}
    \cH
\ar[dddll,  rounded corners, to path={ 
      --([yshift = -20ex]\tikztostart.south) 
      --node[left]{$(\rho_{\cX}\circ r_{\cX} \ , \  s_{\cH})$} 
      (\tikztostart.south) 
      |- (\tikztotarget.east)
      }]
    \\
    \&
    (h,x)
    \ar[rr, mapsto]
    \ar[d, mapsto]
    \&
    \&
    (h\HleftX x, h\HrightX x)
    \ar[d, mapsto]
    \&
    \\
    \&
    \bigl(r_{\cH}(h), \rho_{\cX}(x\inv)\bigr)
    \ar[rr, equal]
    \&
    \&
    \bigl(\rho_{\cX}(h\HleftX x), s_{\cH}(h\HrightX x)\bigr)
    \&
    \\
    \&\&
    \cH\z
    \times
    \cH\z
    \&\&
    \end{tikzcd}
    \]
\end{remark}

\begin{example}\label{ex:matched pair is ssla}
    Suppose $\cX$ and $\cH$ are groupoids with $\cX\z=\cH\z$. Then $(\cX,\cH)$ is a matched pair of groupoids in the sense of \cite[Definition 1.1]{AA2005} if and only if $\cH$ has a \ssla\ on $\cX$ with $\rho_{\cX}\z= \mathrm{id}_{\cX\z}$, meaning that  $\rho_{\cX}= r_{\cX}$. We point out that this is the reason that inverse elements appear in Condition~\eqref{item:L1}: Here, the condition $s_{\cH}(h\HrightX x)=\rho_{\cX}(x\inv)$ becomes $s_{\cH}(h\HrightX x)=
    s_{\cX}(x)$, which might feel a bit more natural.
\end{example}

\begin{remark}\label{rm.ssla means cH acts on cX}
    If~$\cH$ has a \ssla\ on~$\cX$, then $(h,x)\mapsto h\HleftX x$ is a left action of the groupoid~$\cH$ on the space~$\cX$ 
    with momentum map $\rho_{\cX}$
    in the sense of \cite[Def.~2.1]{Wil2019}. Indeed, the algebraic properties needed for an action are 
    \begin{equation}\label{eq:assumptions on an action}
    \rho_{\cX}(h\HleftX x) = r_{\cH}(h),\quad \rho_{\cX} (x) \HleftX x = x,\quad  \text{ and }\quad (kh)\HleftX x=k\HleftX(h\HleftX x),
    \end{equation}
    which are all assumed in~\eqref{item:L1},~\eqref{item:L2}, resp.~\eqref{item:L5}.
    
    Moreover, if $\cX\z=\cH\z$ and 
        $\rho_{\cX}\z=\mathrm{id}_{\cX\z}$,
    then $(h,x)\mapsto h\HrightX x$ is a right action of the groupoid~$\cX$ on the space~$\cH$ 
    with
    momentum map $s_{\cH}$. 
\end{remark}

\begin{example}\label{ex:ssla ON trivial gpd}
    \repeatable{assumptions of ex:ssla ON trivial gpd}{%
        Suppose $\cH$ acts on a groupoid $\cX$ by automorphisms%
    }%
    , meaning $\cX$ has a
    continuous, surjective
    momentum map $\rho_{\cX}\colon \cX\to\cH\z$ and there is a continuous map $\cH\bfp{s}{\rho}\cX\to \cX$ satisfying not only the conditions in~\eqref{eq:assumptions on an action} but also
    $h\HleftX (xy)=(h\HleftX x)(h\HleftX y)$ where it makes sense.
    Then $\HleftX$ is a \ssla\ of $\cH$ on $\cX$ if and only 
    if we let $\cX$ act trivially on $\cH$ (meaning $h\HrightX x=h$). Note that there is no other choice for $\HrightX$ because of Condition~\eqref{item:L4} in combination with the  assumption that $\HleftX$ is an action by homomorphisms.
\end{example}%

\begin{example}[see {\cite[Example 1.6.]{AA2005}}]\label{ex:ssla OF trivial gpd}
    Suppose we are given a groupoid $\cX$. If we let $\cH=\cX\z$ be the trivial groupoid and let 
    $\rho_{\cX}\z=\mathrm{id}_{\cX\z}$, so that $\rho_{\cX}=r_{\cX}$,
    then we can define for a tuple $(u,x)=(r_{\cX}(x),x)\in \cX\z\bfp{s}{r}\cX$,
    \begin{align*}
        \cX\z\cart \cX\colon &&
        r_{\cX}(x)\HleftX x &= x, \\
        \cX\z\calb \cX\colon &&
        r_{\cX}(x)\HrightX x &= s_{\cX}(x).
    \end{align*}
    One swiftly verifies that these constitute a \ssla\ of $\cX\z$ on $\cX$. (In fact, these groupoids form a matched pair.)
\end{example}

    We point out that, in order for the condition $s_{\cH}(h\HrightX x)=\rho_{\cX}(x\inv)$ in \eqref{item:L1} to be satisfied by the pair in Example~\ref{ex:ssla OF trivial gpd}, we must define $\cX\z\calb \cX$ in the above way and cannot let $\cX$ act trivially on~$\cX\z$.
    For Example~\ref{ex:ssla of trivial gp and gpd:sometimes a something} later, it will therefore be convenient to know that we can also replace the trivial groupoid~$\cX\z$ with the trivial group~$\{e\}$ as follows. This also highlights the advantage of not having forced $\cX$ and $\cH$ to have the same unit space, as was the case in, for example, \cite{AA2005,BPRRW2017}.
\begin{example}\label{ex:ssla of trivial group}
    Suppose we are given a groupoid $\cX$. If we let $\cH=\{e\}$ be the trivial group, so that 
    $\rho_{\cX}\z\colon \cX\z\to \cH\z=\{e\}$
    is constant
    and so that $\HleftX$ and $\HrightX$ must be defined to be trivial, then these constitute a \ssla\ of $\{e\}$ on $\cX$.
\end{example}

    \begin{example}\label{ex.example1 - part 1}
        Suppose a \LCH\ group $K$ acts on the left on a \LCH\ space $X$, denoted by $\ast$.
        Suppose further that $K$ can be written as an (internal) \ZS\ product of two (necessarily closed) subgroups, i.e., $K=G\bowtie H$ with 
        the product topology. 
        This means that, for any $h\in H$ and $t\in G$, there exist unique elements $h|_{t}\in H$ and $h\cdot t\in G$ such that 
        $(e,h)(t,e)=(h\cdot t , h|_t)$, where the product on the left-hand side is the group multiplication of~$K$ and where~$e$ denotes the identity element of each group.
        
        Consider the transformation groupoid 
        $\cX=G\ltimes X = \{(t,x): x\in X, t\in G\}$;
        we choose the convention that its range and source maps are $r(t,x)=t\ast x$ resp.\ $s(t,x)=x$. 
        Then 
        \begin{align}
        \begin{split}\label{eq:actions in example1}
            H \cart \cX\colon \quad h\HleftX (t,x) &= (h\cdot t, h|_t \ast x)    \\        
            H \calb \cX\colon \quad h\HrightX (t,x) &= h|_t 
        \end{split}
        \end{align}
        is a \ssla\ of $H$ on $\cX$. 
    \end{example}

Note that units are not necessarily fixed by \ss\ actions.
Instead, we have the following formulas:
\begin{lemma}\label{lem:acting_on_units}
    For any $x\in \cX$ and for any $(h,v)\in 
    \cH \bfp{s}{\rho} \cX\z
    $, we have
    \begin{align}\tag{L7}
    \repeatable{item:L7}
          	{
          	\rho_{\cX}(x)\HrightX x
           =
            \rho_{\cX}(x\inv)
          	}
          	\\
            \tag{L8}\repeatable{item:L8-new}
          	{
          	 h\HleftX v\in \cX\z
          	}
    \end{align}
    Moreover, if 
    $s_{\cH}(h)=\rho_{\cX}(x)$, then
    \begin{align}\tag{L9}\repeatable{item:L9}
    {
    (h\HleftX x)\inv  &= (h\HrightX x) \HleftX x\inv 
    &&\text{and}&&&
    (h\HrightX x)\inv  &= 
    h\inv  \HrightX (h\HleftX x)
    }
    \\
    \tag{L10}\repeatable{item:L10}
	{
	r_{\cX}(h\HleftX x)&=h \HleftX r_{\cX}(x)
    &&\text{and}&&&
    s_{\cX}(h\HleftX x)&=(h\HrightX x)\HleftX s_{\cX}(x)
	}
    \end{align}
\end{lemma}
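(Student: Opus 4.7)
The plan is to reduce each of the six identities to one of the axioms \eqref{item:L3}--\eqref{item:L6} evaluated at a carefully chosen pair involving units or inverses. The underlying observation is that in a groupoid an idempotent element is a unit, and consequently the identity $a\cdot b=r(a)$ forces $b=a\inv$ while $a\cdot b=s(b)$ forces $a=b\inv$. This will let us read off each claim from a specialization of an axiom.

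For \eqref{item:L8-new}, I would apply \eqref{item:L4} to the pair $(v,v)\in \cX^{(2)}$ (composable since $v\in \cX\z$, with $vv=v$). The first identity in \eqref{item:L2} gives $h\HrightX v=h$, and the result collapses to $h\HleftX v=(h\HleftX v)(h\HleftX v)$, so $h\HleftX v$ is an idempotent and hence a unit. For \eqref{item:L7}, I would apply \eqref{item:L6} with $h=k=\rho_{\cX}(x)$; both lie in $\cH\z$ and compose to $\rho_{\cX}(x)$. The second identity in \eqref{item:L2} collapses this to $\rho_{\cX}(x)\HrightX x=(\rho_{\cX}(x)\HrightX x)(\rho_{\cX}(x)\HrightX x)$, so the element is a unit of $\cH$, and \eqref{item:L1} identifies it as $\rho_{\cX}(x\inv)$.

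The first halves of \eqref{item:L9} and \eqref{item:L10} should fall out simultaneously from \eqref{item:L4} with $y=x\inv$: this gives $h\HleftX r_{\cX}(x)=(h\HleftX x)\cdot [(h\HrightX x)\HleftX x\inv]$, and the left-hand side is a unit by \eqref{item:L8-new}. The groupoid principle then forces the second factor on the right to equal $(h\HleftX x)\inv$ and identifies the resulting unit as $r_{\cX}(h\HleftX x)$. For the second identity in \eqref{item:L9}, I would apply \eqref{item:L6} to the composable pair $(h\inv,h)$: by \eqref{item:L7} and \eqref{item:L1}, the left-hand side becomes $s_{\cH}(h\HrightX x)$, and the groupoid principle applied to the right-hand side yields $h\inv\HrightX(h\HleftX x)=(h\HrightX x)\inv$.

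For the second identity in \eqref{item:L10}, I would combine the first halves already proved with \eqref{item:L1} applied to the pair $(h\HrightX x,x\inv)$ to chain the computation
\[
s_{\cX}(h\HleftX x)=r_{\cX}((h\HleftX x)\inv)=r_{\cX}((h\HrightX x)\HleftX x\inv)=(h\HrightX x)\HleftX s_{\cX}(x).
\]
No step requires more than a single application of a groupoid axiom together with the idempotent-is-a-unit principle, so the main obstacle is purely one of bookkeeping: identifying the correct substitution at each stage and verifying that the source/range condition of the axiom being invoked is actually met by the pair one is plugging in.
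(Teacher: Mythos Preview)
Your proposal is correct and follows essentially the same route as the paper: specialize \eqref{item:L4} and \eqref{item:L6} at units or inverses, use \eqref{item:L2} to collapse one factor, and invoke the idempotent-is-a-unit principle together with \eqref{item:L1}. The only organizational difference is that the paper proves the two halves of \eqref{item:L10} by two separate applications of \eqref{item:L4} (to $r_{\cX}(x)\cdot x$ and to $x\cdot s_{\cX}(x)$), whereas you extract the first half of \eqref{item:L10} from the same \eqref{item:L4}-on-$(x,x\inv)$ step that yields \eqref{item:L9}, and then chain through \eqref{item:L9} and that first half to get the second half of \eqref{item:L10}; this is a harmless repackaging.
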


\begin{proof} Let $e=\rho_{\cX}(x)\in\cH\z$. For \eqref{item:L7},
\[e\HrightX x=(e^2) \HrightX x\overset{\eqref{item:L6}}{=}(e\HrightX (e\HleftX x))(e\HrightX x)\overset{\eqref{item:L2}}{=}(e\HrightX x)^2.\]
Hence, $e\HrightX x\in \cH\z $. Therefore, 
\[e\HrightX x=s_{\cH}(e\HrightX x)\overset{\eqref{item:L1}}{=}
    \rho_{\cX}(x\inv)
.\]
Condition \eqref{item:L8-new} follows from
\[h\HleftX v=h\HleftX (v^2) \overset{\eqref{item:L4}}{=} (h\HleftX v)((h\HrightX v)\HleftX v)\overset{\eqref{item:L2}}{=}(h\HleftX v)^2.\]

For \eqref{item:L9}, note that we have just shown that 
$h\HleftX (xx\inv )\in \cX\z $. By Condition~\eqref{item:L4}, 
\[
\cX\z \ni h\HleftX (xx\inv ) = (h\HleftX x)[(h\HrightX x) \HleftX x\inv ] .\]
Therefore, $(h\HleftX x)\inv =(h\HrightX x) \HleftX x\inv $. Similarly, by
what we have proved above,
$(h\inv h)\HrightX x\in\cH\z $. Therefore, by Condition~\eqref{item:L6},
\[\cH\z\ni (h\inv h)\HrightX x = [h\inv  \HrightX (h\HleftX x)] (h\HrightX x) .\]
This proves that $(h\HrightX x)\inv  = 
h\inv  \HrightX (h\HleftX x)$.

 Lastly, for \eqref{item:L10}, we compute
\[h\HleftX x=h\HleftX (r_{\cX}(x)x)\overset{\eqref{item:L4}}{=}(h\HleftX r_{\cX}(x))[(h\HrightX r_{\cX}(x))\HleftX x]\overset{\eqref{item:L2}}{=}(h\HleftX r_{\cX}(x))(h\HleftX x)\]
and
\[
    h\HleftX x
    =
    h\HleftX (x s_{\cX}(x))
    \overset{\eqref{item:L4}}{=}
    (h\HleftX x)[(h\HrightX x)\HleftX s_{\cX}(x)].\qedhere
\]
\end{proof}

\begin{corollary}\label{cor:q(non unit) is non unit}
    If~$\cH$ has a \ssla\ on~$\cX$ and if $h\HleftX x$ is a unit in $\cX$, then $x$ is a unit. 
\end{corollary}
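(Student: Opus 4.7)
The plan is to exploit the fact that, by Remark~\ref{rm.ssla means cH acts on cX}, the map $\HleftX$ is a genuine left action of the groupoid $\cH$ on the space $\cX$; in particular, it should be invertible by the action of $h\inv$.

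More precisely, suppose $h\HleftX x$ is defined (so $s_{\cH}(h)=\rho_{\cX}(x)$) and is a unit in $\cX$. First I would verify that $h\inv\HleftX(h\HleftX x)$ makes sense: indeed, by \eqref{item:L1}, $\rho_{\cX}(h\HleftX x)=r_{\cH}(h)=s_{\cH}(h\inv)$. Then, applying \eqref{item:L5} to the composable pair $(h\inv,h)\in \cH^{(2)}$ together with $x$, followed by \eqref{item:L2} using that $h\inv h = s_{\cH}(h)=\rho_{\cX}(x)\in \cH\z$, I obtain
\[
h\inv\HleftX(h\HleftX x) = (h\inv h)\HleftX x = \rho_{\cX}(x)\HleftX x = x.
\]
Since $h\HleftX x$ is by assumption a unit in $\cX$, this exhibits $x$ as $h\inv$ acting on a unit, so \eqref{item:L8-new} gives $x\in \cX\z$, completing the proof.

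There is essentially no obstacle here; the entire argument is a two-line consequence of the action axioms \eqref{item:L2}, \eqref{item:L5}, and \eqref{item:L8-new}, with \eqref{item:L1} only invoked to check that the compositions under $\HleftX$ are well defined. The corollary is really just the observation that left multiplication by $h$ is a bijection $\rho_{\cX}\inv(s_{\cH}(h))\to \rho_{\cX}\inv(r_{\cH}(h))$ whose inverse is left multiplication by $h\inv$, and that this bijection restricts to units by \eqref{item:L8-new}.
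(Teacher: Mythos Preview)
Your proof is correct and takes essentially the same approach as the paper: write $x = h\inv\HleftX(h\HleftX x)$ via \eqref{item:L5} and \eqref{item:L2}, then invoke \eqref{item:L8-new} to conclude that $x$, being the image of a unit under $\HleftX$, is itself a unit. The paper's proof is terser (it cites only \eqref{item:L5} and Lemma~\ref{lem:acting_on_units}), but the argument is identical.
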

\begin{proof}
    By Lemma~\ref{lem:acting_on_units}, $\cH$ maps units to units. In particular,  $x \overset{\eqref{item:L5}}{=} h\inv \HleftX (h\HleftX x)$ is a unit.
\end{proof}

Since $\HleftX$ is a left groupoid action of $\cH$ on the space $\cX$ (Remark~\ref{rm.ssla means cH acts on cX}), we make the following definitions, which
are standard in the literature.

\begin{definition}\label{df.free, proper}
    If~$\cH$ has a \ssla\ on~$\cX$, we call it
    {\em free} if $\HleftX$ is free, meaning that the equality $h\HleftX x=x$ implies $h\in\cH\z $.    Likewise, we call it {\em proper}  if $\HleftX$ is proper, meaning that the map $\cH \bfp{s}{\rho} \cX
    \to \cX\times\cX$ defined by $(h,x)\mapsto (h\HleftX x, x)$ is a proper map.
\end{definition}

We note that these conditions on $\HleftX$ do not impose conditions on $\HrightX$.

\begin{example}
\label{ex:ssla of trivial gp and gpd:free and proper} 
    Given a groupoid $\cX$, the (trivial) \ssla s of the trivial groupoid $\cX\z$ and of the trivial group $\{e\}$ on $\cX$ (Examples~\ref{ex:ssla OF trivial gpd} and~\ref{ex:ssla of trivial group})  are both free and proper.
\end{example}%

    \begin{example}[%
        continuation of Example~\ref{ex.example1 - part 1}%
        ]\label{ex.example1 - part 2}  
        Suppose again that a \LCH\ group $K=G\bowtie H$ acts on the left on a \LCH\ space $X$, denoted by $\ast$. We define the \ssla\ $\HleftX$ and $\HrightX$  of $H$ on the transformation groupoid $\cX=G\ltimes X$ as in~\eqref{eq:actions in example1}.
        
        Note that, if $\ast$ is free, then so is $\HleftX$:
        suppose $h\HleftX (t,x)=(t,x)$,
        i.e., $h\cdot t=t$ and $h|_t \ast x=x$. By the freeness of the $K$-action on $X$, this forces $h|_t=e$. Recall that the \ZS-structure of $K$ implies that 
        $(e,h)(t,e)=(h\cdot t , h|_t)$. But the right-hand side equals $(t, e)$, which forces $h=e$.

        Likewise, if $\ast$ is proper, then so is $\HleftX$:
        suppose that we have  convergent nets   $(t_{i},x_{i})\to (t,x)$ and $h_{i}\HleftX (t_{i},x_{i})\to (s,y)$ in $\cX$; we must check that $h_{i}$ has a convergent subnet.
        By definition of $\HleftX$, we know in particular that 
        $h_{i}|_{t_{i}}\ast x_{i}\to y$ in $X$. As $x_{i}\to x$ and as $\ast$ is proper, it follows that $h_{i}|_{t_{i}}$ (has a subnet that) converges to, say, $k$ in $K$. Since $H$ is closed in $K$, $k$ is an element of $H$, and so by continuity of the restriction and inversion map, we conclude that $h_{i}= (h_{i}|_{t_{i}})|_{t_{i}\inv} \to k|_{t\inv}$. 
    \end{example}

\begin{lemma}\label{lem:restricted action on cXz}
    If~$\cH$ has a \ssla\ on~$\cX$, then $\HleftX$ 
    restricts to a continuous left action of~$\cH$ on the unit space, $\cX\z$. 
    The action on $\cX$ is free 
        (resp.\ proper)
    if and only the action on $\cX\z$ is free 
        (resp.\ proper)
    . 
\end{lemma}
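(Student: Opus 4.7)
My plan is to break the lemma into three assertions and handle them in sequence: well-definedness and continuity of the restriction, the freeness equivalence, and the properness equivalence. The first two are short bookkeeping arguments; the properness equivalence will be the main obstacle, especially the ``$\Leftarrow$'' direction, where I need to bootstrap proper convergence on $\cX$ from proper convergence on $\cX\z$.

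For the first claim, Condition~\eqref{item:L8-new} already guarantees that $h\HleftX v\in \cX\z$ whenever $v\in \cX\z$ and $(h,v)\in \cH\bfp{s}{\rho} \cX\z$, so the restriction of $\HleftX$ to $\cH\bfp{s}{\rho} \cX\z$ lands in $\cX\z$. The restricted map is continuous because $\cH\bfp{s}{\rho} \cX\z$ carries the subspace topology inherited from $\cH\bfp{s}{\rho} \cX$, and it satisfies the action axioms  because they are verbatim consequences of the ones listed in Remark~\ref{rm.ssla means cH acts on cX}, now restricted to units.

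For the freeness equivalence, the direction ``free on $\cX$ implies free on $\cX\z$'' is immediate from $\cX\z\subseteq\cX$. For the converse, I would assume $h\HleftX x=x$ for some $h\in\cH$ and $x\in\cX$, apply $r_{\cX}$ to both sides, and invoke Condition~\eqref{item:L10} to obtain $r_{\cX}(x)=h\HleftX r_{\cX}(x)$; freeness on the unit space then forces $h\in\cH\z$.

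For the properness equivalence, I would work with the continuous map $\Phi\colon \cH\bfp{s}{\rho} \cX\to \cX\times\cX$, $(h,x)\mapsto (h\HleftX x,x)$, and its analogue $\Phi'\colon \cH\bfp{s}{\rho} \cX\z\to \cX\z\times\cX\z$. For the ``$\Rightarrow$'' direction, Corollary~\ref{cor:q(non unit) is non unit} combined with \eqref{item:L8-new} yields the crucial identity $\Phi\inv(\cX\z\times\cX\z)=\cH\bfp{s}{\rho} \cX\z$, so that for any compact $K\subseteq \cX\z\times\cX\z$ one has $(\Phi')\inv(K)=\Phi\inv(K)$; properness of $\Phi$ then gives compactness. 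For the ``$\Leftarrow$'' direction, which I expect to be the trickiest step, I would use the net characterization of properness valid for our \LCH\ setting: given a net $(h_{i},x_{i})$ with $(h_{i}\HleftX x_{i},x_{i})\to (y,x)$ in $\cX\times\cX$, I would apply $r_{\cX}$ in both coordinates and use \eqref{item:L10} to deduce
\[
\bigl(h_{i}\HleftX r_{\cX}(x_{i}),\, r_{\cX}(x_{i})\bigr)=\bigl(r_{\cX}(h_{i}\HleftX x_{i}),\, r_{\cX}(x_{i})\bigr) \longrightarrow \bigl(r_{\cX}(y),\, r_{\cX}(x)\bigr)
\]
in $\cX\z\times\cX\z$. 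Properness on units then extracts a convergent subnet $h_{i_{\alpha}}\to h$ in $\cH$; paired with $x_{i_{\alpha}}\to x$, this is the convergent subnet of $(h_{i},x_{i})$ needed to conclude that $\Phi$ is proper.
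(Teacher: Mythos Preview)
Your proposal is correct and closely parallels the paper's proof, with two small but notable differences. For the freeness ``$\Leftarrow$'' direction, you apply $r_{\cX}$ and invoke \eqref{item:L10} directly to get $h\HleftX r_{\cX}(x)=r_{\cX}(x)$; the paper instead uses \eqref{item:L9} and \eqref{item:L4} to show $h\HleftX(xx\inv)=xx\inv$. Both land on a fixed unit, but your route is shorter. For properness, the paper only writes out the ``$\Leftarrow$'' direction (which matches yours verbatim: push through $r_{\cX}$ via \eqref{item:L10} and use the net criterion), leaving ``$\Rightarrow$'' implicit; your argument via $\Phi\inv(\cX\z\times\cX\z)=\cH\bfp{s}{\rho}\cX\z$ makes that step explicit and is a clean way to handle it.
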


\begin{proof}
    Notice first that, if $v\in\cX\z$ and $h\in\cH$ are such that $s_{\cH}(h)=\rho_{\cX}(v)$, then $h\HleftX v \in \cX\z$ by
    Lemma~\ref{lem:acting_on_units} \eqref{item:L8-new},
    so the map restricts to
        a continuous action $\cH\bfp{s}{\rho}\cX\z\to \cX\z$ with momentum map $\rho_{\cX}\z\colon \cX\z \to \cH\z$.

    Now suppose the action on $\cX\z$ is free, and assume that $h\HleftX x = x$ for some $x\in \cX$. Then
    \[
        x\inv = (h\HleftX x)\inv \overset{\eqref{item:L9}}{=} (h\HrightX x)\HleftX x\inv, 
    \]
    so that
    \begin{align*}
        h\HleftX (x x\inv) 
        \overset{\eqref{item:L4}}{=}
        (h\HleftX x)\bigl[ (h\HrightX x)\HleftX x\inv\bigr]
        =
        xx\inv.
    \end{align*}
    As $xx\inv\in \cX\z$, our assumption now implies that $h$ is a unit, proving that $\HleftX$ is free. The other direction of the equivalence is trivial.

        Lastly suppose that the action on $\cX\z$ is proper, and assume that the net $\{(h_\lambda\HleftX x_\lambda,x_\lambda)\}_\Lambda$ converges to $ (y,x)$ in $\cX\times\cX$. By \eqref{item:L10} and continuity of $r_{\cX}$, this implies that $(h_\lambda\HleftX r_{\cX}(x_\lambda),r_{\cX}(x_\lambda)) \to (r_{\cX}(y),r_{\cX}(x))$. By properness on $\cX\z$, it follows from \cite[Proposition 2.17]{Wil2019} that $\{h_\lambda\}_\Lambda$ has a convergent subnet. By the same proposition, this implies that $\cH$ acts properly on $\cX$. 
\end{proof}

   The above implies that a non-trivial groupoid $\cH$ cannot admit a free \ssla\ on a group $\cX$, because its action on the unit space $\{e\}$ of $\cX$ is never free.

\begin{lemma}\label{lm.uniqueness}
Let~$\cH$ act on~$\cX$ by a {\em free} \ssla.  
If $x,x'\in \cX$ satisfy $\cH\HleftX x=\cH\HleftX x'$ and if $r_{\cX}(x)=r_{\cX}(x')$, then $x=x'$.
\end{lemma}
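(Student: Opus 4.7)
The plan is to extract an explicit $h\in \cH$ witnessing the orbit equality, pin down that $h$ must act trivially on a unit, and then invoke freeness plus Condition~\eqref{item:L2}.

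First I would unpack the hypothesis $\cH\HleftX x=\cH\HleftX x'$: since $x'\in\cH\HleftX x'=\cH\HleftX x$, there must exist some $h\in\cH$ with $s_{\cH}(h)=\rho_{\cX}(x)$ and $h\HleftX x=x'$. The aim is then to show that this $h$ is forced to be the unit $\rho_{\cX}(x)$, which by Condition~\eqref{item:L2} would give $x'=h\HleftX x = \rho_{\cX}(x)\HleftX x=x$.

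To corner $h$, I would apply the range map to the equation $h\HleftX x=x'$ and use the range formula from Lemma~\ref{lem:acting_on_units}\eqref{item:L10}:
\[
r_{\cX}(x')=r_{\cX}(h\HleftX x)=h\HleftX r_{\cX}(x).
\]
Combining with the standing hypothesis $r_{\cX}(x)=r_{\cX}(x')$ yields $h\HleftX r_{\cX}(x)=r_{\cX}(x)$, i.e., $h$ fixes the unit $r_{\cX}(x)\in\cX\z$ under $\HleftX$.

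Now I would invoke Lemma~\ref{lem:restricted action on cXz}: since $\HleftX$ is a free \ssla\ on $\cX$, its restriction to $\cX\z$ is likewise a free action. Consequently $h$ must lie in $\cH\z$, and because units equal their own source, $h=s_{\cH}(h)=\rho_{\cX}(x)$. Condition~\eqref{item:L2} then gives $x'=h\HleftX x = \rho_{\cX}(x)\HleftX x = x$, completing the proof. No step looks like a real obstacle here; the only nontrivial input is the equivalence of freeness on $\cX$ and on $\cX\z$, which has already been established.
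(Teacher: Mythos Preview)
Your proof is correct and follows essentially the same route as the paper: extract $h$ with $x'=h\HleftX x$, apply \eqref{item:L10} to get $h\HleftX r_{\cX}(x)=r_{\cX}(x)$, and conclude $h\in\cH\z$ by freeness. The only difference is cosmetic: the paper invokes freeness on $\cX$ directly (since $r_{\cX}(x)\in\cX\z\subseteq\cX$), whereas you route through Lemma~\ref{lem:restricted action on cXz}, which is unnecessary but harmless.
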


\begin{proof} Since $\cH\HleftX x=\cH\HleftX x'$, there exists $h\in \cH$ such that $x'=h\HleftX x$. By \eqref{item:L10} (Lemma~\ref{lem:acting_on_units}), $h\HleftX r_{\cX}(x)=r_{\cX}(h\HleftX x)= r_{\cX}(x')$, which coincides with $r_{\cX}(x)$ by assumption. Since the \ss\ $\cH$-action is free, $h$ must be in $\cH\z $ and thus $x'=x$. 
\end{proof}

\subsection{The \ssp\ groupoid: A generalized \ZS\ product}

Following \cite{BPRRW2017} and \cite[Example 2.4]{DL2021}, we can define a \ZS-type product of~$\cH$ with~$\cX$; the main difference is that we do not require
the unit spaces of the two groupoids to coincide.

\begin{definition}\label{def.ZSProduct.groupoid.left}
Let~$\cH$ be a groupoid that has a (not necessarily free or proper) \ssla\ on~$\cX$ (Definition~\ref{df.left.selfsimilar}). 
The {\em \ssp} of~$\cX$ and~$\cH$ is the set
\[\cX\bowtie \cH=\{(x,h)\in \cX\times\cH: 
    \rho_{\cX}(x\inv)
=r_{\cH}(h)\}\]
with the following structure of a groupoid: the unit space is $$(\cX\bowtie\cH)\z=(\cX\z\times\cH\z)\cap(\cX\bowtie\cH)$$
and its range and source maps are given by
\[
    r_{\cX\bowtie\cH} (x,h) = \bigl(r_{\cX}(x) , r_{\cH}(h)\HrightX x\inv\bigr)
\quad\text{resp.}\quad
    s_{\cX\bowtie\cH} (x,h)
    =\bigl(h\inv \HleftX s_{\cX}(x), s_{\cH}(h)\bigr).
\]
Two elements $(x,h)$ and $(y,k)$ are composable if and only if  
\(
    s_{\cX}( x)
        =h \HleftX 
        r_{\cX}(y)
        ,
\)
in which case their composition is defined by
\[(x,h)(y,k)\coloneqq (x(h\HleftX y), (h\HrightX y)k).
\]
Lastly, the inverse is
\[(x,h)\inv \coloneqq  (h\inv \HleftX x\inv , h\inv \HrightX x\inv ).\]
\end{definition}

\begin{remark}
    Let us do some sanity checks.
    
    \paragraph{\itshape The range map lands in the alleged unit space.} We trivially have that $v\coloneqq r_{\cX}(x)$ is in~$\cX\z$.  Since $r_{\cH}(h)\HrightX x\inv =\rho_{\cX}(v)$ by Lemma~\ref{lem:acting_on_units}, it is an element of $\cH\z$, and 
    \begin{align*}
        \rho_{\cX}(v\inv)
        &=
        \rho_{\cX}(v)=
        r_{\cH}(h)\HrightX x\inv
        =
        r_{\cH}\left(r_{\cH}(h)\HrightX x\inv\right),
    \end{align*}
    which shows that $r_{\cX\bowtie\cH} (x,h)$ is in \mbox{$(\cX\bowtie\cH)\z$}.

    \smallskip\paragraph{\itshape Composability condition.} The elements $(x,h)$ and $(y,k)$ are composable in \mbox{$\cX\bowtie\cH$} if and only if
    \(
        s_{\cX\bowtie\cH} (x,h)
        =
        r_{\cX\bowtie\cH} (y,k)
    \);
    by our definition of the source and range map, that means
    \[  
        h\inv \HleftX s_{\cX}( x)
        =
        r_{\cX}(y)
        \quad\text{and}\quad
        s_{\cH}(h)=
        r_{\cH}(k)\HrightX y\inv.
    \]
    But now notice that the first condition implies the second: 
    \begin{align*}
        r_{\cH}(k)\HrightX y\inv
        &=
        \rho_{\cX}(y)
        &&\text{(by \eqref{item:L7} in Lemma~\ref{lem:acting_on_units})}
        \\
        &
        =\rho_{\cX}\z (h\inv \HleftX s_{\cX}( x))
        &&\text{(by the first condition)}
        \\
        &= r_{\cH}(h\inv)=s_{\cH}(h) 
        &&\text{(by 
            \eqref{item:L1})},
    \end{align*}
    so $(x,h)$ and $(y,k)$ are composable if and only if $h\inv \HleftX s_{\cX}( x)=
        r_{\cX}(y)$, as claimed.
    
    \smallskip\paragraph{\itshape The composition makes sense.}
    By assumption, we have $s_{\cH}(h)=r_{\cH}(k)\HrightX y\inv$. By Lemma~\ref{lem:acting_on_units}~\eqref{item:L7}, the right-hand side is 
        exactly  $\rho_{\cX}(y)$,
so that $  h\HleftX y$ and $h\HrightX y$ are indeed defined.
    We have $r_{\cX}(h\HleftX y) =h\HleftX r_{\cX}( y)$ by \eqref{item:L10} (Lemma~\ref{lem:acting_on_units}); the right-hand side is, by assumption, equal to $h\HleftX [h\inv \HleftX s_{\cX}(x)]$. By~\eqref{item:L5}, that is exactly $s_{\cX}(x)$, so that $x(h\HleftX y)$ is defined.  
    We have 
        $s_{\cH}(h\HrightX y)=\rho_{\cX}(y\inv)$ by~\eqref{item:L1}.
    Since $(y,k)\in\cX\bowtie\cH$, the right-hand side equals $r_{\cH}(k)$, so that $(h\HrightX y)k$ makes sense.
    We have $\rho_{\cX}((x[h\HleftX y])\inv)=\rho_{\cX}((h\HleftX y)\inv)$
    which equals $r_{\cH}(h\HrightX y)=r_{\cH}([h\HrightX y]k)$ by~\eqref{item:L1},
    so the product is an element of \mbox{$\cX\bowtie\cH$}.
\end{remark}

\begin{remark}\label{rm.ssp is LCH gpd} \label{rm.product.groupoid.unit}
    With the algebraic structure from Definition~\ref{def.ZSProduct.groupoid.left} and the subspace topology, \mbox{\mbox{$\cX\bowtie\cH$}} is a \LCH\ groupoid. Indeed, since~$\cX$ and~$\cH$ are both \LCH, and since \mbox{$\cX\bowtie\cH$} is a closed subspace of \mbox{\mbox{$\cX\times\cH$}}, it is clear that \mbox{$\cX\bowtie\cH$} is itself \LCH. Continuity of multiplication and inversion  follow immediately from continuity of $\HleftX$, $\HrightX$, and of multiplication and inversion in~$\cX$ and $\cH$.
\end{remark}

\begin{remark}\label{rm.unit space of ssp}
    Notice that the unit space of the \ssp, 
    $$(\cX\bowtie \cH)\z =\{(u,
    v
    ): u\in \cX\z , 
    v
    \in\cH\z , \rho_{\cX}(u)=
    v
    \},$$
    is homeomorphic to $\cX\z $, since the map $(u,
    v
    )\mapsto u$ and its inverse $u\mapsto (u,\rho_{\cX}(u))$ are continuous. Under this identification, we can simply write $r_{\cX\bowtie\cH}(x,h)=r_{\cX}(x)$ and $s_{\cX\bowtie\cH}(x,h)=h\inv \HleftX s_{\cX}(x)$.
\end{remark}

\begin{example}[continuation of Example~\ref{ex:ssla ON trivial gpd}]\label{ex:ssla ON trivial gpd:bowtie}
    \txtrepeat{assumptions of ex:ssla ON trivial gpd}. 
    Then the \ssp\ $\cX\bowtie\cH$ (where $\cX$ acts trivially on $\cH$) is identical to the transformation groupoid $\cX\rtimes \cH$, if we use the convention that $r_{\cX\rtimes \cH}(x,h)=x$ and $s_{\cX\rtimes \cH}(x,h)=h\inv\HleftX x$.
\end{example}

\begin{example}\label{ex:ssla of trivial group and groupoid:bowtie}
    Given a groupoid $\cX$, 
    it is easy to check that the \ssp\  $\cX\bowtie\cX\z$ of~$\cX$ with the trivial groupoid $\cX\z$ (as in Example~\ref{ex:ssla OF trivial gpd}) is isomorphic to $\cX$ via $ (x,s_{\cX}(x))\mapsto x$.
    Likewise, the \ssp\ $\cX\bowtie\{e\}$  of~$\cX$ with the trivial group (as in Example~\ref{ex:ssla of trivial group}) is isomorphic to the groupoid $\cX$ via $(x,e)\mapsto x$.
\end{example}

In~\cite[Section 3]{BPRRW2017}, the construction of the \ZS\ product was only done for \etale\ groupoids. Furthermore, their groupoids were {\em matched}: In addition to the left and right actions, groupoids in a matched pair are assumed to have the same unit space, $\cX\z=\cH\z$,
    and that $\rho_{\cX}\z=\mathrm{id}_{\cX\z}$.
Our above definition of the
    \ssp\
$\cX\bowtie \cH$ does not require~$\cX$ and~$\cH$ to be matched; they  
may have different unit spaces.
However, as pointed out in~\cite[Example 2.4]{DL2021}, we can construct a new transformation groupoid $\widetilde{\cH}$ such that $\widetilde{\cH}$ and~$\cX$ are matched, and such that their
 \ZS\ product $\cX\bowtie\widetilde{\cH}$ 
is isomorphic to the
    \ssp\
\mbox{$\cX\bowtie\cH$}. 
We will now make this more precise.

\begin{lemma}\label{lem.ss.is.matched}
    Suppose a groupoid $\cH$ has a \ssla\ on a groupoid $\cX$, denoted~$\HleftX$ and~$\HrightX$.
By Lemma~\ref{lem:restricted action on cXz}, we get a
left action of~$\cH$ on $\cX\z$ which
gives rise to a transformation groupoid
$\widetilde{\cH}
    =\cH\ltimes \cX\z
$
with unit space $\cX\z$. If we define for $((h,u),x)\in \widetilde{\cH}\bfp{s}{r}\cX$,
\begin{align*}
    \widetilde{\cH}\cart \cX:&&(h,u) \cdot x &
        \coloneqq
    h\HleftX x, \\
    \widetilde{\cH}\calb \cX:&&
    (h,u)|_x &
        \coloneqq
    (h\HrightX x,s_{\cX}(x)),
\end{align*}
then $(\cX,\widetilde{\cH})$ is a matched pair.
\end{lemma}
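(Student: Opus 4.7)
My plan is to reduce the claim to a direct verification of the \ssla\ axioms (\ref{item:L1})--(\ref{item:L6}) via Example~\ref{ex:matched pair is ssla}. Since $\widetilde{\cH}\z = \cX\z$ by construction, in order to conclude that $(\cX,\widetilde{\cH})$ is a matched pair it suffices to show that the proposed operations $\cdot$ and $|$ constitute a \ssla\ of $\widetilde{\cH}$ on $\cX$ whose momentum map is the identity on $\cX\z$ (so that $\rho_{\cX}^{\widetilde{\cH}} = r_{\cX}$). With this identification, the fiber product $\widetilde{\cH}\bfp{s}{\rho}\cX$ consists of pairs $((h,u), x)$ with $u=r_{\cX}(x)$ and $s_{\cH}(h)=\rho_{\cX}\z(u)$, which is exactly the domain where $h\HleftX x$ and $h\HrightX x$ are defined; continuity of $\cdot$ and $|$ is inherited from continuity of $\HleftX$, $\HrightX$, $s_{\cX}$.

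First, I would check that the images land in the correct spaces. The condition $(h,u)|_x = (h\HrightX x, s_{\cX}(x)) \in \widetilde{\cH}$ requires $s_{\cH}(h\HrightX x) = \rho_{\cX}\z(s_{\cX}(x))$, which is precisely \eqref{item:L1}. For \eqref{item:L1} of the new action, I must verify
\[
    r_{\widetilde{\cH}}(h,u)=r_{\cX}((h,u)\cdot x), \quad s_{\widetilde{\cH}}((h,u)|_x)=r_{\cX}(x\inv), \quad r_{\widetilde{\cH}}((h,u)|_x)=r_{\cX}(((h,u)\cdot x)\inv);
\]
unpacking $r_{\widetilde{\cH}}(h,u)=h\HleftX u = h\HleftX r_{\cX}(x)$ and $s_{\widetilde{\cH}}(h\HrightX x, s_{\cX}(x))=s_{\cX}(x)$, these reduce via $u=r_{\cX}(x)$ to the two identities in \eqref{item:L10}. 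Condition \eqref{item:L2} is immediate: for $v\in\cX\z$, $(h,v)|_v=(h\HrightX v, v)=(h,v)$ by \eqref{item:L2}, while the unit of $\widetilde{\cH}$ at $r_{\cX}(x)$ acts by $\rho_{\cX}(x)\HleftX x = x$.

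The bulk of the work is \eqref{item:L3}--\eqref{item:L6}. For $x,y$ composable in $\cX$,
\[
(h,u)|_{xy} = ((h\HrightX x)\HrightX y, s_{\cX}(y))
\]
using \eqref{item:L3}, and this equals $((h,u)|_x)|_y$ once one observes that $s_{\widetilde{\cH}}(h\HrightX x, s_{\cX}(x)) = s_{\cX}(x) = r_{\cX}(y)$ so the right-hand composition is defined. Condition \eqref{item:L4} is exactly \eqref{item:L4} rewritten, with composability $s_{\cX}(h\HleftX x)=r_{\cX}((h\HrightX x)\HleftX y)$ following from two applications of \eqref{item:L10}. For $((h,u),(k,v))$ composable in $\widetilde{\cH}$ (i.e.\ $u=k\HleftX v$) with $v=r_{\cX}(x)$, the product is $(hk,v)$, and \eqref{item:L5} reads $(hk)\HleftX x = h\HleftX(k\HleftX x)$, which is \eqref{item:L5}; the requisite matching $s_{\widetilde{\cH}}(h,u)=r_{\cX}(k\HleftX x)$ holds via \eqref{item:L10} since $u=k\HleftX v=k\HleftX r_{\cX}(x)$. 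Condition \eqref{item:L6} is the only genuinely two-layered computation:
\[
((h,u)(k,v))|_x = ((hk)\HrightX x, s_{\cX}(x))
\]
must be shown equal to $((h,u)|_{k\HleftX x})((k,v)|_x) = (h\HrightX(k\HleftX x), s_{\cX}(k\HleftX x))\cdot(k\HrightX x, s_{\cX}(x))$; composability in $\widetilde{\cH}$ amounts to $s_{\cX}(k\HleftX x)=(k\HrightX x)\HleftX s_{\cX}(x)$, which is \eqref{item:L10}, and the resulting product in $\widetilde{\cH}$ equals the desired expression by \eqref{item:L6}.

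The main obstacle is purely bookkeeping: correctly interpreting the structure maps of the transformation groupoid $\widetilde{\cH}=\cH\ltimes\cX\z$ against the momentum-map convention forced on us by Example~\ref{ex:matched pair is ssla}, and repeatedly invoking \eqref{item:L10} (Lemma~\ref{lem:acting_on_units}) to confirm that the various composability conditions in $\cX$ and $\widetilde{\cH}$ are automatic. No new ideas beyond Definition~\ref{df.left.selfsimilar} and Lemma~\ref{lem:acting_on_units} are needed.
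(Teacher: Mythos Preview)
Your proposal is correct and follows essentially the same approach as the paper: both check that the operations are well defined (domain and codomain, in particular that $(h,u)|_x\in\widetilde{\cH}$ via \eqref{item:L1}) and then verify the \ssla\ axioms for the pair $(\cX,\widetilde{\cH})$ with momentum map $r_{\cX}$. The paper simply leaves the verification of \eqref{item:L1}--\eqref{item:L6} to ``the ambitious reader,'' whereas you carry it out explicitly, correctly reducing each new axiom to the corresponding old one plus \eqref{item:L10} for the composability checks.
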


    Note that the momentum map of $\cX$ for these newly defined actions is not $\rho_{\cX}$ but $r_{\cX}$, as necessary for a matched pair.

\begin{proof}
Recall that $\widetilde{\cH}$ is the set
$\cH\bfp{s}{\rho}\cX\z$ with multiplication and inversion defined by
\begin{align*}
    (k,h\HleftX u)(h,u) = (hk,u)
    &&\text{resp.}&&
    (h,u)\inv  = (h\inv ,h\HleftX u). 
\end{align*}
Its unit space is further identified with $\cX\z$; to be precise, the source of $(h,u)$ is $(h\inv h,u) = (u,u)$, or simply $u$.

Let us
check that the new 
actions are well defined. The actions are only defined for $((h,u),x)$ for which $s_{\widetilde{\cH}}(h,u)=u$ equals $r_{\cX}(x)$. 
Since $(h,u)\in\widetilde{\cH}$, we have 
    $s_{\cH}(h)=\rho_{\cX}\z(u)$, and so $s_{\cH}(h)=\rho_{\cX}(x)$.  
This means that $h\HleftX x$ and $h\HrightX x$ are both defined. Lastly, notice that 
\(
    s_{\cH}(h\HrightX x)
    =
    \rho_{\cX} (x\inv)
    =
    \rho_{\cX} (s_{\cX} (x))
\)
by~\eqref{item:L1},
so that $(h,u)|_x$ is indeed another element of $\widetilde{\cH}$.

The ambitious reader can now verify easily that $(\cX,\widetilde{\cH})$ is a matched pair.
\end{proof}

\begin{proposition}\label{prop.ss.to.matched.left}
    With the assumptions and definitions 
    in
    Lemma~\ref{lem.ss.is.matched}, the
   \ZS\ product $\cX\bowtie\widetilde{\cH}$ of the matched pair is isomorphic to the 
        \ssp\
    $\cX\bowtie \cH$ in the sense of Definition~\ref{def.ZSProduct.groupoid.left}. 
\end{proposition}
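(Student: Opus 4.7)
The plan is to exhibit the isomorphism as a projection. Define
\[
\phi \colon \cX\bowtie \widetilde{\cH} \longrightarrow \cX\bowtie \cH, \qquad \phi(x,(h,u))=(x,h),
\]
with candidate inverse
\[
\psi\colon \cX\bowtie \cH \longrightarrow \cX\bowtie \widetilde{\cH}, \qquad \psi(x,h)=(x,(h,\, h\inv\HleftX s_{\cX}(x))).
\]
The second coordinate in $\psi$ is forced: if $(x,(h,u))\in \cX\bowtie\widetilde{\cH}$, then by the matched-pair condition $s_{\cX}(x)=h\HleftX u$, so applying $h\inv$ via~\eqref{item:L5} and~\eqref{item:L2} recovers $u=h\inv\HleftX s_{\cX}(x)$.

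First I would check that $\phi$ and $\psi$ land in the correct sets. For $\phi$, given $(x,(h,u))\in\cX\bowtie\widetilde{\cH}$ one has $s_{\cX}(x)=h\HleftX u$, and applying $\rho_{\cX}\z$ to both sides together with~\eqref{item:L1} yields $\rho_{\cX}(x\inv)=r_{\cH}(h)$, which is exactly the set-membership condition for $\cX\bowtie\cH$. For $\psi$, I would set $u:=h\inv\HleftX s_{\cX}(x)$, verify $u\in\cX\z$ using~\eqref{item:L8-new}, compute $\rho_{\cX}\z(u)=\rho_{\cX}(h\inv\HleftX s_{\cX}(x))=r_{\cH}(h\inv)=s_{\cH}(h)$ by~\eqref{item:L1} (so $(h,u)\in\widetilde{\cH}$), and check $h\HleftX u=s_{\cX}(x)$ by~\eqref{item:L5} and~\eqref{item:L2}. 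That $\phi$ and $\psi$ are mutually inverse is then immediate.

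Next I would verify that $\phi$ is a groupoid homomorphism. Via Remark~\ref{rm.unit space of ssp} applied to both \ssp s, the unit spaces of $\cX\bowtie\widetilde{\cH}$ and $\cX\bowtie\cH$ are each identified with $\cX\z$, and $\phi$ restricts to $\mathrm{id}_{\cX\z}$; moreover, the source/range formulas from the remark show $r(x,(h,u))=r_{\cX}(x)=r(\phi(x,(h,u)))$ and $s(x,(h,u))=(h,u)\inv\cdot s_{\cX}(x)=h\inv\HleftX s_{\cX}(x)=s(\phi(x,(h,u)))$, so in particular the composability conditions in the two groupoids coincide. For multiplicativity, I would unfold the product $(x,(h,u))(y,(k,v))$ in $\cX\bowtie\widetilde{\cH}$ using the definitions of the $\widetilde{\cH}$-action and of multiplication in $\widetilde{\cH}$; the result is $\bigl(x(h\HleftX y),\, ((h\HrightX y)k,\,v)\bigr)$, which maps under $\phi$ to exactly the product $(x(h\HleftX y),(h\HrightX y)k)=(x,h)(y,k)$ prescribed by Definition~\ref{def.ZSProduct.groupoid.left}.

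Finally, continuity: $\phi$ is the restriction of the projection $\cX\times\cH\times\cX\z\to\cX\times\cH$, while $\psi$ is continuous because $s_{\cX}$, inversion in $\cH$, and $\HleftX$ all are. There is no genuine obstacle here; the only care needed is the bookkeeping to show that the extra $\cX\z$-coordinate in $\widetilde{\cH}$ is uniquely and continuously determined by the data $(x,h)$, which is precisely why the matched-pair condition $s_{\cX}(x)=h\HleftX u$ collapses the seeming ambiguity.
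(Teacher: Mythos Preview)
Your proposal is correct and follows essentially the same approach as the paper: the same projection $\phi(x,(h,u))=(x,h)$ with inverse $\psi(x,h)=(x,(h,h\inv\HleftX s_{\cX}(x)))$, the same well-definedness checks via \eqref{item:L1}, and the same multiplicativity computation unfolding to $(x(h\HleftX y),((h\HrightX y)k,v))$. If anything, you are slightly more thorough in explicitly verifying that source and range are intertwined and in spelling out continuity of $\psi$.
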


\begin{proof}
   By definition of $\cX\bowtie\widetilde{\cH}$, any of its elements $(x, (h,u))$ satisfies $s_{\cX}(x)=r_{\widetilde{\cH}}(h,u)$, which is exactly $h\HleftX u$ by definition of 
   the range map of
   $\widetilde{\cH}$. Thus, $u=h\inv \HleftX s_{\cX}(x)$. Moreover, 
   $\rho_{\cX}(x\inv)
   =\rho_{\cX}(h\HleftX u)=r_{\cH}(h)$ 
   by 
   \eqref{item:L1},
   Lemma~\ref{lem:acting_on_units}~\eqref{item:L8},
   which shows that $(x,h)$ is an element of \mbox{$\cX\bowtie\cH$}. All in all, the maps
   \[
        \varphi\colon
        \cX\bowtie\widetilde{\cH}
        \to 
        \cX\bowtie\cH,
        \quad
        (x,(h,u))\mapsto (x,h),
   \]
   and
   \[
        \cX\bowtie\cH
        \to 
        \cX\bowtie\widetilde{\cH},
        \quad
        (x,h)\mapsto \bigl(x, (h, h\inv \HleftX s_{\cX}(x)
        )
        \bigr),
   \]
   are well defined and mutually inverse.
   Since they are constructed out of continuous maps, they are themselves continuous.	
   Lastly, notice that $\varphi$ is a groupoid homomorphism:
   \begin{align*}
       \varphi
       \bigl( (x,(h,u))\,(y,(k,v))\bigr)
       &=
       \varphi
       \bigl(
        x[(h,u)\cdot y],
        (h,u)|_{y} (k,v)
       \bigr)
       && \text{(def'n of $\cX\bowtie\widetilde{\cH}$)}
       \\&
       =
       \varphi
       \bigl(
        x[h\HleftX y],
        (h\HrightX y,s_{\cX}(y))(k,v)
       \bigr)
       && \text{(def'n of $\cdot$ and $|$)}
       \\
       &=
       \varphi
       \bigl(
        x[h\HleftX y],
        ([h\HrightX y] k,v)
       \bigr)
       && \text{(def'n of $\widetilde{\cH}$)}
       \\
       &=
       \bigl(
        x[h\HleftX y],
        [h\HrightX y] k
       \bigr)
       && \text{(def'n of $\varphi$)}
       \\&=(x,h)
       \,(y,k)
       && \text{(def'n of $\cX\bowtie \cH$)}
       \\&
       =
       \varphi
       ( x,(h,u))
       \,\varphi
       (y,(k,v)).
   \end{align*}
    This proves that $\cX\bowtie\widetilde{\cH}$ is isomorphic to \mbox{$\cX\bowtie\cH$}. 
\end{proof}

\begin{example}[cf.\ {\cite[Section 5.3]{BPRRW2017},
\cite[Definition 3.6]{Deaconu}
}]\label{ex:skew product}
    Suppose $\cG$ is a \LCH\ groupoid and $H$ is a group (neither are assumed to be \etale), and $\mathbf{c}\colon \cG\to H$ is a continuous homomorphism. The {\em skew-product groupoid $\cG(\mathbf{c})$} is the set $\cG\times H$ with the operations given for $(g,g')\in\cG^{(2)}$ and $h\in H$ by
    \[
        (g, h )(g', h \mathbf{c}(g))=(gg', h )
        \quad\text{and}\quad
        (g, h )\inv = (g\inv, h\mathbf{c}(g)  ).
    \]
    Note that $\cG(\mathbf{c})\z=\cG\z\times H$. 
    The formula
    \(
        \varphi_{h}(g, h')\coloneqq(g, h' h\inv)
    \)
    defines a continuous, free action of $H$ on $\cG(\mathbf{c})$ by automorphisms. See \cite[Section 4]{KWR2001:Skew} 
    for more details, but note that their convention for $\cG(\mathbf{c})$ is slightly different from ours.

    In the case where $\cG$ and $H$ are \etale, \cite[Proposition 22]{BPRRW2017} states that the above action induces a left $H$-action on $\cG\z\times H$ and that the corresponding transformation groupoid $$\widetilde{H}\coloneqq H\tensor*[_{\varphi}]{\ltimes}{} \cG(\mathbf{c})\z$$ allows a \ZS\ product with $\cG(\mathbf{c})$. It was pointed out further that this product $\cG(\mathbf{c})\bowtie \widetilde{H}$ {\em ``should be considered as the \ZS\ product of the groupoid $\cG(\mathbf{c})$ with the group $H$''}, since the space $\cG(\mathbf{c})\times H$  is homeomorphic to $\cG(\mathbf{c})\bowtie \widetilde{H}$ via $\bigl((g, h ),h'\bigr)\mapsto \bigl((g, h ),(h',s(g), h \mathbf{c}(g)h')\bigr)$. 
    
    Using our machinery above, this comment can be made  concrete without the need to go via the transformation groupoid $\widetilde{H}$ (and without assuming \etale):
    Since $H\z=\{e\}$, the balanced fiber product $\bfp{s
        }{\rho
        }$ just becomes the Cartesian product,
    and we can define
    \begin{align*}
        H\cart  \cG(\mathbf{c})
    \colon &&
    h\HleftX (g,h') &\coloneqq (g,h'h\inv)
     \\
        H\calb  \cG(\mathbf{c})
    \colon &&
    h\HrightX (g,h')  &\coloneqq  \mathbf{c}(g)\inv h\mathbf{c}(g)
    \end{align*}

    One  verifies that these give a \ssla\ of $H$ on $\cG(\mathbf{c})$, and so we may construct the \ssp\ $\cG(\mathbf{c})\bowtie H$ as in Definition~\ref{def.ZSProduct.groupoid.left}. By Proposition~\ref{prop.ss.to.matched.left}, $\cG(\mathbf{c})\bowtie H$ is isomorphic to  the \ZS\ product groupoid $\cG(\mathbf{c})\bowtie \widetilde{H}$ from \cite[Proposition 22]{BPRRW2017}.
\end{example}

\begin{remark}
    As the last example highlights, the main distinction between the (old) \ZS\ product and our (new) \ssp\ is that the latter does not require the groupoids with two-way actions to have matching unit spaces. For \ZS\ products, there is no inherent distinction between the roles of the two groupoids $\cH$ and $\cX$ (everything is entirely symmetric), while the \ss-variant makes a clear distinction between them: Besides its range and source maps, the groupoid $\cX$ must also carry a separate momentum map $\rho_{\cX}\colon \cX\to \cH\z$ with respect to which the $\cH$-action is defined. After Proposition~\ref{prop:Htilde}, it is natural to ask whether this added layer of difficulty in Definition~\ref{df.left.selfsimilar} is worth the effort. But  while the \ssp\ $\cX\bowtie\cH$ and the \ZS\ product $\cX\bowtie\widetilde{\cH}$ are isomorphic, there are fundamental differences between the pair $(\cX,\cH)$ and the pair $(\cX,\widetilde{\cH})$, as we will see in Example~\ref{ex:ssla of trivial gp and gpd:sometimes a something} and its subsequent remark.
\end{remark}

\begin{example}[reconciliation]\label{ex:reconciliation}
    Suppose $\cH=\{e\}$ has the trivial \ssla\ on a groupoid $\cX$ (Example~\ref{ex:ssla of trivial group}). The induced action $\cdot$ of the transformation groupoid $\widetilde{\{e\}}=\{e\}\ltimes \cX\z$ on $\cX$ as defined in Lemma~\ref{lem.ss.is.matched} is then likewise trivial, and the induced action $|$ of $\cX$ on $\widetilde{\{e\}}$ is given for $x\in\cX$ and $(e,u)\in\widetilde{\{e\}}$ by
    \[
        (e, u)|_{x}\coloneqq (e, s_{\cX}(x)) \quad\text{ where } \quad u=s_{\widetilde{\cH}}(e,u)=r_{\cX}(x).
    \]
    In other words: If we identify an element $(e,u)$ of $\widetilde{\{e\}}$ with $u$ in $\cX\z$, then 
    the \ssla\ of $\widetilde{\{e\}}$ on $\cX$ that we described in Lemma~\ref{lem.ss.is.matched} is identical to the one of $\cX\z$ on $\cX$ that we described in Example~\ref{ex:ssla OF trivial gpd}. Under this identification, the concatenation of the isomorphisms $\cX\bowtie\{e\}\cong \cX$ and $\cX\cong \cX\bowtie\cX\z$ in Example~\ref{ex:ssla of trivial group and groupoid:bowtie} yields exactly the isomorphism $\cX\bowtie\{e\}\cong\cX\bowtie \widetilde{\{e\}} $ in Proposition~\ref{prop.ss.to.matched.left}.
\end{example}

One can define an analogous notion of a \ss\ action on the right. For the convenience of the reader and to establish notation, we will repeat the main properties in Subsection~\ref{ssec.right ssa}.

\subsection{Haar systems for \ssla s}

\begin{definition}\label{df.lambda H-invariance}
    Suppose $\cH$ and $\cX$ are 
    groupoids and that $\HleftX$ is a left $\cH$-action on $\cX$ with momentum map $
        \rho_{\cX}=\rho_{\cX}\z\circ r_{\cX}
    \colon \cX\to\cH\z$.
    We say that a left Haar system $\{\lambda^{u}\}_{u\in \cX\z }$ on~$\cX$ is {\em $\HleftX$-invariant} if for all $h\in\cH$ and all $u\in \cX\z $ with $s_{\cH}(h)=\rho_{\cX}(u)$, we have
    \[h\HleftX \lambda^{u}=\lambda^{h\HleftX u},\] where $(h\HleftX \lambda^{u}) (E) = \lambda^{u}(h\inv\HleftX E)$. Equivalently, 
    for all $f\in C_c(\cX)$, \begin{equation}\label{eq:HleftX-invariance in integral form}
        \int f(h\HleftX x) \dif\lambda^{u}(x) = \int f(y) \dif\lambda^{h\HleftX u}(y).
    \end{equation}
\end{definition}

\begin{proposition}[cf.\ {\cite[Proposition 6.4]{KMQW2010}}
]\label{prop.ZSProduct.Haar.left} 
    Suppose $\cH$ and $\cX$ are \LCH\ groupoids, that $\cH$ has a 
    \ssla\ on $\cX$, and that $\cX$ has a $\HleftX$-invariant left Haar system~$\lambda$. 
    If $\varepsilon$ is any left Haar system for~$\cH$, then
    we get a left Haar system $\lambda\bowtie\varepsilon$ for \mbox{$\cX\bowtie\cH$}
    defined for $u\in\cX\z$ by
    \[
        \dif (\lambda\bowtie\varepsilon)^{u}
        (y,k)= \dif
        \varepsilon^{\rho(y\inv)}(k)
        \dif\lambda^{u}(y) 
        .
    \]
    Equivalently, 
    for any $f\in C_c(\cX\bowtie\cH)$, 
    \[
    \int f(y,k) \dif (\lambda\bowtie\varepsilon)^{u}
    (y,k) = \int_{\cX} \int_{\cH} f(y,k)  \dif 
        \varepsilon^{\rho(y\inv)}(k)
        \dif\lambda^{u}(y).
    \]
\end{proposition}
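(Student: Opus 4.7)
My plan is to verify the three standard properties of a left Haar system: that each $(\lambda\bowtie\varepsilon)^u$ is a Radon measure with full support on the range fiber $(\cX\bowtie\cH)^u$, that $u\mapsto \int f\,\dif(\lambda\bowtie\varepsilon)^u$ is continuous for $f\in C_c(\cX\bowtie\cH)$, and that the resulting system is left invariant. Under the identification $(\cX\bowtie\cH)\z\cong\cX\z$ of Remark~\ref{rm.unit space of ssp}, the fiber unpacks as $(\cX\bowtie\cH)^u=\{(y,k)\in\cX^u\times\cH : r_{\cH}(k)=\rho(y\inv)\}$, so that for each $y\in\cX^u$ the $k$-slice is exactly $\cH^{\rho(y\inv)}$. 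Hence the iterated integral defines a Radon measure whose support is the whole fiber, because $\lambda^u$ has full support on $\cX^u$ and $\varepsilon^{\rho(y\inv)}$ on $\cH^{\rho(y\inv)}$.

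For continuity, I would reduce to showing that $\tilde f(y)\coloneqq \int_{\cH^{\rho(y\inv)}} f(y,k)\,\dif\varepsilon^{\rho(y\inv)}(k)$ is a compactly supported continuous function on $\cX$. Since $\cX\bowtie\cH$ is closed in $\cX\times\cH$ and $f$ has compact support, a Tietze extension lets me view $f$ as an element of $C_c(\cX\times\cH)$; combining continuity of $y\mapsto \rho(y\inv)$ with the Haar-system continuity of $\varepsilon$ then yields $\tilde f\in C_c(\cX)$. Continuity of $u\mapsto \int \tilde f\,\dif\lambda^u$ follows from $\lambda$ being a Haar system.

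The main computation is left invariance. Fix $(y,k)\in\cX\bowtie\cH$, let $u\coloneqq r_{\cX}(y)$ and $v\coloneqq k\inv\HleftX s_{\cX}(y)$, and expand
\[
\int f\bigl((y,k)(z,l)\bigr)\,\dif(\lambda\bowtie\varepsilon)^v(z,l)
= \int_{\cX^v}\int_{\cH^{\rho(z\inv)}} f\bigl(y(k\HleftX z),(k\HrightX z)l\bigr)\,\dif\varepsilon^{\rho(z\inv)}(l)\,\dif\lambda^v(z).
\]
Using \eqref{item:L1} to observe that $k\HrightX z$ has range $\rho((k\HleftX z)\inv)$ and source $\rho(z\inv)$, I apply left invariance of $\varepsilon$ with the change of variable $m=(k\HrightX z)l$. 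Next, using \eqref{item:L10} and the definition of $v$ to see that $z\mapsto w\coloneqq k\HleftX z$ maps $\cX^v$ onto $\cX^{s_{\cX}(y)}$, I invoke the $\HleftX$-invariance of $\lambda$ from Definition~\ref{df.lambda H-invariance}. Finally, the ordinary left invariance of $\lambda$ substitutes $x=yw$, and the identity $\rho(w\inv)=\rho_{\cX}\z(s_{\cX}(w))=\rho_{\cX}\z(s_{\cX}(yw))=\rho(x\inv)$ guarantees the inner $\varepsilon$-integral matches the definition of $(\lambda\bowtie\varepsilon)^u$.

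I expect the continuity of $\tilde f$ to be the main technical obstacle, since the domain $\cH^{\rho(y\inv)}$ of the inner integral varies with $y$ and $f$ lives only on a closed subset of $\cX\times\cH$; the invariance calculation itself is just careful bookkeeping with the source/range/momentum identities of Lemma~\ref{lem:acting_on_units} and the definition of multiplication in $\cX\bowtie\cH$.
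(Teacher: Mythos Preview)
Your proposal is correct and follows the same three-part structure as the paper. Two packaging differences are worth noting. For continuity, the paper first treats functions of the form $(f\times g)|_{\cX\bowtie\cH}$ with $f\in C_c(\cX)$, $g\in C_c(\cH)$, and then passes to general $F$ via Stone--Weierstrass and a uniform bound on $(\lambda\bowtie\varepsilon)^v(K_{\cX}\bowtie K_{\cH})$; your Tietze-extension route is more direct and sidesteps the approximation argument. For left invariance, the paper does not separate the $\HleftX$-invariance step from the ordinary left-invariance of $\lambda$: instead it introduces auxiliary elements $h_2=(h\inv\HrightX x\inv)\inv$ and $x_2=h_2\inv\HleftX x$ with the key identity $x(h\HleftX y)=h_2\HleftX(x_2 y)$ (Lemma~\ref{lem:x2h2}), so that a single application of left invariance followed by a single application of $\HleftX$-invariance suffices (Corollary~\ref{cor:left-invariance and HleftX}). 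Your three sequential substitutions $m=(k\HrightX z)l$, $w=k\HleftX z$, $x=yw$ reach the same conclusion more transparently without that lemma; the paper's version is slightly slicker but less self-explanatory.
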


In the above, we have used the fact that $(\cX\bowtie\cH)\z\approx \cX\z$ by Remark~\ref{rm.unit space of ssp}.
To prove the above proposition, we need the following:

\begin{lemma}
\label{lem:x2h2}
    Suppose $u,v\in\cX\z$ and $(x,h)\in \cX^{u}_v\times\cH^{\rho(v)}\subseteq \cX\bowtie\cH$ are fixed. If we let
    \(
    	h_{2}=(h\inv \HrightX x\inv)\inv
		\) and \(
		x_{2}=h_{2}\inv\HleftX x,
    \)
    then 
        $h \HleftX s_{\cX}(x_{2})= v$,
    and for all $y\in \cX^{h\inv \HleftX v}$, we have
    $x(h\HleftX y)=h_{2}\HleftX(x_{2}y)$.
\end{lemma}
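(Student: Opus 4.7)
The heart of the lemma is the algebraic identity $h_2\HrightX x_2 = h$, from which everything else cascades. My plan is to compute this identity first via a short chain of applications of Lemma~\ref{lem:acting_on_units} and the axioms \eqref{item:L3}, \eqref{item:L9}.

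Specifically, I would start by observing that $h_2\inv = h\inv\HrightX x\inv$ by definition, so that
\[
h_2\inv\HrightX x = (h\inv\HrightX x\inv)\HrightX x \overset{\eqref{item:L3}}{=} h\inv\HrightX(x\inv x) = h\inv\HrightX v \overset{\eqref{item:L2}}{=} h\inv.
\]
Applying \eqref{item:L9} to the pair $(h_2\inv, x)$, I then get
\[
h_2\HrightX x_2 = h_2\HrightX(h_2\inv\HleftX x) = (h_2\inv\HrightX x)\inv = (h\inv)\inv = h.
\]
Along the way I would verify that the source/range conditions for all these operations are met (e.g.\ that $s_\cH(h_2\inv)=\rho_\cX(x)$, which follows from $r_\cH(h_2)= \rho_\cX\z(u)$ by \eqref{item:L1} applied to $h\inv\HrightX x\inv$).

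Next I would compute the value of $h_2\HleftX x_2$ directly:
\[
h_2\HleftX x_2 = h_2\HleftX(h_2\inv\HleftX x) \overset{\eqref{item:L5}}{=} r_\cH(h_2)\HleftX x = \rho_\cX(x)\HleftX x \overset{\eqref{item:L2}}{=} x.
\]
Combining this with the identity $h_2\HrightX x_2 = h$ and applying the product formula \eqref{item:L4}, I obtain
\[
h_2\HleftX(x_2 y) = (h_2\HleftX x_2)\bigl[(h_2\HrightX x_2)\HleftX y\bigr] = x(h\HleftX y),
\]
which is the desired equality—provided $x_2 y$ is composable. For that I would compute $s_\cX(x_2)$: using \eqref{item:L10} and the identity $h_2\inv\HrightX x = h\inv$ established above,
\[
s_\cX(x_2) = s_\cX(h_2\inv\HleftX x) = (h_2\inv\HrightX x)\HleftX s_\cX(x) = h\inv\HleftX v = r_\cX(y),
\]
so $x_2 y$ makes sense. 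The same computation immediately yields $h\HleftX s_\cX(x_2) = h\HleftX(h\inv\HleftX v) \overset{\eqref{item:L5}}{=} \rho(v)\HleftX v \overset{\eqref{item:L2}}{=} v$, handling the first assertion.

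The proof is essentially a bookkeeping exercise, and the main obstacle is simply ensuring that every invocation of an axiom is legitimate (the compatibility of sources and ranges). Once the key identity $h_2\inv\HrightX x = h\inv$ is isolated, the rest is mechanical.
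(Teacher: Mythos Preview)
Your proof is correct and follows essentially the same approach as the paper's: both establish the key identities $h_2\HleftX x_2 = x$ and $h_2\HrightX x_2 = h$, compute $s_{\cX}(x_2)=h\inv\HleftX v$, and then apply \eqref{item:L4}. The only cosmetic difference is that the paper first derives the alternate form $x_2 = (h\inv\HleftX x\inv)\inv$ and uses it to obtain both $s_{\cX}(x_2)$ and $h_2 = h\HrightX x_2\inv$, whereas you bypass that expression and work directly with the definition $x_2 = h_2\inv\HleftX x$ via \eqref{item:L10} and \eqref{item:L9}.
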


\begin{proof}
    We compute
        \begin{align}\label{eq:x_2}
        	x_{2}
        	&=
        	(h\inv \HrightX x\inv)\HleftX x
        	\overset{\eqref{item:L9}}{=}
        	(h\inv\HleftX x\inv)\inv,
        \end{align}
        so that
        \begin{align*}
          s_{\cX}(x_{2}) &
          = r_{\cX}(h\inv \HleftX x\inv )
          \overset{\eqref{item:L10}}{=} h\inv \HleftX r_{\cX}(x\inv )
          = h\inv \HleftX s_{\cX}(x) = h\inv \HleftX v,
        \end{align*}
        as claimed.
        By Equation~\eqref{eq:x_2},
        \begin{align*}
        	h_{2}
        	&\overset{\eqref{item:L9}}{=}
        	h  \HrightX (h\inv \HleftX x\inv)
        	=
        	h\HrightX x_{2}\inv, 
        	\quad\text{ so that }\quad
        	h_{2}\HrightX x_{2}
         \overset{\eqref{item:L3}}{=} h.
        \end{align*}
        Now, if $y$ is such that $r_{\cX}(y)=h\inv \HleftX v$, meaning that $x_{2}y$ makes sense by our above computation, then
        \[
            \rho_{\cX}(x_{2}y)
            =
            \rho_{\cX}(x_{2})
            \overset{\eqref{item:L1}}{=} 
            s_{\cH} (h\HrightX x_{2}\inv)
            =
            s_{\cH}(h_{2})
            .
        \]
        Therefore, $h_{2}\HleftX(x_{2}y)$ is likewise defined, and we have:
        \begin{align*}
        	h_{2}\HleftX(x_{2}y)
        	&=
        	(h_{2}\HleftX x_{2})[(h_{2}\HrightX x_{2})\HleftX y]
        	&&\text{(by \eqref{item:L4})}
        	\\
        	&=
        	x[h\HleftX y]
	       	&&\text{(def'n of $h_{2}$  and by the above)}.\qedhere
        \end{align*}
\end{proof}

\begin{corollary}
\label{cor:left-invariance and HleftX}
    Suppose  $u,v\in\cX\z$, $(x,h)\in \cX^{u}_v\times\cH^{\rho(v)}\subseteq \cX\bowtie\cH$, and $\lambda$
    is a $\HleftX$-invariant left Haar system for~$\cX$ in the sense of Definition~\ref{df.lambda H-invariance}. If $G\in C_{c} (\cX)$, then
    \[    \int_{\cX}
            G\big(x[h\HleftX y]\big)
            \dif\lambda^{h\inv \HleftX v}(y)
            =
            \int_{\cX}
            G(y)
            \dif\lambda^{u}(y).
    \]
\end{corollary}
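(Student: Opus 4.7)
The plan is to reduce this to a combination of two facts: the standard left-invariance of the groupoid Haar system $\lambda$ under multiplication in $\cX$, and the assumed $\HleftX$-invariance (Definition~\ref{df.lambda H-invariance}). The bridge between the two is Lemma~\ref{lem:x2h2}, which rewrites the ``semidirect-style'' translation $y\mapsto x(h\HleftX y)$ as $y\mapsto h_2\HleftX(x_2 y)$, where the outer action by $h_2 = (h\inv\HrightX x\inv)\inv$ is an honest $\HleftX$-translation and the inner multiplication by $x_2 = h_2\inv\HleftX x$ is honest groupoid multiplication in $\cX$.

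Concretely, I would first substitute $G(x(h\HleftX y)) = G(h_2\HleftX(x_2 y))$ inside the integral; this is valid for all $y$ with $r_{\cX}(y) = h\inv\HleftX v$, which is exactly the support set of $\lambda^{h\inv\HleftX v}$. Next, viewing $F(\cdot)\coloneqq G(h_2\HleftX \cdot)$ as a function in $C_c(\cX)$, I would apply the usual left-invariance of $\lambda$ under left multiplication by $x_2$: one checks from $x_2 = h_2\inv\HleftX x$ and \eqref{item:L10} that $s_{\cX}(x_2) = h\inv\HleftX v$ (as already noted in Lemma~\ref{lem:x2h2}) and $r_{\cX}(x_2) = h_2\inv\HleftX u$, so left-invariance gives
\[
\int_{\cX} G(h_2\HleftX(x_2 y))\dif\lambda^{h\inv\HleftX v}(y) = \int_{\cX} G(h_2\HleftX z)\dif\lambda^{h_2\inv\HleftX u}(z).
\]
Finally, I would apply $\HleftX$-invariance with $h_2$ to shift the action off the integrand. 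The resulting measure is indexed by $h_2\HleftX(h_2\inv\HleftX u)$, which by \eqref{item:L5} equals $r_{\cH}(h_2)\HleftX u$ and in turn reduces to $u$ by \eqref{item:L2} once we verify that $\rho_{\cX}(u) = r_{\cH}(h_2)$.

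The only real work is the bookkeeping at each step: confirming that the various range, source, and momentum-map conditions align so that the two invariance formulas may legitimately be invoked. In particular, one has to check that $h_2\inv\HleftX u\in\cX\z$ (by \eqref{item:L8-new}) with $\rho_{\cX}(h_2\inv\HleftX u) = r_{\cH}(h_2\inv) = s_{\cH}(h_2)$, which is what Definition~\ref{df.lambda H-invariance} requires of $\HleftX$-invariance, and that $r_{\cH}(h_2) = \rho_{\cX}(u)$ at the end. Each of these is a mechanical consequence of \eqref{item:L1}, \eqref{item:L8-new}, \eqref{item:L10}, and associativity \eqref{item:L5}, so I expect no genuine obstacle beyond careful tracking of these compatibility conditions.
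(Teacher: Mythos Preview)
Your proposal is correct and follows essentially the same approach as the paper: use Lemma~\ref{lem:x2h2} to rewrite $x(h\HleftX y)$ as $h_2\HleftX(x_2 y)$, apply left-invariance of $\lambda$ for the multiplication by $x_2$, then apply $\HleftX$-invariance for the action by $h_2$. Your final bookkeeping (computing $r_{\cX}(x_2)=h_2\inv\HleftX u$ via \eqref{item:L10} and then reducing $h_2\HleftX(h_2\inv\HleftX u)$ to $u$ via \eqref{item:L5} and \eqref{item:L2}) is equivalent to the paper's one-line observation that $h_2\HleftX r_{\cX}(x_2)=r_{\cX}(x)=u$ by \eqref{item:L10}.
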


\begin{proof}
Let $x_{2},h_{2}$ be as in Lemma~\ref{lem:x2h2}. Then
\begin{align*}
    \int_{\cX}
    G\big(x[h\HleftX y]\big)
    \dif\lambda^{h\inv \HleftX v}(y)
    &=
    \int_{\cX}
    G\big(h_{2}\HleftX[x_{2}y]\big)
    \dif\lambda^{s(x_{2})}(y)
    .
\intertext{By left invariance of $\lambda$, we have}
    \int_{\cX}
    G\big(h_{2}\HleftX[x_{2}y]\big)
    \dif\lambda^{s
    (x_{2})}(y)
    &=
    \int_{\cX}
    G(h_{2}\HleftX z )
    \dif\lambda^{r
    (x_{2})}(z).
\intertext{
Since $r_{\cX}(x_{2})= s_{\cH}(h_{2})$, we can invoke $\HleftX$-invariance of $\lambda$ in the form of Equation~\eqref{eq:HleftX-invariance in integral form} 
to conclude}
    \int_{\cX}
    G(h_{2}\HleftX z )
    \dif\lambda^{r
    (x_{2})}(z)
    &=
    \int_{\cX}
    G(y)
    \dif\lambda^{h_{2}\HleftX r
    (x_{2})}(y)
    .
\end{align*}
Since $x_{2}=h_{2}\inv \HleftX x$, it follows from \eqref{item:L10} (Lemma~\ref{lem:acting_on_units}) that $h_{2}\HleftX r_{\cX}(x_{2})=r_{\cX}(x)=u$, so that the above right-hand side is as claimed in the statement.
\end{proof}

\begin{proof}[Proof of Proposition~\ref{prop.ZSProduct.Haar.left}]
   For this proof, let $\rho\coloneqq \rho_{\cX} = \rho_{\cX}\z\circ r_{\cX}$ and $\rho'\coloneqq  \rho_{\cX}\z\circ s_{\cX}$.
   Fix an arbitrary $u\in\cX\z$ and note that $(\lambda\bowtie\varepsilon)^{u}$
   is a Radon measure on $\cX\bowtie \cH$, since
   \[
   (\lambda\bowtie\varepsilon)^{u}
   \colon\quad
        C_c (\cX\bowtie\cH)\to \mathbb{C},
        \quad
        F \mapsto 
        \int_{\cX}
        \int_{\cH}
            F(y,k)
        \dif 
            \varepsilon^{\rho'(y)}(k)
        \dif\lambda^{u}(y),
   \]
   is clearly a positive linear functional on $C_c(\cX\bowtie\cH)$.
   First, we show that  $\supp \,(\lambda\bowtie\varepsilon)^{u}
   =
            (\cX\bowtie\cH)^{u}$. To see~$\supseteq$,  fix any $$\eta=(y,k)\in (\cX\bowtie\cH)^{u}
            =
            \cX^{u}\bowtie\cH
            =
            \sqcup_{v\in\cX\z} \cX^{u}_{v}\times\cH^{ \rho (v)}.$$ For any open neighborhood $N_{\eta}$ around $\eta$, we must show that $(\lambda\bowtie\varepsilon)^{u}
            (N_{\eta})>0$. By monotonicity, 
            it suffices to show this for a {\em basic} open neighborhood, so we may assume that $N_{\eta}=  (N_{y}\times N_{k})\cap \cX\bowtie\cH$ for some 
            neighborhoods $N_{y}$ of $y$ and $N_{k}$ of $k$.
        Thus,
        \begin{align}
            (\lambda\bowtie\varepsilon)^{u}
            (N_{\eta})
            &=
            \int_{\cX\bowtie\cH} 1_{N_{\eta}}(\xi) \dif
                (\lambda\bowtie\varepsilon)
           ^{u}(\xi)
            =
            \int_{\cX}\int_{\cH} 1_{N_{\eta}}(x,h) \dif
            \varepsilon^{\rho'(x)}(h)
            \,\dif\lambda^{u}(x)
            \notag
            \\
            &
            =
            \int_{\cX}\int_{\cH}
                1_{N_{y}}(x)\, 1_{N_{k}}(h)\, 1_{\cX\bowtie\cH} (x,h)
            \dif
            \varepsilon^{\rho'(x)}(h)
            \,\dif\lambda^{u}(x)
            \notag
            \\
            &
            =
            \int_{\cX}1_{N_{y}}(x)\, \left[\int_{\cH}
                1_{N_{k}}(h)
            \dif
            \varepsilon^{\rho'(x)}(h)
        \right]
            \,\dif\lambda^{u}(x)
            .
            \label{eq:measure of N_eta}
        \end{align}

        \assumption{Since~$\cH$ is locally compact}, we may find a precompact neighborhood $M_{k}$ of $k$ for which $\overline{M_{k}}\subseteq N_{k}$. Since $k\in \cH^{ \rho (v)}=\supp \varepsilon^{ \rho (v)}$, we have $\delta\coloneqq \varepsilon^{ \rho (v)} (M_{k})>0$. Let $f\in C_{c}(\cH,[0,1])$ be a function that is constant $1$ on $M_{k}$ and vanishes outside of $N_{k}$, so that for all $w\in \cH\z$,
         \begin{equation}\label{eq:integral-Nk}
            \int_{\cH} 1_{N_{k}}(h) \dif\varepsilon^{w}(h)
            \geq
            \int_{\cH} f(h) \,\dif\varepsilon^w (h)
            \geq
            \int_{\cH} 1_{M_{k}}(h) \dif\varepsilon^{w}(h)
            .
        \end{equation}
        Note that the middle term is exactly $\varepsilon^w (f)$.
        As $\varepsilon$ is a Haar system for~$\cH$, the function $$\varepsilon (f)\colon \cH\z\to\mathbb{C}, w\mapsto \varepsilon^w (f),$$  is continuous,
        where we followed the notation used in~\cite[Remark 1.20]{Wil2019}.
        As the right-most side of \eqref{eq:integral-Nk} equals $\delta$ for $w= \rho (v)$, continuity of $\varepsilon(f)$ implies that $\varepsilon(f)$ is greater than $\frac{\delta}{2}$ in a neighborhood $U$ of $ \rho (v)$; let $V\coloneqq 
            (\rho')\inv 
        (U)\subseteq \cX$. 
        
        Using our computation in \eqref{eq:measure of N_eta}, we see that
        \begin{align*}
            (\lambda\bowtie\varepsilon)^{u}
            (N_{\eta})
            &
            \geq
            \int_{\cX}1_{N_{y}}(x)\, \varepsilon(f)\bigl(
                \rho'(x)
            \bigr)
            \,\dif\lambda^{u}(x)
            \\
            &
            \geq
            \int_{\cX}1_{N_{y}\cap V}(x)\, \varepsilon(f)\bigl(
                \rho'(x)
            \bigr)
            \,\dif\lambda^{u}(x)
            \\
            &
            \geq
            \delta
            \,
            \int_{\cX}1_{N_{y}\cap V}(x)\, \dif\lambda^{u}(x)
            =
            \delta\, \lambda^{u} (N_{y}\cap V).
        \end{align*}
        Note that 
        by choice of $y$, $\rho'(y) = \rho_{\cX}\z(s_{\cX}(y))=\rho_{\cX}\z(v)$ is an element of $U$,
        so $N_{y}\cap V$ is a neighborhood of $y$.  Since  $y\in \cX^{u}=\supp \lambda^{u}$, we must have $\lambda^{u} (N_y\cap V)>0$, and hence $(\lambda\bowtie\varepsilon)^{u}
        (N_{\eta})>0$. Since $y,k, u,v$ were arbitrary, this proves that $\supp \,(\lambda\bowtie\varepsilon)^{u}
        \supseteq \sqcup_{v\in\cX\z} \cX^{u}_{v}\times\cH^{ \rho (v)}$.
        
        \smallskip
        Conversely, assume that $\eta\notin  (\cX\bowtie\cH)^{u}$, i.e., if we write $\eta=(y,k)$, then $r_{\cX}(y)\neq u$. Consider $r_{\cX}\inv (\cX\z\setminus \{u\})=\cX\setminus\cX^{u}$. 
        Since $\cX\z$ is Hausdorff, this is an open neighborhood around $y$. 
        Since $\supp\lambda^{u}=\cX^{u}$, we have $\lambda^{u} (\cX\setminus\cX^{u})=0$. In particular, if we let $N_{\eta}\coloneqq (\cX\setminus\cX^{u})\bowtie \cH$, then we have found a neighborhood of $\eta$ for which $(\lambda\bowtie\varepsilon)^{u}(N_{\eta})=0$. Indeed, using our computation in \eqref{eq:measure of N_eta}, we see that
        \begin{align*}
            (\lambda\bowtie\varepsilon)^{u}
            (N_{\eta})
            =
            \int_{\cX}1_{\cX\setminus\cX^{u}}(x)\, \left[\int_{\cH}
                1_{\cH}(h)
            \dif
            \varepsilon^{\rho'(x)}(h)
            \right]
            \,\dif\lambda^{u}(x)
            =
            0.
        \end{align*}
        This means that $\eta\notin \supp \,(\lambda\bowtie\varepsilon)^{u}$, as claimed.
        
        \medskip
        
        Next, for $F\in C_c (\cX\bowtie\cH)$, we need to show that the map $u\mapsto \int F \dif (\lambda\bowtie\varepsilon)^{u}
        $ is continuous.
        We will first prove the claim for $F=(f\times g)|_{\cX\bowtie\cH}$, where $f \times  g \colon (x,h)\mapsto  f (x) g (h)$ for some $ f \in C_{c}(\cX)$ and $ g \in C_{c}(\cH)$, so that
        \begin{align*}
	        \int_{\cX\bowtie\cH} F(\eta) \dif (\lambda\bowtie\varepsilon)^{u}
	        (\eta)
	        &=
	        \int_{\cX}
	         f (y)\,\int_{\cH}  g (k)
	        \dif
            \varepsilon^{\rho'(y)}(k)
	        \,\dif\lambda^{u}(y).
        \end{align*}
        Since $\varepsilon$ is a Haar system on~$\cH$ and since $ g \in C_c(\cH)$, we know that the function
        \[
        	\cH\z \to \mathbb{C}, \quad u'\mapsto \int_{\cH}  g (k)  \dif \varepsilon^{u'}(k),
        \]
        is continuous. Since $ f \in C_{c}(\cX)$ and since 
         $ \rho'=\rho_{\cX}\z \circ s_{\cX}$ 
        is continuous, it follows that
        \[
        	G\colon \cX\to \mathbb{C},
        	\quad 
        	y\mapsto  f (y)\left(\int_{\cH}  g (k)
        		        \dif 
            \varepsilon^{\rho'(y)}(k)
            \right),
        \]
        is continuous and compactly supported. Since $\lambda$ is a Haar system on~$\cX$, we thus know that 
        \[
        	(\cX\bowtie\cH)\z \cong \cX\z\to \mathbb{C},\quad
        	u\mapsto \int_{\cX}
        		        G(y)
        		        \,\dif\lambda^{u}(y) = \int_{\cX\bowtie\cH} F(\eta) \dif (\lambda\bowtie\varepsilon)^{u}
	        (\eta),
        \]
        is continuous, as needed.

        For general $F\in C_{c}(\cX\bowtie\cH)$, let $K_{\cX}$ and $K_{\cH}$ be the  $\cX$- resp.\ the $\cH$-part of $\supp(F)$, both of which are compact.
        Pick $f\in C_{c}(\cX)$ and $g\in C_{c}(\cH)$ which are constant 1 on $K_{\cX}$ resp.\ $K_{\cH}$, so that for any $v\in\cX\z$
        and for $K_{\cX}\bowtie K_{\cH}\coloneqq (K_{\cX}\times K_{\cH})\cap \cX\bowtie \cH$,
        \[
            (\lambda\bowtie\varepsilon)^{v} (\supp(F))
            \leq 
            (\lambda\bowtie\varepsilon)^{v} (K_{\cX}\bowtie K_{\cH})
            \leq
            \int_{\cX\bowtie\cH} (f\times g) \dif (\lambda\bowtie\varepsilon)^{v}
            .
        \]
       By our earlier argument, the right-hand side is a continuous function in $v$. Therefore, if $K$ is some compact set, then for any $v\in K$,
       \begin{equation}\label{eq:lambda bowtie vareps of suppF}
            (\lambda\bowtie\varepsilon)^{v} (K_{\cX}\bowtie K_{\cH})
            \leq
            \max_{v'\in K}
            \bigl[\int_{\cX\bowtie\cH} (f\times g) \dif (\lambda\bowtie\varepsilon)^{v'}\bigr]
            \rotatebox[origin=c]{180}{$\coloneqq$}
            c_{K}<\infty.
       \end{equation}
       Now, assume we are given a convergent net $u_{i}\to u$ in $\cX\z$ and fix an arbitrary $\epsilon>0$. By \assumption{local compactness of $\cX$}, we may without loss of generality assume that each $u_{i}$ is contained in a compact neighborhood $K$ of $U$, so that \eqref{eq:lambda bowtie vareps of suppF} holds for $v=u_{i}$.
       By Stone--Weierstrass, we can choose finitely  many $f_{j}\in C_{c}(\cX),g_{j}\in C_{c}(\cH)$ such that
            \[
            \norm{F - \sum\nolimits_{j=1}^{k} (f_{j}\times g_{j})|_{\cX\bowtie\cH}}_{\infty}
            < 
            \epsilon/ \big( 3 c_{K} + 1 \big).
            \]
        Without loss of generality, the support of each $f_{j}$ is in $K_{\cX}$ and of each $g_{j}$ is in $K_{\cH}$, so that for all $v\in K$,
        \begin{align}
            &\int 
                \big|F - \sum\nolimits_{j} f_{j}\times g_{j}\big|
            \dif (\lambda\bowtie\varepsilon)^{v}
            \notag
            \\
            \leq&
            (\lambda\bowtie\varepsilon)^{v}
            \bigl(K_{\cX}\bowtie K_{\cH}\bigr)
            \, 
            \norm{F - \sum\nolimits_{j=1}^{k} (f_{j}\times g_{j})|_{\cX\bowtie\cH}}_{\infty}
            \overset{\eqref{eq:lambda bowtie vareps of suppF}}{<} 
            \epsilon/3
            .
            \label{consequence of eq:lambda bowtie vareps of suppF}
        \end{align}
        By our earlier result, we may choose $i_0$ large enough such that for all $i\geq i_0$ and all $1\leq j\leq k$, we have
            \[
                \abs{
            \int f_{j}\times g_{j} \dif (\lambda\bowtie\varepsilon)^{u_{i}}
            -
            \int f_{j}\times g_{j} \dif (\lambda\bowtie\varepsilon)^{u}
            }
            < \epsilon/3k.
            \]
        Combining this with \eqref{consequence of eq:lambda bowtie vareps of suppF}, we get for all $i\geq i_0$ that
        \begin{align*}
            &\abs{
            \int F \dif (\lambda\bowtie\varepsilon)^{u_{i}}
            -
            \int F \dif (\lambda\bowtie\varepsilon)^{u}
            }\notag
            \\%
            &\quad
            \leq
            \int 
                \big|F - \sum\nolimits_{j} f_{j}\times g_{j}\big|
            \dif (\lambda\bowtie\varepsilon)^{u_{i}}
            \notag
            \\
            &\qquad+
            \sum\nolimits_{j}
            \abs{
            \int f_{j}\times g_{j} \dif (\lambda\bowtie\varepsilon)^{u_{i}}
            -
            \int f_{j}\times g_{j} \dif (\lambda\bowtie\varepsilon)^{u}
            }\notag
            \\
            &\qquad+
            \int
                \big|\bigl(\sum\nolimits_{j} f_{j}\times g_{j}\bigr) - F\big|
            \dif (\lambda\bowtie\varepsilon)^{u}
            <\epsilon
            ,
        \end{align*}
            as needed.

        \medskip
        
        Lastly, we have to show that for any $\xi\in \cX\bowtie\cH$ and any $F\in C_c (\cX\bowtie\cH)$, we have $\int F(\xi\eta) \dif
        (\lambda\bowtie\varepsilon)^{s(\xi)}
        \eta = \int F(\eta) \dif  (\lambda\bowtie\varepsilon)^{r(\xi)}
	        \eta$. Write $\xi=(x,h)\in \cX^{u}_v\times\cH^{ \rho (v)}$, so that $s(\xi)=h\inv \HleftX v$; as above, it suffices to consider the case where 
	        $F$ can be written as $F(\xi)= f (x) g (h)$ for some $ f \in C_c (\cX)$ and some $ g \in C_{c}(\cH)$.
        Then 
        \begin{align*}
            \int_{\cX\bowtie\cH} F(\xi\eta) \dif
        (\lambda\bowtie\varepsilon)^{s(\xi)}
        (\eta)
            &=
            \int_{\cX\bowtie\cH} F\big(x[h\HleftX y],[h\HrightX y]k\big) \dif
            (\lambda\bowtie\varepsilon)^{h\inv \HleftX v}
            (y,k)
            \\
            &=
            \int_{\cX}\int_{\cH}
             f \big(x[h\HleftX y]\big)\, g \big([h\HrightX y]k\big)
            \dif 
            \varepsilon^{\rho'(y)}(k)
            \,
            \dif\lambda^{h\inv \HleftX v}(y)
            \\
            &=
            \int_{\cX}
             f \big(x[h\HleftX y]\big)
            \,\int_{\cH}
             g \big([h\HrightX y]k\big)
            \dif 
            \varepsilon^{\rho'(y)}(k)
            \,
            \dif\lambda^{h\inv \HleftX v}(y)
            ,
        \end{align*}
        where the last equation follows from \eqref{item:L1}, which guarantees that       
        \(
             \rho' (y)
            =
            s_{\cH}(h\HrightX y).
        \)
        Since 
        \(
            r_{\cH}(h\HrightX y)
            =
            \rho'(h\HleftX y)
            ,
        \)
        left-invariance of $\varepsilon$ implies
        \begin{align*}
            \int_{\cX\bowtie\cH} F(\xi\eta) \dif
        (\lambda\bowtie\varepsilon)^{s(\xi)}
            (\eta)
            &=
            \int_{\cX}
             f \big(x[h\HleftX y]\big)
            \,\int_{\cH}
             g (k)
            \dif  
            \varepsilon^{\rho'(h\HleftX y)}(k)
            \,
            \dif\lambda^{h\inv \HleftX v}(y)
            .
        \end{align*}
        For $z\in \cX$, define
        \[
            G(z)
            \coloneqq 
             f (z)
            \,\int_{\cH}
             g (k)
            \dif   
            \varepsilon^{\rho'(z)}(k)
            .
        \]
        Since $\varepsilon$ is a Haar system and  since $ g \in C_c (\cH)$, we know that
                $$
                	\cH\z \to 
                	\mathbb{C},
                	\quad
                	u'\mapsto \int_{\cH}  g  \dif \varepsilon^{u'},$$
        is continuous. Since 
         $ \rho'=\rho_{\cX}\z \circ s_{\cX}$ 
        is continuous and since $ f \in C_c(\cX)$, we conclude that $G$ is a continuous and compactly supported function on~$\cX$. 
        Since $s_{\cX}(h\HleftX y)= s_{\cX}(x[h\HleftX y])$, we conclude that
        \begin{align*}\label{eq:int F(xieta)}
            \int_{\cX\bowtie\cH} F(\xi\eta) \dif
        (\lambda\bowtie\varepsilon)^{s(\xi)}
            (\eta)
            &=
            \int_{\cX}
            G\big(x[h\HleftX y]\big)
            \dif\lambda^{h\inv \HleftX v}(y)
            \\
            &=
            \int_{\cX}
            G(y)
            \dif\lambda^{r
            (x)}(y)
            &\text{(Corollary~\ref{cor:left-invariance and HleftX})}
            \\
            &=
            \int_{\cX}
             f (y)
            \,\int_{\cH}
             g (k)
            \dif   
            \varepsilon^{\rho'(y)}(k)
            \dif\lambda^{r(\xi)}(y)
            &\text{(def'n of $G$)}
            \\
            &=
            \int_{\cX\bowtie\cH} F(\eta) \dif
            (\lambda\bowtie\varepsilon)^{r(\xi)}
            (\eta).
            \qedhere
        \end{align*}
\end{proof}

\begin{corollary}\label{cor:ssp gpd:r-discrete, etale}
    Suppose $\cH$ and $\cX$ are \LCH\ groupoids and that $\cH$ has a  \ssla\ on $\cX$. 
    \begin{enumerate}
        \item 
        If $\cX$ is \etale, then counting measure on $\cX$ is $\HleftX$-invariant in the sense of Definition~\ref{df.lambda H-invariance}.
        \item If $\cH$ and $\cX$ are both $r$-discrete, then so is $\cH\bowtie\cX$.
        \item  If $\cH$ and $\cX$ are both \etale, then so is $\cH\bowtie\cX$.
    \end{enumerate}
    
\end{corollary}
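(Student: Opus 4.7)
The three parts are related, and I would attack them in order, using each earlier part in the next.

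\textbf{Part (1).} When $\cX$ is \etale, its canonical Haar system is the family of counting measures $\lambda^u$ on $\cX^u$. To verify $\HleftX$-invariance in the sense of Definition~\ref{df.lambda H-invariance}, I would check that for $h\in\cH$ and $u\in\cX\z$ with $s_{\cH}(h)=\rho_{\cX}(u)$, the map $\Phi_h\colon x\mapsto h\HleftX x$ is a bijection $\cX^u \to \cX^{h\HleftX u}$. Well-definedness uses \eqref{item:L10}: $r_{\cX}(h\HleftX x)=h\HleftX r_{\cX}(x) = h\HleftX u$. For bijectivity, note that $h\inv\HleftX y$ makes sense whenever $y\in\cX^{h\HleftX u}$, because $\rho_{\cX}(y)=\rho_{\cX}(h\HleftX u)=r_{\cH}(h)=s_{\cH}(h\inv)$ by \eqref{item:L1}. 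Then \eqref{item:L5} together with \eqref{item:L2} gives $h\inv\HleftX(h\HleftX x) = (h\inv h)\HleftX x = \rho_{\cX}(x)\HleftX x = x$, and symmetrically $h\HleftX (h\inv\HleftX y)=y$. Pushing forward counting measure along a bijection yields counting measure, which is exactly the assertion that $h\HleftX \lambda^u = \lambda^{h\HleftX u}$.

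\textbf{Part (2).} Being $r$-discrete means the unit space is open in the groupoid (equivalently, $r$ is a local homeomorphism). By Remark~\ref{rm.unit space of ssp}, $(\cX\bowtie\cH)\z = (\cX\z\times\cH\z)\cap (\cX\bowtie\cH)$. If $\cX$ and $\cH$ are both $r$-discrete, then $\cX\z$ is open in $\cX$ and $\cH\z$ is open in $\cH$, so $\cX\z\times\cH\z$ is open in $\cX\times\cH$. Intersecting with $\cX\bowtie\cH$ and using the subspace topology, $(\cX\bowtie\cH)\z$ is open in $\cX\bowtie\cH$, as required.

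\textbf{Part (3).} An \etale\ groupoid is exactly an $r$-discrete, \LCH\ groupoid which admits a (continuous) left Haar system. By Remark~\ref{rm.ssp is LCH gpd}, $\cX\bowtie\cH$ is \LCH. By (2), it is $r$-discrete. Finally, taking $\lambda$ and $\varepsilon$ to be the counting-measure Haar systems on $\cX$ and $\cH$ and applying (1) lets us invoke Proposition~\ref{prop.ZSProduct.Haar.left}, which produces a Haar system $\lambda\bowtie\varepsilon$ on $\cX\bowtie\cH$; inspecting the defining formula shows that $(\lambda\bowtie\varepsilon)^u$ is just counting measure on $(\cX\bowtie\cH)^u$, so $\cX\bowtie\cH$ is \etale.

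\textbf{Main obstacle.} None of the steps is truly delicate; the only place requiring care is the verification in Part (1) that $\Phi_h$ and $\Phi_{h\inv}$ are mutually inverse, since one must track the momentum maps closely to ensure that $h\inv\HleftX y$ is defined in the first place. Once that is set up, Parts (2) and (3) are formal consequences of the definitions, Remark~\ref{rm.unit space of ssp}, and Proposition~\ref{prop.ZSProduct.Haar.left}.
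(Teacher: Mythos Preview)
Your proposal is correct and follows essentially the same route as the paper: the bijection $\cX^{u}\to\cX^{h\HleftX u}$ for Part~(1), the subspace-topology argument for Part~(2), and the combination of $r$-discreteness with the existence of the Haar system from Proposition~\ref{prop.ZSProduct.Haar.left} for Part~(3). Your extra care in Part~(1) about why $h^{-1}\HleftX y$ is defined, and your observation in Part~(3) that $\lambda\bowtie\varepsilon$ is counting measure, are both fine additions but not strictly needed---the paper simply invokes the standard fact that any locally compact $r$-discrete groupoid admitting a Haar system is \'etale.
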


\begin{proof}
        If $\cX$ is
    \etale, \cite[Prop.\ 1.29]{Wil2019} says that counting measures form 
        a Haar system on $\cX$.
    Now, for any fixed $(h,u)\in \cH\bfp{s}{\rho}\cX\z$, the map $\cX^{h\HleftX u}\to \cX^{u}$, $y\mapsto h\inv\HleftX y$, is a bijection (in fact, a homeomorphism), and thus
    \[
        \sum_{x\in\cX^{u}} f(h\HleftX x)  = \sum_{y\in \cX^{h\HleftX u}} f(y)
    \]
    for all $f\in C_c (\cX)$. In other words, counting measure on $\cX$ is $\HleftX$-invariant.

    Now suppose the groupoids are $r$-discrete. Since $\cX\z\times\cH\z$ is open in $\cX\times\cH$ and since $\cX\bowtie\cH$ has the subspace topology, we have that $(\cX\z\times\cH\z)\cap (\cX\bowtie\cH) = (\cX\bowtie\cH)\z$ is open in $\cX\bowtie \cH$. Thus, $\cX\bowtie\cH$ is also $r$-discrete.

    Now, if 
        both $\cX$ and $\cH$ are \etale, then it follows from
    Proposition~\ref{prop.ZSProduct.Haar.left}  that $\cH\bowtie\cX$ admits a Haar system. According to \cite[Prop.\ 1.23 and 1.29]{Wil2019}, any locally compact and $r$-discrete groupoid that admits a Haar system is necessarily \etale, so our claim follows. 
\end{proof}

\subsection{Rehash (from left to right)}\label{ssec.right ssa}

The definitions we made so far can similarly be made on the right; we have added them here for easy reference.

\begin{definition}[cf.\ Definition~\ref{df.left.selfsimilar}]\label{df.right.selfsimilar}\label{df.G-X-actions}
Let~$\cG$ and~$\cX$ be two \LCH\ groupoids. We say~$\cG$ has a {\em \ssra} on~$\cX$ if there exists 
    a continuous surjection $\sigma_{\cX}\z\colon \cX\z\to\cG\z$ and, using the anchor map $\sigma_{\cX}\coloneqq \sigma_{\cX}\z\circ s_{\cX}$, 
two continuous maps
\begin{align*}
    \cX\calt \cG\colon &&\cX 
    \bfp{\sigma_{\cX}    }{r_{\cG}}
    \cG\ni (x,s) &\mapsto x\XrightG s \in \cX\\
        \cX \carb \cG\colon &&\cX
    \bfp{\sigma_{\cX}    }{r_{\cG}}
    \cG\ni (x,s) &\mapsto x\XleftG s \in \cG
\end{align*}
such that the following hold.
\begin{itemize}
    \item For any $x\in \cX$ and $t\in \cG$
        such that $\sigma_{\cX}(x)=r_{\cH}(t)$, we have
        \begin{equation}\tag{R1}
    \repeatable{item:R1}
    {
        \begin{aligned}
            \sigma_{\cX}(x\XrightG t)&=s_{\cG}(t)
            &&&
            \sigma_{\cX}(x\inv)&= r_{\cG}(x\XleftG t)
            &&&
            \sigma_{\cX}((x\XrightG t)\inv)&=s_{\cG}(x\XleftG t)
        \end{aligned}
    }
    \end{equation}
    \item
      For all $v\in \cX\z $ and $s\in \cG$
        such that $\sigma_{\cX}(v)=r_{\cG}(s)$
        and for all $x\in \cX$, we have:
        \begin{equation}\tag{R2}
        \repeatable{item:R2} 
        {
            v\XleftG s=s \text{ and } x\XrightG \sigma_{\cX}(x)=x
        }
        \end{equation}
    \item 
    For all $(x,y)\in \cX^{(2)}$ and $s\in \cG$
        such that $\sigma_{\cX}(y)=r_{\cG}(s)$, we have
        $s_{\cX}(x\XrightG (y\XleftG s))=r_{\cX}(y\XrightG s)$ and
        \begin{align}
        \tag{R3}
        \repeatable{item:R3} 
        {
        (xy)\XleftG s&=x\XleftG (y\XleftG s)
        }
        \\
        \tag{R4}
        \repeatable{item:R4} 
        {
        (xy)\XrightG s&=[x\XrightG (y\XleftG s)](y\XrightG s)
        }
    \end{align}
    \item
    For all $x\in \cX$ and $(s,t)\in \cG^{(2)}$ 
        such that $\sigma_{\cX}(x)=r_{\cG}(s)$, we have:
        \begin{align}
            \tag{R5}
        \repeatable{item:R5}
        {
            x\XrightG (st)&=(x\XrightG s)\XrightG t
        }
            \\
            \tag{R6}
        \repeatable{item:R6}
        {
            x\XleftG (st)&=(x\XleftG s)[(x\XrightG s)\XleftG t]
        }
        \end{align}
\end{itemize}
We call the \ssra\ {\em free} (resp.\ {\em proper}) if $\XrightG$ is free (resp.\ proper).
\end{definition}

\begin{remark} Similar to our previous computation for the \ssla s, 
for every $t\in\cG$, $x\in\cX$, and $v\in \cX\z$ with $r_{\cG}(t)=\sigma_{\cX}(x)=\sigma_{\cX}(v)$, we have 
    \begin{align}\tag{R7}
    \repeatable{item:R7}
          	{
          	x\XleftG \sigma_{\cX}(x)
           =
           \sigma_{\cX}(x\inv)
          	}
          	\\
           \tag{R8}\repeatable{item:R8-new}
          	{
          	 v\XrightG t\in \cX\z
          	}
          	\\
          	\tag{R9}\repeatable{item:R9}
                {
(x\XrightG t)\inv  = x\inv \XrightG (x\XleftG t)
\qquad&\text{ and }\qquad
(x\XleftG t)\inv  = (x\XrightG t) \XleftG t\inv 
                }
          	\\
          	\tag{R10}\repeatable{item:R10}
	{
	s_{\cX}(x\XrightG t)=s_{\cX}(x)\XrightG t \qquad&\text{ and }\qquad r_{\cX}(x\XrightG t)=r_{\cX}(x)\XrightG (x\XleftG t)
	}
    \end{align}
\end{remark}

In a very similar fashion, we can define the 
    \ssp\
for a right action:

\begin{definition}\label{def.ZSProduct.groupoid.right} Let~$\cG$ be a groupoid that has a \ssra\ on~$\cX$. 
Define their \ssp\ as the set
\[\cG\bowtie \cX=\{(t,x): s_{\cG}(t)=\sigma_{\cX}(r_{\cX}(x))\}\]
with multiplication
\[(s,x)(t,y)\coloneqq (s(x\XleftG t), (x\XrightG t)y), \quad\text{whenever } s_{\cX}(x)=r_{\cX}(y) \XrightG t\inv ,\]
and inverse
\[(t,x)\inv  \coloneqq  (x\inv \XleftG t\inv , x\inv \XrightG t\inv ).\] 
\end{definition}

For a right action, we mimic the construction in Definition~\ref{df.lambda H-invariance} verbatim, only replacing the left Haar system  by a right Haar system:
\begin{definition}\label{df.lambda G-invariance}
        Suppose $\cG$ and $\cX$ are \LCH\ groupoids and that $\XrightG$ is a right $\cG$-action on $\cX$ with momentum map $\sigma_{\cG}\colon \cX\to\cG\z$.
    We say that a right Haar system $\{\lambda_u\}_{u\in \cX\z }$ on~$\cX$ is {\em $\XrightG$-invariant} if for all $t\in\cG$ and all $u\in \cX\z $ with $\sigma_{\cX}(u) = r_{\cG}(t)$, we have
    \[\lambda_{u}\XrightG t=\lambda_{u\XrightG t},\] where $\lambda_{u}\XrightG t (E) = \lambda_{u}( E\XrightG t\inv)$.
\end{definition}

Given a \ssra\
    of $\cG$ on $\cX$, a right Haar system of $\cG$ and a $\XrightG$-invariant right Haar system on $\cX$ yields a
    right
Haar systems on the \ssp\ groupoid $\cG\bowtie\cX$ similarly to the result in Proposition~\ref{prop.ZSProduct.Haar.left}. The details are omitted here.

\section{The Orbit Space}\label{sec.orbit}

If~$\cH$ has a \ssla\ on the groupoid $\cX$, then $(h,x)\mapsto h\HleftX x$ is an $\cH$-action on the space~$\cX$  according to Lemma \ref{rm.ssla means cH acts on cX}. We can therefore construct the quotient space, $\cH\backslash\cX$, whose elements we will denote by $\cH\HleftX x$. 
We will now show that we can equip this space with its own groupoid structure as long as the action is \assumption{free and proper}.

Recall from Lemma~\ref{lem:restricted action on cXz} that $\HleftX$ restricts to an $\cH$-action on $\cX\z$, so we may consider $\cH\backslash \cX\z$.
We
define $s_{\cH\backslash\cX}, r_{\cH\backslash\cX}\colon \cH\backslash\cX \to \cH\backslash\cX\z$ by
    \begin{equation}\label{eq.s and r for quotient}
        s_{\cH\backslash\cX}(\cH\HleftX x)= \cH\HleftX s_{\cX}(x)
        \quad\text{and}\quad
        r_{\cH\backslash\cX}(\cH\HleftX x)= \cH\HleftX r_{\cX}(x).
    \end{equation}
These are well defined by \eqref{item:L10}.

\begin{lemma}\label{lem:s and r of H under X}
        \assumption{If $s_{\cH}$ and $s_{\cX}$ are open,} then the map $s_{\cH\backslash\cX}$ is also open.
\end{lemma}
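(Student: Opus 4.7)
The plan is to use the commutative square
\[
\begin{tikzcd}[ampersand replacement=\&]
\cX
    \ar[r, "s_{\cX}"]
    \ar[d, "q_{\cX}"']
\&
\cX\z
    \ar[d, "q_{\cX\z}"]
\\
\cH\backslash\cX
    \ar[r, "s_{\cH\backslash\cX}"]
\&
\cH\backslash\cX\z
\end{tikzcd}
\]
where $q_{\cX}$ and $q_{\cX\z}$ are the orbit projections (the right-hand square commutes by definition of $s_{\cH\backslash\cX}$ in~\eqref{eq.s and r for quotient}). Given an open $U\subseteq \cH\backslash\cX$, the set $q_{\cX}\inv(U)$ is open in $\cX$ by continuity of $q_{\cX}$, hence $s_{\cX}(q_{\cX}\inv(U))$ is open in $\cX\z$ by the hypothesis that $s_{\cX}$ is open. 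If we can also show that $q_{\cX\z}$ is open, then $q_{\cX\z}(s_{\cX}(q_{\cX}\inv(U)))$ is open in $\cH\backslash\cX\z$; commutativity of the square together with surjectivity of $q_{\cX}$ identifies this set with $s_{\cH\backslash\cX}(U)$, completing the proof.

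The substantive step is therefore to show that the orbit projections $q_{\cX}$ and $q_{\cX\z}$ are open, and for this the hypothesis that $s_{\cH}$ is open will be used. I would prove the general fact: for any continuous action of $\cH$ on a space $Y$ with momentum map $\rho\colon Y\to\cH\z$, if $s_{\cH}$ is open, then the orbit map $q\colon Y\to \cH\backslash Y$ is open. Equivalently, for every open $V\subseteq Y$ the saturation $\cH\HleftX V$ is open. To see this, factor the action map
\[
\psi\colon \cH\bfp{s}{\rho}Y\to Y,\qquad (h,y)\mapsto h\HleftX y,
\]
as $\psi=\pi_{2}\circ\Phi$, where
\[
\Phi\colon \cH\bfp{s}{\rho}Y\to \cH\bfp{r}{\rho}Y,\qquad (h,y)\mapsto (h,h\HleftX y),
\]
is a homeomorphism (its inverse is $(h,z)\mapsto(h,h\inv\HleftX z)$, continuous by continuity of the action and of inversion in $\cH$), and $\pi_{2}$ is projection onto the second coordinate. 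A basic open set in $\cH\bfp{r}{\rho}Y$ has the form $(A\times B)\cap \cH\bfp{r}{\rho}Y$ with $A\subseteq\cH$, $B\subseteq Y$ open, and its image under $\pi_{2}$ is $B\cap \rho\inv(r_{\cH}(A))$; since $r_{\cH}$ is open whenever $s_{\cH}$ is (via inversion in $\cH$), this image is open. Thus $\pi_{2}$ is open, hence so is $\psi$, and applying $\psi$ to $\cH\bfp{s}{\rho}V$ shows that $\cH\HleftX V$ is open in $Y$.

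Applying this general statement twice, once to the $\cH$-action on $\cX$ and once to the restricted $\cH$-action on $\cX\z$ supplied by Lemma~\ref{lem:restricted action on cXz}, shows that both $q_{\cX}$ and $q_{\cX\z}$ are open, and the diagram chase from the first paragraph then finishes the argument. The only real obstacle is ensuring that openness of $s_{\cH}$ does transfer to openness of the action map $\psi$; everything else is formal.
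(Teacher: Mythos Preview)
Your proposal is correct and follows essentially the same route as the paper: both use the commuting square relating $s_{\cX}$, $s_{\cH\backslash\cX}$, and the two orbit projections, and both reduce the claim to openness of the orbit map $q_{\cX\z}\colon\cX\z\to\cH\backslash\cX\z$. The paper dispatches that step by citing \cite[Proposition~2.12]{Wil2019}, whereas you reprove it from scratch via the factorization $\psi=\pi_{2}\circ\Phi$; your argument is sound (and is in fact essentially the proof of the cited proposition). One small efficiency: you only need $q_{\cX\z}$ to be open and $q_{\cX}$ to be continuous and surjective, so proving openness of $q_{\cX}$ as well is harmless but unnecessary.
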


\begin{proof}  
     Since \assumption{$s_{\cH}$ is open,} the quotient map $q|_{\cX\z}\colon \cX\z\to \cH\backslash\cX\z$ 
     is open by \cite[Proposition 2.12]{Wil2019}. The claim now follows from continuity of $q$ and commutativity of the  diagram below.
     \[
     \begin{tikzcd}
         \cX \ar[r, "q"]\ar[d, "s_{\cX}"]
         & \ar[d, "s_{\cH\backslash\cX}"]
         \cH\backslash\cX
        \\
        \cX\z \ar[r, "q|_{\cX\z}"]& \cH\backslash\cX\z
     \end{tikzcd}
     \qedhere
     \]
\end{proof}

\begin{lemma}\label{lm.orbit multiplication}
    Suppose~$\cH$ has a \ssla\ on $\cX$, and fix two elements $\xi,\eta$
    of $\cH\backslash\cX$
    for which $s_{\cH\backslash\cX}(\xi)= r_{\cH\backslash\cX}(\eta)$.
    Then we can find $x_{1}\in \xi
    $ and $y_{1}\in \eta
    $ such that $s_{\cX}(x_{1})=r_{\cX}(y_{1})$.
    
    Moreover, \assumption{if the action of $\cH$ on $\cX$ is free,} then any two more such elements $x_{2},y_{2}$ satisfy $\cH\HleftX  (x_{2}y_{2})=\cH\HleftX (x_{1}y_{1}) $.
\end{lemma}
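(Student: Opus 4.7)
The plan is to prove the two claims in turn, exploiting the unit-space restriction from Lemma~\ref{lem:restricted action on cXz} and the fusion identity~\eqref{item:L4}.

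\textbf{Existence.} Start with arbitrary representatives $x\in\xi$ and $y\in\eta$. The hypothesis $s_{\cH\backslash\cX}(\xi)=r_{\cH\backslash\cX}(\eta)$ translates, via the definition in~\eqref{eq.s and r for quotient}, to $\cH\HleftX s_{\cX}(x)=\cH\HleftX r_{\cX}(y)$ in $\cH\backslash\cX\z$, so there exists $h\in\cH$ with $h\HleftX s_{\cX}(x)=r_{\cX}(y)$. Take $x_{1}:=x$ and $y_{1}:=h\inv\HleftX y$. A brief momentum/source check using~\eqref{item:L1} confirms that $h\inv\HleftX y$ is defined, and by~\eqref{item:L10} followed by~\eqref{item:L5} one gets $r_{\cX}(y_{1})=h\inv\HleftX r_{\cX}(y)=s_{\cX}(x_{1})$. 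Clearly $y_{1}\in\eta$.

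\textbf{Uniqueness under freeness.} Suppose another valid pair $(x_{2},y_{2})$ is given, and write $x_{2}=h\HleftX x_{1}$ and $y_{2}=k\HleftX y_{1}$ for some $h,k\in\cH$. The key step is to force a relationship between $k$ and $h\HrightX x_{1}$. Using~\eqref{item:L10} twice, compute
\[
s_{\cX}(x_{2})=(h\HrightX x_{1})\HleftX s_{\cX}(x_{1})
\qquad\text{and}\qquad
r_{\cX}(y_{2})=k\HleftX r_{\cX}(y_{1}).
\]
Since $s_{\cX}(x_{i})=r_{\cX}(y_{i})$ for $i=1,2$ and $s_{\cX}(x_{1})=r_{\cX}(y_{1})$, the two elements $h\HrightX x_{1}$ and $k$ of $\cH$ act identically on the single unit $r_{\cX}(y_{1})\in\cX\z$. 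By Lemma~\ref{lem:restricted action on cXz} the restricted action on $\cX\z$ is free, which forces $k=h\HrightX x_{1}$. Now~\eqref{item:L4} yields
\[
h\HleftX(x_{1}y_{1})
=(h\HleftX x_{1})\bigl((h\HrightX x_{1})\HleftX y_{1}\bigr)
=x_{2}(k\HleftX y_{1})
=x_{2}y_{2},
\]
so $\cH\HleftX(x_{2}y_{2})=\cH\HleftX(x_{1}y_{1})$.

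The main obstacle is the uniqueness step: the two elements $h,k$ moving the representatives come in without any a priori connection, yet~\eqref{item:L4} only expresses $h\HleftX(x_{1}y_{1})$ using $h$ together with $h\HrightX x_{1}$. Matching the source of $x_{2}$ with the range of $y_{2}$ and invoking freeness on $\cX\z$ (rather than on all of $\cX$) is what ties $k$ to $h\HrightX x_{1}$ and closes the identity.
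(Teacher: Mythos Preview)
Your proof is correct and follows essentially the same route as the paper's: adjust one representative by an element of $\cH$ to align source and range, then in the uniqueness step use~\eqref{item:L10} to compare how $h\HrightX x_{1}$ and $k$ act on the common unit $s_{\cX}(x_{1})=r_{\cX}(y_{1})$, invoke freeness to identify them, and conclude via~\eqref{item:L4}. The only cosmetic differences are that the paper moves $y$ by $h$ rather than $h\inv$ in the existence step, and it invokes freeness of $\HleftX$ on $\cX$ directly rather than going through Lemma~\ref{lem:restricted action on cXz}; since the point being acted on is a unit, these amount to the same thing.
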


\begin{proof}
    For existence,
    start with two arbitrary elements $x\in \xi$ and $y\in \eta$. By construction of $s_{\cH\backslash\cX}$ and $r_{\cH\backslash\cX}$, we have $s_{\cX}(x)\in s_{\cH\backslash\cX}(\xi)$ and $r_{\cX}(y)\in r_{\cH\backslash\cX}(\eta)$. As the two equivalence classes coincide by assumption, there exists $h\in \cH$ such that $s_{\cX}(x) = h\HleftX r_{\cX}(y)$. Since the right-hand side equals $r_{\cX}(h \HleftX y)$ by \eqref{item:L10} (Lemma~\ref{lem:acting_on_units}), we see that we can pick $ x_{1}\coloneqq x$ and $y_{1}\coloneqq h \HleftX y \in \cH\HleftX y = \eta$.
    
    To see the claim about  the product,
    let $ k,l \in \cH$ be such that $x_{2}= k\HleftX x_{1}$ and $y_{2}= l   \HleftX y_{1}$.
    By \eqref{item:L10}, 
    \begin{align*}
        s_{\cX}(x_{2})&=s_{\cX} (k\HleftX x_{1})=( k \HrightX x_{1})\HleftX s_{\cX}(x_{1}), 
        \qquad\text{ and}
        \\
        r_{\cX}(y_{2})&=r_{\cX}(l\HleftX y_{1})= l \HleftX r_{\cX}(y_{1})=l \HleftX s_{\cX}(x_{1}).
    \end{align*}
    Since the left-hand sides of these equations are assumed to be equal and \assumption{since the $\cH$-action is free,} we conclude that $l=k \HrightX x_{1}$. Therefore, by \eqref{item:L6},
    \begin{align*}
        x_{2}y_{2}&= (k \HleftX x_{1})(l \HleftX y_{1}) = (k \HleftX x_{1})([k \HrightX x_{1}] \HleftX y_{1}) = k\HleftX (x_{1}y_{1}),
    \end{align*}
    so $\cH\HleftX (x_{2}y_{2})=\cH\HleftX (x_{1}y_{1})$, as claimed.
\end{proof}

The lemma allows us to make the following definition:
\begin{proposition}\label{prop.gpd structure on orbit space}
    Suppose~$\cH$ has a  \ssla\ on~$\cX$ for which $\HleftX$ is {\em free on $\cX$}.
    For two elements $\xi,\eta$  of the orbit space  $\cH\backslash\cX$ with $s_{\cH\backslash\cX}(\xi)=r_{\cH\backslash\cX}(\eta)$,
    define 
    \[
    \xi\eta
    =
    \cH\HleftX (xy)
    \quad
    \text{where }x\in \xi, y\in \eta \text{ are such that }s_{\cX}(x)=r_{\cX}(y)
    .
    \]
    Further, define
    $$(\cH\HleftX x)\inv =\cH\HleftX x\inv .$$
    With this structure,
    $\cH\backslash\cX$ is a (non-topological) groupoid.
    
    If we further assume that $\HleftX$ is proper and $s_{\cH}$ is open, then $\cH\backslash\cX$ is a \LCH\ groupoid with the quotient topology, and if
    $\cX$ is 
    \etale, then so is $\cH\backslash\cX$.
\end{proposition}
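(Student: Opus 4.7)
The plan is to separate algebraic verification from the topological upgrade and handle the étale assertion last. For the algebraic part, well-definedness of the product and the existence of source--range-compatible representatives are both the content of Lemma~\ref{lm.orbit multiplication}. Well-definedness of inversion comes from \eqref{item:L9}: if $x' = h\HleftX x$, then $(x')^{-1} = (h\HrightX x)\HleftX x^{-1}$, which lies in $\cH\HleftX x^{-1}$. The source and range maps on orbits were already defined and shown well-defined via \eqref{item:L10} in \eqref{eq.s and r for quotient}. The groupoid axioms (associativity, unit, inverse laws) then follow by lifting to representatives in $\cX$ with compatible source and range, applying the corresponding axioms of $\cX$, and invoking Lemma~\ref{lm.orbit multiplication} once more to confirm independence of the choices.

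Now assume $\HleftX$ is proper and $s_{\cH}$ is open. Openness of $s_{\cH}$ implies that the quotient map $q\colon \cX\to \cH\backslash\cX$ is open, by the same argument as in the proof of Lemma~\ref{lem:s and r of H under X}. Properness of the action yields that the orbit equivalence relation is closed in $\cX\times\cX$, so $\cH\backslash\cX$ is Hausdorff; local compactness follows from the standard fact that a proper quotient of an LCH space is LCH. Continuity of $s_{\cH\backslash\cX}$ and $r_{\cH\backslash\cX}$, and of inversion, each follow from continuity of the corresponding operation on $\cX$ together with the universal property of the quotient topology. The delicate step is continuity of multiplication: the map $(x,y)\mapsto q(xy)$ on $\cX^{(2)}$ is continuous, and the restriction $q\times q|_{\cX^{(2)}}\colon \cX^{(2)}\to (\cH\backslash\cX)^{(2)}$ is surjective by Lemma~\ref{lm.orbit multiplication}. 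One then verifies that this restriction is a topological quotient map, using openness of $q$ together with the flexibility to adjust representatives by $\HleftX$ so as to meet the source--range condition, exactly as in the proof of Lemma~\ref{lm.orbit multiplication}.

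For the étale claim, $\cX\z$ is open in $\cX$ by hypothesis, and openness of $q$ gives that $\cH\backslash\cX\z = q(\cX\z)$ is open in $\cH\backslash\cX$. To upgrade this to a local homeomorphism property for $s_{\cH\backslash\cX}$, I would pick $\xi = \cH\HleftX x$ and an open bisection $U$ of $\cX$ around $x$ and shrink $U$ to an open neighborhood $U'$ of $x$ on which $q$ is injective. Such shrinking is possible thanks to freeness and properness: any near-coincidence $y_n' = h_n\HleftX y_n$ with $y_n, y_n'\to x$ forces (by properness) a subnet of $h_n$ to converge to some $h\in\cH$ with $h\HleftX x = x$, which by freeness is a unit, so that $h_n$ eventually acts trivially near $x$. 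Then $q|_{U'}$ is a homeomorphism onto an open bisection in $\cH\backslash\cX$, showing that $\cH\backslash\cX$ is étale. The principal obstacle is continuity of multiplication, which reduces to recognizing the restriction of $q\times q$ to $\cX^{(2)}$ as a quotient map onto $(\cH\backslash\cX)^{(2)}$; every other step is either a direct invocation of a preceding result or a routine application of standard techniques for quotients by free and proper groupoid actions.
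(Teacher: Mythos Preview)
Your algebraic part and your general strategy for the topological upgrade match the paper's. There are two genuine problems, however.

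\textbf{Continuity of multiplication.} You reduce to showing that $q\times q|_{\cX^{(2)}}\colon \cX^{(2)}\to(\cH\backslash\cX)^{(2)}$ is a quotient map, and you say this follows from openness of $q$ plus the representative-adjustment of Lemma~\ref{lm.orbit multiplication}. But Lemma~\ref{lm.orbit multiplication} is purely algebraic; to make the adjustment continuous you must show that the elements $h_{i}\in\cH$ with $s_{\cX}(x_{i})=h_{i}\HleftX r_{\cX}(y_{i})$ vary continuously along a net. This is exactly where properness is used in the paper's argument (see the passage around~\eqref{eq:why properness is needed}): from $(h_{i}\HleftX r_{\cX}(y_{i}),r_{\cX}(y_{i}))\to(h\HleftX r_{\cX}(y),r_{\cX}(y))$ one extracts a convergent subnet of $\{h_{i}\}$ by properness, and freeness pins down the limit. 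Your sketch never invokes properness at this step, so as written the argument is incomplete.

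\textbf{The \'etale claim.} Your strategy is to shrink a bisection $U\ni x$ to a set on which $q$ is injective, arguing that a sequence of nontrivial identifications $y'_{n}=h_{n}\HleftX y_{n}$ with $y_{n},y'_{n}\to x$ would force $h_{n}\to h\in\cH\z$ and hence ``$h_{n}$ eventually acts trivially.'' That last inference fails: convergence of $h_{n}$ to a unit does not make $h_{n}$ a unit, since $\cH\z$ need not be open in $\cH$. In fact $q$ need not be locally injective at all. Take $\cH=\mathbb{R}$ (as a group) acting on the trivial groupoid $\cX=\mathbb{R}^{2}$ by translation in the first coordinate; $\cX$ is \'etale, the action is free and proper, $s_{\cH}$ is open, yet $q\colon\mathbb{R}^{2}\to\mathbb{R}$ is the second projection and is nowhere locally injective. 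The quotient is still \'etale, but not via local sections of $q$. The paper avoids this entirely: it observes that $q(\cX\z)$ is open (so $\cH\backslash\cX$ is $r$-discrete), invokes Lemma~\ref{lem:s and r of H under X} to see that $s_{\cH\backslash\cX}$ is open, and then cites the standard fact that an $r$-discrete locally compact groupoid with open source map is \'etale.
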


\begin{proof}
    We have seen in Lemma~\ref{lm.orbit multiplication} that, since \assumption{the $\cH$-action is free}, the multiplication is well defined and independent of the choice of $x,y$. To see that the inversion is well defined, suppose that $x_{1}\in \cH\HleftX x =\xi$, i.e., $x_{1}=h\HleftX x$ for some $h$. Then by \eqref{item:L9}, we have $x_{1}\inv = ( h  \HleftX x)\inv= (h\HrightX x)\HleftX x\inv$, so $x_{1}\inv \in \cH\HleftX x\inv$, and hence the definition of $(\cH\HleftX x)\inv$ does not depend on the chosen representative.

    The algebraic properties of a groupoid are now easy to verify and follow from the algebraic properties that~$\cX$ satisfies.

    \smallskip

        Now suppose $\HleftX$ is proper and $s_{\cH}$ is open.
        Since we assume our groupoids 
        \assumption{$\cH$ and~$\cX$ to be}
        \LCH, 
        it follows from \cite[Proposition 2.18]{Wil2019} 
        that the quotient is \LCH.

    To show that the multiplication map $(\cH\backslash\cX)^{(2)}\to\cH\backslash\cX$  
    is continuous, suppose we are given a net $\{(\xi_{i },\eta_{i })\}_{i\in I }$ in $(\cH\backslash\cX)^{(2)}$ that converges to some composable pair $(\xi,\eta)$. Because of Lemma~\ref{lem:exercise in top:cts maps}, it suffices to show that a subnet of $\{\xi_{i }\eta_{i }\}_{i\in I }$ converges to $\xi\eta$.
    
    As $(\cH\backslash\cX)^{(2)}$ has the subspace topology of the product topology on $(\cH\backslash\cX) \times (\cH\backslash\cX)$, convergence implies  that $\xi_{i }\to \xi$ and $\eta_{i }\to \eta$ in $\cH\backslash\cX$.
    \assumption{Since $s_{\cH}$ is open,} the quotient map $q$ is open by \cite[Proposition 2.12]{Wil2019}.
    Thus, if we fix $x\in \xi$, then by 
    Proposition~\ref{Fell's criterion}
    we can find a subnet of $\{\xi_{i }\}_{i\in I }$ that is the image under $q$ of a net in~$\cX$ that converges to $x$; 
    without loss of generality, the subnet is the net itself,  meaning there exist  $x_{i }\in\cX$ such that  $x_{i }\to x$ and $\cH\HleftX x_{i } = \xi_{i }$. Once again by passing to a subnet, we can without loss of generality assume that $\{\eta_{i }\}_{i\in I }$ is the image under $q$ of a convergent net, say of  $y_{i }\to y\in\eta$. In other words, by passing to a subnet of a subnet, we can without loss of generality assume that $\{(\xi_{i },\eta_{i })\}_{i\in I }$ itself can be lifted to a net $\{(x_{i  },y_{i } )\}_{i\in I  }$ that converges to $(x,y)\in \xi\times\eta$ in $\cX\times\cX$. Since $(\xi_{i  },\eta_{i  })\in (\cH\backslash\cX)^{(2)}$, we have 
    \[
        \cH\HleftX s_{\cX}(x_{i  })
        =
        s_{\cH\backslash\cX} (\xi_{i  })
        =
        r_{\cH\backslash\cX} (\eta_{i  })
        =
        \cH\HleftX r_{\cX}(y_{i  }),
    \]
    so we can find  $h_{i  }\in\cH$ such that $s_{\cX}(x_{i  }) = h_{i  }\HleftX r_{\cX}(y_{i  }) 
    $; note that $h_{i }$ is unique \assumption{by freeness}. Similarly, there exists a unique $h$ with $s_{\cX}(x) = h\HleftX r_{\cX}(y)$.  Continuity of $s_{\cX}$ and $r_{\cX}$ implies that 
    \begin{equation}\label{eq:why properness is needed}
        \bigl(
            h_{i  }\HleftX r_{\cX}(y_{i  })
            ,
            r_{\cX}(y_{i  }) 
        \bigr)
        =
        \bigl(
            s_{\cX}(x_{i  }), r_{\cX}(y_{i  })
        \bigr)
        \overset{i  }{\to}
        \bigl(
            s_{\cX}(x), r_{\cX}(y)
        \bigr)
        =
        \bigl(
            h\HleftX r_{\cX}(y)
            ,
            r_{\cX}(y) 
        \bigr)
    \end{equation}
    Since \assumption{$\HleftX$ is proper}, this convergence implies that
    (a subnet of) $\{h_{i  }\}_{i\in I  }$ converges.
        Since~$\cX\z$
    \assumption{%
    is Hausdorff and $\HleftX$ is free,} it must converge to $h$. In particular, continuity if $\HleftX$ implies that $\{(x_{i  }, h_{i } \HleftX y_{i  })\}_{i\in I  }$ is a net in $\cX^{(2)}$ that converges to the composable pair $(x,h\HleftX y)$. Continuity of the multiplication on~$\cX$ implies that $\{x_{i  }[h_{i } \HleftX y_{i  }]\}_{i\in I  }$ converges to $x[h\HleftX y]$.
    Since
    \[
        q(x_{i  }[h_{i } \HleftX y_{i  }])
        =
        \cH \HleftX (x_{i  }[h_{i } \HleftX y_{i  }])
        =
        (\cH \HleftX x_{i  })(\cH \HleftX y_{i  })
        =
        \xi_{i } \, \eta_{i } 
    \] and
    $
        q(x[h\HleftX y])
        =
        \xi\,\eta,
    $
    continuity of $q$ implies that $\{\xi_{i }  \eta_{i } \}_{i\in I  }$ converges to $\xi\eta$. This proves that the multiplication on $\cH\backslash\cX$ is continuous.

    \smallskip
    
    For the inversion map, the argument is similar: if $\xi_{i }\to \xi$ in $\cH\backslash\cX$, then openness of $q$ allows a lift $\{x_j \}_{j\in J} $ of a subnet $\{\xi_j \}_{j\in J} $ which converges to a fixed preimage $x$ of $\xi$. Continuity of the inversion in~$\cX$ implies that $x_j \inv \to x\inv $, and continuity of $q$ implies $\xi_j \inv = \cH\HleftX (x_j \inv)\to\cH\HleftX (x\inv)= \xi\inv$. By Lemma~\ref{lem:exercise in top:cts maps}, this suffices to show that the inversion on $\cH\backslash\cX$ is continuous.
    
    \smallskip

    Lastly, assume that $\cX$ is \etale, so its source map is an open map and its unit space is open. As argued above, the quotient map $q\colon \cX\to \cH\backslash\cX$ is open, and so $(\cH\backslash\cX)\z=\cH\backslash\cX\z = q(\cX\z)$ is open, i.e.,  $\cH\backslash\cX$ is $r$-discrete. Since \assumption{$s_{\cH}$ and $s_{\cX}$ are open}, Lemma~\ref{lem:s and r of H under X} implies that the source map of $\cH\backslash\cX$ is open, and so \cite[Proposition 1.29]{Wil2019} implies that $\cH\backslash\cX$ is \etale.
\end{proof}

\begin{example}\label{ex:ssla of trivial group and groupoid:quotient}
    If we consider the \ssla\ of the trivial groupoid $\cX\z$ on $\cX$ as defined in Example~\ref{ex:ssla OF trivial gpd}, then $\cX\z\backslash\cX\cong \cX$ via $\cX\z\HleftX x\mapsto x$, since $\HleftX$ is trivial.
    
    Likewise, the trivial group $\{e\}$ with its (trivial) \ssla\  on a groupoid $\cX$  as defined in Example~\ref{ex:ssla of trivial group} is (trivially) free and proper. The quotient groupoid $\{e\}\backslash\cX$ is exactly the groupoid $\cX$ if we identify $\{e\}\HleftX x$ with $x$.
\end{example}

\subsection{\Sspa s}

We are now in a position where we can define a generalized notion of compatible actions. 
\begin{definition}\label{df.compatible}
    Suppose the two groupoids $\cH,\cG$ act on the left resp.\ right of a groupoid~$\cX$ by \ss\ actions. We say the actions are {\em\intune} if for any $h\in \cH$, $x\in \cX$, and $s\in \cG$ with $s_{\cH}(h)=\rho_{\cX}(x)$ and $\sigma_{\cX}(x)=r_{\cG}(s)$, we have
    \repeatable{properties of two sets of compatible actions}{
    \begin{enumerate}[leftmargin=1.5cm, label=\textup{(C\arabic*)}, start=0]
    \item 
    \label{cond.compatible.c0} $
        \sigma_{\cX}(h\HleftX x) = \sigma_{\cX}(x)$ in $\cG\z$ and $
        \rho_{\cX}(x)
        =
        \rho_{\cX}(x\XrightG s)$ in $\cH\z$,
        \item\label{cond.compatible.c1} $h\HleftX (x\XrightG s)=(h\HleftX x)\XrightG s$
            in $\cX$,
        \item\label{cond.compatible.c2} $(h\HleftX x)\XleftG s=x\XleftG s$
            in $\cG$, and
        \item\label{cond.compatible.c3} $h\HrightX (x\XrightG s)=h\HrightX x$
            in $\cH$.
    \end{enumerate}
    }
\end{definition}
    Note that Condition~\ref{cond.compatible.c0} ensures that the elements in the other conditions make sense.

\begin{definition}\label{df.something}   
   Suppose the two groupoids $\cH,\cG$ act on the left resp.\ right of a groupoid~$\cX$ by \ss\ actions. If the \ss\ actions are \intune\ and both free and proper, and if $\cH$, $\cG$, and $\cX$ have open source 
   maps, then we call~$\cX$ an {\em $(\cH,\cG)$-\sspa}.
\end{definition}

\begin{remark} In case of 
    the semidirect product construction
in \cite[Appendix A.2]{KMQW2013}, 
we have $x\XleftG s=s$ and $h\HrightX x=h$ 
for all $h\in \cH$, $x\in \cX$, and $s\in \cG$. Therefore, Conditions~\ref{cond.compatible.c2} and~\ref{cond.compatible.c3} are 
    trivially satisfied since both sides of the first equation are $s$ and both sides of the second are $h$. 
Thus, in this case, the \intuneadj\ conditions simply reduce to the commuting
    conditions~\ref{cond.compatible.c0} and~\ref{cond.compatible.c1}.
\end{remark}

\begin{example}\label{ex:ssla of trivial gp and gpd:sometimes a something}
Let $\cG$ and  $\cX$ be 
groupoids whose source maps are open, and suppose that $\cG$ has a \ssra\ on~$\cX$ that is free and proper (Definition~\ref{df.right.selfsimilar}). Then $\cX$ is a $(\{e\}, \cG)$-\sspa. Indeed, the trivial actions $\HleftX, \HrightX$ constitute a free and proper \ssla\ of $\{e\}$ on~$\cX$ (see Examples~\ref{ex:ssla of trivial group} and~\ref{ex:ssla of trivial gp and gpd:free and proper}), and the following computations show that the actions of $\cX\z$ and of $\cG$ are \intune, where $x\in\cX$ and $s\in\cG$ are such that $\sigma_{\cX}(x)=r_{\cG}(s)$.
\begin{description}
    \item[Ad \ref{cond.compatible.c0}] Since $\HleftX$ is trivial, we have $\sigma_{\cX}(h\HleftX x) = \sigma_{\cX}(x)$, and since $\rho_{\cX}\colon \cX\to \{e\}$ is constant, we have $
        \rho_{\cX}(x)=
        \rho_{\cX}(x\XrightG s)$.
        \item[Ad \ref{cond.compatible.c1}, \ref{cond.compatible.c2}] Since $\HleftX$ is trivial, we have $e\HleftX (x\XrightG s)=x\XrightG s=(e\HleftX x)\XrightG s$ and $(e\HleftX x)\XleftG s=x\XleftG s$.
        \item[Ad \ref{cond.compatible.c3}] Since $\HrightX$ is trivial, we have $e\HrightX (x\XrightG s)=e=e\HrightX x$.
    \end{description}

    Note, however, that $\cX$ is {\em not} a $(\cX\z,\cG)$-\sspa\ (even though $\cX$ has a free and proper \ssla\ of $\cX\z$ by Example~\ref{ex:ssla OF trivial gpd}): If there exists one $(x,s)\in \cX\bfp{\sigma}{s}\cG$ with $s \notin \cG\z$, then freeness of the $\cG$-action $\XrightG$ on $\cX\z$ implies that
    \[
        r_{\cX}(x)\neq r_{\cX}(x)\XrightG(x\XleftG s)
        \overset{\eqref{item:R10}}{=} r_{\cX}(x\XrightG s).
     \]
     As 
     the momentum map $\rho_{\cX}$ for the $\cX\z$-action on $\cX$ is $r_{\cX}$ in this setting, the above inequality
     conflicts with Condition~\ref{cond.compatible.c0}.
\end{example}
 
\begin{remark}\label{rmk:ill advised}
    In Example~\ref{ex:reconciliation}, we showed that the (trivial) \ssla\ of $\{e\}$ on a groupoid $\cX$ gives rise to the `standard' \ssla\ of $\cX\z$ on $\cX$ as defined in Example~\ref{ex:ssla OF trivial gpd}, 
    and it then also followed that $\cX\bowtie\{e\}\cong \cX\bowtie\cX\z$. This seemed to indicate that the pairs $(\cX,\{e\})$ and $(\cX,\cX\z)$ are `the same' in some sense.

    However, the above example shows that this point of view is ill-advised, since a \sspa\ $\cX$ between $\cH$ and $\cG$ need not be one between $\widetilde{\cH}=\cH\ltimes \cX\z$ and $\cG$. The reason is that, Condition~\ref{cond.compatible.c0} for the pair $(\cH,\cG)$ does not imply the same condition for $(\widetilde{\cH},\cG)$, since the momentum maps on $\cX$ with respect to the left actions do not need to coincide:
    we have $\rho_{\cX}\colon\cX\to\cH\z$ for the left $\cH$-action $\HleftX$, while we have $r_{\cX}\colon\cX\to\cX\z=\widetilde{\cH}\z$ for the left $\widetilde{\cH}$-action $\cdot$ (see Lemma~\ref{lem.ss.is.matched}).
\end{remark}

 Given a  $(\cH,\cG)$-\sspa\  $\cX$, we have shown in Proposition~\ref{prop.gpd structure on orbit space} that the orbit space $\cH\backslash \cX$ and, by extension, $\cX/\cG$ are groupoids. In Proposition~\ref{prop.ssp actions on quotients}, we will establish that~$\cH$ has a \ssla\ on $\cX/\cG$; similarly,~$\cG$ has a \ssra\ on $\cH\backslash\cX$. We can then consider the 
    \ssp\ 
groupoids $(\cX/\cG)\bowtie \cH$ and $\cG\bowtie (\cH\backslash \cX)$, as constructed in Definition~\ref{def.ZSProduct.groupoid.left}. Our main result is that these two
    \ssp\ 
 groupoids are equivalent via their actions on~$\cX$ 
 in the sense of \cite[Definition 2.1]{MRW:Grpd}
 as summed up in the following theorem; this generalizes \cite[Lemma 3.2]{KMQW2013}.
 
\begin{theorem}[cf.\ {\cite[Lemma 3.2]{KMQW2013}}]\label{thm.groupoid.eq} 
Let $\cH,\cG,\cX$ be 
groupoids, and suppose that~$\cX$ is a $(\cH,\cG)$-\sspa\  in the sense of
    Definition~\ref{df.something},
that is,
\begin{itemize}
    \item $s_{\cH}$, $s_{\cG}$, and $s_{\cX}$ are open maps,
    \item $\cH$ has a \ssla\ on~$\cX$ that is free and proper (Definition~\ref{df.left.selfsimilar}),
    \item $\cG$ has a \ssra\ on~$\cX$ that is free and proper (Definition~\ref{df.right.selfsimilar}), and
    \item the two actions are \intune\ (Definition~\ref{df.compatible}).
\end{itemize}
Then there is a natural way to turn $\cX$ into
a groupoid equivalence from $(\cX/\cG)\bowtie \cH$ to $\cG\bowtie (\cH\backslash \cX)$. 
\end{theorem}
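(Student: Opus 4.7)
My approach is to verify directly that $\cX$ carries the structure of a groupoid equivalence from $L := (\cX/\cG)\bowtie\cH$ to $R := \cG\bowtie(\cH\backslash\cX)$ in the sense of \cite[Definition 2.1]{MRW:Grpd}. This requires defining commuting, free, and proper left/right actions of $L, R$ on $\cX$ with open continuous moment maps $\pi_L\colon\cX\to L\z$ and $\pi_R\colon\cX\to R\z$ inducing homeomorphisms $\cX/R\cong L\z$ and $L\backslash\cX\cong R\z$. The strategy parallels the proof of \cite[Lemma 3.2]{KMQW2013}, but with substantially more bookkeeping due to the non-trivial restriction maps $\HrightX$ and $\XleftG$.

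Using Remark~\ref{rm.unit space of ssp} and its right analog to identify $L\z\cong (\cX/\cG)\z$ and $R\z\cong (\cH\backslash\cX)\z$, define $\pi_L(x)$ to be the class of $r_{\cX}(x)$ in $\cX/\cG$ and $\pi_R(x) := \cH\HleftX s_{\cX}(x)$. These are continuous open surjections thanks to openness of the quotient maps $\cX\to\cX/\cG$ and $\cX\to\cH\backslash\cX$ (Lemma~\ref{lem:s and r of H under X} and its right analog, applicable since $s_{\cH}$ and $s_{\cG}$ are open). For the action of $L$: given $(\xi, h)\in L$ and $x\in\cX$ with $\pi_L(x) = s_L(\xi, h)$, freeness of the $\cG$-action on $\cX\z$ together with an argument paralleling Lemma~\ref{lm.uniqueness} singles out a unique $y\in\xi$ with $s_{\cX}(y) = h\HleftX r_{\cX}(x)$, so that $y(h\HleftX x)$ is a valid product in $\cX$, and I set $(\xi, h)\cdot x := y(h\HleftX x)$. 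Dually, for $(t, \eta)\in R$ with $\pi_R(x) = r_R(t, \eta)$, freeness of the $\cH$-action supplies a unique $z\in\eta$ with $r_{\cX}(z) = s_{\cX}(x\XrightG t)$, and I set $x\cdot(t,\eta) := (x\XrightG t)z$.

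That these are continuous groupoid actions with the claimed moment maps follows from the axioms \eqref{item:L1}--\eqref{item:L10} and \eqref{item:R1}--\eqref{item:R10}, mirroring the well-definedness verifications in Proposition~\ref{prop.gpd structure on orbit space}. Freeness of the $L$-action reduces to freeness of $\HleftX$: assuming $(\xi, h)\cdot x = x$, comparing sources via \eqref{item:L10} yields $(h\HrightX x)\HleftX s_{\cX}(x) = s_{\cX}(x)$, so $h\HrightX x\in \cH\z$ by freeness of $\HleftX$ on $\cX\z$ (Lemma~\ref{lem:restricted action on cXz}); then \eqref{item:L9} and \eqref{item:L2} give $h\HleftX x = x$, freeness of $\HleftX$ forces $h\in\cH\z$, and finally $y = r_{\cX}(x)\in\cX\z$, whence $\xi\in(\cX/\cG)\z$. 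Freeness of the $R$-action is mirror-symmetric. Properness follows by lifting convergent nets to representatives and invoking properness of the component $\cH$- and $\cG$-actions on $\cX$, along the lines of the argument in Proposition~\ref{prop.gpd structure on orbit space}.

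The most delicate point is commutativity of the two actions, where all three \intuneadj\ Conditions \ref{cond.compatible.c1}, \ref{cond.compatible.c2}, \ref{cond.compatible.c3} enter essentially. Expanding $\bigl((\xi, h)\cdot x\bigr)\cdot (t, \eta)$ via \eqref{item:R4} and using \ref{cond.compatible.c2} to simplify $(h\HleftX x)\XleftG t = x\XleftG t$, and expanding $(\xi, h)\cdot \bigl(x\cdot(t,\eta)\bigr)$ via \eqref{item:L4} with \ref{cond.compatible.c1} (to rewrite $h\HleftX(x\XrightG t) = (h\HleftX x)\XrightG t$) and \ref{cond.compatible.c3} (to rewrite $h\HrightX(x\XrightG t) = h\HrightX x$), both sides reduce to $\bigl(y\XrightG(x\XleftG t)\bigr)\bigl((h\HleftX x)\XrightG t\bigr)\bigl((h\HrightX x)\HleftX z\bigr)$ after invoking uniqueness of the various lifts via freeness. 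For the orbit identification, $\pi_L$ descends to a bijection $\cX/R\to L\z$: two elements $x, x'$ share a $\pi_L$-image iff $r_{\cX}(x') = r_{\cX}(x)\XrightG t$ for some $t\in\cG$, in which case $x' = x\cdot\bigl(t, \cH\HleftX [(x\XrightG t)\inv x']\bigr)$; openness of $\pi_L$ and of the $R$-quotient map upgrade this to a homeomorphism, with a mirror argument for $\pi_R$. The main obstacle I anticipate is precisely the commutativity computation: it has no analog in the semi-direct product setting of \cite{KMQW2013}, since there \ref{cond.compatible.c2} and \ref{cond.compatible.c3} hold tautologically, and in our setting keeping track of which lift is needed on each side is the crux of the argument.
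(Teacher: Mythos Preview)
Your proposal is correct and follows the same route as the paper, which packages the construction and verification of the two actions (well-definedness, continuity, freeness, properness, commutativity) into Proposition~\ref{prop.actions.X} and then handles the orbit identification in the proof of Theorem~\ref{thm.groupoid.eq} proper. One small bookkeeping slip in your injectivity step: the element $t$ witnessing $r_{\cX}(x')=r_{\cX}(x)\XrightG t$ is \emph{not} the $t$ you can feed into $x\cdot(t,\eta)$, since $r_{\cX}(x\XrightG t)=r_{\cX}(x)\XrightG(x\XleftG t)$ by \eqref{item:R10}; the paper fixes this by replacing your $t$ with $x\inv\XleftG t$, after which $(x\XrightG t)\inv x'$ is a legitimate product in~$\cX$.
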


For the description of the equivalence structure on $\cX$, see Proposition~\ref{prop.actions.X}.
    Examples of applications of Theorem~\ref{thm.groupoid.eq} can be found in Subsection~\ref{ssec:Applications of thm.groupoid.eq}.

    \begin{example}\label{ex:recovering KMQW2023, Lemma 3.2}
    Theorem~\ref{thm.groupoid.eq} recovers \cite[Lemma 3.2]{KMQW2013}: when $H$ and $G$ are \LCH\ groups and their free and proper actions on a groupoid 
    $\cX$ 
    are actions by automorphisms, then we may let
    $\cX$ 
    act trivially on both $H$ and $G$, i.e., $h\HrightX x=h$ and $x\XleftG s=s$. This makes
    $\cX$ 
    a $(H,G)$
    -\sspa, and the equivalence structure  alluded to in Theorem~\ref{thm.groupoid.eq} makes
    $\cX$ 
    a groupoid equivalence between $(
    \cX   
    /G)\rtimes H$ and $G\ltimes (H\backslash 
    \cX 
    )$.
    \end{example}

\begin{proposition}\label{prop.ssp actions on quotients}
    Suppose~$\cX$ is a $(\cH,\cG)$-\sspa.
Then~$\cH$ has a \ssla\ on $\cX/\cG$: the momentum map is given by $\tilde\rho(x\XrightG G)=\rho_{\cX}(x)$, and the actions are defined by
\begin{align*}
        \cH \cart (\cX/\cG)\colon&&\cH \bfp{s}{\tilde{\rho}} (\cX/\cG)
        \ni (h,x\XrightG \cG) &\mapsto h\qHleftX (x\XrightG \cG)
        \coloneqq  (h \HleftX x)\XrightG \cG \in \cX/\cG
        \\
        \cH \calb (\cX/\cG)\colon&&\cH \bfp{s}{\tilde\rho} (\cX/\cG)\ni (h,x\XrightG \cG)
        &\mapsto h\qHrightX (x\XrightG \cG) \coloneqq 
        h\HrightX x \in \cH
\end{align*}

Likewise,
$\cG$ has a \ssra\ on $\cH\backslash \cX$: the momentum map is given by 
$\tilde\sigma(\cH\HleftX x)=\sigma_{\cX}(x)$, and the actions are defined by
    \begin{align*}
        (\cH\backslash \cX) \calt \cG\colon
        &&
        (\cH\backslash \cX) \bfp{\tilde\sigma}{r} \cG\ni (\cH\HleftX x, s)
        &\mapsto (\cH\HleftX x) \qXrightG s\coloneqq \cH\HleftX (x\XrightG s) \in \cH\backslash \cX
        \\
        (\cH\backslash \cX) \carb \cG\colon
        &&
        (\cH\backslash \cX) \bfp{\tilde\sigma}{r} \cG\ni (\cH\HleftX x, s)
        &
        \mapsto (\cH\HleftX x) \qXleftG s\coloneqq x\XleftG s \in \cG
    \end{align*}
\end{proposition}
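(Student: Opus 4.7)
The plan is to verify the six axioms of Definition~\ref{df.left.selfsimilar} for the triple $(\tilde\rho,\qHleftX,\qHrightX)$ on $\cX/\cG$, together with continuity and surjectivity of $\tilde\rho$. The statement for the right $\cG$-action on $\cH\backslash\cX$ then follows by entirely symmetric arguments, with Conditions~\ref{cond.compatible.c0},~\ref{cond.compatible.c1} and~\ref{cond.compatible.c2} taking the role of~\ref{cond.compatible.c0},~\ref{cond.compatible.c1} and~\ref{cond.compatible.c3}, so I will only sketch the left-action half.

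First, I would dispense with well-definedness of the three maps. Suppose $x\XrightG\cG=x'\XrightG\cG$, so that $x'=x\XrightG s$ for some $s\in\cG$ with $\sigma_{\cX}(x)=r_{\cG}(s)$. Then Condition~\ref{cond.compatible.c0} gives $\rho_{\cX}(x')=\rho_{\cX}(x)$, so $\tilde\rho$ is unambiguous; Condition~\ref{cond.compatible.c1} gives $(h\HleftX x')\XrightG\cG=((h\HleftX x)\XrightG s)\XrightG\cG=(h\HleftX x)\XrightG\cG$, so $\qHleftX$ is unambiguous; and Condition~\ref{cond.compatible.c3} gives $h\HrightX x'=h\HrightX x$, so $\qHrightX$ is unambiguous. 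Continuity of the three maps then follows from continuity of $\rho_{\cX},\HleftX,\HrightX$ together with the fact that the quotient map $q\colon\cX\to\cX/\cG$ is open (by \cite[Proposition 2.12]{Wil2019}, since $s_{\cG}$ is assumed open) and hence a quotient map. Surjectivity of $\tilde\rho\z$ is inherited from that of $\rho_{\cX}\z$.

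Next I would verify~\eqref{item:L1}--\eqref{item:L6}. Conditions~\eqref{item:L1},~\eqref{item:L2}, and~\eqref{item:L5} follow immediately by evaluating on any representative and applying the corresponding axiom for $(\HleftX,\HrightX)$ on $\cX$. The step I expect to be the main obstacle is verifying~\eqref{item:L3},~\eqref{item:L4}, and~\eqref{item:L6}, since each involves a pair of composable elements in $\cX/\cG$ and thus a simultaneous choice of representatives at two points. Given composable $\xi,\eta\in\cX/\cG$ and $h\in\cH$ with $s_{\cH}(h)=\tilde\rho(\xi)$, the right-handed analog of Lemma~\ref{lm.orbit multiplication} supplies representatives $x\in\xi$, $y\in\eta$ with $s_{\cX}(x)=r_{\cX}(y)$, so that $\xi\eta=(xy)\XrightG\cG$. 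The composability condition required on the right-hand side of~\eqref{item:L4} in the quotient reduces, via~\eqref{item:L10} and Condition~\ref{cond.compatible.c0}, to $s_{\cX}(h\HleftX x)\XrightG\cG=r_{\cX}((h\HrightX x)\HleftX y)\XrightG\cG$, which already holds in $\cX$ because $s_{\cX}(x)=r_{\cX}(y)$. The three identities themselves then follow by applying~\eqref{item:L3},~\eqref{item:L4}, and~\eqref{item:L6} on $\cX$ to $x$ and $y$ and projecting along $q$, invoking the well-definedness established above so that the particular choice of representatives does not matter.
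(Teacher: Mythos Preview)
Your proposal is correct and follows essentially the same approach as the paper: well-definedness via Conditions~\ref{cond.compatible.c0},~\ref{cond.compatible.c1},~\ref{cond.compatible.c3}, the axioms~\eqref{item:L1}--\eqref{item:L6} inherited from $\cX$ by choosing composable representatives, and continuity via openness of the quotient map. The only place where the paper is more explicit than you is continuity: rather than invoking ``$q$ open, hence a quotient map'' (which is not quite enough for $\qHleftX$ and $\qHrightX$, whose domains are balanced fibre products over $\cX/\cG$), the paper lifts convergent nets along $q$ using Fell's criterion (Proposition~\ref{Fell's criterion}) and then appeals to Lemma~\ref{lem:exercise in top:cts maps}; conversely, you are more explicit than the paper on why~\eqref{item:L3},~\eqref{item:L4},~\eqref{item:L6} require a simultaneous choice of representatives.
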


Note that, even though $\HleftX$ and $\XrightG$ are free and proper, the same is not necessarily true for~$\qHleftX$ or~$\qXrightG$. This fact prevents us from turning an iterated quotient such as $\cH\backslash (\cX/\cG)$ or $(\cH\backslash \cX)/\cG$ into a topological groupoid, if we were so inclined. (Luckily, we aren't.)

\begin{proof} The momentum map is well defined by Condition~\ref{cond.compatible.c0} and it is surjective
because $\rho_{\cX}$ is surjective. 
 It remains to check that $\tilde{\rho}$ is continuous.
Since
    \assumption{%
    $r_{\cG}$ is open,
    }%
    we know that the quotient map $\cX\to \cX/\cG$ is open by \cite[Proposition 2.12]{Wil2019}. In particular, if $\{x_{i}\XrightG \cG\}_{i\in I}$ is a net converging to $x\XrightG \cG$ in $\cX/\cG$, then 
    Proposition~\ref{Fell's criterion}
  says that we can find a subnet $\{x_{f(j)}\XrightG \cG\}_{j\in J}$ which allows a convergent lift  in $\cX$, i.e., there exist $y_{j}\in x_{f(j)}\XrightG \cG$ for all $j$ with $y_{j}\to y$ for some $y\in x\XrightG \cG$.
    Continuity of $\rho_{\cX}$ then implies 
    \[
    \tilde{\rho}(x_{f(j)}\XrightG\cG)
    =
    \rho_{\cX}(y_{j})\to
    \rho_{\cX}(y)
    =
    \tilde{\rho}(x\XrightG\cG).
    \]
    Using Lemma~\ref{lem:exercise in top:cts maps}, we conclude that $\tilde{\rho}$ is continuous.

    We next verify that $
    \qHleftX
    $ is well defined. If $x\XrightG \cG=y\XrightG \cG$, there exists a unique $s\in \cG$ such that $x=y\XrightG s$. Now
    by the commuting Condition~\ref{cond.compatible.c1},
    \[h\HleftX x=h\HleftX (y\XrightG s) = (h\HleftX y) \XrightG s.\]
    Therefore, $(h\HleftX x)\XrightG \cG=(h\HleftX y)\XrightG \cG$. Similarly, to show that $
    \qHrightX
    $ is well defined, 
    let $x,y,s$ be as above, and let $h\in\cH$ be such that $s_{\cH}(h)=\rho_{\cX}(x)$.
    By Condition~\ref{cond.compatible.c3}, we have
    \[h\HrightX x=h\HrightX (y\XrightG s) = h\HrightX y.\]
    To see that this $\cH$-action on $\cX/\cG$ is \ss, we observe that the $\cH$-action on $\cX$ passes through the quotient and $(x\XrightG \cG)(y\XrightG \cG)=(xy)\XrightG \cG$ whenever $r_{\cX}(y)=s_{\cX}(x)$. Therefore, the Conditions~\eqref{item:L2} through~\eqref{item:L6} from the \ss\ $\cH$-action on $\cX$ all pass through to the $\cH$-action on $\cX/\cG$, proving that this $\cH$-action on $\cX/\cG$ is also \ss.

    Lastly, we will check that $\qHleftX$ is continuous.
    So assume we are given $h_{i}\in \cH$ such that $s_{\cH}(h_{j})=\rho_{\cX}(x_{i})$ and $h_{i}\to h$. By continuity of $\HleftX$ and $\HrightX$, we have $h_{f(j)}\HleftX y_{j}\to h\HleftX y$ in~$\cX$ and $h_{f(j)}\HrightX y_{j}\to h\HrightX y$ in $\cH$. Continuity of the quotient map $\cX\to\cX/\cG$ then implies that
    \begin{align*}
    h_{f(j)}\qHleftX (x_{f(j)}\XrightG\cG)=(h_{f(j)}\HleftX y_{j})\XrightG \cG \to (h\HleftX y)\XrightG \cG =h\qHleftX (x\XrightG\cG),
    \intertext{and likewise we have}
    h_{f(j)}\qHrightX (x_{f(j)}\XrightG\cG)=h_{f(j)}\HrightX y_{j}  \to h\HrightX y =h\qHrightX (x\XrightG \cG).
    \end{align*}
    Lemma~\ref{lem:exercise in top:cts maps} again implies that $\qHleftX$ is continuous.
    
    The claims for $\qXrightG$ and $\qXleftG$ follow {\em mutatis mutandis}.
\end{proof}

Following Definitions~\ref{def.ZSProduct.groupoid.left} and~\ref{def.ZSProduct.groupoid.right}, we obtain two groupoids, $(\cX/\cG)\bowtie \cH$ and $\cG\bowtie (\cH\backslash \cX)$. By Remark~\ref{rm.unit space of ssp}, 
the unit space of the \ssp\ groupoid $(\cX/\cG)\bowtie\cH$ is homeomorphic to the unit space  of $\cX/\cG$. 
In other words, we have:
\begin{align*}
    ((\cX/\cG)\bowtie \cH)\z  &\approx
    (\cX\z)/\cG
    = \{u\XrightG \cG: u\in \cX\z \}; \\
    (\cG\bowtie (\cH\backslash \cX))\z  & \approx
    \cH \backslash (\cX\z)
    = \{\cH\HleftX u: u\in \cX\z \}.
\end{align*}

The following lemma computes the range and source maps explicitly for these two \ssp\ groupoids. It follows immediately from Remark~\ref{rm.unit space of ssp}. 
\begin{lemma}\label{lm.orbitZS.rs}
Consider $(\xi, h)\in (\cX/\cG)\bowtie \cH$ and $(t, \eta)\in \cG\bowtie (\cH\backslash \cX)$, and let $x\in\xi$ and $y\in\eta$ be arbitrary. We have 
\begin{enumerate}[label=\textup{(\arabic*)}]
    \item $r(\xi, h)=r_{\cX/\cG}(\xi)=r_{\cX}(x)\XrightG \cG$
    \item $s(\xi, h)=h\inv \qHleftX s_{\cX/\cG}(\xi) = (h\inv \HleftX s_{\cX}(x))\XrightG \cG$
    \item $r(t, \eta)=r_{\cH\backslash\cX}(\eta)\qXrightG t\inv =\cH\HleftX (r_{\cX}(y)\XrightG t\inv )$
    \item $s(t, \eta)=s_{\cH\backslash\cX}(\eta)=\cH\HleftX s_{\cX}(y)$
\end{enumerate}
\end{lemma}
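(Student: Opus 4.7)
The proof is essentially bookkeeping: I will apply Remark~\ref{rm.unit space of ssp} to each of the two \ssp\ groupoids $(\cX/\cG)\bowtie\cH$ and $\cG\bowtie(\cH\backslash\cX)$, and then unpack the resulting formulas using the definitions of the range/source maps on the orbit groupoids and of the induced \ss\ actions $\qHleftX$ and $\qXrightG$.

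For formulas (1) and (2), I apply Remark~\ref{rm.unit space of ssp} to the \ssla\ of $\cH$ on the orbit groupoid $\cX/\cG$ (whose existence was established in Proposition~\ref{prop.ssp actions on quotients}). This identifies the unit space of $(\cX/\cG)\bowtie\cH$ with $(\cX/\cG)\z$ via projection onto the first coordinate, and produces the formulas
\[
r(\xi,h)=r_{\cX/\cG}(\xi)\qquad\text{and}\qquad s(\xi,h)=h\inv\qHleftX s_{\cX/\cG}(\xi).
\]
For any representative $x\in\xi$, equation~\eqref{eq.s and r for quotient} (applied with $\cG$ acting on the right in place of $\cH$ acting on the left) yields $r_{\cX/\cG}(\xi)=r_{\cX}(x)\XrightG\cG$ and $s_{\cX/\cG}(\xi)=s_{\cX}(x)\XrightG\cG$. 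Substituting the definition of $\qHleftX$ from Proposition~\ref{prop.ssp actions on quotients} gives
\[
h\inv\qHleftX \bigl(s_{\cX}(x)\XrightG\cG\bigr)=\bigl(h\inv\HleftX s_{\cX}(x)\bigr)\XrightG\cG,
\]
which completes~(1) and~(2).

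For formulas (3) and (4), I apply the right-handed analogue of Remark~\ref{rm.unit space of ssp} to the \ssra\ of $\cG$ on $\cH\backslash\cX$ (also produced in Proposition~\ref{prop.ssp actions on quotients}). As recorded in Subsection~\ref{ssec.right ssa}, every construction done for \ssla s has a formally symmetric counterpart for \ssra s; in particular, the unit space of $\cG\bowtie(\cH\backslash\cX)$ is identified with $(\cH\backslash\cX)\z$, and
\[
r(t,\eta)=r_{\cH\backslash\cX}(\eta)\qXrightG t\inv\qquad\text{and}\qquad s(t,\eta)=s_{\cH\backslash\cX}(\eta).
\]
(The correct placement of $t\inv$ here matches the composability condition $s_{\cX}(x)=r_{\cX}(y)\XrightG t\inv$ from Definition~\ref{def.ZSProduct.groupoid.right}.) For any $y\in\eta$, equation~\eqref{eq.s and r for quotient} gives $r_{\cH\backslash\cX}(\eta)=\cH\HleftX r_{\cX}(y)$ and $s_{\cH\backslash\cX}(\eta)=\cH\HleftX s_{\cX}(y)$, and the definition of $\qXrightG$ from Proposition~\ref{prop.ssp actions on quotients} yields
\[
\bigl(\cH\HleftX r_{\cX}(y)\bigr)\qXrightG t\inv=\cH\HleftX\bigl(r_{\cX}(y)\XrightG t\inv\bigr),
\]
establishing~(3) and~(4).

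There is no real obstacle here; the only care needed is to keep track of which \ss\ action is acting on which orbit space, and to correctly invoke the right-handed analogue of Remark~\ref{rm.unit space of ssp} (which the paper has set up implicitly by making all the left/right constructions symmetric in Subsection~\ref{ssec.right ssa}). Independence of the choice of representatives $x\in\xi$ and $y\in\eta$ is automatic, since the formulas on the right-hand sides are themselves expressed as orbits.
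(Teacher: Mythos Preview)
Your proposal is correct and takes essentially the same approach as the paper, which simply states that the lemma ``follows immediately from Remark~\ref{rm.unit space of ssp}.'' You have merely (and appropriately) spelled out the bookkeeping that the paper leaves implicit.
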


We now define left and right actions of these groupoids on $\cX$.

\begin{proposition}\label{prop.actions.X} 
Let $\cX$ be a $(\cH,\cG)$-\sspa. 
Define $\mathfrak{r}\colon \cX\to [(\cX/\cG)\bowtie \cH]\z$ and $\mathfrak{s}\colon \cX\to [\cG\bowtie (\cH\backslash \cX)]\z$ by
\[\mathfrak{r}(x)=r_{\cX}(x)\XrightG \cG \quad \text{and} \quad \mathfrak{s}(x)=\cH\HleftX s_{\cX}(x).
\]
These are well-defined,
surjective, continuous, open maps. Using them as momentum maps, we can define
a left-$(\cX/\cG)\bowtie \cH$ and a right-$\cG\bowtie (\cH\backslash \cX)$ action via:
\begin{align*}
    ((\xi,h),y) & \mapsto (\xi,h)\cdot y \coloneqq  x(h\HleftX y), &&\text{where $x\in \xi$ is such that } 
    (x,h\HleftX y)\in\cX^{(2)}
    ;
    \\
    (y,(t,\eta)) & \mapsto y\cdot (t,\eta) \coloneqq  (y\XrightG t)z,&& \text{where  $z\in \eta$ is such that } (y\XrightG t,z)\in\cX^{(2)}
    .
\end{align*}
Here, $x(h\HleftX y)$ and $ (y\XrightG g)z$ denote composition in the groupoid $\cX$.
These actions are free and proper, and they commute.
\end{proposition}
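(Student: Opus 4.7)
The plan is to address the momentum maps first, then the well-definedness of each action, then the action axioms and commutativity, and finally freeness and properness. For the momentum maps, well-definedness of $\mathfrak{r}$ and $\mathfrak{s}$ is immediate from the formulas, while surjectivity follows from surjectivity of $r_{\cX}$ and $s_{\cX}$ onto $\cX\z$ together with surjectivity of the quotient maps. Continuity is clear. Openness follows because $r_{\cX}$ and $s_{\cX}$ are open (the latter by assumption, the former via the inversion homeomorphism) and the quotient maps $\cX\to \cX/\cG$ and $\cX\to\cH\backslash\cX$ are open by \cite[Proposition 2.12]{Wil2019}, using openness of $s_{\cG}$ and $s_{\cH}$.

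For the well-definedness of the left action, given $(\xi,h)\in (\cX/\cG)\bowtie\cH$ and $y\in\cX$ with $s(\xi,h)=\mathfrak{r}(y)$, the first task is to produce a unique $x\in\xi$ with $s_{\cX}(x)=r_{\cX}(h\HleftX y)$, so that the product $x(h\HleftX y)$ makes sense in $\cX$. Starting from any $x_0\in\xi$, the compatibility of source and range forces $r_{\cX}(y)$ to lie in the $\cG$-orbit of $h\inv\HleftX s_{\cX}(x_0)$, so we may write $r_{\cX}(y)=(h\inv\HleftX s_{\cX}(x_0))\XrightG s$ for some $s\in\cG$ and set $x:=x_0\XrightG s$. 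The identity $s_{\cX}(x)=r_{\cX}(h\HleftX y)$ then follows by chasing Condition~\ref{cond.compatible.c1} together with \eqref{item:L5}, \eqref{item:L10}, and \eqref{item:R10}. Uniqueness is forced by freeness of the \ssra\ $\XrightG$ restricted to $\cX\z$. The right action is handled symmetrically using the $\cH$-orbit of $s_{\cX}(y)$.

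The action axioms---preservation of $\mathfrak{r}$, triviality of units, and associativity of the composition---then follow by unwinding the \ssp\ multiplication of Definition~\ref{def.ZSProduct.groupoid.left} (applied to the \ssla\ of $\cH$ on $\cX/\cG$ from Proposition~\ref{prop.ssp actions on quotients}) and invoking Conditions~\eqref{item:L4}--\eqref{item:L5} on $\HleftX$ together with the \intuneadj\ Conditions \ref{cond.compatible.c0}--\ref{cond.compatible.c3}. Continuity of the action will come from continuity of $\HleftX$, of multiplication in $\cX$, and from lifting along the open quotient maps exactly as in the proof of Proposition~\ref{prop.ssp actions on quotients}. Commutativity of the two actions follows from associativity of multiplication in $\cX$ combined with Condition~\ref{cond.compatible.c1}, which yields $(h\HleftX y)\XrightG t = h\HleftX(y\XrightG t)$ so that both iterated expressions reduce to a common three-factor product in $\cX$.

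The main obstacle I anticipate is in establishing freeness and properness. Freeness of each action reduces to freeness of the underlying $\cH$- and $\cG$-actions on $\cX$ via the uniqueness observation used in the construction: if $(\xi,h)\cdot y=y$, then $x(h\HleftX y)=y$ in $\cX$, which forces $x$ to be a unit and $h$ to fix $y$, and these conditions in turn force $(\xi,h)$ to be a unit in $(\cX/\cG)\bowtie\cH$. Properness is the more delicate point: given a convergent net $\bigl((\xi_i,h_i)\cdot y_i,\,y_i\bigr)\to(z,y)$, one lifts $\xi_i$ via a standard open-map convergence-lifting argument to obtain representatives $x_i\in\xi_i$, then applies properness of $\HleftX$ and of $\XrightG$ on $\cX$ in sequence to extract convergent subnets of $h_i$ and of the $\cG$-adjustment parameters used to build $x_i$, and finally reassembles these pieces into a convergent subnet of $(\xi_i,h_i)$ in $(\cX/\cG)\bowtie\cH$. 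The careful bookkeeping of simultaneously lifting through two quotients while keeping the momentum compatibility intact along subnets will be the technical heart of the argument.
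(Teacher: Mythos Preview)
Your overall plan matches the paper's approach closely: momentum maps, well-definedness via unique representatives, continuity via lifting through open quotient maps, freeness by reducing to freeness of $\HleftX$, and properness via a subnet-extraction argument. The paper likewise proves only the left-hand side and leaves the right to symmetry.

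There is, however, a genuine gap in your commutativity sketch. You write that commutativity ``follows from associativity of multiplication in $\cX$ combined with Condition~\ref{cond.compatible.c1}, \ldots\ so that both iterated expressions reduce to a common three-factor product.'' This is too optimistic. When you compute $[(\xi,h)\cdot y]\cdot(t,\eta)$, you choose $x\in\xi$ with $s_{\cX}(x)=r_{\cX}(h\HleftX y)$ and then $z\in\eta$ with $r_{\cX}(z)=s_{\cX}\bigl([x(h\HleftX y)]\XrightG t\bigr)$; when you compute $(\xi,h)\cdot[y\cdot(t,\eta)]$, you choose \emph{different} representatives $z'\in\eta$ and $x'\in\xi$. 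Expanding with \eqref{item:R4}, \eqref{item:L4} and then \ref{cond.compatible.c1}--\ref{cond.compatible.c3}, the two sides become
\[
\bigl[x\XrightG(y\XleftG t)\bigr]\,\bigl[h\HleftX(y\XrightG t)\bigr]\,z
\quad\text{and}\quad
x'\,\bigl[h\HleftX(y\XrightG t)\bigr]\,\bigl[(h\HrightX y)\HleftX z'\bigr],
\]
so the middle factors agree but the outer ones do not, a priori. One must verify $x'=x\XrightG(y\XleftG t)$ and $z=(h\HrightX y)\HleftX z'$. This is done by computing that both candidates have the same source (resp.\ range) and then invoking freeness of $\XrightG$ (resp.\ $\HleftX$) via Lemma~\ref{lm.uniqueness}. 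So all of \ref{cond.compatible.c1}--\ref{cond.compatible.c3} are needed here, together with freeness; associativity and \ref{cond.compatible.c1} alone do not suffice.

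A smaller remark on properness: the paper's argument does not begin by lifting $\xi_i$ through the quotient. Instead it first shows that $(h_i\HrightX y_i)$ subconverges (by applying properness of $\HleftX$ to $s_{\cX}(y_i)$), deduces that $h_i\HleftX y_i$ subconverges, applies properness of $\HleftX$ a second time to conclude $h_i$ subconverges, and only then recovers $x_i$ from the product $x_i(h_i\HleftX y_i)$. Your proposed route---lift $\xi_i$ first, then chase $\cG$- and $\cH$-adjustments---may also work, but be aware that the representatives you lift to need not be the $x_i$ determined by the action, so you will still need a properness-of-$\XrightG$ step to align them, exactly as in the continuity argument.
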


\begin{proof}
We will do everything for the left-hand side; the claims for the right-hand side will follow {\em mutatis mutandis}.

First, notice that $\mathfrak{r}$ is clearly continuous (resp.\ surjective) since $r_{\cX}$ is continuous (resp.\ surjective). Furthermore, $\mathfrak{r}$ is open as a concatenation of open maps:  \assumption{$r_{\cX}$ is open by assumption,} and the quotient map $\cX\to \cX/\cG$ is open by \cite[Proposition 2.12]{Wil2019} \assumption{since $r_{\cG}$ is open by assumption}.

Next, we
verify that the left-$(\cX/\cG)\bowtie \cH$ action  is well defined. 
Given a pair $((\xi,h),y)$ with $\mathfrak{r}(y)=s(\xi,h)$, it follows from the definition of $\mathfrak{r}$, from Lemma~\ref{lm.orbitZS.rs}, and from \eqref{item:L10} that
$r_{\cX}(h\HleftX y)\XrightG \cG=s_{\cX/\cG}(\xi)$, where $s_{\cX/\cG}\colon \cX/\cG\to (\cX\z)/\cG$ is as in Equation~\eqref{eq.s and r for quotient}. 
Therefore,
there exists
    $x\in \xi$ such that $s_{\cX}(x)=r_{\cX}(h\HleftX y)$.
Since the action on~$\cX$ is \assumption{assumed to be free,} we may invoke a $\XrightG$-version of Lemma~\ref{lm.uniqueness}  to conclude
that such $x$ must be unique. 
Therefore, the left action is well defined.

\smallskip

We now verify that the left action is
free. Pick any $y\in \cX$ and $(\xi,h)\in (\cX/\cG)\bowtie \cH$ such that $(\xi,h)\cdot y=y$, and let $x\in \xi$
satisfy $r_{\cX}(h\HleftX y)=s_{\cX}(x)$. By the definition of the left $(\cX/\cG)\bowtie \cH$-action on $\cX$, our assumption $(\xi,h)\cdot y=y$ implies
$x(h\HleftX y)=y$. In particular, 
\[s_{\cX}(y)=s_{\cX}(x(h\HleftX y))=s_{\cX}(h\HleftX y)=h\HleftX s_{\cX}(y).\] 
Since \assumption{the $\cH$-action on~$\cX$ is free}, we have $h\in\cH\z $ and thus $y=xy$. This only happens when $x=r_{\cX}(y)$ and thus $(\xi,h)=(r_{\cX}(y)\XrightG \cG,h)$ is a unit in $(\cX/\cG)\bowtie \cH$. 

\smallskip

To see that the left action is continuous, assume that we have nets $\{(\xi_{i },h_{i })\}_{i\in I}$ in $(\cX/\cG)\bowtie
 \cH$ and $\{y_{i }\}_{i\in I}$ in~$\cX$  which converge to $(\xi, h)$ and $y$, respectively, and which satisfy 
 \[
    s(\xi_{i },h_{i })
    =
    \mathfrak{r}(y_{i })
    ,\quad \text{ i.e., }\quad
     s_{\cX/\cG}(\xi_{i })
    =
    r_{\cX}(h_{i } \HleftX y_{i })\XrightG \cG.
 \]
  If we let $x_{i }\in \xi_{i }$ and $x\in \xi$ be the unique elements such that \[ u_{i }\coloneqq s_{\cX}(x_{i })
    =
    r_{\cX}(h_{i } \HleftX y_{i })\quad\text{ and }\quad u\coloneqq s_{\cX}(x)
    =
    r_{\cX}(h \HleftX y),\] then by Lemma~\ref{lem:exercise in top:cts maps}, it suffices to find a subnet of $\{x_{i } (h_{i } \HleftX y_{i })\}_{i\in I}$ that converges to $x(h\HleftX y)$. As $(h_{i },y_{i })\to (h,y)$, we only need to show that a subnet of  $\{x_{i }\}_{i\in I}$ converges to $x$; furthermore, it gives us that $u_{i }\to u$. 
    Since $\xi_{i }\to\xi$
        and since $q\colon \cX\to\cX/\cG$ is  open,
    Proposition~\ref{Fell's criterion}
 then implies that there exists a subnet $\{\xi_{j}\}_{j\in J}$ of $\{\xi_{i }\}_{i\in I}$ and lifts $z_{i}\in \xi_{j}$ such that $z_{i}\to x$. As $x_{j}\in \xi_{j}$ also, there exist $t_i\in \cG$ such that $z_{i}=x_{j}\XrightG t_{i}$.  In particular, by continuity of $s_{\cX}$ and by \eqref{item:R10}, we have $u_{j} \XrightG t_{i}=s_{\cX}(z_{i})\to s_{\cX}(x)=u$. Since \(u_{j}\to u\), we therefore have that
    \[
       ( u_{j}\XrightG t_{i},  u_{j})
       \to
       ( u,u).
    \]
    As the right action of~$\cG$ on~$\cX$ is \assumption{free and proper}, it now follows from \cite[Corollary 2.26]{Wil2019} that $t_{i}\to \sigma_{\cX}(u)= \sigma_{\cX}(x)$ 
        by definition of $\sigma_{\cX}$.
    Thus,
    \(
        x_{j}=z_{i}\XrightG t_{i}\inv
    \) converges to $x\XrightG \sigma_{\cX}(x)\inv= x\XrightG \sigma_{\cX}(x)=x$ by \eqref{item:R2}.

\smallskip

    To show that the left action is proper, suppose $y_{i }\to y$ and $(\xi_{i },h_{i })\cdot y_{i } \to z$ in $\cX$; according to \cite[Proposition 2.17]{Wil2019}, it suffices to show that $\{(\xi_{i },h_{i })\}_{i\in I}$ has a convergent subnet. As before, let $x_{i }\in \xi_{i } $ be the unique element such that \(u_{i }\coloneqq s_{\cX}(x_{i })
    =
    r_{\cX}(h_{i } \HleftX y_{i })\), so that $(\xi_{i },h_{i })\cdot y_{i } = x_{i } (h_{i }\HleftX y_{i })\to z$. 
    
    We have $s_{\cX}(y_{i })\to s_{\cX}(y)$ and
    \[
        (h_{i }\HrightX y_{i })\HleftX s_{\cX}(y_{i })
        =
        s_{\cX}(h_{i }\HleftX y_{i })
        =
        s_{\cX}((\xi_{i },h_{i })\cdot y_{i }) 
        \to
        s_{\cX}(z).
    \]
    \assumption{Since $\HleftX$ is proper,}  this implies that (a subnet of) $\{h_{i }\HrightX y_{i }\}_{i\in I}$ converges in $\cH$; let $g$ be its limit. Note that
    \begin{align*}
        h_{i } \HleftX r_{\cX}(y_{i })
        &=
        h_{i } \HleftX (y_{i } y_{i }\inv)
        =
        (h_{i }\HleftX y_{i })\bigl[ (h_{i } \HrightX y_{i }) \HleftX y_{i }\bigr]
        &&\text{by \eqref{item:L4}}.
    \end{align*}
    If we multiply by $(h_{i }\HleftX y_{i })\inv$ on the left, we therefore get
     \begin{align}\label{eq:convergence to gy}
        (h_{i }\HleftX y_{i })\inv \, \bigl[h_{i } \HleftX r_{\cX}(y_{i })\bigr]
        &=
        (h_{i } \HrightX y_{i }) \HleftX y_{i }
        \to 
        g \HleftX y.
    \end{align}
    Since~$\cH$ leaves $\cX\z$ invariant (Lemma~\ref{lem:restricted action on cXz}), we have 
    \begin{align*}
        ( h_{i } \HleftX y_{i } )\inv 
        \bigl[h_{i } \HleftX r_{\cX} (y_{i })\bigr]
        =
        ( h_{i } \HleftX y_{i })\inv ,
    \end{align*}
    and so it follows from \eqref{eq:convergence to gy} that \(
         h_{i } \HleftX y_{i } 
         \to
         (g\HleftX y)\inv .\)
    Again, since  $y_{i }\to y$, \assumption{properness of $\HleftX$} now implies that (a subnet of) $h_{i }$ converges in $\cH$; let $h$ be its limit.
    Thus
    \[
        x_{i } =
        \bigl[x_{i } (h_{i } \HleftX y_{i })\bigr]
        (h_{i } \HleftX y_{i })\inv
        \to 
        z (h\HleftX y)\inv.
    \]
    We have shown that (a subnet of) $\{(x_{i }, h_{i })\}_{i\in I}$ converges, namely to $\bigl(z (h\HleftX y)\inv, h\bigr)$. We conclude that (a subnet of) $\{(\xi_{i },h_{i })\}_{i\in I}$ converges as well. This concludes our proof of properness.

\smallskip

We now want to verify that these two actions commute. Pick $(\xi,h)\in (\cX/\cG)\bowtie \cH$, $y\in \cX$, and $(t,\eta)\in \cG\bowtie (\cH\backslash \cX)$ with matching range resp.\ source and momentum maps.
Let $x$ be the unique element in $\xi$   such that $s_{\cX}(x)=r_{\cX}(h\HleftX y)$. We want to argue that we can chose a particular representative of $\eta$. We compute
\begin{align*}
    \mathfrak{s}(x(h\HleftX y)) &=\cH\HleftX s_{\cX}(x(h\HleftX y)) && \text{(def'n of $\mathfrak{s}$)}\\
    &= \cH\HleftX s_{\cX}(h\HleftX y) \\
    &= \cH\HleftX [(h\HrightX y) \HleftX s_{\cX}(y)]   && \text{(by \eqref{item:L10})}\\
    &= \cH\HleftX s_{\cX}(y) = r(t,\eta) && \text{(def'n of $\cG\bowtie (\cH\backslash\cX)$)}.
\end{align*}
This shows that $(t,\eta) $ can act on the right of $x(h\HleftX y)$. Our previous explanation now implies  that there exists a unique representative $z\in \eta$ which has range  equal to $s_{\cX}([x(h\HleftX y)]\XrightG t)$.
This choice of $x$ and $z$ makes the following computation particularly easy:
\begin{align*}
    \bigl[(\xi,h) \cdot y\bigr]\cdot (t,\eta) &= [x(h\HleftX y)]\cdot (t,\eta)
    =\bigl[\bigl(x(h\HleftX y)\bigr)\XrightG t\bigr]z
    \\
     &=\bigl[x\XrightG \bigl((h\HleftX y)\XleftG t\bigr)\bigr] ((h\HleftX y)\XrightG t) z
    &&\text{(by \eqref{item:R4})}
    \\
     &
     =\bigl[x\XrightG \bigl(y\XleftG t\bigr)\bigr] (h\HleftX (y\XrightG t)) z
    &&
    \text{(by \ref{cond.compatible.c3} and \ref{cond.compatible.c1})}.
\end{align*}

On the other hand,
let $z'\in \eta$ satisfy $r_{\cX}(z')=s_{\cX}(y\XrightG t)$, and let $x'\in \xi$ be the unique element such that $s_{\cX}(x')=r_{\cX}(h\HleftX((y\XrightG t) z'))$. Then
\begin{align*}
    (\xi,h) \cdot \bigl[y\cdot (t,\eta)\bigr] &= (\xi,h) \cdot [(y\XrightG t)z'] &&\text{(choice of $z'$)}\\
    &=x'\bigl[h\HleftX((y\XrightG t)z')\bigr]&&\text{(choice of $x'$)} \\
    &=x'[h\HleftX y\XrightG t]
    \bigl[(h \HrightX (y\XrightG t) 
    )\HleftX z'\bigr]&& \text{(by \eqref{item:L4})} \\
    &
    =x'[h\HleftX y\XrightG t]
    \bigl[(h
    \HrightX y
    )\HleftX z'\bigr]
    && 
    \text{(by \ref{cond.compatible.c3})}.
\end{align*}
Thus,
to prove that $\bigl[(\xi,h) \cdot y\bigr]\cdot (t,\eta)=(\xi,h) \cdot \bigl[y\cdot (t,\eta)\bigr]$,
it suffices to show that
\[
x\XrightG \bigl(y\XleftG t\bigr)
= x'
\quad\text{and}\quad
z=
    (h\HrightX y)
    \HleftX z'.
\]
For the right equation, we compute the range of the right-hand side as
\begin{align*}
    r_{\cX}\bigl(
    (h\HrightX y)
    \HleftX z'\bigr) 
    &= (h\HrightX y)\HleftX r_{\cX}(z') &&\text{(by \eqref{item:L10})} \\
    &= (h\HrightX y)\HleftX s_{\cX}(y\XrightG t)&&\text{(choice of $z'$)} \\
    &= (h\HrightX y)\HleftX [s_{\cX}(y) \XrightG t] &&\text{(by \eqref{item:R10})}\\
    &= s_{\cX}(h\HleftX y)\XrightG t &&\text{(by \eqref{item:L10}).} 
\intertext{On the other hand, }
    r_{\cX}(z) &= s_{\cX}\bigl([x(h\HleftX y)]\XrightG t\bigr)&&\text{(choice of $z$)} \\
    &= s_{\cX}(x(h\HleftX y)) \XrightG t &&\text{(by \eqref{item:L10})} \\
    &= s_{\cX}(h\HleftX y)\XrightG t.
\end{align*}
Both combined yield
\begin{equation*}
 r_{\cX}\bigl((h\HleftX y\XrightG t) \HleftX z'\bigr) =r_{\cX}(z) .
\end{equation*}
Since $\cH\HleftX z=\cH\HleftX z'=
H\HleftX \bigl(
    (h\HrightX y)
    \HleftX z'\bigr)$ \assumption{and since the actions are free,} it follows from  Lemma~\ref{lm.uniqueness} 
that $z=
    (h\HrightX y)
    \HleftX z'$. A similar argument shows that $x'=x\XrightG 
    (y\XleftG t)
    $. 
    We proved that the left-$(\cX/\cG)\bowtie \cH$ and the right-$\cG\bowtie (\cH\backslash \cX)$ actions on~$\cX$ commute. 
\end{proof}

We now prove the first main result (Theorem~\ref{thm.groupoid.eq}) which states that~$\cX$ is a $(\cX/\cG)\bowtie \cH \sme \cG\bowtie (\cH\backslash \cX)$-equivalence.

\begin{proof}[Proof of Theorem~\ref{thm.groupoid.eq}] According to Proposition~\ref{prop.actions.X}, we have commuting free and proper left $(\cX/\cG)\bowtie \cH$- and right $\cG\bowtie (\cH\backslash \cX)$-actions on $\cX$. It remains to show that $\mathfrak{r}$ induces a homeomorphism $\tilde{\mathfrak{r}}$
between $\cX/(\cG\bowtie (\cH\backslash \cX))$ and $((\cX/\cG)\bowtie \cH)\z $; a similar proof will then show that $\mathfrak{s}$ induces an analogous homeomorphism.

Fix $y\in \cX$ and consider any $(t,\eta)\in \cG\bowtie (\cH\backslash \cX)$ 
with $\mathfrak{s}(y)=r(t,\eta)$. Let $z\in \eta$ be the unique element such that
$r_{\cX}(z)=s_{\cX}(y\XrightG t
)$, so that by definition of the right-$\cG\bowtie (\cH\backslash \cX)$-action, 
$y\cdot (s,\eta)=(y\XrightG t)z$.
Consider its range in $\cX$:
\begin{align*}
    r_{\cX}(y\cdot (t,\eta)) &= r_{\cX}((y\XrightG t)z) =r_{\cX}(y\XrightG t) \\
    &=r_{\cX}(y)\XrightG(y\XleftG t) \in r_{\cX}(y)\XrightG G.
\end{align*}
Therefore, if we write $\overline{y}$ for the equivalence class of $y$ in $\cX/(\cG\bowtie (\cH\backslash \cX))$, then
$\tilde{\mathfrak{r}} (\overline{y})
=
r_{\cX}(y)\XrightG \cG$ is well defined. Surjectivitiy, continuity, and opennes of $\tilde{\mathfrak{r}}$ is trivial, since $\mathfrak{r}$ is surjective, continuous, and open.
To see that $\tilde{\mathfrak{r}}$ 
is injective, take any $y,y'$ with $r_{\cX}(y)\XrightG \cG=r_{\cX}(y')\XrightG \cG$; we need to find $t
\in \cG$ and  $\eta\in \cH\backslash\cX$ 
such that $y\cdot (s,\eta
)=y'$.  By assumption, 
there exists $s
\in \cG$ such that $r_{\cX}(y')=r_{\cX}(y)\XrightG s
$. Set $t
=y\inv \XleftG s
$ 
Then $s
=y\XleftG t
$ and thus by \eqref{item:R10},
\[r_{\cX}(y')=r_{\cX}(y)\XrightG (y\XleftG  t
) = r_{\cX}(y\XrightG t
).\]
Since $y'$ and $y\XrightG  t
$ have the same range in $\cX$, we may let $x=(y\XrightG  t
)\inv  y'\in \cX$, so that $y'=(y\XrightG  t
)x$, i.e., $y'=y\cdot ( t
,\cH\HleftX x)$.
\end{proof}

\begin{remark}
    Let us briefly recap which topological assumption in Theorem~\ref{thm.groupoid.eq} was needed for which part of the proof. We required the source map of $\cH$ to be open in order for the quotient map $q\colon \cX\to\cH\backslash \cH$ to be open which, in turn,  we used to show that the momentum map $\tilde{\sigma}$ 
    of the $\cG$-action on $\cH\backslash\cX$ is continuous (see  proof of Proposition~\ref{prop.ssp actions on quotients}). Freeness of the $\cH$-action on $\cX$ allowed us to turn $\cH\backslash\cX$ into a groupoid (Lemma~\ref{lm.orbit multiplication}), and  its properness plus openness of $q$ was needed to make $\cH\backslash\cX$ a \LCH\ groupoid (Proposition~\ref{prop.gpd structure on orbit space}). Lastly, the source map of $\cX$ was required to be open in order to prove that the momentum map $\mathfrak{s}$ of the right $\cG\bowtie (\cH\backslash \cX)$-action on $\cX$  (Proposition~\ref{prop.actions.X}) is open and can therefore induce a homeomorphism of the quotient by the right $(\cX/\cG)\bowtie \cH$-action onto the unit space of $\cG\bowtie (\cH\backslash \cX)$.
\end{remark}

\begin{corollary}[cf.\ {\cite[Proposition 2.47]{Wil2019}}]\label{cor.gprd.one.sided} Suppose $\cH$ and $\cX$ are groupoids and that $\cH$ has a \ssla\ on $\cX$ that is free and proper. If $s_{\cX}$ and $s_{\cH}$ are open maps, then the groupoids $\cX\bowtie \cH$ and $\cH\backslash \cX$ are equivalent.
\end{corollary}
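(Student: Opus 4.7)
The plan is to deduce the corollary from Theorem~\ref{thm.groupoid.eq} by taking the second groupoid to be the trivial group $\{e\}$. To set this up, I would first equip $\cX$ with the trivial \ssra\ of $\{e\}$, in the obvious right-hand analogue of Example~\ref{ex:ssla of trivial group}, with $\sigma_{\cX}\z\colon\cX\z\to\{e\}$ the constant map. This \ssra\ is automatically free and proper (analogue of Example~\ref{ex:ssla of trivial gp and gpd:free and proper}), and $s_{\{e\}}$ is trivially open. Together with the standing hypotheses that $\HleftX$ is free and proper and that $s_{\cX},s_{\cH}$ are open, the right-left analogue of Example~\ref{ex:ssla of trivial gp and gpd:sometimes a something} then shows that $\cX$ is a $(\cH,\{e\})$-\sspa: the four \intuneadj\ conditions~\ref{cond.compatible.c0}--\ref{cond.compatible.c3} hold because $x\XrightG e=x$, $x\XleftG e=e$, the right momentum map is constant, and $h\HrightX (x\XrightG e) = h\HrightX x$, so that each equation collapses to a tautology.

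Having placed ourselves in the setting of Theorem~\ref{thm.groupoid.eq}, I would then invoke it to obtain an equivalence, implemented by $\cX$, between $(\cX/\{e\})\bowtie\cH$ and $\{e\}\bowtie(\cH\backslash\cX)$. The remaining work is to identify these two \ssp\ groupoids with $\cX\bowtie\cH$ and $\cH\backslash\cX$, respectively. For the first, the right-hand analogue of Example~\ref{ex:ssla of trivial group and groupoid:quotient} gives $\cX/\{e\}\cong \cX$ via $x\XrightG\{e\}\mapsto x$, and under this identification the induced \ssla\ of $\cH$ on $\cX/\{e\}$ described in Proposition~\ref{prop.ssp actions on quotients} coincides with the original \ssla\ on $\cX$; hence $(\cX/\{e\})\bowtie\cH\cong\cX\bowtie\cH$. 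For the second, the right-hand analogue of Example~\ref{ex:ssla of trivial group and groupoid:bowtie} gives $\{e\}\bowtie(\cH\backslash\cX)\cong\cH\backslash\cX$ via $(e,\eta)\mapsto\eta$. Chaining these isomorphisms with the equivalence produced by Theorem~\ref{thm.groupoid.eq} yields the desired equivalence $\cX\bowtie\cH\sme\cH\backslash\cX$.

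I do not anticipate a real obstacle here: the argument is essentially a bookkeeping exercise that trivializes the $\cG$-side of the symmetric picture. The only mildly delicate point is checking that the induced $\cH$-action on the quotient $\cX/\{e\}$, when transported back along the isomorphism $\cX/\{e\}\cong\cX$, really does recover the original action and the original \ssp; this is immediate from the formulas in Proposition~\ref{prop.ssp actions on quotients} once one observes that the $\{e\}$-orbits in $\cX$ are singletons.
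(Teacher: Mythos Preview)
Your proposal is correct and follows essentially the same approach as the paper: set $\cG=\{e\}$, use the (right-hand analogue of) Example~\ref{ex:ssla of trivial gp and gpd:sometimes a something} to verify that $\cX$ is a $(\cH,\{e\})$-\sspa, apply Theorem~\ref{thm.groupoid.eq}, and then identify $(\cX/\{e\})\bowtie\cH\cong\cX\bowtie\cH$ and $\{e\}\bowtie(\cH\backslash\cX)\cong\cH\backslash\cX$ via Examples~\ref{ex:ssla of trivial group and groupoid:quotient} and~\ref{ex:ssla of trivial group and groupoid:bowtie}. The paper's proof is terser but structurally identical; your additional remarks on the \intuneadj\ conditions and on transporting the $\cH$-action through $\cX/\{e\}\cong\cX$ are correct elaborations of steps the paper leaves implicit.
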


\begin{proof}
    We have seen in Example~\ref{ex:ssla of trivial gp and gpd:sometimes a something} that $\cX$ is a $(\cH,\{e\})$-\sspa\ (modulo switching the roles of $\cH$ and $\cG$).
    Theorem~\ref{thm.groupoid.eq} thus implies that $(\cX/\{e\})\bowtie \cH$ and $\{e\}\bowtie (\cH\backslash \cX)$ are equivalent groupoids.
    By Examples~\ref{ex:ssla of trivial group and groupoid:bowtie} and~\ref{ex:ssla of trivial group and groupoid:quotient}, we have $\{e\}\bowtie (\cH\backslash \cX) \cong \cH\backslash \cX$ and $(\cX/\{e\})\bowtie \cH \cong \cX\bowtie \cH$, respectively. The claim now follows.
\end{proof}

\subsection{%
Applications of Theorem~\ref{thm.groupoid.eq}
}%
\label{ssec:Applications of thm.groupoid.eq}

\begin{example}[%
    continuation of Examples~\ref{ex.example1 - part 1} and~\ref{ex.example1 - part 2}%
    ]\label{ex.example1 - part 3}     
    Suppose again that a \LCH\ group $K=G\bowtie H$ acts on the left on a \LCH\ space $X$, denoted by $\ast$. We let $\cX=G\ltimes X$ be the transformation groupoid, and we define the \ssla\ $\HleftX$ and $\HrightX$ of $H$ on $\cX$ as in~\eqref{eq:actions in example1}.
    We assume that $\ast$ is free and proper, so that $\HleftX$ and $\HrightX$ are free and proper by our computations in Example~\ref{ex.example1 - part 2}. Thus, by
    Corollary~\ref{cor.gprd.one.sided}, we 
    get that $\cX\bowtie H$ is equivalent to $H\backslash \cX$. (Here, the assumption that the source maps are open is trivially satisfied: the source map of $H$ is constant and the source map of $X$ is the identity map.)

    Note that the map 
    \[
        \phi\colon (G\ltimes X)\bowtie H \to (G\bowtie H) \ltimes X,
        \quad
        ((t,x),h) \mapsto ((t,h), h\inv \ast x)
        ,
    \]
    is a groupoid isomorphism $\cX\bowtie H \cong K\ltimes X$. Indeed, using
    the definition of $\bowtie$ in $\cX\bowtie H$, we compute
    the product of two elements of the domain to be
    \[
        \bigl((t,x),h\bigr)\, \bigl((s,y),k\bigr)
        =
        \bigl(
            (t,x)[h\HleftX (s,y)],
            [h\HrightX (s,y)]k
        \bigr)
        =
        \bigl(
            (t,x)(h\cdot s,h|_{s}\ast y),
            h|_{s}k
        \bigr).
    \]
    Of the tuple on the far right-hand side, the first component is a product in $\cX$; it
    is defined if and only if the source of $(t,x)$ equals the range of $(h\cdot s,h|_{s}\ast y)$.
    In other words, we must have
    $x= (h\cdot s)\ast[h|_{s}\ast y]=[(h\cdot s )(h|_{s})]\ast y= [hs]\ast y$, in which case their product is $(t[h\cdot s],h|_{s}\ast y)$. Therefore,
    the composition in $\cX\bowtie H$ can be described succinctly as follows:
    \[
        \bigl((t,[hs]\ast y),h\bigr)\, \bigl((s,y),k\bigr)
        =
        \bigl(
            (t[h\cdot s],h|_{s}\ast y),
            h|_{s}k
        \bigr)
        .
    \]
    Applying $\phi$, we end up with
    \[
        \phi
        \Bigl(
            \bigl(
                (t,[hs]\ast y),h
            \bigr)\,
            \bigl((s,y),k\bigr)
        \Bigr)
        =
        \bigl(
            (t[h\cdot s],h|_{s}k),
            [h|_{s}k]\inv \ast [h|_{s}\ast y]
        \bigr)
        =
        \bigl(
            (t[h\cdot s],h|_{s}k),
            k\inv \ast y
        \bigr)
        .
    \]
    On the other
    hand, 
    the product of $\phi((t, h), x)$ with $\phi((s, k), y)$ in the codomain $K\ltimes X$  is defined if and only if the source $h\inv \ast x$ of $((t, h),h\inv x)$ equals the range $(s,k)\ast (k\inv \ast y)$ of $((s, k),k\inv \ast y)$.
    In other words, we get the same necessary condition for composability as above, namely that
    $x=[hs]\ast y$, in which case
    \[
        \phi \bigl((t,[hs]\ast y),h\bigr)\, \phi\bigl((s,y),k\bigr)
        =
        \bigl( (t,h)(s,k), k\inv \ast y\bigr)
        .
    \]
    In $K$, we have $(t,h)(s,k)=(t[h\cdot s], h|_{s}k)$, which shows that indeed 
    \[
        \phi \bigl((t,x),h\bigr)\, \bigl((s,y),k\bigr)
        =
        \phi \bigl((t,x),h\bigr)\, \phi\bigl((s,y),k\bigr).
    \]
    \end{example}
The setup in Example~\ref{ex.example1 - part 3} arises abundantly in group dynamics. 

\begin{example}[First special case of Example~\ref{ex.example1 - part 3}]\label{ex.example1 a)}
    In the above example, suppose that $G=\{e\}$, so $K=H$ and $\cX=X$ is a trivial groupoid (i.e., a space). The action $\HrightX$ is now trivial and the action $\HleftX$ is exactly the action $\ast$ of $K$ on $X$ that we started with. If $\ast$ is free and proper, Example~\ref{ex.example1 - part 3} shows that the transformation groupoid $K\ltimes X$ is equivalent to $K\backslash X$. Note that the trivial groupoid $K\backslash X$ always admits a Haar system (see, for example, \cite[Example 1.22]{Wil2019}). Assuming that the two groupoids are second countable, $K\ltimes X$ therefore also admits a Haar system by \cite[Theorem 2.1]{Wil:Haar}. We may now apply \cite[Theorem 2.8]{MRW:Grpd}, which states that the C*-algebras of  equivalent groupoids with Haar systems are Morita equivalent. In other words, we exhibit the known result that the crossed product $C_0(X)\rtimes K$ is Morita equivalent to $C_0(K\backslash X)$.
\end{example}

The following 
is a concrete example using a finite group $K$. 

\begin{example}[Second special case of Example~\ref{ex.example1 - part 3}]\label{ex.example.concrete}
Consider the symmetric group $S_4$, which is a group of order $24$, 
and the elements
\[
a=\left(\begin{matrix}1 & 2 & 3\end{matrix}\right)
\quad\text{and}\quad
r=\left(\begin{matrix}1 & 2 & 3 & 4\end{matrix}\right),
f=\left(\begin{matrix}1 & 3\end{matrix}\right).
\]
Let
$G=\langle a
\rangle$ and $H=\langle r
, f
\rangle$;  
one can verify that $G$ and $H$ are 
of order $3$ and $8$ respectively, 
that neither subgroup is normal,
and that $G\cong C_3$ and $H\cong D_4$. 

Since $|S_4|=|G|\cdot |H|$ and $|G\cap H|=1$, we must have $S_4=G\cdot H$, i.e., each element in $S_4$ is a unique product of the form $t h$ for $t\in G$ and $h\in H$. In other words, $S_{4}=K$ is the internal \ZS\ product of $G$ and $H$, and in particular, we get \ZS\ actions $G \arrowlssa H$ in such a way that any product $h t$ of $h\in H$ and $t\in G$ in $S_{4}$ can be uniquely decomposed as 
\[h t=(h\cdot t)(h|_{t})\]
where $h\cdot t\in G$ and $h|_{t}\in H$. 
Tables~\ref{table.ZS.S4_action} and \ref{table.ZS.S4_restr} contains an overview of these actions.

\begin{table}[ht]
    \begin{minipage}{.49\linewidth}
          \caption{\label{table.ZS.S4_action} Action map $h\cdot t$ on $S_4$}\centering
            \begin{tabular}{c|ccc} 
                \diagbox{$h$}{$t$} &    $e$   &   $a$    &   $a^2$    \\ 
                                  \hline
             {$e$} & $e$   &  $a$   &  $a^2$      \\
             {$r$} &  $e$  &  $a^2$ &  $a$   \\
             {$r^2$} & $e$ &  $a$   &  $a^2$   \\
             {$r^3$} & $e$ &  $a^2$ &  $a$     \\
             {$f$} & $e$   &  $a^2$ &  $a$      \\
             {$rf$} & $e$  &  $a$   &  $a^2$      \\
             {$r^2f$} & $e$&  $a^2$ &  $a$       \\
             {$r^3f$} & $e$&  $a$   &  $a^2$    
            \end{tabular}
    \end{minipage}
    \begin{minipage}{.49\linewidth}
          \caption{\label{table.ZS.S4_restr} Restriction map $h|_{t}$ on $S_4$}\centering
            \begin{tabular}{c|ccc} 
            \diagbox{$h$}{$t$}      &   $e$    &    $a$   &  $a^2$     \\\hline {$e$}   & $e$    & $e$    & $e$      \\{$r$}   & $r$    & $r^2f$ & $r^3$     \\{$r^2$} & $r^2$  & $rf$   & $r^3f$     \\{$r^3$} & $r^3$  & $r$    & $r^2f$     \\{$f$}   & $f$    & $f$    & $f$     \\{$rf$}  & $rf$   & $r^3f$ & $r^2$     \\{$r^2f$}& $r^2f$ & $r^3$  & $r$     \\{$r^3f$}& $r^3f$ & $r^2$  & $rf$     
            \end{tabular}
    \end{minipage}
\end{table}

Now let $X=S_4$ and we let $S_4$ act on $X$ by left translation,
so that $K\ltimes X = S_{4}\tensor[_{\operatorname{lt}}]{\ltimes}{} S_{4}$. One
can explicitly write out all the orbits in $H\backslash (G\ltimes X)$,
and verify that the nine elements in $G\ltimes G\subseteq G\ltimes X$ are in different $H$-orbits. Since $|H\backslash (G\ltimes X)|=9$, we have $H\backslash (G\ltimes X)\cong G\ltimes G$. 
By
    Example~\ref{ex.example1 - part 3},
we conclude that the groupoids $S_{4}\tensor[_{\operatorname{lt}}]{\ltimes}{} S_{4}$ and $G\ltimes G$ are equivalent. By 
the Stone--von Neumann Theorem,
their groupoid C*-algebras 
are given by
$\mathcal{K}(\ell^2(S_4))\cong M_{24}(\mathbb{C})$ and $\mathcal{K}(\ell^2(G))\cong M_{3}(\mathbb{C})$. Consequently, these C*-algebras are Morita equivalent.
\end{example}

\begin{example}[continuation of Example~\ref{ex:skew product}]\label{ex:skew product, part 2}
    Suppose again that $\mathbf{c}\colon \cG\to H$ is a continuous homomorphism from a groupoid to a group. In Example~\ref{ex:skew product}, we described a \ssla\ of $H$ on the skew-product groupoid $\cG(\mathbf{c})$. This action is free and proper. 
    Note that $s_{\cG}$ is  open  if and only if $s_{\cG(\mathbf{c})}$ is  open, in which case it follows from Corollary~\ref{cor.gprd.one.sided} that $\cG(\mathbf{c})\bowtie H$ is equivalent to $H\backslash\cG(\mathbf{c})\cong\cG$.
\end{example}

\subsection{Haar Systems on quotients}

To construct a right Haar systems on $\cX/\cG$ out of a right
Haar system on~$\cX$, 
we again require $\XrightG$-invariance. 

\begin{lemma}[cf.\ {\cite[Prop.\ A.10]{KMQW2013}}]\label{lem.orbit.Haar.right}
    Suppose $\cG$ and $\cX$ are \LCH\ groupoids, that $\cG$ has a \assumption{free and proper} \ssra\ on $\cX$, and that $\cX$ has a $\XrightG$-invariant right Haar system $\{\lambda_u\}_{u\in\cX\z}$ (Definition~\ref{df.lambda G-invariance}). 
    Then there exists a right Haar system $\{\kappa_{u\XrightG \cG}\}_{u\XrightG \cG 
    }$ on $\cX/\cG$ given for any $\widehat{f}\in C_c(\cX/\cG)$ by
\[\int \widehat{f}(x\XrightG G) \dif\kappa_{u\XrightG G}(x\XrightG G)
=\int_{\cX} \widehat{f}(x\XrightG G)\dif\lambda_u(x).
\]
\end{lemma}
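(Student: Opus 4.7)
The plan is to check that $\kappa_{u\XrightG\cG}(\widehat{f})\coloneqq \int_{\cX}\widehat{f}(x\XrightG\cG)\dif\lambda_u(x)$ defines a right Haar system, verifying in turn well-definedness, support, continuity, and right-invariance. Throughout, let $q\colon \cX\to\cX/\cG$ denote the quotient map, which is open by openness of $s_{\cG}$ and \cite[Prop.\ 2.12]{Wil2019}.

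The main obstacle is showing that the integral is finite, since $\widehat{f}\circ q$ is not itself compactly supported on $\cX$. As $\supp\lambda_u=\cX_u$, it suffices to argue that $q\inv(\supp\widehat{f})\cap\cX_u$ is compact. Given a net $\{x_n\}$ therein, a subnet of $\{q(x_n)\}$ converges in $\supp\widehat{f}$, say to $q(y)$; by openness of $q$ and Proposition~\ref{Fell's criterion} I can refine and find lifts $y_n\to y$ with $y_n=x_n\XrightG t_n$ for some $t_n\in\cG$. Then \eqref{item:R10} gives $s(y_n)=u\XrightG t_n$, which converges in $\cX\z$, and the standard properness-plus-freeness argument (see the proof of Proposition~\ref{prop.actions.X} for a similar extraction) yields a convergent subnet $t_n\to t$. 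Using \eqref{item:R5} and \eqref{item:R2} one checks that $x_n=y_n\XrightG t_n\inv$ converges to an element of $\cX_u$. Independence of the representative $u$ in $u\XrightG\cG$ then follows immediately from $\XrightG$-invariance of $\lambda$: if $u'=u\XrightG t$, then $\lambda_{u'}=\lambda_u\XrightG t$, and the change of variables gives the same value since $(x\XrightG t)\XrightG\cG=x\XrightG\cG$. Positivity and linearity are clear, so $\kappa_{u\XrightG\cG}$ is a Radon measure, and its support equals $(\cX/\cG)_{u\XrightG\cG}$ by observing that $q(\cX_u)\subseteq(\cX/\cG)_{u\XrightG\cG}$ via~\eqref{eq.s and r for quotient} and, conversely, that every element of $(\cX/\cG)_{u\XrightG\cG}$ admits a lift in $\cX_u$ after adjusting by the $\cG$-action.

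For continuity of $u\XrightG\cG\mapsto\kappa_{u\XrightG\cG}(\widehat{f})$, I would localize: pick a precompact neighborhood $V$ of a given $u_0$ in $\cX\z$. A repeat of the compactness argument above shows that $W\coloneqq q\inv(\supp\widehat{f})\cap s_{\cX}\inv(\overline{V})$ is compact, and Urysohn's lemma then supplies $\phi\in C_c(\cX)$ with $\phi\equiv 1$ on $W$. The function $(\widehat{f}\circ q)\phi$ lies in $C_c(\cX)$ and agrees with $\widehat{f}\circ q$ on $\cX_u$ for every $u\in V$, so $\kappa_{u\XrightG\cG}(\widehat{f})=\int(\widehat{f}\circ q)\phi\dif\lambda_u$, which is continuous in $u$ by the Haar system property of $\lambda$ and hence continuous on $\cX\z/\cG$ by the universal property of the quotient topology. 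Finally, right-invariance of $\kappa$ reduces to that of $\lambda$ via a lift: given $\widehat{\gamma}\in\cX/\cG$, choose $\gamma\in\cX$ with $q(\gamma)=\widehat{\gamma}$, and apply right-invariance of $\lambda$ under the substitution $y=x\gamma$ to turn $\int\widehat{f}(\widehat{x}\,\widehat{\gamma})\dif\kappa_{r(\widehat{\gamma})}(\widehat{x})$ into $\int\widehat{f}\dif\kappa_{s(\widehat{\gamma})}$.
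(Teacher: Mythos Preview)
Your proof is correct, but it takes a genuinely different route from the paper. The paper's argument is a one-line reduction to \cite[Lemma~1.3]{Renault1987Fr}: one applies Renault's lemma with $(X,Y,G,\pi)=(\cX,\cX\z,\cG,s_{\cX})$, noting that $\cX$ is a principal $\cG$-space (freeness and properness), that $s_{\cX}$ is open (since $\cX$ carries a Haar system), and that $s_{\cX}$ is $\cG$-equivariant by~\eqref{item:R10}. Renault's lemma then asserts precisely that the given formula defines a continuous system of measures for the induced map $\cX/\cG\to\cX\z/\cG$, which is the source map of $\cX/\cG$.

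By contrast, you verify the Haar system axioms by hand: the compactness of $q\inv(\supp\widehat f)\cap\cX_u$ (and its uniform version over $\overline V$), the Urysohn cut-off to reduce to a $C_c(\cX)$ integrand, and the lift-and-substitute argument for right-invariance. This is essentially a self-contained reproof of the relevant special case of Renault's lemma. The advantage of your approach is that it keeps the paper independent of an external reference and makes the mechanism transparent; the advantage of the paper's approach is brevity and the recognition that the result sits inside a known general framework. One minor remark: in your compactness extraction you cite ``properness-plus-freeness'', but only properness is needed to obtain a convergent subnet of $\{t_n\}$; freeness plays no role at that step.
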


\begin{proof}
    The argument is verbatim as in the proof of \cite[Prop.\ A.10]{KMQW2013}, only that the range map of $\cX$ has to be replaced by its source map. To be precise, we will invoke \cite[Lemma 1.3]{Renault1987Fr} for $(X,Y,G,\pi)=(\cX,\cX\z, \cG,s_{\cX})$. Since we assumed
    \assumption{%
    $\XrightG$ to be free and proper,
    }%
    $\cX$ is a principal $\cG$-space.
    \assumption{%
    Since $\cX$ is assumed to have a Haar system,
    }%
    its continuous source map $s_{\cX}
    $ is open \cite[Prop.\ 1.23]{Wil2019}. It is furthermore equivariant by \eqref{item:R10}, so that we may apply \cite[Lemma 1.3]{Renault1987Fr}. The given formula for $\kappa$ is hence a system for the map $\cX/\cG\to \cX\z/\cG$, $x\XrightG \cG\mapsto s_{\cX}(x)\XrightG \cG$, which is the source map of the groupoid $\cX/\cG$. In other words, $\kappa$ is a right Haar system for $\cX/\cG$.
\end{proof}

\begin{lemma}\label{lm.quotient Haar system is invariant}
Suppose $\cX,\cH,\cG$ are \LCH\ groupoids and that $\cX$ has a left $\cH$-action $\HleftX$ and a \assumption{free and proper} right $\cG$-action $\XrightG$.
Assume $\{\lambda_u\}_{u\in\cX\z}$ is a $\XrightG$-invariant right Haar system on $\cX$  (Definition~\ref{df.lambda G-invariance}), and let $\{\kappa_{u\XrightG \cG}\}_{u\XrightG \cG}$ be the induced right Haar system on $\cX/\cG$ (Lemma~\ref{lem.orbit.Haar.right}).
If the left Haar system $\{\lambda^{u}\}_{u\in\cX\z}$ on $\cX$ defined by $\lambda^{u}(E)=\lambda_u(E\inv )$ is $\HleftX$-invariant   (Definition~\ref{df.lambda H-invariance}), then the left Haar system $\{\kappa^{u\XrightG \cG}\}_{u\XrightG \cG}$ on $\cX/\cG$ associated to $\{\kappa_{u\XrightG \cG}\}_{u\XrightG \cG}$ is $\qHleftX$-invariant.
\end{lemma}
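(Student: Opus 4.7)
The plan is to verify the integral form of $\qHleftX$-invariance: for every $\hat f\in C_c(\cX/\cG)$, every $h\in\cH$, and every $u\in\cX\z$ with $s_{\cH}(h)=\rho_{\cX}(u)$, I want to show
\[
    \int_{\cX/\cG} \hat f(h\qHleftX \xi)\,\dif\kappa^{u\XrightG\cG}(\xi)
    =
    \int_{\cX/\cG} \hat f(\eta)\,\dif\kappa^{(h\HleftX u)\XrightG\cG}(\eta),
\]
where I have used Proposition~\ref{prop.ssp actions on quotients} to rewrite $h\qHleftX(u\XrightG\cG)=(h\HleftX u)\XrightG\cG$. The strategy is to translate both sides into integrals over $\cX$ against the given Haar systems $\lambda^{u}$ and $\lambda^{h\HleftX u}$, and then invoke the assumed $\HleftX$-invariance.

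The key bridge is the identity
\[
    \int_{\cX/\cG} \hat g(\xi)\,\dif\kappa^{w\XrightG\cG}(\xi)
    =
    \int_{\cX} \hat g(x\XrightG\cG)\,\dif\lambda^{w}(x)
    \qquad (w\in\cX\z,\ \hat g\in C_c(\cX/\cG)).
\]
To prove this, I unpack the definitions $\kappa^{w\XrightG\cG}(E)=\kappa_{w\XrightG\cG}(E\inv)$ and $\lambda^w(E)=\lambda_w(E\inv)$, and combine Lemma~\ref{lem.orbit.Haar.right} with the fact that inversion on $\cX/\cG$ is given by $(x\XrightG\cG)\inv=x\inv\XrightG\cG$. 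This last formula is the $\XrightG$-analogue of the inversion formula in Proposition~\ref{prop.gpd structure on orbit space} and is a direct consequence of~\eqref{item:R9}.

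Applying this bridge to both sides reduces the claim to
\[
    \int_{\cX} \hat f((h\HleftX x)\XrightG\cG)\,\dif\lambda^{u}(x)
    =
    \int_{\cX} \hat f(y\XrightG\cG)\,\dif\lambda^{h\HleftX u}(y),
\]
which is the $\HleftX$-invariance of $\lambda^u$ (Equation~\eqref{eq:HleftX-invariance in integral form}) applied to the function $G(x)\coloneqq \hat f(x\XrightG\cG)$. The main obstacle is that $G$ is only continuous on $\cX$ and need not be compactly supported, so~\eqref{eq:HleftX-invariance in integral form} does not apply verbatim. This is circumvented by a Bruhat-type cutoff: since $\XrightG$ is free and proper and $\cG$ admits a Haar system $\beta$, one can choose $b\in C_c(\cX)$ with $\int_\cG b(x\XrightG s)\,\dif\beta(s)=1$, so that $f(x)\coloneqq \hat f(x\XrightG\cG)b(x)$ belongs to $C_c(\cX)$ and satisfies $G(x)=\int_\cG f(x\XrightG s)\,\dif\beta(s)$. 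Fubini's theorem then reduces the desired identity to~\eqref{eq:HleftX-invariance in integral form} applied to $f$, after using Condition~\ref{cond.compatible.c1} to commute $\HleftX$ past $\XrightG$ inside the integrand.
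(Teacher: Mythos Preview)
Your overall strategy---establish the bridge
\[
\int_{\cX/\cG}\hat g(\xi)\,\dif\kappa^{w\XrightG\cG}(\xi)=\int_{\cX}\hat g(x\XrightG\cG)\,\dif\lambda^{w}(x)
\]
and then reduce to the $\HleftX$-invariance of $\lambda^{u}$---is exactly what the paper does, just phrased in integral language rather than in terms of measures of sets. The paper unfolds $\kappa^{u\XrightG\cG}$ to $\kappa_{u\XrightG\cG}$ (via the inversion formula on $\cX/\cG$), then to $\lambda_{u}$ (the defining relation in Lemma~\ref{lem.orbit.Haar.right}), then to $\lambda^{u}$, and finally invokes $\HleftX$-invariance; your bridge is precisely this chain read as a pushforward identity.

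The Bruhat-cutoff step, however, is unnecessary and also smuggles in a hypothesis (a Haar system on $\cG$) that the lemma does not assume. The obstacle you flag is not real: Definition~\ref{df.lambda H-invariance} states $\HleftX$-invariance as an equality of \emph{Radon measures}, $h\HleftX\lambda^{u}=\lambda^{h\HleftX u}$; the $C_{c}$-integral identity~\eqref{eq:HleftX-invariance in integral form} is recorded only as an equivalent reformulation. An equality of Radon measures applies directly to any bounded Borel integrand, in particular to the bounded continuous function $G(x)=\hat f(x\XrightG\cG)$, with no need to manufacture compact support. The paper exploits this by computing with measures of sets throughout, so the issue never arises.
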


\begin{proof}
    The computation is straight forward: on the one hand,
    \begin{align*}
        \kappa^{u\XrightG \cG} (h\inv\qHleftX [E\XrightG \cG])
        &=
        \kappa^{u\XrightG \cG} ([h\inv\HleftX E]\XrightG \cG)
        &&\text{(def'n of $\qHleftX$)}
        \\
        &=
        \kappa_{u\XrightG \cG} \bigl([h\inv\HleftX E]\inv\XrightG \cG\bigr)
        &&\text{(def'n of $\kappa^{u\XrightG \cG}$ and of ${}\inv$ on $\cX/\cG$)}
        \\
        &=
        \lambda_{u} ([h\inv\HleftX E]\inv)
        &&\text{(def'n of $\kappa_{u\XrightG \cG}$)}
        \\
        &=
        \lambda^{u} (h\inv\HleftX E)
        &&\text{(def'n of $\lambda^{u}$)}
        \\
        &=
        \lambda^{h\HleftX u} ( E)
        &&\text{($\HleftX$-invariance of $\lambda^{u}$).}
\intertext{On the other hand,}
        \kappa^{h\qHleftX[u\XrightG \cG]} (E\XrightG \cG)
        &=
        \kappa_{h\qHleftX[u\XrightG \cG]} ([E\XrightG \cG]\inv)
        &&\text{(def'n of $\kappa^{h\qHleftX[u\XrightG \cG]}$)}
        \\
        &=\kappa_{[ h\qHleftX u]\XrightG \cG} (E\inv\XrightG \cG)
        &&\text{(def'n  of $\qHleftX$ and of ${}\inv$ on $\cX/\cG$)}
        \\
        &=\lambda_{h\HleftX u} ( E\inv)
        &&\text{(def'n of $\kappa_{[ h\qHleftX u]\XrightG \cG}$)}
        \\
        &=\lambda^{h\HleftX u} ( E)
        &&\text{(def'n of $\lambda^{h\HleftX u}$).}
    \end{align*}
    This shows that $\kappa^{u\XrightG \cG} (h\inv\qHleftX [E\XrightG \cG])=\kappa^{h\qHleftX[u\XrightG \cG]} (E\XrightG \cG)$.
\end{proof}

\begin{corollary}\label{cor.gpd equiv plus Haar makes SME}
    Suppose $\cG,\cH,\cX$ are as in  Theorem~\ref{thm.groupoid.eq}. 
    \repeatable{Assumption on Haar in cor.gpd equiv plus Haar makes SME}{%
         Assume that $\cX$ has a $\HleftX$-invariant left Haar system  
         (Definition~\ref{df.lambda H-invariance}) whose associated right Haar system 
         is $\XrightG$-invariant%
    }%
    . If $\cH$ and $\cG$ have Haar systems, then so do $(\cX/\cG)\bowtie \cH$ and $\cG\bowtie (\cH\backslash \cX)$, and so their C*-algebras  $\textrm{C}^*((\cX/\cG)\bowtie \cH)$ and $\textrm{C}^*(\cG\bowtie (\cH\backslash \cX))$ are Morita equivalent.
\end{corollary}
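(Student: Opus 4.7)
The plan is to assemble the quotient-Haar-system and the \ssp-Haar-system constructions from the preceding subsections on each of the two equivalent groupoids, and then invoke \cite[Theorem 2.8]{MRW:Grpd}. Let $\lambda=\{\lambda_{u}\}_{u\in\cX\z}$ denote the right Haar system on $\cX$ provided by the hypothesis, and let $\lambda^{u}(E)\coloneqq \lambda_{u}(E\inv)$ be the associated left system; by assumption $\lambda$ is $\XrightG$-invariant and $\lambda^{u}$ is $\HleftX$-invariant. Fix left and right Haar systems $\varepsilon$ on $\cH$ and $\mu$ on $\cG$, respectively.

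First I construct a left Haar system on $(\cX/\cG)\bowtie \cH$. Since $\XrightG$ is free and proper, Lemma~\ref{lem.orbit.Haar.right} yields a right Haar system $\kappa=\{\kappa_{u\XrightG\cG}\}$ on the quotient groupoid $\cX/\cG$. Lemma~\ref{lm.quotient Haar system is invariant} then says that its associated left Haar system $\{\kappa^{u\XrightG\cG}\}$ is $\qHleftX$-invariant, where $\qHleftX$ is the \ssla\ of $\cH$ on $\cX/\cG$ from Proposition~\ref{prop.ssp actions on quotients}. We are now in a position to apply Proposition~\ref{prop.ZSProduct.Haar.left} to the pair $(\cX/\cG,\cH)$ together with $\varepsilon$, which produces a left Haar system $\kappa\bowtie\varepsilon$ on $(\cX/\cG)\bowtie\cH$.

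For the other groupoid, I would invoke the right-handed analogs that are mentioned but not written down in Subsection~\ref{ssec.right ssa}: using $\HleftX$-invariance of $\lambda^{u}$ and freeness/properness of $\HleftX$, one obtains a left Haar system $\tilde\kappa=\{\tilde\kappa^{\cH\HleftX u}\}$ on $\cH\backslash \cX$ via the analog of Lemma~\ref{lem.orbit.Haar.right}; its associated right Haar system $\{\tilde\kappa_{\cH\HleftX u}\}$ is $\qXrightG$-invariant by the analog of Lemma~\ref{lm.quotient Haar system is invariant}, where $\qXrightG$ is the \ssra\ of $\cG$ on $\cH\backslash\cX$ from Proposition~\ref{prop.ssp actions on quotients}. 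Combining this with the right Haar system $\mu$ on $\cG$ via the right-handed analog of Proposition~\ref{prop.ZSProduct.Haar.left} yields a right Haar system $\mu\bowtie\tilde\kappa$ on $\cG\bowtie(\cH\backslash\cX)$.

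Finally, Theorem~\ref{thm.groupoid.eq} has already shown that $\cX$ carries a natural structure making it an equivalence bibundle between $(\cX/\cG)\bowtie \cH$ and $\cG\bowtie (\cH\backslash \cX)$. Since both groupoids now admit Haar systems by the preceding two paragraphs, \cite[Theorem 2.8]{MRW:Grpd} applies and yields Morita equivalence of $\textrm{C}^{*}((\cX/\cG)\bowtie \cH)$ and $\textrm{C}^{*}(\cG\bowtie (\cH\backslash \cX))$. The only non-bookkeeping step is verifying the right-handed versions of Lemma~\ref{lem.orbit.Haar.right}, Lemma~\ref{lm.quotient Haar system is invariant}, and Proposition~\ref{prop.ZSProduct.Haar.left}, but these go through \emph{mutatis mutandis} by swapping $r$ and $s$ throughout and using the already-noted symmetry between the left and right \ss\ frameworks.
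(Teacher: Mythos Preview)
Your proof is correct and follows essentially the same approach as the paper's own proof: apply Lemma~\ref{lem.orbit.Haar.right} and Lemma~\ref{lm.quotient Haar system is invariant} to get a $\qHleftX$-invariant left Haar system on $\cX/\cG$, then Proposition~\ref{prop.ZSProduct.Haar.left} to get a Haar system on $(\cX/\cG)\bowtie\cH$, handle the other groupoid by symmetry, and conclude via Theorem~\ref{thm.groupoid.eq} and \cite[Theorem 2.8]{MRW:Grpd}. The paper's version is just more terse, saying ``since our assumptions are symmetric'' where you spell out which right-handed analogs are being invoked.
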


In Corollary~\ref{cor.Fell bdl equiv plus Haar makes SME}, we will generalize the above result to Fell bundle C*-algebras.

\begin{proof}
    Since $\cG$ acts 
    \assumption{%
    properly and freely
    }%
    on $\cX$, it follows from Lemma~\ref{lem.orbit.Haar.right} that the right Haar system on $\cX$ induces a right Haar system on $\cX/\cG$. By Lemma~\ref{lm.quotient Haar system is invariant}, the associated left Haar system is $\qHleftX$-invariant. 
    It follows from Proposition~\ref{prop.ZSProduct.Haar.left} that $(\cX/\cG)\bowtie\cH$ has a Haar system. Since our assumptions are symmetric, we likewise get a Haar system on $\cG\bowtie(\cH\backslash\cX)$. As the two groupoids are equivalent by Theorem~\ref{thm.groupoid.eq} and have Haar systems, it follows from \cite[Theorem 2.8]{MRW:Grpd} that their C*-algebras are Morita equivalent.
\end{proof}

\section{\Ss\ actions on Fell bundles}\label{sec.ssa.Fell} 

Fell bundles were originally introduced by Fell as ``C*-algebraic bundles'' \cite{FellBundleBook}; they are 
a powerful tool to study C*-algebras graded by groups or groupoids, and many C*-algebras can be realized as Fell bundle C*-algebras. One may refer to \cite{ExelFellBundle, Kumjian1998, Yamagami1990} for a more detailed discussion on the subject;
 in Subsection~\ref{ssec:Fell bdl properties}, the reader can find the definition that we are going to be using.

\subsection{\Ss\ left actions on Fell bundles}

We will now extend the notion of \ss\ actions to Fell bundles. 
Similar to the construction 
 of a \ZS\ product Fell bundle
 in~\cite{DL2021}, this
    will allow
 us to construct a 
 \ssp\
 Fell bundle.

\begin{definition}\label{df.ss.FellBundle.action.left}
Let 
	$\cB=(q_{\cB}\colon B\to\cX)$
	be a Fell bundle.
Suppose~$\cH$ has a left \ss\ action on~$\cX$ with
 momentum map $\rho_{\cX}\colon \cX\to \cH\z$.
Define $\rho_{\cB}=\rho_{\cX}\circ q_{\cB}$ and let 
\[\cH 
\bfp{s}{\rho} 
\cB=\{(h,b)
\in \cH \times \cB
: s_{\cH}(h)=\rho_{\cB}(b)\}\]
be equipped with the subspace topology of $\cH\times \cB$. 
A 
\em
left \ss~$\cH$-action on~$\cB$
is a continuous map \[\mvisiblespace\HleftB\mvisiblespace \colon \cH \bfp{s
}{\rho
} \cB\to \cB\]
satisfying the following conditions:
\repeatable{lssa on Fell bdl}{
\begin{enumerate}[label=\textup{(B\arabic*)}]
    \item\label{item:leftFell.1} For any $(h,x)\in \cH 
        \bfp{s}{\rho}
    \cX$, the map $h\HleftB\mvisiblespace$ maps $\cB_x$ into $\cB_{h\HleftX x}$ and is linear.
    \item\label{item:leftFell.2} For any 
   	 $(k,h)\in\cH^{(2)}$, 
        we have $k\HleftB (h \HleftB \mvisiblespace)=(kh)\HleftB \mvisiblespace$.
    \item\label{item:leftFell.3} For any $u\in \cH\z $, the map $u \HleftB\mvisiblespace$ is the identity.
    \item\label{item:leftFell.4} For any $(b,c)\in \cB^{(2)}$ such that $(h,bc)\in \cH \bfp{s
    }{\rho
    } \cB$, we have
    \[h\HleftB (bc)=(h\HleftB b)\left[ (h\HrightX q_{\cB}(b) )\HleftB c\right].\]
    \item\label{item:leftFell.5} For any 
    	$(h,b)\in \cH \bfp{s
    	}{\rho
    	} \cB$,  we have
    \[(h\HleftB b)^* = [h\HrightX q_{\cB}(b) ]\HleftB b^*.\]
\end{enumerate}
}
\end{definition}

Writing $h\HrightB b\coloneqq h\HrightX q_{\cB}(b)\in\cH$ for $(h,b)\in \cH \bfp{s
}{\rho
} \cB$  highlights the similarities between the above definition and Definition~\ref{df.left.selfsimilar}; compare
\eqref{item:L4} and \eqref{item:L9} on the left to \ref{item:leftFell.4} resp.\ \ref{item:leftFell.5} on the right: 
    \begin{align*}
        h\HleftX (xy)&=(h\HleftX x)[(h\HrightX x)\HleftX y]
        &\text{ versus }&&
         h\HleftB (bc)&=(h\HleftB b)\left[ (h\HrightB b )\HleftB c\right],
        \\
        (h\HleftX x)\inv  &= (h\HrightX x) \HleftX x\inv
        &\text{ versus }&&
        (h\HleftB b)^* &= [h\HrightB b ]\HleftB b^*
        .
    \end{align*}

\begin{remark} When~$\cX$ and~$\cH$ form a matched pair of groupoids, Definition~\ref{df.ss.FellBundle.action.left} coincides with the notion of 
a
$(\cX,\cH)$-compatible~$\cH$-action \cite[Definition 3.1]{DL2021}. 
\end{remark}

In general, we saw in Proposition~\ref{prop.ss.to.matched.left} that the
    \ssp\ groupoid
$\cX\bowtie \cH$ is isomorphic to the 
 \ZS\ product groupoid $\cX\bowtie\widetilde{\cH}$.
The next proposition proves that 
a similar result holds in the realm of Fell bundles. 

\begin{proposition}\label{prop:Htilde}
    \repeatable{ssla on Fell bundle}{%
    Suppose $\cH$ has a \ssla\ $\HleftB$ on a Fell bundle $\cB=(q_{\cB}\colon B\to \cX)$
    }%
and write
$r_{\cB}=r_{\cX}\circ q_{\cB}$.
    Let $\widetilde{\cH}=\{(u,h)\in \cX\z \times \cH: \rho_{\cX}(u)=r_{\cH}(h)\}$ be the transformation groupoid of the $\cH$-action on $\cX\z$
with 
source map
given by $s_{\widetilde{\cH}}(u,h)=h\inv \HleftX u$.
Let 
\[\wtbeta \colon \widetilde{\cH} 
        \bfp{s}{r}
    \cB \to \cB
\quad\text{
be defined
by }\quad
\wtbeta ((u,h),b)=h\HleftB b.\]
Then $\wtbeta $ is a $(\cX,\widetilde{\cH})$-compatible $\widetilde{\cH}$-action on~$\cB$ in the sense of \cite[Definition 3.1]{DL2021}.
\end{proposition}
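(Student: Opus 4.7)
The plan is to verify that $\wtbeta$ satisfies the five axioms \ref{item:leftFell.1}--\ref{item:leftFell.5} of Definition~\ref{df.ss.FellBundle.action.left}, now applied to the matched pair $(\cX, \widetilde{\cH})$ whose underlying groupoid action recovers $\HleftX$ via $(u,h)\cdot x = h\HleftX x$ and whose restriction map encodes $\HrightX$ (Lemma~\ref{lem.ss.is.matched}). Since $\wtbeta$ is built directly from the $\cH$-component $h$, each axiom for $\wtbeta$ should translate immediately to the corresponding axiom for $\HleftB$.

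First I would check that $\wtbeta$ is well-defined: if $((u,h), b) \in \widetilde{\cH} \bfp{s}{r} \cB$, then $h^{-1}\HleftX u = r_{\cX}(q_{\cB}(b))$, so applying $\rho_{\cX}$ and using \eqref{item:L1} gives $\rho_{\cB}(b) = \rho_{\cX}(h^{-1}\HleftX u) = r_{\cH}(h^{-1}) = s_{\cH}(h)$, and $h\HleftB b$ is defined. Continuity, fiber preservation (the image sits in $\cB_{h\HleftX q_{\cB}(b)} = \cB_{(u,h)\cdot q_{\cB}(b)}$), and linearity are immediate from the corresponding properties of $\HleftB$, giving \ref{item:leftFell.1}. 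For \ref{item:leftFell.2}, two composable elements $(u,h), (v,k) \in \widetilde{\cH}$ satisfy $v = h^{-1}\HleftX u$, their product has $\cH$-component $hk$, and \ref{item:leftFell.2} for $\HleftB$ delivers $\wtbeta((u,h)(v,k), b) = (hk)\HleftB b = h\HleftB(k\HleftB b)$. For \ref{item:leftFell.3}, units in $\widetilde{\cH}$ have the form $(u, \rho_{\cX}(u))$, so the action reduces to $\rho_{\cX}(u)\HleftB b = b$ by \ref{item:leftFell.3} for $\HleftB$.

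The substantive content lies in \ref{item:leftFell.4} and \ref{item:leftFell.5}, which involve the right action $(u,h)|_{q_{\cB}(b)}$ of $\cX$ on $\widetilde{\cH}$ from the matched pair structure. By Lemma~\ref{lem.ss.is.matched}, this restriction carries $\cH$-component $h\HrightX q_{\cB}(b)$, so that $\wtbeta((u,h)|_{q_{\cB}(b)}, c) = (h\HrightX q_{\cB}(b))\HleftB c$. Both \ref{item:leftFell.4} and \ref{item:leftFell.5} for $\wtbeta$ then follow at once from the identities
\[
    h\HleftB(bc) = (h\HleftB b)\bigl[(h\HrightX q_{\cB}(b))\HleftB c\bigr]
    \quad\text{and}\quad
    (h\HleftB b)^* = (h\HrightX q_{\cB}(b))\HleftB b^*
\]
that are precisely \ref{item:leftFell.4} and \ref{item:leftFell.5} for $\HleftB$.

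I expect the only real obstacle to be notational bookkeeping: Lemma~\ref{lem.ss.is.matched} writes elements of $\widetilde{\cH}$ in the order $(h, u)$, while Proposition~\ref{prop:Htilde} uses $(u, h)$ with source $h^{-1}\HleftX u$. Being consistent while translating the matched pair structure between these two conventions takes some care, but the algebraic verification of each axiom is essentially automatic thereafter.
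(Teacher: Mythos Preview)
Your proposal is correct and follows exactly the approach of the paper: the paper's proof checks well-definedness via the same $\rho_{\cX}(h^{-1}\HleftX u)=s_{\cH}(h)$ computation and then declares the remaining verification of the axioms ``routine'', whereas you spell out that routine verification explicitly. Your observation about the $(h,u)$ versus $(u,h)$ convention between Lemma~\ref{lem.ss.is.matched} and Proposition~\ref{prop:Htilde} is well taken and is indeed the only place requiring care.
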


\begin{proof} To see that $\wtbeta $ is well defined, take $(u,h)\in\widetilde{\cH}$ and $b\in\cB_x$ with $s_{\widetilde{\cH}}(u,h)=r_{\cX}(x)$. Since $s_{\widetilde{\cH}}(u,h)=h\inv \HleftX u$, we have 
\[\rho_{\cX}(x)=\rho_{\cX}\z(r_{\cX}(x)) = \rho_{\cX}\z(h\inv \HleftX u) \overset{\eqref{item:L1}}{=} r_{\cH}(h\inv )=s_{\cH}(h).\]
Therefore, $(h,b)\in \cH \bfp{s
}{\rho
} \cB$ and $\wtbeta $ is well defined. It is routine to check that $\wtbeta $ is indeed an $(\cX,\widetilde{\cH})$-compatible $\widetilde{\cH}$-action on~$\cB$. 
\end{proof}

One immediate consequence is that, 
fiberwise, $\HleftB$
  shares all the nice properties
of
an $(\cX,\cH)$-compatible action. For example, 
the following is a consequence of  \cite[Corollary 3.3]{DL2021}:

\begin{corollary}\label{cor.isometric.action} For each $h\in \cH$ and $x\in \cX$ with $s_{\cH}(h)=\rho_{\cX}(x)$, the map $h\HleftB\mvisiblespace \colon \cB_x\to \cB_{h\HleftX x}$ is isometric. 
\end{corollary}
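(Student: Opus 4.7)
My plan is to leverage Proposition~\ref{prop:Htilde}, which re-packages the \ssla\ $\HleftB$ on the Fell bundle as a genuine $(\cX,\widetilde{\cH})$-compatible action $\wtbeta$ of the transformation groupoid $\widetilde{\cH}=\cH\ltimes \cX\z$, and then invoke \cite[Corollary 3.3]{DL2021}, which states that every fiberwise map of such an action is isometric.

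The translation is straightforward. Given $h\in\cH$ and $x\in\cX$ with $s_{\cH}(h)=\rho_{\cX}(x)$, I would set $u\coloneqq h\HleftX r_{\cX}(x)\in\cX\z$; this is a well-defined unit by \eqref{item:L8-new} in Lemma~\ref{lem:acting_on_units}. Then $\rho_{\cX}(u)=r_{\cH}(h)$ by \eqref{item:L1}, so $(u,h)\in\widetilde{\cH}$, and its source in $\widetilde{\cH}$ is
\[
    s_{\widetilde{\cH}}(u,h)=h\inv\HleftX u =h\inv \HleftX(h\HleftX r_{\cX}(x))\overset{\eqref{item:L5}}{=}r_{\cX}(x),
\]
so that $((u,h),b)\in\widetilde{\cH}\bfp{s}{r}\cB$ for every $b\in\cB_x$. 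By construction of $\wtbeta$, we have $\wtbeta((u,h),b)=h\HleftB b$ for all $b\in\cB_x$.

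Proposition~\ref{prop:Htilde} tells us that $\wtbeta$ is an $(\cX,\widetilde{\cH})$-compatible $\widetilde{\cH}$-action on $\cB$. According to \cite[Corollary 3.3]{DL2021}, the fiber maps of any such action are isometric; in particular, $\wtbeta((u,h),\mvisiblespace)\colon \cB_{x}\to\cB_{h\HleftX x}$ is isometric, which is exactly the statement that $h\HleftB\mvisiblespace\colon\cB_x\to\cB_{h\HleftX x}$ is isometric.

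There is no real obstacle here: essentially all the work has been done in Proposition~\ref{prop:Htilde} and in the cited result from \cite{DL2021}. The only verification required is that the element $(u,h)$ chosen above correctly encodes the action of $h$ on the fiber $\cB_x$, which amounts to a single application of \eqref{item:L5} together with the formulas in \eqref{item:L1}.
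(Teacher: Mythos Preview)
Your proposal is correct and follows precisely the paper's own approach: the paper states the corollary as an immediate consequence of Proposition~\ref{prop:Htilde} together with \cite[Corollary 3.3]{DL2021}, and you have simply spelled out the translation by exhibiting the element $(u,h)\in\widetilde{\cH}$ that realizes $h\HleftB\mvisiblespace$ as a fiber map of $\wtbeta$.
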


\subsection{The \ssp\ Fell bundle}

The \ZS\ product Fell bundle was first defined in~\cite[Theorem 3.8]{DL2021} under 
the assumption
 that
\begin{enumerate}
    \item 
    the underlying groupoids~$\cX$ and~$\cH$ form a matched pair and
    \item the underlying groupoids are \etale. 
\end{enumerate}
Inspired by the construction of semi-crossed product Fell bundles in~\cite{KMQW2010}, we now define a similar construction with these two assumptions removed. 
To be precise,
we aim to define the product Fell bundle from a \ss~$\cH$-action on a Fell bundle~$\cB$, where the underlying groupoids are \LCH. 

\begin{definition}\label{df.ss.product.bundle.left}
\txtrepeat{ssla on Fell bundle}
(Definition~\ref{df.ss.FellBundle.action.left}).
Define the {\em (left) \ssp\ Fell bundle} $\cB\BbowtieH  \cH$ to 
have the total space
\[
B\BbowtieH  \cH
=
B
\bfp{\rho_{\cX} \circ s_{\cX}\circ q_{\cB}}{r_{\cH}} \cH = \{(b,h)\in B\times\cH: (q_{\cB}(b),h) \in \cX\bowtie \cH\}\]
with bundle projection
$q_{\cB\BbowtieH\cH}
(b,h)=(q_{\cB}(b),h)$, mapping 
    $B\BbowtieH  \cH$
to $\cX\bowtie \cH$. The fiber 
\[
\left(\cB\BbowtieH  \cH\right)_{(x,h)}=\{(b,h)\in B\BbowtieH  \cH: q_{\cB}(b)=x\}
\]
is equipped with the norm $\|(b,h)\|=\|b\|$.

    As always, let 
    \[(\cB\BbowtieH  \cH)^{(2)}
        \coloneqq(B\BbowtieH  \cH)
            \bfp{s_{\cB\BbowtieH  \cH}}{r_{\cB\BbowtieH  \cH}}
            (B\BbowtieH  \cH),
    \]
and define multiplication and involution by
\[(a,h)(b,k)=\bigl(a[h\HleftB b], [h\HrightB b ]k\bigr)
\quad\text{and}\quad
(b,h)^* = ( h\inv \HleftB b^*, h\inv  \HrightB 
b^*).\]
\end{definition}
We note that the proof that $\cB\BbowtieH  \cH$ is a Fell bundle over $\cX\bowtie \cH$ follows {\em mutatis mutandis} as in 
the proof in \cite[Section 3]{DL2021}.

\bigskip

For the first example, we will need a bit of notation.
\begin{notation}
    Let $\cA=(q_{\cA}\colon A \to \cK\z)$ be an \usc\
C*-bundle over the unit space $\cK\z$ of a groupoid $\cK $, and let $(\cA, \cK , \alpha)$ be a groupoid dynamical system (see \cite[Definition 4.1]{MW:Renaults} or \cite[Chapter 3]{Goehle:thesis} for more details).
    We let  $\cB(\cA,\cK ,\alpha)$ denote the Fell bundle associated to this dynamical system: as a set, it is given by $\cA\bfp{q}{r}\cK $ with bundle projection $q_{\cB}(a,k)=k$. The involution is given by
        \(
            (a,k)^* \coloneqq \left( \alpha_{k\inv} (a)^*,k\inv\right),
        \)
    and the product of two elements $(a_i,k_i)\in \cB(\cA,\cK ,\alpha)$ with $(k_1,k_2)\in \cK^{(2)}$ is given by 
        \[
            (a_1,k_1)\cdot (a_2,k_2)\coloneqq \left(a_{1}\alpha_{k_{1}}(a_{2}), k_{1}k_{2}\right).
        \]
    The C*-algebra of this Fell bundle is exactly the groupoid crossed product $\cA\rtimes_\alpha \cK $ \cite[Example 2.8]{MW2008}.
\end{notation}

\begin{example}[generalization of {\cite[Example  3.10]{DL2021}}] \label{ex:ZS of CP}
    Suppose $\cH$ has a \ssla\ on a groupoid $\cX$ and  $(\cA,\cX\bowtie\cH,\alpha)$ is a groupoid dynamical system. Let $\alpha|_{\cX}$ be the restriction of $\alpha$ to the subgroupoid $\cX$, i.e., $(\alpha|_{\cX})_{x}\coloneqq \alpha_{(x,\rho_{\cX}(x))}$. Then $\cH$ has a \ssla\ $\HleftB$ on $\cB=\cB(\cA,\cX,\alpha|_{\cX})$ defined for $h\in \cH$ and $(a,x)\in \cA\bfp{q}{r}\cX$ with $s_{\cH}(h)=\rho_{\cB}(a,x)=\rho_{\cX}(x)$  by 
    \[
        h\HleftB (a,x) \coloneqq 
        \left(\alpha_{(r(h),h)} (a), h\HleftX x\right).
    \]
    One can check that
    \[
        \cB(\cA, \cX\bowtie \cH,\alpha)
        \cong
        \cB(\cA, \cX,\alpha|_{\cX}) \BbowtieH \cH
         .
    \]
\end{example}

\begin{remark}\label{rm.saturated is inherited by bowtie}
    If $\cH$ has a \ssla\ $\HleftB$ on a Fell bundle $\cB$, then
    \[
        (\cB\BbowtieH \cH)_{(x,h)}
        \cdot
        (\cB\BbowtieH \cH)_{(y,k)}
        =
        (
        \cB_{x} 
        \times\{h\})
        \cdot
        (
        \cB_{y}
        \times\{k\} )
        \subseteq
        \cB_{x(h\HleftX y)}\times \{(h\HrightX y)k\}
        .
    \]
    Moreover, our assumptions on $\HleftB$ imply that $h\HleftB \cB_{y}=\cB_{h\HleftB y}$, rather than merely a containment of the left-hand side in the right-hand side. Thus, if $\cB$ is saturated (meaning that the
        closed linear span of the
    $\cB$-product of any $\cB_{x_{1}}$ with any compatible $\cB_{x_{2}}$ equals the entire $\cB_{x_{1}x_{2}}$), then by the above argument, we automatically have that $\cB\BbowtieH \cH$ is saturated also.
\end{remark}

Similar to the case of a \ssp\ groupoid (see Lemma~\ref{lem.ss.is.matched}), we can lift the action $\HleftB$ to a $\widetilde{\cH}$-action $\wtbeta$, where $(\cX,\widetilde{\cH})$ is a matched pair of groupoids.  When the groupoids $\cX$ and $\cH$ are \etale, this construction is closely related to the construction in \cite{DL2021} 
in the following sense.

\begin{proposition}
    If the groupoids~$\cX$ and~$\cH$ are \etale, then so is the groupoid $\widetilde{\cH}$ from Proposition~\ref{prop:Htilde}
    and the \ssp\ Fell bundle $\cB\BbowtieH  \cH$ is isomorphic to the \ZS\ product Fell bundle $\cB\bowtie_{\wtbeta } \widetilde{\cH}$ constructed in~\cite{DL2021}.
\end{proposition}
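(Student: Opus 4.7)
My proof splits into two independent claims. First, I would verify that $\widetilde{\cH}=\cH\ltimes\cX\z$ is \etale. Since $\cX$ is \etale, its unit space $\cX\z$ is open (hence \LCH) in $\cX$, so it is a valid base space. Since $\cH$ is \etale, $s_{\cH}$ is a local homeomorphism; to see that $s_{\widetilde{\cH}}\colon(h,u)\mapsto u$ is a local homeomorphism near any $(h_{0},u_{0})$, I would pick a neighborhood $U$ of $h_{0}$ on which $s_{\cH}|_{U}$ is a homeomorphism onto an open $V\subseteq\cH\z$, and then observe that $s_{\widetilde{\cH}}$ restricted to $\widetilde{\cH}\cap\bigl(U\times\rho_{\cX}\inv(V)\bigr)$ is a homeomorphism onto $\rho_{\cX}\inv(V)$, with continuous inverse $u\mapsto\bigl((s_{\cH}|_{U})\inv(\rho_{\cX}(u)),u\bigr)$.

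For the second claim, I would use the groupoid isomorphism $\varphi\colon\cX\bowtie\widetilde{\cH}\to\cX\bowtie\cH$, $(x,(h,u))\mapsto (x,h)$, from Proposition~\ref{prop.ss.to.matched.left} as the base map, and lift it to
\[
    \Phi\colon \cB\bowtie_{\wtbeta}\widetilde{\cH}\to \cB\BbowtieH \cH,\qquad (b,(h,u))\mapsto (b,h).
\]
Since the constraint defining $\cB\bowtie_{\wtbeta}\widetilde{\cH}$ forces $u=h\inv\HleftX s_{\cX}(q_{\cB}(b))$, the map $\Phi$ is a bijection with explicit continuous inverse $(b,h)\mapsto\bigl(b,(h,h\inv\HleftX s_{\cX}(q_{\cB}(b)))\bigr)$. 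It covers $\varphi$ and restricts to the identity $\cB_{q_{\cB}(b)}\to \cB_{q_{\cB}(b)}$ on each fiber, so it is automatically a fiberwise linear isometry.

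The remaining work is to verify that $\Phi$ preserves multiplication and involution, which reduces to bookkeeping with the matched-pair operations from Lemma~\ref{lem.ss.is.matched}. For multiplication, the product in $\cB\bowtie_{\wtbeta}\widetilde{\cH}$ of $(a,(h,u))$ with $(b,(k,v))$ is (by the construction in \cite{DL2021}) given by $\bigl(a\,\wtbeta_{(h,u)}(b),\,(h,u)|_{q_{\cB}(b)}(k,v)\bigr)$. Using $\wtbeta_{(h,u)}(b)=h\HleftB b$ from Proposition~\ref{prop:Htilde} and $(h,u)|_{x}=(h\HrightX x,s_{\cX}(x))$ from Lemma~\ref{lem.ss.is.matched}, together with the product rule in $\widetilde{\cH}$, this equals $\bigl(a(h\HleftB b),\,((h\HrightB b)k,v)\bigr)$. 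Applying $\Phi$ recovers exactly the formula $(a,h)(b,k)=\bigl(a[h\HleftB b],[h\HrightB b]k\bigr)$ from Definition~\ref{df.ss.product.bundle.left}. The involution check is similar: unraveling $(b,(h,u))^{*}$ via the matched-pair inverse from Lemma~\ref{lem.ss.is.matched} yields $\bigl(h\inv\HleftB b^{*},(h\inv\HrightX q_{\cB}(b)\inv,s_{\cX}(q_{\cB}(b)\inv))\bigr)$, which $\Phi$ sends to $(h\inv\HleftB b^{*},h\inv\HrightB b^{*})$, matching our involution formula.

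\textbf{Main obstacle.} The only real friction is aligning the two notations: translating the matched-pair formulas of \cite{DL2021} for $\cB\bowtie_{\wtbeta}\widetilde{\cH}$ into the \ss\ language used here. No hard analytic or algebraic content is at stake beyond Lemma~\ref{lem.ss.is.matched} and Proposition~\ref{prop.ss.to.matched.left}, both of which have already done the groupoid-level work; the Fell-bundle layer is a straightforward upgrade since the $B$-component of $\Phi$ is the identity.
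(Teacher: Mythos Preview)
The paper states this proposition without proof, evidently regarding it as a routine upgrade of Proposition~\ref{prop.ss.to.matched.left} (the groupoid-level isomorphism $\cX\bowtie\widetilde{\cH}\cong\cX\bowtie\cH$) to the Fell-bundle layer, once Proposition~\ref{prop:Htilde} has supplied the compatible $\widetilde{\cH}$-action $\wtbeta$. Your plan is exactly this intended argument: you lift the groupoid isomorphism $\varphi$ of Proposition~\ref{prop.ss.to.matched.left} to the bundle map $\Phi(b,(h,u))=(b,h)$, and the multiplication and involution checks you outline are precisely the Fell-bundle analogues of the computation already carried out in the proof of Proposition~\ref{prop.ss.to.matched.left}. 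Your verification that $\widetilde{\cH}$ is \etale\ is also correct (and could be shortened by appealing to the standard fact that a transformation groupoid of an \etale\ groupoid acting on an LCH space is \etale). There is nothing to compare against, and your approach is the natural one the paper implicitly has in mind.
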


\begin{remark}\label{rm.df.ss.product.bundle.right}
    As always, a similar construction can be done on the other side: if~$\cB$ carries a {\em right} \ss\ $\cG$-action $\BrightG$, we can let $\cG\GbowtieB  \cB$ be given as the bundle with the total space
    \[
    \cG\GbowtieB B
    =
    \cG
        \bfp{s }{\sigma}
    B = \{(s,b)\in \cG\times B: (s,q_{\cB}(b)) \in \cG\bowtie \cX\}\]
    and the analogous Fell bundle structure.
\end{remark}

 \section{The orbit Fell bundle from \ss\ actions}\label{sec.orbit.Fell}

The following is analogous to the construction in \cite[Corollary A.12]{KMQW2013}.

\begin{definition}
    If $\cH$ is a groupoid and a topological space $B$ is a left $\cH$-space, where the action is denoted by $\HleftB$, we may let
$ \cH\backslash B=\{\cH \HleftB  b: b\in B\}$ be the quotient space which we equip  with the quotient topology, i.e., the largest topology making $\pi\colon B\to \cH\backslash B$ continuous.
\end{definition}
\begin{remark}\label{rmk:pi open}
    We will frequently assume that an acting groupoid $\cH$ has open source
    map, because then \cite[Prop.\ 2.12]{Wil2019} implies that the quotient map $\pi\colon B\to \cH\backslash B$ is open. 
\end{remark}

    When $\cH$ has a \ssla\ on a Fell bundle $\cB=(q_{\cB}\colon B\to \cX)$
(Definition~\ref{df.ss.FellBundle.action.left}), then $B$ is a left $\cH$-space. In this case, since
$h\HleftB\mvisiblespace$ maps $\cB_{x}$ to $\cB_{h\HleftX x}$ by \ref{item:leftFell.1},
the map 
\begin{equation}\label{eq:def p tilde cB}
q_{\wtcB}\colon \cH\backslash B\to \cH\backslash\cX\quad\text{ given by }\quad \cH \HleftB  b\mapsto \cH\HleftX q_{\cB}(b)
\end{equation}
is well defined, 
and we let $\wtcB \coloneqq (q_{\wtcB}\colon \cH\backslash B \to \cH\backslash\cX)$.
The fiber over $\xi\in \cH\backslash \cX$ of the bundle  is therefore given by 
\[(\wtcB)_{\xi} = \{\cH \HleftB  b: b\in \cB \text{ such that } q_{\cB}(b)\in \xi\}
.\]

\begin{lemma}\label{lem:fibers of quotient bundle}
    Suppose the 
    \ssla\ $\HleftX$ of $\cH$ on the groupoid $\cX$
is free. Let $\xi\in \cH\backslash\cX$. For $\Xi,\Theta$ in the fiber $(\wtcB)_{\xi}$ and for $z\in \mathbb{C}$, we may let
    \begin{gather*}
    \|\Xi\|\coloneqq\|b\| \quad\text{ and }\quad
        z\,\Xi = \cH \HleftB  (zb)
        \quad\text{ where } b\in\Xi, \text{ and}
        \\
        \Xi+\Theta 
        =
        \cH \HleftB  \bigl([h \HleftB b]+c\bigr)
        \quad
        \text{ where } b\in\Xi,c\in\Theta, h\in\cH
        \text{ such that }
        q_{\cB} (c) = q_{\cB} (h \HleftB b).
    \end{gather*}
    With this structure, $(\wtcB)_{\xi}$ is  a complex Banach space. 
\end{lemma}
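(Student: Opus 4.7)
The plan is to reduce everything to the Banach space structure of a single fiber of $\cB$. Fix any $x\in\xi$; I will show that the map
\[\iota_{x}\colon \cB_{x}\to (\wtcB)_{\xi},\quad b\mapsto \cH\HleftB b,\]
is a linear isometric bijection, so that $(\wtcB)_{\xi}$ inherits its Banach space structure from $\cB_{x}$.

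First, I would establish that $\iota_{x}$ is a bijection. For surjectivity, take $\Xi\in (\wtcB)_{\xi}$ and any $b'\in\Xi$; since $q_{\cB}(b')\in \xi = \cH\HleftX x$, there exists $h\in\cH$ with $q_{\cB}(b')=h\HleftX x$, so that $h\inv\HleftB b'\in\cB_{x}$ represents $\Xi$. For injectivity, if $b,b'\in \cB_{x}$ satisfy $\cH\HleftB b=\cH\HleftB b'$, then $b'=h\HleftB b$ for some $h\in\cH$ with $h\HleftX x=x$; freeness forces $h$ to be a unit in $\cH$, and then \ref{item:leftFell.3} yields $b'=b$.

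Next, I would verify well-definedness of the norm, scalar multiplication, and addition on $(\wtcB)_{\xi}$. The norm is invariant under the $\cH$-action by Corollary~\ref{cor.isometric.action}. Scalar multiplication is well-defined by fiberwise linearity (\ref{item:leftFell.1}): if $b'=h\HleftB b$, then $zb'=h\HleftB(zb)$, so $\cH\HleftB(zb)=\cH\HleftB(zb')$. For addition, suppose $b_{1},b_{2}\in\Xi$ with $b_{2}=k\HleftB b_{1}$, $c_{1},c_{2}\in\Theta$ with $c_{2}=l\HleftB c_{1}$, and $h_{i}\in\cH$ with $q_{\cB}(c_{i})=h_{i}\HleftX q_{\cB}(b_{i})$ for $i=1,2$. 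Evaluating $q_{\cB}(c_{2})$ two ways using \eqref{item:L5} and invoking freeness of $\HleftX$ yields the identity $lh_{1}=h_{2}k$. Two applications of \ref{item:leftFell.2} combined with fiberwise linearity of $l\HleftB\mvisiblespace$ then give
\[(h_{2}\HleftB b_{2})+c_{2}=l\HleftB\bigl((h_{1}\HleftB b_{1})+c_{1}\bigr),\]
so both sides lie in the same $\cH$-orbit.

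With well-definedness in hand, $\iota_{x}$ is linear: for scalars, $z\iota_{x}(b)=\cH\HleftB(zb)=\iota_{x}(zb)$ by definition; for sums of $b,c\in\cB_{x}$, taking $h=\rho_{\cX}(x)\in\cH\z$ as linking element (which acts trivially by \ref{item:leftFell.3}) yields $\iota_{x}(b)+\iota_{x}(c)=\cH\HleftB(b+c)=\iota_{x}(b+c)$. Isometry holds by construction. Consequently, the vector space axioms, norm axioms, and completeness of $(\wtcB)_{\xi}$ all transfer from the complex Banach space $\cB_{x}$ via $\iota_{x}$. The main obstacle is the well-definedness of addition, where the interplay between different choices of representatives and the linking element $h$ must be carefully tracked; freeness of $\HleftX$ is the essential ingredient producing the identity $lh_{1}=h_{2}k$.
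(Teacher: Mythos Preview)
Your proposal is correct and follows essentially the same approach as the paper. The well-definedness computations are identical (the paper's identity $h'=lhk\inv$ is your $lh_{1}=h_{2}k$ in different notation), and the paper's completeness argument implicitly does exactly what you make explicit: it moves a Cauchy sequence of representatives into a single fiber $\cB_{x}$ via suitable $h_{n}$'s and uses completeness there. Your packaging via the named bijection $\iota_{x}$ is slightly cleaner, since it dispatches the vector space axioms and completeness in one stroke rather than treating completeness separately.
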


\begin{proof}
    First note that $\norm{\cdot}$ is well defined: Since $h\HleftB\mvisiblespace$ is isometric on each fiber, $\cH\HleftB  a=\cH\HleftB  b$ implies $\|a\|=\|b\|$. Likewise, scalar multiplication is well defined since each $h\HleftB\mvisiblespace$ is $\mathbb{C}$-linear by assumption.
    
    To see that addition is well defined, we first check that $h$ exists. If we pick any $b\in \Xi, c\in \Theta$, then by definition of the fiber $(\wtcB)_{\xi}$, we have $q_{\cB} (b),q_{\cB} (c)\in \xi$. In particular, there exists $h\in\cH$ such that $q_{\cB} (c)=h\HleftX q_{\cB} (b) = q_{\cB} (h \HleftB b)$. This shows that $c$ and $h \HleftB b$ are in the same fiber of $\cB$,  so that $[h \HleftB b]+c$ makes sense. It remains to check that $\Xi+\Theta$ does not depend on the choices, so assume that we are given $b',c',h'$ with $q_{\cB} (c') = q_{\cB} (h' \HleftB b')$. As $b,b'\in\Xi$ and $c,c'\in \Theta$, there exist $k,l\in\cH$ such that $b'=k\HleftB b$ and $c'=l\HleftB c$. In particular,
    \begin{align*}
        h'\HleftX q_{\cB} (b')
        &=q_{\cB} (c')= q_{\cB} (l\HleftB c)
        =
        l\HleftX q_{\cB}(c)
        =
        l\HleftX \left[h\HleftX q_{\cB} (b)\right]
        \\
        &=
        (lh)\HleftX q_{\cB} (k\inv\HleftB b')
        =
        (lhk\inv)\HleftX q_{\cB} (b').
    \end{align*}
    \assumption{Since the $\cH$-action on~$\cX$ is free}, we conclude that $h'=lhk\inv$, and thus
    \begin{align*}
        [h'\HleftB b']+c'
        &=
        (lhk\inv) \HleftB (k\HleftB b)+l\HleftB c
        =
        (lh)\HleftB b + l\HleftB c
        =
        l\HleftB \bigl([h \HleftB b]+c\bigr),
    \end{align*}
    which shows that $[h'\HleftB b']+c'$ and $[h \HleftB b]+c$ represent the same class in $(\wtcB)_{\xi}$.
    
    It is now easy to check that we have a normed vector space. To see that $(\wtcB)_{\xi}$ is complete, let $(\Xi_n)_n$ be a Cauchy sequence. If we pick arbitrary $b_n\in\Xi_n$ for each $n$, then we can find $h_n\in\cH$ such that $c_{n}\coloneqq h_n\HleftB b_n$ is in the same fiber as the representative $b_{1}$ of $\xi_{1}$; say, in $\cB_{x}$. We now have a sequence $(c_n)_n$ in $ \cB_{x}$. Note that, by definition of the linear structure on $(\wtcB)_{\xi}$, we have $\Xi_n - \Xi_m=\cH \HleftB  (c_{n}-c_{m})$, so that
    \[
        \norm{\Xi_n - \Xi_m} = \norm{c_{n}-c_{m}}_{\cB_{x}}.
    \]
    Thus, $(c_{n})_{n}$ is a Cauchy sequence in the Banach space $\cB_{x}$ and hence converges to some element $c$. As 
    \[
        \norm{\Xi_n - \cH \HleftB c} = \norm{c_{n}-c}_{\cB_{x}},
    \]
    we conclude that $\Xi_n \to \cH \HleftB c$ in norm in $(\wtcB)_{\xi}$.
\end{proof}

\begin{corollary}\label{cor:widetildecB is USCBb}
    Suppose the 
        \ssla\ $\HleftX$ of $\cH$ on the groupoid $\cX$
is free and $\cH$ as open source 
map.  Then $\wtcB = (q_{\wtcB}\colon \cH\backslash B \to \cH\backslash\cX)$ is a \USCBb.
\end{corollary}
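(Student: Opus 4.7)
The approach is to verify directly the axioms of a \USCBb\ for $\wtcB$, using Lemma~\ref{lem:fibers of quotient bundle} for the Banach-space structure on each fiber and transporting the topological structure from $\cB$ along the quotient map $\pi\colon B\to\cH\backslash B$. Two facts will be used throughout: $\pi$ is a continuous open surjection (Remark~\ref{rmk:pi open}, using openness of $s_{\cH}$), and each $h\HleftB\mvisiblespace$ is a linear isometry between fibers (Corollary~\ref{cor.isometric.action}).

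First I would verify that $q_{\wtcB}\colon \cH\backslash B\to \cH\backslash\cX$ is a continuous open surjection: continuity follows from the universal property of the quotient topology applied to the commutative square $q\circ q_{\cB}=q_{\wtcB}\circ\pi$, where $q\colon\cX\to\cH\backslash\cX$ is the open quotient map, and openness from the fact that $\pi$, the Fell-bundle projection $q_{\cB}$, and $q$ are all open. Upper semicontinuity of the norm is then almost immediate: for any $\Xi_0$ and $\epsilon>0$, the $\pi$-preimage of $\{\Xi:\|\Xi\|<\|\Xi_0\|+\epsilon\}$ equals $\{b:\|b\|<\|\Xi_0\|+\epsilon\}$ by the isometry statement of Corollary~\ref{cor.isometric.action}, and the latter is open by USC of the norm on $\cB$. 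Continuity of scalar multiplication and the norm-zero axiom follow by standard subnet arguments combined with Proposition~\ref{Fell's criterion}: a convergent net in $\cH\backslash B$ admits, after passing to a subnet, a convergent lift in $B$, which lets one transfer each property from $\cB$ via continuity of $\pi$. For instance, for the norm-zero axiom, given $\Xi_i$ with $q_{\wtcB}(\Xi_i)\to\xi$ and $\|\Xi_i\|\to 0$, I would lift a subnet of $q_{\wtcB}(\Xi_i)\to\xi$ to $y_i\to y\in\xi$ in $\cX$ via openness of $q$, pick representatives $b_i\in\Xi_i$ with $q_{\cB}(b_i)=y_i$ and $\|b_i\|=\|\Xi_i\|\to 0$, apply the norm-zero axiom on $\cB$ to deduce $b_i\to 0_y$, and push through $\pi$.

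The subtlest axiom is continuity of addition on the fiber product. Given $(\Xi_i,\Theta_i)\to(\Xi,\Theta)$ with matching $q_{\wtcB}$-images, I would fix representatives $b\in\Xi$, $c\in\Theta$ with $q_{\cB}(b)=q_{\cB}(c)$, lift $\Xi_i$ to a net $b_i\to b$ in $B$ by Proposition~\ref{Fell's criterion}, and let $c_i\in\Theta_i$ be the unique representative with $q_{\cB}(c_i)=q_{\cB}(b_i)$, which exists and is unique by freeness of $\HleftX$. Granted $c_i\to c$, continuity of addition on $\cB^{(2)}$ yields $b_i+c_i\to b+c$, and applying $\pi$ delivers $\Xi_i+\Theta_i\to \Xi+\Theta$. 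The main obstacle is establishing $c_i\to c$: lifting $\Theta_i$ instead to a convergent net $c'_i\to c$ via Proposition~\ref{Fell's criterion} and writing $c_i=h_i\HleftB c'_i$ for the unique $h_i$ determined by $h_i\HleftX q_{\cB}(c'_i)=q_{\cB}(b_i)$, one needs a subnet of $h_i$ to converge to the unit $\rho_{\cX}(q_{\cB}(c))$, after which \eqref{item:L2} gives $c_i\to c$. Extracting this convergent subnet from $h_i\HleftX q_{\cB}(c'_i)\to q_{\cB}(c)$ together with $q_{\cB}(c'_i)\to q_{\cB}(c)$ is exactly where properness of the $\cH$-action is required, mirroring Equation~\eqref{eq:why properness is needed} in the proof of Proposition~\ref{prop.gpd structure on orbit space}; freeness and Hausdorffness of $\cX\z$ then force the limit to be the unit.
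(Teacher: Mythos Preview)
Your approach differs from the paper's: rather than verifying the \USCBb\ axioms directly, the paper applies \cite[\CitationFromZStwo]{DL:ZS2}, a general result from the companion paper about when a quotient of a Banach bundle is again a \USCBb, after checking that the horizontal maps $\pi\colon B\to\cH\backslash B$ and $q\colon\cX\to\cH\backslash\cX$ are open, that $\pi$ is fiberwise an isometric isomorphism in the sense of Lemma~\ref{lem:fibers of quotient bundle}, and that every element of $\cH\backslash B$ admits a $\pi$-preimage over each point of its base orbit.

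Your direct verification is sound for openness of $q_{\wtcB}$, upper semicontinuity of the norm, scalar multiplication, and the zero-section axiom, but there is a genuine gap at continuity of addition: you explicitly invoke \emph{properness} of the $\cH$-action to extract a convergent subnet of $\{h_i\}$, yet Corollary~\ref{cor:widetildecB is USCBb} assumes only freeness and openness of $s_{\cH}$. Your own reference to Equation~\eqref{eq:why properness is needed} highlights the issue --- that passage lives in the proof of Proposition~\ref{prop.gpd structure on orbit space}, where properness \emph{is} a hypothesis. Without it, the restriction of $\pi\times\pi$ to $B\bfp{q_{\cB}}{q_{\cB}}B$ need not be open onto $(\cH\backslash B)\bfp{q_{\wtcB}}{q_{\wtcB}}(\cH\backslash B)$, and your pair-lifting step has no reason to succeed. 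The external result the paper cites evidently avoids lifting compatible pairs altogether; your direct route, as written, does not establish the corollary under its stated hypotheses. If you are willing to add properness as an assumption, the argument goes through, and since the only downstream use of the corollary is in Proposition~\ref{prop.Fell bdl structure on quotient bundle}, which does assume properness, that would be harmless in context.
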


\begin{proof}
    We will check that we can apply
    \cite[\CitationFromZStwo]{DL:ZS2}
    to the commutative diagram
    \[
    \begin{tikzcd}[ampersand replacement=\&]
        B
        \ar[r, "\pi"]\ar[d, "q_{\cB}"']
        \&
        \cH\backslash B
        \ar[d, "q_{\wtcB}"]
        \\
        \cX
        \ar[r,]
        \&
        \cH\backslash\cX
    \end{tikzcd}
    \]
    We have already noted in Lemma~\ref{lem:fibers of quotient bundle} that the fibers of $\wtcB$ are complex Banach spaces.     By definition of the topologies of the spaces on the right-hand side, the vertical maps are quotient maps. Moreover, $\pi$ is open by Remark~\ref{rmk:pi open} and $\cX\to\cH\backslash\cX$ is open by \cite[Proposition 2.12]{Wil2019} 
    \assumption{since $s_{\cH}$ is open}. Therefore, 
    Assumption~(i) of \cite[\CitationFromZStwo]{DL:ZS2} holds.
       By definition of the Banach space
    structure on the fibers of $\wtcB$ (see Lemma~\ref{lem:fibers of quotient bundle}), 
    Assumption~(ii) holds.

    Lastly,     let $\Xi\in \cH\backslash B$ and $x \in
    q_{\wtcB}
    (\Xi)$ be given, and take any $b\in  \Xi\subseteq B$. Since $q_{\cB}(b)\in q_{\wtcB}(\Xi)$, there exists $h\in \cH$ such that $x= h\HleftX q_{\cB}(b)=q_{\cB} (h \HleftB b)$. This means that $h \HleftB b \in \cB_{x}$ satisfies $\pi(h \HleftB b)=\Xi$, since $\pi\circ (h\HleftB\mvisiblespace)=\pi$ where both are defined. This proves
    the final
    Assumption~(iii) of \cite[\CitationFromZStwo]{DL:ZS2}.
\end{proof}

As before, we will write $s_{\wtcB}\coloneqq s_{\cH\backslash\cX}\circ q_{\wtcB}$ and $r_{\wtcB}\coloneqq r_{\cH\backslash\cX}\circ q_{\wtcB}$. 

\begin{proposition}\label{prop.Fell bdl structure on quotient bundle}
Suppose the
    \ssla\ $\HleftX$ of $\cH$ on the groupoid $\cX$
is free
    and proper
and $\cH$ as open source  
map. For two elements $\Xi,\Theta$ of $\wtcB
$ with $s_{\wtcB}(\Xi)=r_{\wtcB}(\Theta)$,
    define 
    \[
    \Xi\Theta
    =
    \cH \HleftB  (bc)
    \quad
    \text{where }b\in \Xi, c\in \Theta \text{ are such that }(b,c)\in\cB^{(2)}.
    \]
    Further, let $(\cH \HleftB  b)^*=\cH \HleftB  b^*$. With this structure, $\wtcB$ is a Fell bundle, which we call the {\em left quotient bundle of~$\cB$ by $\cH$}.
\end{proposition}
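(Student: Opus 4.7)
The plan is to mirror the proof of Proposition~\ref{prop.gpd structure on orbit space} at the Fell bundle level, using the fact that we have already established (via Corollary~\ref{cor:widetildecB is USCBb}) that $\wtcB$ is a \USCBb. What remains is to check that the proposed multiplication and involution are well defined, satisfy the algebraic Fell bundle axioms, and are jointly continuous.

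For well-definedness of the multiplication, given $\Xi, \Theta \in \wtcB$ with $s_{\wtcB}(\Xi) = r_{\wtcB}(\Theta)$, Lemma~\ref{lm.orbit multiplication} produces $b\in\Xi$ and $c\in\Theta$ with $(b,c)\in\cB^{(2)}$: choose any $b\in\Xi$ and $c'\in\Theta$, find $h\in\cH$ such that $s_{\cX}(q_{\cB}(b)) = h\HleftX r_{\cX}(q_{\cB}(c'))$, and set $c \coloneqq h\HleftB c'$. For independence of the choice, suppose $b_{2} = k\HleftB b$ and $c_{2} = l\HleftB c$ give another such composable pair; the groupoid-level uniqueness argument in Lemma~\ref{lm.orbit multiplication} uses freeness of $\HleftX$ on $\cX$ to force $l = k\HrightB b$, and condition \ref{item:leftFell.4} then gives $b_{2} c_{2} = (k\HleftB b)\bigl([k\HrightB b]\HleftB c\bigr) = k\HleftB (bc)$, so that $\cH\HleftB (b_{2} c_{2}) = \cH\HleftB (bc)$. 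Well-definedness of the involution $(\cH\HleftB b)^{*} \coloneqq \cH\HleftB b^{*}$ is immediate from \ref{item:leftFell.5}. Next, I would verify the algebraic Fell bundle axioms (bilinearity and associativity of multiplication, compatibility of the involution, submultiplicativity of the norm, the C*-identity, and positivity of $\Xi^{*}\Xi$). Each reduces to the corresponding identity in $\cB$ by choosing composable representatives in a common fiber; the key tools are the fiberwise linearity of $h\HleftB\mvisiblespace$ from \ref{item:leftFell.1}, its fiberwise isometry from Corollary~\ref{cor.isometric.action} (which makes the norm $\|\Xi\|=\|b\|$ well defined and matches it with the norm on $\cB$), and the compatibility relations~\ref{item:leftFell.4}--\ref{item:leftFell.5}.

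The main obstacle is continuity. For the involution, given $\Xi_{i} \to \Xi$, I would use openness of $\pi\colon B\to\cH\backslash B$ (Remark~\ref{rmk:pi open}) together with Proposition~\ref{Fell's criterion} to lift a subnet to $b_{i}\to b$ in $\cB$; then continuity of $*$ in $\cB$ gives $b_{i}^{*}\to b^{*}$, and continuity of $\pi$ gives $\Xi_{i}^{*} = \pi(b_{i}^{*})\to \pi(b^{*}) = \Xi^{*}$. For multiplication, the proof follows the template of the groupoid argument in Proposition~\ref{prop.gpd structure on orbit space}: given $\Xi_{i}\to\Xi$ and $\Theta_{i}\to\Theta$ with matching source/range, I would lift subnets to $b_{i}\to b$ and $c_{i}\to c$ in $\cB$. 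These need not be composable, so I would extract the unique $h_{i}\in\cH$ (uniqueness by freeness) with $s_{\cX}(q_{\cB}(b_{i})) = h_{i}\HleftX r_{\cX}(q_{\cB}(c_{i}))$; the convergence $\bigl(h_{i}\HleftX r_{\cX}(q_{\cB}(c_{i})), r_{\cX}(q_{\cB}(c_{i}))\bigr) \to \bigl(s_{\cX}(q_{\cB}(b)), r_{\cX}(q_{\cB}(c))\bigr)$ combined with properness of $\HleftX$ on $\cX$ then produces a limit $h_{i}\to h$ along a further subnet. Joint continuity of $\HleftB$, of multiplication in $\cB$, and of $\pi$ then yield $\Xi_{i}\Theta_{i} = \pi\bigl(b_{i}[h_{i}\HleftB c_{i}]\bigr) \to \pi\bigl(b[h\HleftB c]\bigr) = \Xi\Theta$, as required. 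The delicate point is coordinating the three lifts (of $\Xi_{i}$, $\Theta_{i}$, and $h_{i}$) so that Banach-bundle convergence in $\wtcB$ can be extracted from continuity of the structure maps at the level of $\cB$; freeness and properness of $\HleftX$ on the underlying groupoid are precisely what make these extractions possible.
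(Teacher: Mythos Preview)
Your proposal is correct and follows essentially the same approach as the paper. You are in fact more thorough in one respect: the paper's proof dispatches the Fell bundle axioms in one sentence (``the algebraic and norm-related properties for Fell bundles \dots\ are all swiftly verified and follow from the respective properties of~$\cB$'') and does not spell out continuity of the multiplication and involution, whereas you do so explicitly, correctly following the template of Proposition~\ref{prop.gpd structure on orbit space}.
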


\begin{proof}
Since $\cH$ acts freely and properly on $\cX$, the quotient $\cH\backslash\cX$ is a groupoid by Proposition~\ref{prop.gpd structure on orbit space}.
We first verify that
$b\in\Xi$ and $c\in\Theta$ exist, so start with two {\em arbitrary} elements $b\in \Xi$ and $c'\in \Theta$. By construction of $s_{\wtcB}$ and $r_{\wtcB}$, we have $s_{\cB}(b)\in s_{\wtcB}(\Xi)$ and $r_{\cB}(c')\in r_{\wtcB}(\Theta)$. By assumption, the equivalence classes are the same element in $\cH\backslash \cX$, so there exists $h\in \cH$ such that $s_{\cB}(b)=h\HleftX r_{\cB}(c')$. We have
\[  
    h\HleftX r_{\cB}(c')
    =
    h\HleftX r_{\cX}(q_{\cB}(c'))
    \overset{\eqref{item:L10}}{=}
    r_{\cX}(h\HleftX q_{\cB}(c'))
    \overset{\ref{item:leftFell.1}}{=}
    r_{\cX}(q_{\cB}(h\HleftB c')).
\]
Thus, for the element $c\coloneqq h\HleftB c'$ of $\Theta$, we have shown that $(b,c)\in \cB^{(2)}$. Next, we must show that  the multiplication  does not depend on the choice of $(b,c)\in \cB^{(2)}$, so assume that $(b_{1},c_{1})$ is another composable pair of~$\cB$ for which $b_{1}\in \Xi$ and $c_{1}\in \Theta$. Then there exist $k,l\in\cH$ such that $b_{1}=k\HleftB b$ and $c_{1}=l\HleftB c$. A computation similar to that in the proof of Lemma~\ref{lm.orbit multiplication} shows that
\[
    [k\HrightB b ]\HleftX s_{\cB} (b) 
    =
    s_{\cB}(b_{1})
    =
    r_{\cB} (c_{1})
    =
    l\HleftX r_{\cB}(c)
    =
    l\HleftX s_{\cB}(b).
\]
\assumption{Since the $\cH$-action on~$\cX$ is free},  we conclude $l=k\HrightB b $, so that \ref{item:leftFell.4} implies
\[
    b_{1}c_{1}=[k\HleftB b][l\HleftB c]
    =
    [k\HleftB b] \bigl[(k\HrightB b )\HleftB c\bigr]
    =
 k\HleftB bc.
\]
In other words, $b_{1}c_{1}\in \cH \HleftB  bc$, as claimed.

\smallskip

Now, if $b_{1} = k\HleftB b$,  then $b_{1}^* = (k\HleftB b)^*= [k\HrightB b ]\HleftB b^*$, which shows that $\cH  \HleftB  b_{1}^*=\cH  \HleftB  b^*$, i.e., involution is well defined on $\wtcB$.

\smallskip

As noted in Corollary~\ref{cor:widetildecB is USCBb}, $\wtcB$ is a \USCBb. Moreover, the algebraic and norm-related properties for Fell bundles (that is, (F1)--(F10) as listed in Subsection~\ref{ssec:Fell bdl properties}) are all swiftly verified and follow from the respective properties of $\cB$. For example, to show (F10), take an arbitrary $\Xi\in \wtcB$ and any $b\in \Xi$; let $u\coloneqq s_{\cB}(b)$. Since~$\cB$ is a Fell bundle, we have  $b^*\cdot b=c^*c$  for some $c\in \cB_{u}$. The definition of the multiplication and involution on $\wtcB$ thus implies that 
\[\Xi^*\Xi = \cH \HleftB  (b^*\cdot b)=\cH \HleftB  (c^*c)=(\cH \HleftB  c)^* (\cH \HleftB  c).\]
Since 
$\cH \HleftB  c\in (\wtcB)_{\cH \HleftX  u}$
and $\cH \HleftX  u =\cH \HleftX  s_{\cB}(b)= s_{\wtcB}(\Xi)$,
this proves that $\Xi^*\Xi$ is a positive element of the C*-algebra~$(\wtcB)_{\cH \HleftX  u}$, as needed for (F10).
\end{proof}

\begin{remark}\label{rm.saturated is inherited by quotient}
    If $\cB$ is saturated, then so is $\cH\backslash\cB$. Indeed, take $(\xi_{1},\xi_{2}) \in (\cH\backslash\cX)^{(2)}$ and let $\Theta\in (\cH\backslash\cB)_{\xi_{1}\xi_{2}}$ be arbitrary. By definition of the fiber, there exists $b\in\cB$ with $q_{\cB}(b)\in \xi_{1}\xi_{2}$ and $\cH\HleftB b=\Theta$. Since $\xi_{1}$ and $\xi_{2}$ are composable, we can find  $x_{i}\in\xi_{i}$ such that $(x_{1},x_{2})\in\cX^{(2)}$. Thus, there exists $h\in\cH$ such that \eqref{item:L4} implies $q_{\cB}(b)=h\HleftX (x_{1}x_{2})= y_{1}y_{2}$, where $y_{1}\coloneqq h\HleftX x_{1}$ and $y_{2}= (h\HrightX x_{1})\HleftX x_{2}$. Since $\cB$ is saturated, we can approximate $b$ by linear combinations of products of elements in $\cB_{y_{1}}$ and in $\cB_{y_{2}}$. Since $y_{i}\in\cH\HleftX x_{i}=\xi_{i}$, the image of these elements under $\pi$ are in $(\cH\backslash\cB)_{\xi_{i}}$ and, by definition of the linear and topological structure on $\cH\backslash\cB$, they approximate $\cH\HleftB b=\Theta$, as claimed.
\end{remark}

\begin{example}\label{ex:CP with quotient}
    \repeatable{assumptions of ex:CP with quotient}{%
        Suppose that the
    \ssla\ $\HleftX$ of $\cH$ on the groupoid $\cX$ is free and proper, that $\cH$ has open source map, and that $(\cA, \cH\backslash\cX, \alpha)$ is a groupoid dynamical system.
    }%
    If we define $\tilde{\alpha}_{x}\coloneqq \alpha_{\cH\HleftX x}$ for $x\in \cX$, then $(\cA, \cX, \tilde{\alpha})$ is a groupoid dynamical system. Moreover, $\cH$ has a \ssla\ on 
    $\cB(\cA,\cX,\tilde{\alpha})$ given by $h\HleftB (a,x)\coloneqq (a,h\HleftX x)$ and the quotient bundle $\cH\backslash \cB(\cA,\cX,\tilde{\alpha})$ is exactly $ \cB(\cA,\cH\backslash\cX,\alpha )$.
\end{example}

\begin{remark}
    Analogously to Proposition~\ref{prop.Fell bdl structure on quotient bundle}, we can define the {\em right} quotient bundle $
    \cB/\cG$ from the right \ss\ action $\BrightG$ of~$\cG$ on $\cB$. 
    We denote an element of $\cB/\cG$ by $b \BrightG  \cG$ and let $q_{\cB/\cG}(b \BrightG  \cG)=q_{\cB}(b) \XrightG \cG$. 
\end{remark}

We next require
a Fell bundle analogue of
\intune\ actions.

\begin{assumptionthm}\label{all assumptions}
    We assume that
    \begin{enumerate}
        \item $\cG$ and $\cH$ are \LCH\ groupoids;
        \item $\cX$ is a $(\cH,\cG)$-\sspa\ with \ss\ actions $\HleftX$ of $\cH$ resp.\ $\XrightG$ of $\cG$
        (Definition~\ref{df.something});
        in particular, the actions are \intune, free, and proper, and the source maps of all three groupoids are open;
        \item $\cB=(q_{\cB}\colon B\to \cX)$ is a saturated Fell bundle,
        \item $\cH$ and~$\cG$ act on the left resp.\ right of~$\cB$ by \ss\ actions $\HleftB$ resp.\ $\BrightG$; and
        \item for any $h\in\cH$, $b\in \cB_x$, $t\in\cG$ for which  $(h\HleftX x)\XrightG t$ is well defined, we have:
        \begin{equation}\tag{BC1}\label{cond.compatible.Fell.BC1}
    (h\HleftB b) \BrightG t = h\HleftB (b \BrightG t).\end{equation}
    \end{enumerate}
    Note that, with the notation introduced after Definition~\ref{df.ss.FellBundle.action.left}, we automatically also have
    \begin{align}
        \tag{BC2}\label{cond.compatible.Fell.BC2}
        (h\HleftB b) \BleftG t &= b \BleftG t
        \\
        \tag{BC3}\label{cond.compatible.Fell.BC3}
        h\HrightB (b \BrightG t) &= h \HrightB b
    \end{align}
    as a consequence of Condition~\ref{item:leftFell.1} combined with Condition~\ref{cond.compatible.c2} resp.\ \ref{cond.compatible.c3}.
\end{assumptionthm}

We first show that the actions   $\HleftB$ and $\BrightG$ on~$\cB$ pass to the quotients.
    We remind the reader of some definitions we made earlier:
    \begin{align*}
    q_{\wtcB}&\colon \wtcB \to \cH\backslash\cX
    &&\quad\text{ is defined by }\quad&
    q_{\wtcB}(\cH \HleftB  b)&=
     \cH \HleftX q_{\cB}(b),
     \text{ and}
     \\ 
    q_{\cB/\cG}&\colon \cB/\cG \to \cX/\cG
    &&\quad\text{ is defined by }\quad&
q_{\cB/\cG} ( b\BrightG  \cG)&=q_{\cB}(b) \XrightG \cG.
     \end{align*}
     Moreover, in Proposition~\ref    {prop.ssp actions on quotients}, we defined a left \ss\ $\cH$-action on $\cX/\cG$ with momentum map  $\tilde\rho(x\XrightG G)=\rho_{\cX}(x)$, and a right \ss\ 
$\cG$-action on $\cH\backslash \cX$ with momentum map 
$\tilde\sigma(\cH\HleftX x)=\sigma_{\cX}(x)$.
\begin{proposition}\label{prop.right.action.quotient}
    We assume all conditions in Assumption~\ref{all assumptions}.
    With 
    \[\mvisiblespace\qBrightG\mvisiblespace \colon \wtcB \bfp{\tilde{\sigma}
    \circ q
    }{r
    } \cG \to \wtcB,
    \qquad
    \Xi \qBrightG s
    = \cH \HleftB  [b \BrightG s]\quad \text{ where } b\in \Xi,\]
    $\qBrightG $ is a right \ss\ $\cG$-action on $\wtcB$, and with 
\[\mvisiblespace \qHleftB\mvisiblespace \colon \cH \bfp{s_{\cH}}{\tilde{\rho}
    \circ q} \cB/\cG \to \cB/\cG,
    \qquad
    h \qHleftB \Xi \coloneqq [h\HleftB b] \BrightG  \cG\quad \text{ where } b\in \Xi,\]
 $\qHleftB $ is a left \ss\ $\cH$-action on $\cB/\cG$.
\end{proposition}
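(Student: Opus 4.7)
The proof splits naturally into three tasks: well-definedness of the two proposed maps, their continuity, and verification of the five axioms in Definition~\ref{df.ss.FellBundle.action.left} (and its right-action analogue).

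For well-definedness of $\qBrightG$, suppose $b,b'\in \Xi$; then $b'=h\HleftB b$ for some $h\in \cH$ (whose source matches $\rho_{\cB}(b)$). Applying the compatibility \eqref{cond.compatible.Fell.BC1} yields
\[
   b'\BrightG s = (h\HleftB b)\BrightG s = h\HleftB (b\BrightG s),
\]
so $\cH\HleftB (b'\BrightG s) = \cH\HleftB (b\BrightG s)$; note Condition~\ref{cond.compatible.c0} and its Fell-bundle upshot guarantee that  $(h\HleftX q_{\cB}(b),s)$ is a valid input for $\BrightG$. The well-definedness of $\qHleftB$ is symmetric: if $b'=b\BrightG t$ represents the same class in $\cB/\cG$ as $b$, then $(h\HleftB b')\BrightG \cG = (h\HleftB (b\BrightG t))\BrightG \cG = ((h\HleftB b)\BrightG t)\BrightG \cG = (h\HleftB b)\BrightG \cG$. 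In both checks, the scaffolding is the same as for $\qHleftX$ and $\qXrightG$ in Proposition~\ref{prop.ssp actions on quotients}, with the Fell-bundle compatibility \eqref{cond.compatible.Fell.BC1} playing the role of \ref{cond.compatible.c1}.

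Continuity of both maps follows by the same subnet argument used in the proof of Proposition~\ref{prop.ssp actions on quotients}. Since $\cH$ has open source map (Assumption~\ref{all assumptions}), the quotient map $\pi\colon \cB\to \wtcB$ is open by Remark~\ref{rmk:pi open}. Given a convergent net $(\Xi_{i},s_{i})\to (\Xi,s)$ in $\wtcB\bfp{\tilde\sigma\circ q}{r}\cG$, we use Proposition~\ref{Fell's criterion} to pass to a subnet admitting a lift $b_{i}\to b$ in $\cB$ with $\pi(b_{i})=\Xi_{i}$ and $\pi(b)=\Xi$. By continuity of $\BrightG$ on $\cB$, we get $b_{i}\BrightG s_{i}\to b\BrightG s$, and continuity of $\pi$ then yields $\Xi_{i}\qBrightG s_{i}\to \Xi\qBrightG s$; Lemma~\ref{lem:exercise in top:cts maps} concludes. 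Continuity of $\qHleftB$ is analogous, using the openness of the quotient map $\cB\to \cB/\cG$, which holds because $s_{\cG}$ is open.

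Finally, the five axioms \ref{item:leftFell.1}--\ref{item:leftFell.5} of a \ss\ action on $\wtcB$ (respectively $\cB/\cG$) are verified by choosing representatives and invoking the corresponding axiom for the original actions $\HleftB$ and $\BrightG$ on $\cB$, modulo the quotient groupoid structures constructed in Propositions~\ref{prop.gpd structure on orbit space} and~\ref{prop.ssp actions on quotients}. For example, to check the Fell-bundle analogue of \ref{item:leftFell.4} for $\qHleftB$, pick a composable pair $(\Theta_{1},\Theta_{2})$ in $\cB/\cG$ with $h$ in range; choose representatives $b_{i}\in\Theta_{i}$ with $(b_{1},b_{2})\in \cB^{(2)}$, so that $\Theta_{1}\Theta_{2}=(b_{1}b_{2})\BrightG \cG$. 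Axiom \ref{item:leftFell.4} in $\cB$ gives $h\HleftB (b_{1}b_{2}) = (h\HleftB b_{1})[(h\HrightB b_{1})\HleftB b_{2}]$, and quotienting by $\cG$ yields the desired identity with $\qHrightB$ on the quotient defined analogously to $\qHrightX$. The involution axiom \ref{item:leftFell.5} lifts verbatim since involution on $\cB/\cG$ is also defined via representatives (by the right-side analogue of Proposition~\ref{prop.Fell bdl structure on quotient bundle}). The main obstacle, and where the full strength of Assumption~\ref{all assumptions} is needed, is ensuring that the intermediate $\cH$- or $\cG$-elements appearing in expressions like $h\HrightB b$ do not depend on the chosen representative, which is precisely what conditions \eqref{cond.compatible.Fell.BC2} and \eqref{cond.compatible.Fell.BC3} guarantee; everything else reduces to the corresponding assertions for the underlying \ss\ groupoid actions already established in Proposition~\ref{prop.ssp actions on quotients}.
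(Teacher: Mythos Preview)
Your proposal is correct and follows essentially the same approach as the paper: well-definedness via \eqref{cond.compatible.Fell.BC1}, continuity via openness of the quotient map together with Fell's criterion and Lemma~\ref{lem:exercise in top:cts maps}, and the axioms \ref{item:leftFell.1}--\ref{item:leftFell.5} checked on representatives using the corresponding properties of $\HleftB$ and $\BrightG$. The paper works out the details for $\qBrightG$ (leaving $\qHleftB$ to symmetry) and spells out \ref{item:leftFell.4} and \ref{item:leftFell.5} explicitly, whereas you illustrate with $\qHleftB$, but the substance is the same.
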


\begin{proof}  As always, we will focus only on one of the two statements, namely $\qBrightG$.
    
    To see that $\qBrightG$ is well defined, assume  $c\in \Xi$, so there exists $h\in \cH$ such that $c=h \HleftB b$. Therefore, by 
   Equation~\eqref{cond.compatible.Fell.BC1}
    and the definition of $\HleftB $,
\[
    \cH \HleftB  [c \BrightG s]
    =
    \cH \HleftB  \bigl[(h \HleftB b) \BrightG s \bigr]
    =
    \cH \HleftB  \bigl[ h\HleftB (b \BrightG s)\bigr]
    =
    \cH \HleftB  [b \BrightG s]
.\]
It remains to show that $\qBrightG$ satisfies all the conditions listed in Definition~\ref{df.ss.FellBundle.action.left}.
    We start with the algebraic properties.
    For~\ref{item:leftFell.1},
    take an arbitrary element $\xi=\cH\HleftX x\in \cH\backslash\cX$ and $\Xi \in (\wtcB)_{\xi}$. If $b\in \Xi\cap \cB_x$, then
    \[
        \Xi \qBrightG s
        =
        \cH  \HleftB  [b \BrightG s]
        \quad\in\quad
        (\wtcB)_{\cH \HleftX (x \XrightG s)}
        =
        (\wtcB)_{\xi \qXrightG s}.
    \]
    This is linear as a map
    $(\wtcB)_{\xi}\to (\wtcB)_{\xi \qXrightG s}$
    because $\mvisiblespace \BrightG s$ is linear as a map $\cB_x\to \cB_{x\XrightG s}$ 
    and because of how we defined the linear structure on the fibers of the quotient (see Lemma~\ref{lem:fibers of quotient bundle}).
    
    Both~\ref{item:leftFell.2} and~\ref{item:leftFell.3} are trivial.
    For~\ref{item:leftFell.4}, let $(\Xi,\Theta)\in (\wtcB)^{(2)}$. If $b\in \Xi$ and $c\in \Theta$ with $(b,c)\in\cB^{(2)}$, then
    $
        \Xi\Theta=
    \cH \HleftB  (bc)$ by our definition in Proposition~\ref{prop.Fell bdl structure on quotient bundle}. If $x=q_{\cB}(b)$, then
    \begin{align*}
        [\cH \HleftB  bc]\qBrightG s
        &=
        \cH \HleftB 
        (
            bc\BrightG s
        )
        &&\text{(def'n of $\qBrightG$)}
        \\
        &=
        \cH \HleftB 
        \bigl(
            [b\BrightG (x\XleftG s)][c \BrightG s]
        \bigr)
        &&\text{(Property~\ref{item:leftFell.4} for $\BrightG$)}
        \\
        &=
        \bigl[\cH \HleftB 
        \bigl(
            b\BrightG (x\XleftG s)
        \bigr)
        \bigr]
        \,
        [\cH \HleftB 
        (
            c \BrightG s
        )]
        &&\text{(def'n of $\wtcB$; see Prop.\ \ref{prop.Fell bdl structure on quotient bundle})}
        \\
        &=
        \bigl[(\cH \HleftB  b)\qBrightG (x\XleftG s)\bigr]
        \,
        [(\cH \HleftB  c)\qBrightG s]
        &&\text{(def' of $\qBrightG$).}
    \end{align*}
    Since
    $x\XleftG s=(\cH\HleftX x)\qXleftG s=q_{\wtcB}(\Xi)\qXleftG s$ (see the definition of $\qXleftG$ in Proposition~\ref{prop.ssp actions on quotients} and that of $q_{\wtcB}$ in Equation~\eqref{eq:def p tilde cB}) and since $\Xi=\cH\HleftB b$ and $\Theta=\cH\HleftB c$, this proves that
    \[
            \Xi
            \Theta
        \qBrightG s
        =
        \bigl(\Xi\qBrightG [q_{\wtcB}(\Xi)\qXleftG s]\bigr)
        \, 
        (\Theta\qBrightG s),
    \]
    as required.
    
    For~\ref{item:leftFell.5}, we compute
    \begin{align*}
        \bigl(\Xi\qBrightG s\bigr)^*
        &=
        \bigl(
            \cH  \HleftB  [b \BrightG s]
        \bigr)^*
        &&\text{for } b\in\Xi
        \\
        &=
            \cH  \HleftB  [b \BrightG s]^*
        && \text{(involution on $\wtcB$; see Prop.~\ref{prop.Fell bdl structure on quotient bundle})}
        \\
        &=
            \cH  \HleftB  \bigl[ b^*\BrightG(b\BleftG s)\bigr]
        && \text{(Property~\ref{item:leftFell.5} for $\BrightG$)}
        \\
        &=
            \bigl(\cH  \HleftB  b^*\bigr)
            \qBrightG \bigl(b\BleftG s\bigr)
        && \text{(def'n of $\qBrightG $)}
        \\
        &=
          \bigl(\cH  \HleftB  b^*\bigr)\qBrightG \bigl(q_{\wtcB}(\Xi)\qXleftG s\bigr)
        && \text{(def'n of $\qXleftG$; see Prop.~\ref{prop.ssp actions on quotients})}
        \\
        &=
          \Xi^*\qBrightG \bigl(q_{\wtcB}(\Xi)\qXleftG s\bigr)
        && \text{(involution on $\wtcB$; see Prop.~\ref{prop.Fell bdl structure on quotient bundle})},
    \end{align*}
    as required.

    Lastly, we have to check that $\qBrightG$ is continuous, so let $\{(\Xi_{i},s_{i})\}_{i\in I}$ be a net in $\wtcB \bfp{\tilde{\sigma}
    \circ q}{r_{\cG}} \cG$ that converges to $(\Xi,s)$. Since the quotient map $ \cB\to \wtcB$ is open by Remark~\ref{rmk:pi open}, there exists a subnet $\{\Xi_{ j }\}_{j\in J}$ of $\{\Xi_{i}\}_{i\in I}$ and lifts $b_\mu\in \Xi_{ j }$, $b\in \Xi$ such that $b_j\to b$ in $\cB$. Since $\BrightG$ is continuous, it follows that $b_j \BrightG s_{ j }\to b\BrightG s$, so that 
    \[  
        \Xi_{ j } \qBrightG s_{ j }
        =
        \cH\HleftB  [b_\mu \BrightG s_{ j }]
        \to 
        \cH\HleftB  [b\BrightG s]
        =
        \Xi \qBrightG s
        .
    \]
    By Lemma~\ref{lem:exercise in top:cts maps}, this suffices to conclude that $\qBrightG $ is continuous.
\end{proof}

 \section{The symmetric im\-prim\-i\-tiv\-ity theorem for \ss\ actions}\label{sec.main}

Let us rehash what the conditions in Assumption~\ref{all assumptions} imply.
By Proposition~\ref{prop.Fell bdl structure on quotient bundle}, we get two quotient Fell bundles: the right quotient~$\cB/\cG$ over~$\cX/\cG$ and the left quotient~$\cH\backslash\cB$ over the groupoid $\cH\backslash\cX$. These are saturated by Remark~\ref{rm.saturated is inherited by quotient}, since $\cB$ is assumed to be saturated.
We have seen in Proposition~\ref{prop.right.action.quotient} that $\cH\backslash\cB$ carries a right \ss\ $\cG$-action $\qBrightG$, and likewise, $\cB/\cG$ carries a left \ss\ $\cH$-action $\qHleftB$. We can therefore take two \ss\ products, as explained in Definition~\ref{df.ss.product.bundle.left} resp.\ Remark~\ref{rm.df.ss.product.bundle.right}:
\begin{itemize}
    \item the product $(\cB/\cG)\BbowtieH\cH$ of $\cB/\cG$ with~$\cH$ is a bundle over $(\cX/\cG)\bowtie\cH$ and will be denoted $q_{\cA}\colon \cA\to (\cX/\cG)\bowtie\cH$, while
    \item the product $\cG\GbowtieB(\cH\backslash\cB)$ of $\cH\backslash\cB$ with~$\cG$ is a bundle over $\cG\bowtie(\cH\backslash \cX)$ and will be denoted $q_{\cC}\colon \cC\to  \cG\bowtie(\cH\backslash \cX)$.
\end{itemize}
These \ssp\ Fell bundles are saturated by Remark~\ref{rm.saturated is inherited by bowtie}.

We now prove that $\cA$ and $\cC$ are equivalent via the bundle $\cB$ in the sense of {\cite[Definition 6.1]{MW2008}}. Recall that $\cX$ is a groupoid equivalence between $(\cX/\cG) \bowtie \cH$ and $\cG\bowtie (\cH\backslash \cX)$ by Theorem~\ref{thm.groupoid.eq} when equipped with the structure defined in Proposition~\ref{prop.actions.X}. We remind the reader that $\mathfrak{r}\colon \cX\to [(\cX/\cG)\bowtie \cH]\z$ denotes the momentum map of $\cX$ for that left action and $\mathfrak{s}\colon \cX\to [\cG\bowtie(\cH\backslash\cX)]\z$ the momentum map for that right-action. Consequently, we will write $\mathfrak{r}_{\cB}\coloneqq \mathfrak{r}\circ q_{\cB}$, not to be confused with $r_{\cB}=r_{\cX}\circ q_{\cB}\colon \cB\to \cX\z$; we likewise let $\mathfrak{s}_{\cB}\coloneqq \mathfrak{s}\circ q_{\cB}$.

\begin{theorem}[cf.\ {\cite[Theorem 3.1]{KMQW2013}}]\label{thm.imprimitivity.main}
We assume all conditions in Assumption~\ref{all assumptions}.
Then~$\cB$ is a Fell bundle equivalence between $\cA=(\cB/\cG) \BbowtieH \cH$ and $\cC=\cG\GbowtieB (\cH\backslash \cB)$ in the following way:
\begin{enumerate}[label=\textup{(\arabic*)}]
    \item\label{thm:it:left action} $\cA$
    acts on the left of~$\cB$: 
    whenever $(\Theta,h)\in \cA$ and $b\in\cB$ are such that $s_{\cA}(\Theta,h) = \mathfrak{r}_{\cB}(b)$ in $[(\cX/\cG)\bowtie \cH]\z$, we let
    \[(\Theta,h) \cdot b = a [h\HleftB b], \text{ where } a\in\Theta \text{ is such that }s_{\cB}(a)=r_{\cB}(h\HleftB b). \]  
    \item\label{thm:it:right action} $\cC$ 
    acts on the right of~$\cB$: 
    whenever $b\in\cB$ and $(t,\Xi)\in \cC$  are such that $\mathfrak{s}_{\cB}(b)=r_{\cC}(t,\Xi) $ in $[\cG\bowtie(\cH\backslash\cX)]\z$, we let
    \[b\cdot (t, \Xi) = [b\BrightG t] c, \text{ where } c\in\Xi \text{ is such that } s_{\cB}(b\BrightG t)=r_{\cB}(c). \]    
    \item\label{thm:it:left ip} The left $\cA$-valued inner product 
    defined on $\cB\bfp{\mathfrak{s}}{\mathfrak{s}}\cB$
    is given by
    \[\linner{\cA}{a}{b}=
    \bigl(
        [a(h\HleftB b^*)]  \BrightG  \cG, h \HrightB b^*
    \bigr),\]
    where $h$ is the unique element of $\cH$ such that $s_{\cB}(a)=h\HleftX s_{\cB}(b)$.
    \item\label{thm:it:right ip} The right $\cC$-valued inner product 
    defined on $\cB\bfp{\mathfrak{r}}{\mathfrak{r}}\cB$
    is given by
    \[
    \rinner{\cC}{a}{b}=
    \bigl(
        a^*\BleftG
        t , \cH  \HleftB  [(a^*\BrightG t)b] \bigr),
    \]
    where
    $t$ is the unique element of $\cG$ such that $r_{\cB}(a)\XrightG t=r_{\cB}(b)$.
\end{enumerate}
\end{theorem}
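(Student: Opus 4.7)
The proof proceeds by verifying the axioms of a Fell bundle equivalence as laid out in~\cite[Definition 6.1]{MW2008}, using as scaffolding the groupoid equivalence between $(\cX/\cG)\bowtie\cH$ and $\cG\bowtie(\cH\backslash\cX)$ via~$\cX$ established in Theorem~\ref{thm.groupoid.eq} and Proposition~\ref{prop.actions.X}. The strategy is to lift every structure from~$\cX$ to $\cB$ by making use of the \ss\ actions $\HleftB$ and $\BrightG$ together with the compatibility condition~\eqref{cond.compatible.Fell.BC1}.

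First, I would verify well-definedness and continuity of the two actions in items~\ref{thm:it:left action} and~\ref{thm:it:right action}. For the left action, the existence of $a\in\Theta$ with $s_{\cB}(a)=r_{\cB}(h\HleftB b)$ follows from $s_{\cA}(\Theta,h)=\mathfrak{r}_{\cB}(b)$ together with the compatibility of $\HleftB$, $\BrightG$ with $q_{\cB}$ (via \ref{item:leftFell.1} and its right analogue) and Equation~\eqref{cond.compatible.Fell.BC1}; uniqueness follows from freeness of the \ss\ right $\cG$-action on $\cB$, in direct analogy to the uniqueness of the lift in the proof of Proposition~\ref{prop.actions.X}. Continuity then follows from openness of the quotient maps $\cB\to\cB/\cG$ and $\cB\to\cH\backslash\cB$ (Remark~\ref{rmk:pi open}) combined with a net-lifting argument via Proposition~\ref{Fell's criterion} exactly as in Proposition~\ref{prop.actions.X}. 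The well-definedness of the inner products in~\ref{thm:it:left ip} and~\ref{thm:it:right ip} follows similarly: the elements $h\in\cH$ and $t\in\cG$ appearing there are unique because the \ss\ actions are free.

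The algebraic axioms of a Fell-bundle equivalence then split into three families, each verified by combining the Fell-bundle \ss\ axioms \ref{item:leftFell.1}--\ref{item:leftFell.5} for $\HleftB$ with their right-handed analogues for $\BrightG$ and the \intuneadj\ conditions \eqref{cond.compatible.Fell.BC1}--\eqref{cond.compatible.Fell.BC3}. Namely: (i) associativity of the $\cA$- and $\cC$-actions, as well as commutativity of the left $\cA$- and right $\cC$-action on $\cB$, reduce on the base-space level to the corresponding statements for $\cX$ (already established in Proposition~\ref{prop.actions.X}), and the bundle-level identities follow by picking lifts that match under~$q_{\cB}$; (ii) the adjoint identities $\linner{\cA}{a}{b}^*=\linner{\cA}{b}{a}$ and the analogue for $\cC$ follow from \ref{item:leftFell.5} together with \eqref{item:L9}; (iii) the crucial compatibility $\linner{\cA}{a}{b}\cdot c=a\cdot\rinner{\cC}{b}{c}$ is an associativity statement for the triple product $a\,b^*\,c$ in~$\cB$, once one tracks how the actions of~$\cH$ and~$\cG$ re-arrange the factors using \eqref{item:L4}, \eqref{item:R4}, and \eqref{cond.compatible.Fell.BC1}. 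The bookkeeping for this last identity is where I expect the main obstacle to lie, since several distinct choices of canonical representative must be reconciled.

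Finally, positivity of $\linner{\cA}{b}{b}$ in the C*-fiber $\cA_{\mathfrak{r}_\cB(b)}$ reduces, via the isomorphism of that fiber with $\cB_{s_\cB(b)}$ effected by $(h\HleftB b^*)\BrightG \cG\mapsto h\HleftB b^*$ and the fact that $\HleftB$ is fiberwise isometric (Corollary~\ref{cor.isometric.action}), to positivity of $b^*b$ inside $\cB_{s_\cB(b)}$, which holds by axiom~(F10) for~$\cB$; positivity of $\rinner{\cC}{b}{b}$ is handled symmetrically. The remaining Rieffel density conditions follow from saturation of~$\cB$ (part of Assumption~\ref{all assumptions}), which by Remarks~\ref{rm.saturated is inherited by quotient} and~\ref{rm.saturated is inherited by bowtie} transfers to saturation of both $\cA$ and $\cC$; the closed span of $\{\linner{\cA}{a}{b}:a,b\in\cB\}$ therefore exhausts each fiber of $\cA$, and likewise for $\cC$, completing the verification.
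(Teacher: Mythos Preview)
Your outline is correct and follows essentially the same verification path as the paper: the proof is broken into checking that the actions in \ref{thm:it:left action}--\ref{thm:it:right action} are well-defined and continuous (Lemma~\ref{lem:thm:actions}), that they commute (Lemma~\ref{lem:thm:actions commute}), that the inner products satisfy \ref{item:FE:ip:fiber}--\ref{item:FE:ip:C*linear} (Lemma~\ref{lem:thm:ips}) and the imprimitivity condition \ref{item:FE:ip:compatibility} (Lemma~\ref{lem:thm:ips-imprim condition}), and finally that each $B(x)$ is an \ib\ (Lemma~\ref{lem:thm:SMEs}).

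The one place where your sketch diverges, and where it contains a slip, is the last step. For \ref{item:FE:SMEs} the paper does not verify positivity and fullness by hand; instead it observes that $\cB$ is a self-equivalence, so each $B(x)$ is already a $B(r_{\cX}(x))\sme B(s_{\cX}(x))$-\ib, and then exhibits explicit C*-isomorphisms $\psi\colon B(r_{\cX}(x))\to A(\mathfrak{r}(x))$, $a\mapsto (a\BrightG\cG,\rho_{\cX}(x))$, and its right-handed counterpart, which intertwine the two bimodule structures. This bypasses any direct positivity or density argument. In your version, the fiber you name is off: $\cA_{\mathfrak{r}_{\cB}(b)}$ is isomorphic to $\cB_{r_{\cB}(b)}$, not $\cB_{s_{\cB}(b)}$, and $\linner{\cA}{b}{b}$ corresponds to $bb^*\in\cB_{r_{\cB}(b)}$ rather than $b^*b$. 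Your fullness argument is also slightly misdirected: saturation of $\cA$ and $\cC$ is needed so that \cite[Definition~6.1]{MW2008} applies at all, but the fullness required for \ref{item:FE:SMEs} is a fiberwise statement that comes directly from saturation of $\cB$ itself (via $\overline{\operatorname{span}}\,B(x)B(x)^*=B(r_{\cX}(x))$), not from saturation of the product bundles.
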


    \begin{example}[see also Example~\ref{ex:recovering KMQW2023, Lemma 3.2}]\label{ex:recovering KMQW2023, Thm3.1}
    Theorem~\ref{thm.imprimitivity.main} recovers \cite[Theorem 3.1]{KMQW2013}: 
    suppose $G$ and $H$ are \LCH\ groups with commuting actions on a Fell bundle~$\cB=(q_{\cB}\colon B\to \cX)$ by Fell bundle automorphisms, where $\cX$ is a \LCH\ groupoid.
    The induced actions of $G$ and $H$ on $\cX$ are then by groupoid automorphisms, and so (with
    $\cX$ acting trivially on $H$ and $G$) they are \ss\ actions on $\cX$. If the actions are free and proper, then
    $\cB$ as described in Theorem~\ref{thm.imprimitivity.main} is a Fell bundle equivalence between
    the semi-direct product bundles
    $(\cB/G)\rtimes H$ and $G\ltimes (H\backslash \cB)$ as considered in \cite{KMQW2013}.
    \end{example}

We will do the proof in pieces.

\begin{lemma}\label{lem:thm:actions}
    The formulas in \ref{thm:it:left action} and \ref{thm:it:right action} of Theorem~\ref{thm.imprimitivity.main} define actions on the \USCBb\ $\cB$ in the sense of Definition~\ref{df.USCBb-action}.
\end{lemma}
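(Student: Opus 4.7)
I focus on \ref{thm:it:left action}; part \ref{thm:it:right action} will follow by an entirely symmetric argument. The plan is to transport the groupoid-level analysis of Proposition~\ref{prop.actions.X} to the Fell bundle level, using that $q_{\cB}$ intertwines the relevant structures and that Assumption~\ref{all assumptions} provides all necessary compatibilities.

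First, I will verify well-definedness. Given $(\Theta, h)\in\cA$ and $b\in\cB$ with $s_{\cA}(\Theta,h)=\mathfrak{r}_{\cB}(b)$, pick any $a_0\in\Theta$. Lemma~\ref{lm.orbitZS.rs} together with the source-matching condition yields $(h\inv\HleftX s_{\cX}(a_0))\XrightG\cG = r_{\cX}(b)\XrightG\cG$ in $\cX/\cG$, and freeness of $\XrightG$ produces a unique $t\in\cG$ such that $a\coloneqq a_0\BrightG t$ has $s_{\cB}(a)=h\HleftX r_{\cX}(b)=r_{\cB}(h\HleftB b)$. Hence $a[h\HleftB b]$ is defined in $\cB$, and matching fibers with Proposition~\ref{prop.actions.X} shows that $(\Theta,h)\cdot b$ lies over $(\Theta,h)\cdot q_{\cB}(b)$ in $\cX$, i.e.\ in the correct fiber of $\cB$.

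Next, I will verify the remaining algebraic axioms of a bundle action. Linearity in $b$ on fibers follows from \ref{item:leftFell.1} and bilinearity of multiplication in $\cB$. The unit axiom is immediate from \ref{item:leftFell.3}: when $(\Theta,h)$ is a unit of $\cA$, the unique representative is $a=r_{\cB}(b)$ and $h\HleftB b = b$. For associativity, given composable $(\Theta_1,h_1), (\Theta_2,h_2)\in \cA^{(2)}$ and a suitable $b$, I will choose representatives $a_1\in\Theta_1$ and $a_2\in\Theta_2$ via the well-definedness argument above so that both $a_1[h_1\HleftB a_2]$ and $a_2[h_2\HleftB b]$ are defined. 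After expanding the product in $\cA$ per Definition~\ref{df.ss.product.bundle.left}, the required identity reduces to
\[
    h_1\HleftB \bigl(a_2\,[h_2\HleftB b]\bigr)
    =
    [h_1\HleftB a_2]\,\bigl[((h_1\HrightB a_2)h_2)\HleftB b\bigr],
\]
which is \ref{item:leftFell.4} combined with \ref{item:leftFell.2}. Verifying that both sides of associativity involve the same representative of the product $\Theta_1\Theta_2$ in $\cB/\cG$ will use freeness of $\XrightG$ together with the \intuneadj\ conditions, in particular \eqref{cond.compatible.Fell.BC1}.

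The main obstacle will be continuity. Suppose $(\Theta_i,h_i)\to (\Theta,h)$ in $\cA$ and $b_i\to b$ in $\cB$ with the source/momentum compatibility holding throughout. I plan to mimic the continuity argument from the proof of Proposition~\ref{prop.actions.X}: openness of the quotient map $\cB\to\cB/\cG$ (which holds because $r_{\cG}$ is open, via \cite[Proposition 2.12]{Wil2019}) combined with the lifting criterion used there lets one pass to a subnet and lift $\Theta_i$ to a convergent net in $\cB$; then freeness and properness of $\XrightG$ (as in the appeal to \cite[Corollary 2.26]{Wil2019} in Proposition~\ref{prop.actions.X}) are used to adjust to the correct representatives $a_i\in\Theta_i$ with $s_{\cB}(a_i)=r_{\cB}(h_i\HleftB b_i)$. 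Continuity of $\HleftB$ and of multiplication in $\cB$ then delivers $(\Theta_i,h_i)\cdot b_i\to (\Theta,h)\cdot b$, and Lemma~\ref{lem:exercise in top:cts maps} concludes. The delicate point is ensuring that the selection of representatives is compatible along the subnet so that convergence survives both the quotient map and the Fell bundle multiplication.
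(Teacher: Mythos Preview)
Your plan follows the paper's proof essentially verbatim: same well-definedness argument (pick $a_{0}\in\Theta$, use freeness of $\XrightG$ to find the unique $t$ with $a=a_{0}\BrightG t$ satisfying $s_{\cB}(a)=r_{\cB}(h\HleftB b)$), same verification of \ref{item:FA:fiber} by passing through $q_{\cB}$ to Proposition~\ref{prop.actions.X}, same associativity computation via \ref{item:leftFell.4} and \ref{item:leftFell.2}, and the same continuity argument (lift through the open quotient $\cB\to\cB/\cG$, use properness and freeness of $\XrightG$ as in \cite[Corollary~2.26]{Wil2019} to force the adjusting $t_{j}$ to converge to a unit, then conclude via Lemma~\ref{lem:exercise in top:cts maps}).

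There is one omission: Definition~\ref{df.USCBb-action} has exactly three axioms, \ref{item:FA:fiber}, \ref{item:FA:assoc}, and \ref{item:FA:norm}, and you never address the norm bound \ref{item:FA:norm}. The paper dispatches it in one line using that $h\HleftB\mvisiblespace$ is isometric (Corollary~\ref{cor.isometric.action}) and that $\norm{(a\BrightG\cG,h)}=\norm{a}$, giving
\[
\norm{(a\BrightG\cG,h)\cdot b}=\norm{a(h\HleftB b)}\leq\norm{a}\,\norm{h\HleftB b}=\norm{(a\BrightG\cG,h)}\,\norm{b}.
\]
Conversely, the ``linearity in $b$'' and ``unit axiom'' checks you include are not part of Definition~\ref{df.USCBb-action}; they are harmless but superfluous. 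One minor point on associativity: you will not actually need \eqref{cond.compatible.Fell.BC1} there---once $a_{2}$ is chosen with $s_{\cB}(a_{2})=r_{\cB}(h_{2}\HleftB b)$ and $a_{1}$ with $s_{\cB}(a_{1})=r_{\cB}(h_{1}\HleftB a_{2})$, the element $a_{1}(h_{1}\HleftB a_{2})$ is already the correct representative of the product class, by a direct source computation using \eqref{item:L5} and \eqref{item:L10} alone.
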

\begin{proof}
    We will  follow similar ideas as in the proof of Proposition~\ref{prop.actions.X}  and we will only do the proof for the left action; the other one follows {\em mutatis mutandis}. We will denote the source and range map of $(\cX/\cG)\bowtie \cH$ merely by $r$ resp.~$s$. 
    
    First, let us check that the condition  $s_{\cA}(\Theta,h) = \mathfrak{r}_{\cB}(b)$ implies that there indeed exists $a\in \Theta$ with $s_{\cB}(a)=r_{\cB}(h\HleftB b)$, so that $a[h\HleftB b]$ makes sense. If $a_0$ is {\em any} element of $\Theta$, then
    \begin{align*}
         s_{\cA}(\Theta,h)
         &=
         s \bigl(q_{\cA}(\Theta,h)\bigr)
         =
         s \bigl(q_{\cB/\cG}(\Theta),h\bigr)
         &&\text{(def'n of $q_{\cA}$ in Definition~\ref{df.ss.product.bundle.left})}
         \\
         &=
         s \bigl(q_{\cB}(a_0)\XrightG \cG,h\bigr)
         &&\text{(def'n of $q_{\cB/\cG}$; cf.\ \eqref{eq:def p tilde cB} on p.~\pageref{eq:def p tilde cB})}
         \\
         &=
         h\inv \qHleftX s_{\cX/\cG}(q_{\cB}(a_0)\XrightG \cG)
         &&\text{(Rmk~\ref{rm.unit space of ssp} and def'n of $s$; cf.\ Definition~\ref{def.ZSProduct.groupoid.left})}
         \\
         &=
         h\inv \qHleftX [s_{\cB}(a_0)\XrightG \cG]
         &&\text{(def'n of $s_{\cX/\cG}$; cf.\ Lemma~\ref{lem:s and r of H under X})}
         \\
         &=
         [h\inv \HleftX s_{\cB}(a_0)]\XrightG \cG
         &&\text{(def'n of $\qHleftX$; see Proposition~\ref{prop.ssp actions on quotients})}
    \end{align*}
    On the other hand, $\mathfrak{r}_{\cB}(b)=r_{\cB}(b)\XrightG \cG$, and so our assumption $s_{\cA}(\Theta,h) = \mathfrak{r}_{\cB}(b)$ implies that there exists $t\in\cG$ such that 
    \[
        r_{\cB}(b)= [h\inv \HleftX s_{\cB}(a_0)]\XrightG t \overset{\ref{cond.compatible.c1}}{=} h\inv \HleftX[ s_{\cB}(a_0)\XrightG t],
    \]
    i.e., 
        $h \HleftX r_{\cX}(q_{\cB}(b))
        =
        s_{\cX}(q_{\cB}(a_0))\XrightG t$. 
    Since
    \begin{align*}
        h \HleftX r_{\cX}(q_{\cB}(b))
        \overset{\eqref{item:L10}}{=}
        r_{\cX}(h \HleftX  q_{\cB}(b))
        \overset{\ref{item:leftFell.1}}{=}
        r_{\cX}(q_{\cB}(h \HleftB  b))
    \end{align*}
    and likewise, $s_{\cB}(a_0)\XrightG t=     s_{\cB}(a_0\BrightG t)$,
    we may thus let  $a\coloneqq a_0\BrightG t$, which is the required element of $a_0\BrightG \cG =\Theta$.

    Note that this chosen representative $a\in\Theta$ is unique, since \assumption{the $\cG$-action on $\cX$ is free:} 
    if $a\BrightG s$ also satisfies $s_{\cB}(a\BrightG s)=r_{\cB}(h\HleftB b)$, then 
    \[s_{\cB}(a)\XrightG s \overset{\eqref{item:R10}}{=}s_{\cB}(a\BrightG s)=s_{\cB}(a), \text{ so } s\in \cG\z.\]

    \medskip
    
   To see that the left action is continuous, assume that we have a net $\{(\Theta_{ i },h_{ i }, b_{ i })\}_{ i \in I}$ in $\cA\bfp{s}{\mathfrak{r}}\cB$ that converges to $(\Theta,h,b)$. For each $ i $, let $a_{ i }\in\Theta_{ i }$ be the unique element such that $u_ i \coloneqq s_{\cB}(a_{ i })=r_{\cB}(h_{ i }\HleftB b_{ i })$. By Lemma~\ref{lem:exercise in top:cts maps}, it suffices to check that a subnet of $a_{ i } [h_{ i }\HleftB b_{ i }]$ converges to $a [h\HleftB b]$. Since $\HleftB$ is continuous, we already know that $\{h_{ i }\HleftB b_{ i }\}_{i\in I}$ converges to $h\HleftB b$; and so in particular $u_ i \to u\coloneqq s_{\cB}(a)$ in $\cX\z$, and since multiplication on $\cB$ is continuous, it suffices to show that a subnet of $\{a_{ i }\}_{ i }$ converges to $a$. 
   
   Since the quotient map $\cB\to \cB/\cG$ is open (cf.\ Remark~\ref{rmk:pi open})
 and since $\Theta_ i \to\Theta$, 
    Proposition~\ref{Fell's criterion}
 implies that there exists a subnet $\{\Theta_{ j }\}_{j\in J}$ and lifts $c_{ j }\in \Theta_{ j }$ such that $c_{ j }\to a$ in $\cB$. Since $a_{ j }\in\Theta_{ j }$ also, there exist $t_{ j }\in \cG$ such that $a_{ j }\BrightG t_{ j }=c_{ j }$. In particular, by continuity of $s_{\cB}$, we have 
 \[
    u_{ j }\XrightG t_{ j }
    =
    s_{\cB}(a_{ j })\XrightG t_{ j }
    \overset{\eqref{item:R10}}{=}
    s_{\cX}(q_{\cB}(a_{ j })\XrightG t_{ j })
    \overset{\ref{item:leftFell.1}}{=}
    s_{\cX}(q_{\cB}(a_{ j }\BrightG t_{ j }))
    =
    s_{\cB}(c_{ j })\to s_{\cB}(a)=u,
\]
   so that
 \[
    ( u_{ j }\XrightG t_{ j },  u_{ j })
      \to
    ( u,u)
    \quad \text{ in }
    \cX\z\times \cX\z.
 \]
 As the right action of~$\cG$ on~$\cX\z$ is \assumption{proper}, it now follows from \cite[Corollary 2.26]{Wil2019} that $t_{ j }$ converges; since the action is \assumption{free}, \eqref{item:R2} implies that it must converge to $\sigma_{\cX}(u)\in \cG\z$. This, in turn, implies that
 \[
    a_{ j }
    =
    c_{ j } \BrightG t_{ j }\inv
    \to
    a \BrightG \sigma_{\cX}(u)\inv
    \overset{\ref{item:leftFell.3}}{=}
    a,
 \]
 as needed.
    
    \medskip
    
    To see that \ref{item:FA:fiber} holds, we must check that $q_{\cB}((a \BrightG  \cG, h)\cdot b)=q_{\cA}(a \BrightG  \cG, h) \cdot q_{\cB}(b)$, 
    where $\cdot$ is the left-$(\cX/ \cG)\bowtie\cH$ action on $\cX$ as defined in Proposition~\ref{prop.actions.X}.
    Let 
    $q_{\cB}(a)=x$ and $q_{\cB}(b)=y$, so
    that $s_{\cB}(a) = s_{\cX}(x)$ equals $r_{\cB}(h\HleftB b)=r_{\cX}(h\HleftX y)$ and 
    \[q_{\cB}((a \BrightG  \cG, h)\cdot b) = q_{\cB}(a[h\HleftB b]) = x[h\HleftX y].\]
    On the other hand, $q_{\cA}    (a \BrightG  \cG, h)=(x\XrightG \cG, h)$. 
    By Proposition \ref{prop.actions.X}, since  $s_{\cX}(x)=r_{\cX}(h\HleftX y)$, we know that $(x\XrightG \cG, h)$ can act on the left of $y$ and we get
    \[q_{\cA}(a \BrightG  \cG, h) \cdot q_{\cB}(b)=(x\XrightG \cG, h)\cdot y = x[h\HleftX y].\]
    This proves \ref{item:FA:fiber}.
    
    Next, we must show that \ref{item:FA:assoc} holds, i.e., associativity, so for $i=1,2$
    pick $a_{i}\in\cB_{x_{i}},b\in\cB$ and $h_{i}\in\cH$ with appropriate range and sources such that
    \[
        (a_{1} \BrightG  \cG,h_{1})(a_{2} \BrightG  \cG,h_{2})\quad\text{and}\quad (a_{2} \BrightG  \cG,h_{2}) \cdot b
    \]
    make sense; we have to show
    \begin{equation}\label{eq:assoc}
        \bigl[(a_{1} \BrightG  \cG,h_{1})(a_{2} \BrightG  \cG,h_{2})\bigr]\cdot b 
        =
        (a_{1} \BrightG  \cG,h_{1})\cdot\bigl[(a_{2} \BrightG  \cG,h_{2}) \cdot b\bigr].
    \end{equation}
    In $\cA=(\cB/\cG)\BbowtieH\cH$, we have
    \begin{align*}
        &(a_{1} \BrightG  \cG,h_{1})(a_{2} \BrightG  \cG,h_{2})
        \\
        &
        =
        \bigl(
            [a_{1} \BrightG  \cG](h_{1}\qHleftB [a_{2} \BrightG  \cG]), [h_{1}\qHrightX q_{\cA}(a_{2} \BrightG  \cG)]h_{2}
        \bigr)
        &&\text{(Definition \ref{df.ss.product.bundle.left})}
        \\
        &=
        \bigl(
            [a_{1} \BrightG  \cG] [(h_{1} \HleftB a_{2}) \BrightG  \cG], [h_{1}\qHrightX ( x_{2}  \XrightG \cG)]h_{2}
        \bigr)
        &&\text{(def'n of $\qHleftB$ and $q_{\cA}$ in Prop.~\ref{prop.right.action.quotient})}
        \\
        &=\bigl(
            [a_{1}  (h_{1} \HleftB a_{2})] \BrightG  \cG,  [h_{1}\HrightX  x_{2}  ]h_{2}
        \bigr)
        &&\text{(Prop.~\ref{prop.Fell bdl structure on quotient bundle} for $\cB/\cG$; $\qHrightX$ in Prop.~\ref{prop.ssp actions on quotients}).}
    \end{align*}
    Therefore, we get
    \begin{align*}
        \bigl[(a_{1} \BrightG  \cG,h_{1})(a_{2} \BrightG  \cG,h_{2})
        \bigr]\cdot b 
        = a_{1}(h_{1}\HleftB a_{2})\bigl( [(h_{1}\HrightX x_{2})h_{2}]\HleftB b\bigr).
    \end{align*}
On the other hand,
    \begin{align*}
        (a_{1} \BrightG  \cG,h_{1}) \cdot \bigl[(a_{2} \BrightG  \cG,h_{2}) \cdot b\bigr]
        &=
        (a_{1} \BrightG  \cG,h_{1}) \cdot (a_{2}[h_{2}\HleftB b])
        \\&=
        a_{1}
        \Bigl[
            h_{1}\HleftB \bigl(a_{2}[h_{2}\HleftB b]\bigr)
        \Bigr]
        \\
        &= a_{1}(h_{1}\HleftB a_{2}) \bigl([(h_{1}\HrightX x_{2})h_{2}]\HleftB b\bigr)
        &&\text{(by \ref{item:leftFell.4} for $\cB$),}
    \end{align*}
    so we have shown Equation~\eqref{eq:assoc}.
    
    For \ref{item:FA:norm}, recall from Lemma~\ref{cor.isometric.action} that $h\HleftB \mvisiblespace$ is isometric and $\|(b \BrightG  \cG,h)\|=\|b\|$. Therefore,
\[\|(a \BrightG  \cG,h) \cdot b\| = \| a (h\HleftB b) \| \leq \|a\| \|b\| = \|(a \BrightG  \cG,h)\| \|b\|,\]
    as needed.
\end{proof}

\begin{lemma}[Regarding \ref{item:FE:actions}]\label{lem:thm:actions commute}
    The left and right actions commute.
\end{lemma}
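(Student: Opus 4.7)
The plan is to unfold both sides of the alleged identity $((\Theta,h)\cdot b)\cdot(t,\Xi) = (\Theta,h)\cdot(b\cdot(t,\Xi))$ using the definitions in Theorem~\ref{thm.imprimitivity.main}\ref{thm:it:left action}--\ref{thm:it:right action}, rewrite them using the Fell-bundle Condition~\ref{item:leftFell.4} (and its obvious right-action analogue $(bc)\BrightG t = [b\BrightG(q_{\cB}(c)\XleftG t)]\,[c\BrightG t]$) together with the three compatibility identities \eqref{cond.compatible.Fell.BC1}--\eqref{cond.compatible.Fell.BC3}, and finally invoke uniqueness of the cosets representatives (cf.\ Lemma~\ref{lm.uniqueness}, which is available because the actions are free) to match the remaining pieces. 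In spirit, this is a Fell-bundle lift of the argument at the end of Proposition~\ref{prop.actions.X} showing the two groupoid actions on $\cX$ commute.

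First I would fix composable data $(\Theta,h)\in\cA$, $b\in\cB$, $(t,\Xi)\in\cC$ with $s_{\cA}(\Theta,h)=\mathfrak{r}_{\cB}(b)$ and $\mathfrak{s}_{\cB}(b)=r_{\cC}(t,\Xi)$, pick the unique $a\in\Theta$ with $s_{\cB}(a)=r_{\cB}(h\HleftB b)$, and compute
\[
    ((\Theta,h)\cdot b)\cdot(t,\Xi) = \bigl[(a[h\HleftB b])\BrightG t\bigr] c_1,
\]
where $c_1\in\Xi$ is the unique element with $r_{\cB}(c_1)=s_{\cB}((a[h\HleftB b])\BrightG t)$. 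Applying the right-action analogue of~\ref{item:leftFell.4}, then \eqref{cond.compatible.Fell.BC2} and \eqref{cond.compatible.Fell.BC1}, rewrites the product as
\[
    \bigl[a\BrightG(b\BleftG t)\bigr]\,\bigl[h\HleftB(b\BrightG t)\bigr]\,c_1.
\]

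On the other side, pick the unique $c_2\in\Xi$ with $r_{\cB}(c_2)=s_{\cB}(b\BrightG t)$ and the unique $a_2\in\Theta$ making $(\Theta,h)\cdot(b\cdot(t,\Xi)) = a_2\bigl[h\HleftB([b\BrightG t]c_2)\bigr]$ meaningful. Applying~\ref{item:leftFell.4} to $h\HleftB([b\BrightG t]c_2)$, and then \eqref{cond.compatible.Fell.BC3} to simplify $h\HrightB(b\BrightG t)$ to $h\HrightB b$, produces
\[
    a_2\,\bigl[h\HleftB(b\BrightG t)\bigr]\,\bigl[(h\HrightB b)\HleftB c_2\bigr].
\]

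It remains to match the two expressions, which amounts to showing $a_2 = a\BrightG(b\BleftG t)$ and $c_1 = (h\HrightB b)\HleftB c_2$. The first lies in $a\BrightG\cG = \Theta$; its source, computed via~\eqref{item:R10} and \ref{item:leftFell.1}, is precisely the range condition that defines $a_2$, so uniqueness in $\Theta$ (freeness of $\XrightG$) forces the equality. Symmetrically, $(h\HrightB b)\HleftB c_2 \in \cH\HleftB c_2 = \Xi$, and projecting through $q_{\cB}$ shows its range equals $s_{\cB}([a(h\HleftB b)]\BrightG t)$ after a short calculation using~\eqref{item:L10} and \eqref{item:R10}, so uniqueness in $\Xi$ (freeness of $\HleftX$) matches it with $c_1$. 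The main obstacle is simply the bookkeeping: verifying that every expression appearing lives in the correct fiber and that the source/range identities needed for uniqueness actually hold; once these are set up, the algebraic collapse is immediate from the compatibility conditions.
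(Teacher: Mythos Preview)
Your proposal is correct and follows essentially the same approach as the paper's own proof: both sides are expanded via the chosen representatives, rewritten using \ref{item:leftFell.4} (and its right analogue) together with \eqref{cond.compatible.Fell.BC1}--\eqref{cond.compatible.Fell.BC3}, and then the two remaining equalities $a_2=a\BrightG(b\BleftG t)$ and $c_1=(h\HrightB b)\HleftB c_2$ are obtained by the uniqueness of the representatives. The only cosmetic difference is that the paper applies \eqref{cond.compatible.Fell.BC1} at the comparison step rather than inside the left-hand-side expansion, and it explicitly carries out the source/range verification for one of the two matchings (invoking symmetry for the other).
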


\begin{proof}
    Let $(\Theta,h)\in \cA$, $b\in\cB$, and $(t,\Xi)\in \cC$ be such that $s_{\cA}(\Theta,h) = \mathfrak{r}_{\cB}(b)$ 
    and $\mathfrak{s}_{\cB}(b)=r_{\cC}(t,\Xi) $; 
    we have to confirm that $[(\Theta,h) \cdot b]\cdot (t, \Xi) =(\Theta,h) \cdot [b\cdot (t, \Xi) ]$.  
    For the left-hand side, we let $a$ be the (unique) element of $\Theta$ with $s_{\cB}(a)=r_{\cB}(h\HleftB b)$, so that $(\Theta,h) \cdot b = a [h\HleftB b]$; then let $c$ be the (unique) element in $\Xi$ with $s_{\cB}((a [h\HleftB b])\BrightG t)=r_{\cB}(c)$, so that 
    \begin{align*}
        [(\Theta,h) \cdot b]\cdot (t, \Xi)
        &=\bigl[(a [h\HleftB b])\BrightG t\bigr]c
        \\
        &=
        \left[a \BrightG ([h\HleftB b]\BleftG t)\right] 
        \, ([h\HleftB b]\BrightG t)
        \, c
        &&\text{(by \ref{item:leftFell.4} for $\BrightG$)}\\
        &=
        \left[a \BrightG (b\BleftG t)\right] 
        \, ([h\HleftB b]\BrightG t)
        \, c
        &&\text{(by \eqref{cond.compatible.Fell.BC2}).}
    \end{align*}
    On the other hand, let $c'$ be the (unique) element in $\Xi$ with $s_{\cB}(b\BrightG t)=r_{\cB}(c')$, so that $b\cdot (t, \Xi) = [b\BrightG t] c'$; then let $a'$ be the (unique) element in $\Theta$ with $s_{\cB}(a')=r_{\cB}(h\HleftB ([b\BrightG t] c'))$, so that
    \begin{align*}
        (\Theta,h) \cdot [b\cdot (t, \Xi)]
        &=
        a' \bigl[h\HleftB ([b\BrightG t] c')\bigr]
        \\
        &=
        a'
        \,
        (h\HleftB [b\BrightG t])
        \,
        \left[ (h\HrightB [b\BrightG t] )\HleftB c'\right]
        &&\text{(by \ref{item:leftFell.4})}
        \\
        &=
        a'
        \,
        (h\HleftB [b\BrightG t])
        \,
        \left[ (h\HrightB b )\HleftB c'\right]
        &&\text{(by \eqref{cond.compatible.Fell.BC3})}.
    \end{align*}
    Since $[h\HleftB b]\BrightG t= h\HleftB [b\BrightG t]$ by \eqref{cond.compatible.Fell.BC1}, we see that it suffices to check that
    \[
        a'=a \BrightG (b\BleftG t)
        \quad\text{and}\quad
        (h\HrightB b )\HleftB c' = c.
    \]
    Note that the second equation is the $\HleftB$-version of the first equation, so by symmetry, it suffices to check the first equation. We have $a \BrightG (b\BleftG t)\in a \BrightG \cG = \Theta$, so by uniqueness of $a'$, we only need to check that $s_{\cB}(a \BrightG (b\BleftG t))=r_{\cB}(h\HleftB ([b\BrightG t] c')).$

    Since 
    \[
        q_{\cB}(a \BrightG (b\BleftG t))
        =
        q_{\cB}(a) \XrightG [b\BleftG t],
    \]
    we have
    \begin{align*}
        s_{\cB}\bigl(a \BrightG (b\BleftG t)\bigr)
        &=
        s_{\cX}\bigl(q_{\cB}(a) \XrightG [b\BleftG t]\bigr)
        \\
        &=
        s_{\cB}(a) \XrightG [b\BleftG t]
        &&
        \text{(by \eqref{item:R10})}
        \\
        &=
        r_{\cB}(h\HleftB b)\XrightG [b\BleftG t]
        &&\text{(by choice of $a$)}
        \\
        &=
        \bigl(h\HleftX r_{\cB}(b)\bigr)\XrightG [b\BleftG t]
        &&\text{(by \eqref{item:L10})}
        \\
        &=
        h\HleftX \bigl( r_{\cB}(b)\XrightG [b\BleftG t]\bigr)
        &&\text{(by \ref{cond.compatible.c1})}
        \\
        &=
        h\HleftX  r_{\cX}\bigl(q_{\cB}(b)\XrightG t\bigr)
        &&\text{(by \eqref{item:R10}).}
    \end{align*}
    On the other hand,
    \begin{align*}
        q_{\cB}(h\HleftB ([b\BrightG t] c'))
        &\overset{\ref{item:leftFell.1}}{=}
        h\HleftX  q_{\cB}([b\BrightG t] c')
        \overset{\ref{cond.F1}}{=}
        h\HleftX  [q_{\cB}(b\BrightG t)q_{\cB}( c')]
        \\
        &\overset{\eqref{item:L4}}{=}
        \bigl[h\HleftX q_{\cB}(b\BrightG t)\bigr]
        \,
        \Bigl(\bigl[h\HrightX q_{\cB}(b\BrightG t)\bigr] \HleftX q_{\cB}( c')\Bigr),
    \end{align*}
    so that it follows from \eqref{item:L10} and \ref{item:leftFell.1} for $\BrightG$ that
    \[
        r_{\cB}(h\HleftB ([b\BrightG t] c'))
        =
        r_{\cX}\bigl(h\HleftX q_{\cB}(b\BrightG t)\bigr)
        =
        h\HleftX  r_{\cX}\bigl(q_{\cB}(b)\XrightG t\bigr).
    \]
    Our earlier computation therefore shows that $s_{\cB}\bigl(a \BrightG (b\BleftG t)\bigr)=r_{\cB}(h\HleftB ([b\BrightG t] c'))$, as needed.
This shows that the left and right actions commute.
\end{proof}

\begin{lemma}[Regarding \ref{item:FE:ip}]\label{lem:thm:ips}
    The formulas in \ref{thm:it:left ip} and \ref{thm:it:right ip} pf Theorem~\ref{thm.imprimitivity.main} define inner products on the \USCBb\ $\cB$ in the sense of Definition~\ref{df.FBequivalence}, \ref{item:FE:ip:fiber}--\ref{item:FE:ip:C*linear}.
\end{lemma}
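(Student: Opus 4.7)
The plan is to verify, for each of $\linner{\cA}{\cdot}{\cdot}$ and $\rinner{\cC}{\cdot}{\cdot}$ separately, three items: (i) the auxiliary element $h$ (resp.\ $t$) exists and is unique on the stated domain; (ii) the displayed expression yields a well-defined element of $\cA$ (resp.\ $\cC$); and (iii) this element lies in the fiber of $\cA$ indexed by the groupoid equivalence inner product of Theorem~\ref{thm.groupoid.eq} and is sesquilinear. By symmetry it suffices to treat only $\linner{\cA}{\cdot}{\cdot}$. For existence and uniqueness of $h$, the hypothesis $\mathfrak{s}_{\cB}(a) = \mathfrak{s}_{\cB}(b)$ unwraps to an equality of $\cH$-orbits in $\cX\z$, so some $h$ works; uniqueness is immediate from Lemma~\ref{lem:restricted action on cXz}, which gives freeness of $\HleftX$ already on $\cX\z$.

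Writing $x = q_{\cB}(a)$ and $y = q_{\cB}(b)$, I would next check that $a(h\HleftB b^*)$ is a well-defined product in $\cB$: the range of $h\HleftB b^* \in \cB_{h\HleftX y^{-1}}$ is $h\HleftX s_{\cX}(y)$ by \eqref{item:L10}, which equals $s_{\cX}(x)$ by the defining property of $h$. Passing to $\mvisiblespace\BrightG\cG$ and pairing with $h\HrightB b^* = h\HrightX y^{-1}$ then produces an honest element of $\cA$, once bowtie-composability is confirmed using \eqref{item:L1}. Sesquilinearity is essentially formal: if $a_1, a_2 \in \cB_x$, they share $s_{\cB}$ and hence the same $h$, so distributivity of $\cB$-multiplication and of $\mvisiblespace\BrightG\cG$ yields linearity in the first variable; conjugate linearity of the $\cB$-involution similarly cascades through linearity of $h\HleftB\mvisiblespace$ (see \ref{item:leftFell.1}) and of left multiplication by $a$ to give conjugate linearity in the second variable.

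The genuine work is in the fiber condition. I would compute
\[
    q_{\cA}(\linner{\cA}{a}{b}) \;=\; \bigl((x(h\HleftX y^{-1}))\XrightG\cG,\; h\HrightX y^{-1}\bigr)
\]
and verify, via \eqref{item:L4}, the identity
\[
    (h\HleftX y^{-1})\bigl[(h\HrightX y^{-1})\HleftX y\bigr] \;=\; h\HleftX s_{\cX}(y) \;=\; s_{\cX}(x),
\]
which exhibits $q_{\cA}(\linner{\cA}{a}{b})$ as the (unique, by freeness in Proposition~\ref{prop.actions.X}) element of $(\cX/\cG)\bowtie\cH$ whose left action on $y$ equals $x$, i.e., as the canonical groupoid inner product of the equivalence. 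The main obstacle is precisely this fiber identification: one must peel away the layered structure of $\cA$ (quotient by $\cG$ outside of the \ssp\ with $\cH$) and reconcile it with the left action on $\cX$ from Proposition~\ref{prop.actions.X}; the displayed identity is the essential algebraic content, and everything else is bookkeeping with \eqref{item:L1} and \eqref{item:L10}.
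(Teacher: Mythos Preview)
Your proposal correctly handles well-definedness and the fiber condition \ref{item:FE:ip:fiber}, and the approach there matches the paper's. However, there is a genuine gap: the statement asks for \ref{item:FE:ip:fiber}--\ref{item:FE:ip:C*linear}, and you have addressed only \ref{item:FE:ip:fiber} together with $\mathbb{C}$-sesquilinearity. You do not verify the adjoint identity \ref{item:FE:ip:adjoint}, namely $\linner{\cA}{a}{b}^* = \linner{\cA}{b}{a}$, nor the $\cA$-linearity \ref{item:FE:ip:C*linear}, namely $\linner{\cA}{(\Theta,k)\cdot a}{b} = (\Theta,k)\,\linner{\cA}{a}{b}$. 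Neither of these is ``bookkeeping'': each requires computing both the $\cB/\cG$-component and the $\cH$-component of an element of $\cA=(\cB/\cG)\BbowtieH\cH$ and matching them separately.

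For \ref{item:FE:ip:adjoint}, the involution in $\cA$ applied to $\linner{\cA}{a}{b}$ involves $(h\HrightX y\inv)\inv$ acting on $\bigl(a[h\HleftB b^*]\bigr)^*$, and you must reduce this to $b(h\inv\HleftB a^*)$; this uses \eqref{item:L9}, \eqref{item:L3}, and \ref{item:leftFell.2}--\ref{item:leftFell.5} in a nontrivial way. For \ref{item:FE:ip:C*linear}, note that $\mathbb{C}$-linearity in the first slot does not imply $\cA$-linearity: you must let $(\Theta,k)\in\cA$ act, identify the new auxiliary element as $m=(k\HrightX x)h$, and then match both components of the product $(\Theta,k)\,\linner{\cA}{a}{b}$ in the \ssp\ bundle against $\linner{\cA}{(\Theta,k)\cdot a}{b}$, invoking \ref{item:leftFell.4} and \eqref{item:L6}. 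Your sentence ``the genuine work is in the fiber condition'' underestimates where the effort lies.
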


\begin{proof}

    We will do the proof for the left inner product; the other one follows {\em mutatis mutandis}.
    
    First, we verify that the inner product is well defined. As $\mathfrak{s}_{\cB}(a)=\mathfrak{s}_{\cB}(b)$, the definition of $\mathfrak{s}_{\cB}=\mathfrak{s}\circ q_{\cB}$ implies the existence of $h$ satisfying $h\HleftX s_{\cB}(b)=s_{\cB}(a)$. As this implies $s_{\cH}(h)=\rho_{\cX}(s_{\cB}(b))$, we therefore have
    \[\rho_{\cB}(b^*) = \rho_{\cX}(r_{\cB}(b^*))=\rho_{\cX}(s_{\cB}(b))=s_{\cH}(h),\]
    and so $h\HleftB b^*$ and $h\HrightB b^*=h\HrightX q_{\cB}(b^*)$ make sense.
    Now $q_{\cB}(h\HleftB b^*)=h\HleftX q_{\cB}(b^*)$, and so by \eqref{item:L10}, we thus have
    \[
        r_{\cB}(h\HleftB b^*)
        =
        h\HleftB r_{\cB}(b^*) = h\HleftX s_{\cB}(b) = s_{\cB}(a),
    \]
 so that $a[h\HleftB b^*]$ makes sense. 
 
 To see that
 \[\linner{\cA}{a}{b}=
    \bigl(
        [a(h\HleftB b^*)]  \BrightG  \cG, h\HrightB b^*
    \bigr)\]
    is an element of $\cA=(\cB/\cG) \BbowtieH \cH$, we have to verify that
    \[
        (\rho_{\cX/\cG}\circ s_{\cX/\cG}\circ q_{\cB/\cG}) \bigl( [a(h\HleftB b^*)]  \BrightG  \cG\bigr)
        =
        r_{\cH}( h \HrightB b^*
        ).
    \]
    Recall from a $\BrightG$-version of Equation~\eqref{eq:def p tilde cB} that
    \begin{align*}
        q_{\cB/\cG} \bigl( [a(h\HleftB b^*)]  \BrightG  \cG\bigr)
        =
        q_{\cB} \bigl( a(h\HleftB b^*)  \bigr)\XrightG  \cG.
    \end{align*}
    Moreover, $s_{\cX/\cG}(x\XrightG \cG)=s_{\cX}(x)\XrightG \cG$ (cf.\ the definition before Lemma~\ref{lem:s and r of H under X}) and $\rho_{\cX/\cG} (x\XrightG \cG)=\rho_{\cX} (x)$ by the definition in Proposition~\ref{prop.ssp actions on quotients}.
    Thus
    \[
        (\rho_{\cX/\cG}\circ s_{\cX/\cG}\circ q_{\cB/\cG}) \bigl( [a(h\HleftB b^*)]  \BrightG  \cG\bigr)
        =
        \rho_{\cX}
        \bigl( s_{\cB}(a[h\HleftB b^*]) \bigr)
        =
        \rho_{\cX}
        \bigl( s_{\cB}(h\HleftB b^*) \bigr).
    \]
    On the other hand,  we have
    \[
        r_{\cH}( h\HrightB b^* 
        )
        =
        r_{\cH}( h\HrightX q_{\cB}(b^*)
        )
        \overset{\eqref{item:L1}}{=}
       \rho_{\cX}\bigl(s_{\cX}(h\HleftX q_{\cB}(b^*) )\bigr)
       \overset{\ref{item:leftFell.1}}{=}
        \rho_{\cX}
        \bigl( s_{\cB}(h\HleftB b^*) \bigr),
    \]
    as required. The inner product is thus well defined and lands in the right space.

        \medskip

    Since multiplication on $\cB$, $\HleftB$, and $\BrightG$ are linear, we see that $\linner{\cA}{\cdot}{\cdot}$ is linear in the first and conjugate linear in the second coordinate. To check that it satisfies the other required properties, let $x\coloneqq q_{\cA}(a)$ and $y\coloneqq q_{\cA}(b)$ and $h\in\cH$ be as above.

    \medskip

    For \ref{item:FE:ip:fiber}, we must check that, when $q_{\cA}\bigl(\lip\cA<a,b>\bigr)
      \in (\cX/\cG) \bowtie \cH$ acts on the left of $y$, it yields $x$.
      By the definition of $q_{\cA}$ (see Definition~\ref{df.ss.product.bundle.left}) and our computations above, we have
      \begin{align*}
        q_{\cA}\bigl(\lip\cA<a,b>\bigr)
        &=
        \bigl(
            q_{\cB/\cG}\bigl([a(h\HleftB b^*)]  \BrightG  \cG\bigr), h\HrightB b^*
        \bigr)
        =
        \bigl(
            q_{\cB} \bigl( a(h\HleftB b^*)  \bigr)\XrightG  \cG, h\HrightB b^*
        \bigr)
        \\
        &=
        \bigl(
            [x (h\HleftX y\inv)]\XrightG  \cG, h\HrightX y\inv
        \bigr).
      \end{align*}
        By the definition of the left $(\cX/\cG)\bowtie \cH$-action on $\cX$ (Proposition~\ref{prop.actions.X}),
        \begin{align*}
        \bigl(
            [x (h\HleftX y\inv)]\XrightG  \cG, h\HrightX y\inv
        \bigr)\cdot y 
        &= x(h\HleftX y\inv )[(h\HrightX y\inv ) \HleftX y] 
        = x(h\HleftX (y\inv y)) &&\text{(by \eqref{item:L4})} \\
        &= x(h\HleftX s_{\cX}(y)) = x s_{\cX}(x)=x&&\text{(by choice of $h$).}
        \end{align*}

    \medskip
    
    To show \ref{item:FE:ip:adjoint}, we must prove that   $\lip\cA<a,b>^{*}=\lip\cA<b,a>$
        Since $s_{\cX}(y)=h\inv \HleftX s_{\cX}(x)$, we have
        \[\linner{\cA}{b}{a} =
        \bigl(
            \bigl[b(h\inv \HleftB a^*)\bigr]  \BrightG  \cG, h\inv \HrightX x\inv 
        \bigr).\]
        Using the definition of the involution on $\cA$ (see Definition~\ref{df.ss.product.bundle.left}), 
        we can compute the adjoint of 
        \[
            \linner{\cA}{a}{b}=
            \bigl(
                [a(h\HleftB b^*)]  \BrightG  \cG, h\HrightB b^* 
            \bigr).
        \]
        Its
        $\cB/\cG$-component
        \begin{equation}\label{eq:BG comp of lip a b}
            \Bigl[(h\HrightX y\inv )\inv \HleftB \bigl(a[h\HleftB b^*]\bigr)^* \Bigr]
            \BrightG  \cG
            \quad\text{ has to equal }\quad
            \bigl[b(h\inv \HleftB a^*)\bigr]  \BrightG  \cG
        \end{equation}
        and that its $\cH$-component
        \begin{equation}\label{eq:H comp of lip a b}
            (h\HrightX y\inv )\inv  \HrightX \bigl(x[h\HleftX y\inv ]\bigr)\inv \quad\text{ has to equal }\quad
            h\inv \HrightX x\inv .
        \end{equation}
        If $z\coloneqq h\HleftX y\inv$ and $k\coloneqq h\HrightX y\inv$, then by \eqref{item:L9}, we have $k\inv = h\inv \HrightX z$. Thus, the asserted equality in \eqref{eq:H comp of lip a b} is easily seen: 
        \begin{align*}
            (h\HrightX y\inv )\inv  \HrightX (x [h\HleftX y\inv] )\inv
            &=
            k\inv  \HrightX (x z )\inv
            \\
            &=
            (h\inv \HrightX  z )
            \HrightX
            (x z )\inv
            \overset{\eqref{item:L3}}{=}
            h\inv \HrightX x\inv .
        \end{align*}
        For the asserted equality in \eqref{eq:BG comp of lip a b}, we compute 
        \[
            \bigl(a[h\HleftB b^*]\bigr)^*
            =
            [h\HleftB b^*]^* a^*
            \overset{\ref{item:leftFell.5}}{=}
            \bigl[(h\HrightB b^*)\HleftB b\bigr] a^*.
        \]
        If $c\coloneqq (h\HrightB b^*)\HleftB b = k\HleftB b$, then we have for the left-hand side of  \eqref{eq:BG comp of lip a b}
        \begin{align}\label{eq:LHS of B/G component}
            (h\HrightX y\inv)\inv   \HleftB \bigl(a[h\HleftB b^*]\bigr)^*
            &=
            k\inv   \HleftB (c a^*)
            \overset{\ref{item:leftFell.4}}{=}
            (k\inv   \HleftB c)
            \bigl[
                (k\inv \HrightB c)
                \HleftB a^*
            \bigr]
            .
        \end{align}
        Since $k= h\HrightX y\inv = h\HrightB b^*$, we have 
        \begin{align*}
            k\inv   \HleftB c
            &=
            k\inv   \HleftB [(h\HrightB b^*)\HleftB b]
            \overset{\ref{item:leftFell.2}}{=}
            \bigl[k\inv   (h\HrightB b^*)\bigr]\HleftB b
            \overset{\ref{item:leftFell.3}}{=}
            b.
        \end{align*}
         On the other hand, $q_{\cB}(c)=k\HleftX y$ by \eqref{cond.compatible.Fell.BC1}, so that
        \begin{align*}
            k\inv \HrightB c
            &=
            k\inv \HrightX (k\HleftX y)
            \overset{\eqref{item:L9}}{=}
            (k\HrightX y)\inv
            =
            \bigl([h\HrightX y\inv]\HrightX y\bigr)\inv
            \overset{\eqref{item:L3}}{=}
            h\inv.
        \end{align*}
        Plugging the results of our last computations back into Equation~\eqref{eq:LHS of B/G component}, we get
        \begin{align*}
            (h\HrightX y\inv)\inv   \HleftB \bigl(a[h\HleftB b^*]\bigr)^*
            &=
            b
            \bigl(
                h\inv
                \HleftB a^*
            \bigr),
        \end{align*}
        which is, on the nose, what we needed for \eqref{eq:BG comp of lip a b}.

    \medskip

    Lastly, for \ref{item:FE:ip:C*linear}, we need that the inner product is $\cA$-linear in the first component, so let $(\Theta,k)$ be an arbitrary element of $\cA$ with $s_{\cA}(\Theta,k)=\mathfrak{r}_{\cB}(a)$. If $c\in\Theta$ is such that $s_{\cB}(c)=r_{\cB}(k\HleftB a)$, then our definition of the left $\cA$-action on $\cB$ (see \ref{thm.imprimitivity.main}\ref{thm:it:left action}) yields
    \(
        (\Theta,k)\cdot a = c[k\HleftB a].
    \)
    Note that $m\coloneqq (k\HrightX x)h$ is the unique element of $\cH$ such that $s_{\cB}(c[k\HleftB a])=m\HleftX s_{\cB}(b)$, since
    \[
        s_{\cB}(c[k\HleftB a])
        =
        s_{\cB}(k\HleftB a)
        \overset{\eqref{item:L10}}{=}
        (k\HrightX x)\HleftX s_{\cB} (a)
        =
        (k\HrightX x)\HleftX [h\HleftX s_{\cB} (b)].
    \]
    We have
    \begin{equation}\label{eq:lip:A-lin,1}
        \linner{\cA}{(\Theta,k)\cdot a}{b}=
    \Bigl(
        \bigl[
            (c[k\HleftB a])(m\HleftB b^*)
        \bigr]\BrightG  \cG, m\HrightB b^* 
    \Bigr).
    \end{equation}
    On the other hand, according to Definition~\ref{df.ss.product.bundle.left}, the product of
    \begin{align*}
        (\Theta,k)\linner{\cA}{a}{b}
        &=
        (\Theta,k)\,
        \bigl(
            [a(h\HleftB b^*)]  \BrightG  \cG, h\HrightB b^*
        \bigr)
    \end{align*}
    in $\cA$ has $\cB/\cG$-component
    \begin{align}\label{eq:B/G component of Theta times lip}
        \Theta \biggl[k\qHleftB \Bigl(\bigl[a(h\HleftB b^*)\bigr]  \BrightG  \cG\Bigr)\biggr]
        =
        \Theta \biggl[\Bigl(k\HleftB \bigl[a(h\HleftB b^*)\bigr]\Bigr)  \BrightG  \cG\biggr]
        .
    \end{align}
    We compute
    \begin{align*}
        k\HleftB \bigl[a(h\HleftB b^*)\bigr]
        &=
        (k\HleftB a)
        \bigl[
            (k\HrightB a)\HleftB (h\HleftB b^*)
        \bigr]
        &&\text{(by \ref{item:leftFell.4})}
        \\
        &=
        (k\HleftB a)
        (m\HleftB  b^*)
        &&\text{(by \ref{item:leftFell.2})}.
    \end{align*}
    Note that $c\in\Theta$ was chosen such that $s_{\cB}(c)=r_{\cB}(k\HleftB a)$, so that the above computation together with the definition of the multiplication in $\cB/\cG$ (cf.\ Proposition~\ref{prop.Fell bdl structure on quotient bundle}) shows that the $\cB/\cG$-component of $(\Theta,k)\linner{\cA}{a}{b}$ is
    \begin{align*}
        \Theta \Bigl[
            \bigl((k\HleftB a)(m\HleftB  b^*)\bigr)  \BrightG  \cG\Bigr]
        &=
       \Bigl[
            c[(k\HleftB a)(m\HleftB  b^*)]
        \Bigr]\BrightG  \cG,
    \end{align*}
    which, by associativity of the multiplication on $\cB$, is exactly the $\cB/\cG$-component of $\linner{\cA}{(\Theta,k)\cdot a}{b}$; see \eqref{eq:lip:A-lin,1}.
    
    Similarly, the  $\cH$-component of $(\Theta,k)\linner{\cA}{ a}{b}$ is given by
    \begin{align*}
        &\biggl[k\qHrightX q_{\cB/\cG}\Bigl(\bigl[a(h\HleftB b^*)\bigr]  \BrightG  \cG\Bigr) \biggr](h \HrightB b^*)
        \\
        &\qquad=
        \biggl[k\qHrightX \Bigl(q_{\cB}\bigl(a(h\HleftB b^*)\bigr)  \BrightG  \cG\Bigr) \biggr](h \HrightB b^*)
        &&\text{(def'n of $q_{\cB/\cG}$)}
        \\
        &\qquad=
        \Bigl[k\HrightX q_{\cB}\bigl(a(h\HleftB b^*)\bigr) \Bigr](h \HrightB b^*)
        &&\text{(def'n of $\qHrightX$)}
        \\
        &\qquad=
        \Bigl[(k\HrightX x )\HrightX q_{\cB}(h\HleftB b^*)\Bigr](h \HrightB b^*)
        &&\text{(by \eqref{item:L3} and \ref{cond.F1})}
        \\
        &\qquad=
        \Bigl[(k\HrightX x )\HrightX (h\HleftX y\inv)\Bigr](h \HrightX y\inv)&&\text{(by \eqref{cond.compatible.Fell.BC1} and def'n of $y$)}
        \\
        &\qquad=
        [(k\HrightX x)h]\HrightX y\inv
        &&\text{(by \eqref{item:L6})}
    \end{align*}
    which is  exactly $
        m\HrightB b^*$, as needed.
\end{proof}

\begin{lemma}[Regarding \ref{item:FE:ip:compatibility}]\label{lem:thm:ips-imprim condition}
   The inner products on the \USCBb\ $\cB$ satisfy \ref{item:FE:ip:compatibility}, i.e., $\lip\cA<a,b>\cdot c=a\cdot\rip\cC<b,c>$ whenever both inner products make sense.
\end{lemma}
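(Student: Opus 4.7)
Plan: I will unpack both sides of the desired identity using the definitions in Theorem~\ref{thm.imprimitivity.main}\ref{thm:it:left action}--\ref{thm:it:right ip} and exhibit a common intermediate form. Let $x=q_{\cB}(a)$, $y=q_{\cB}(b)$, $z=q_{\cB}(c)$, let $h\in\cH$ be the unique element with $h\HleftX s_{\cB}(b)=s_{\cB}(a)$, let $t\in\cG$ be the unique element with $r_{\cB}(b)\XrightG t=r_{\cB}(c)$, and write $k\coloneqq h\HrightB b^{*}=h\HrightX y^{-1}$ and $\tau\coloneqq y^{-1}\XleftG t=b^{*}\BleftG t$. The verification that $s_{\cA}(\lip\cA<a,b>)=\mathfrak{r}_{\cB}(c)$ and $\mathfrak{s}_{\cB}(a)=r_{\cC}(\rip\cC<b,c>)$, so that both products make sense, follows from the calculation of $s_{\cA}$ done in the proof of Lemma~\ref{lem:thm:ips} together with the analogous computation for $r_{\cC}$.

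For the left-hand side, \ref{thm:it:left action} yields $\lip\cA<a,b>\cdot c=a'[k\HleftB c]$, where $a'$ is the unique representative of $[a(h\HleftB b^{*})]\BrightG\cG$ whose source matches $r_{\cB}(k\HleftB c)$. I will argue that $a'=[a(h\HleftB b^{*})]\BrightG t$: the source condition reads $(k\HleftX r_{\cB}(y))\XrightG s=k\HleftX r_{\cB}(z)$, and condition~\ref{cond.compatible.c1} together with freeness of $\XrightG$ forces $s=t$. Applying \ref{item:leftFell.4} for $\BrightG$ and then \ref{cond.compatible.c2} gives
\[
a'=[a\BrightG ((h\HleftX y^{-1})\XleftG t)]\,[(h\HleftB b^{*})\BrightG t]
=[a\BrightG\tau]\,[(h\HleftB b^{*})\BrightG t].
\]
Invoking the intune condition~\eqref{cond.compatible.Fell.BC1} turns $(h\HleftB b^{*})\BrightG t$ into $h\HleftB(b^{*}\BrightG t)$, and then \eqref{cond.compatible.Fell.BC3} shows $h\HrightB(b^{*}\BrightG t)=k$, so \ref{item:leftFell.4} for $\HleftB$ lets me recombine
\[
[h\HleftB(b^{*}\BrightG t)]\,[k\HleftB c]=h\HleftB[(b^{*}\BrightG t)c].
\]
Altogether, $\lip\cA<a,b>\cdot c=[a\BrightG\tau]\cdot h\HleftB[(b^{*}\BrightG t)c]$.

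For the right-hand side, \ref{thm:it:right action} yields $a\cdot\rip\cC<b,c>=[a\BrightG\tau]\,c'$, where $c'$ is the unique representative of $\cH\HleftB[(b^{*}\BrightG t)c]$ with $r_{\cB}(c')=s_{\cB}(a\BrightG\tau)$. I propose $c'=h\HleftB[(b^{*}\BrightG t)c]$, which is tautologically in the orbit; the source-range check then reduces, via \ref{item:leftFell.1}, \eqref{item:L10}, \eqref{item:R10}, and \ref{cond.compatible.c1}, to
\[
r_{\cB}(c')=h\HleftX[s_{\cB}(b)\XrightG\tau]=[h\HleftX s_{\cB}(b)]\XrightG\tau=s_{\cB}(a)\XrightG\tau=s_{\cB}(a\BrightG\tau),
\]
as required. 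Hence $a\cdot\rip\cC<b,c>=[a\BrightG\tau]\cdot h\HleftB[(b^{*}\BrightG t)c]$, matching the left-hand side.

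The main obstacle is not any single identity but the careful bookkeeping of momentum maps, fibres, and uniqueness of representatives: the freeness assumptions on $\HleftX$ and $\XrightG$ are what pin down $a'$ and $c'$, while each of the four intune conditions enters at a specific bridging step between the left and right actions on $\cB$. Once these are tracked, the identity amounts to two applications of the ``Leibniz rule'' \ref{item:leftFell.4} (one for $\HleftB$, one for $\BrightG$) used in opposite directions to transfer the factor $h\HleftB(b^{*}\BrightG t)$ across.
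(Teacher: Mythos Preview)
Your proof is correct and follows essentially the same route as the paper's: both arguments identify the left-hand side as $\bigl([a(h\HleftB b^{*})]\BrightG t\bigr)\,[k\HleftB c]$ and the right-hand side as $[a\BrightG\tau]\,\bigl(h\HleftB[(b^{*}\BrightG t)c]\bigr)$, then bridge the two via one application of the right-version of \ref{item:leftFell.4} (together with \ref{cond.compatible.c2} and \eqref{cond.compatible.Fell.BC1}) to expand $a'$, and one application of \ref{item:leftFell.4} for $\HleftB$ (together with \eqref{cond.compatible.Fell.BC3}) to recombine the middle factors. Your only cosmetic differences are the handy abbreviations $k,\tau$ and pinning down $a'$ via freeness rather than by direct computation; note the minor slip $r_{\cB}(y),r_{\cB}(z)$ where you mean $r_{\cX}(y),r_{\cX}(z)$ (since $y,z\in\cX$).
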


\begin{proof}
        Let $a\in \cB_x$, $b\in \cB_y$, and $c\in \cB_z$. For the inner products to be defined, we require $\mathfrak{s}_{\cB}(a)=\mathfrak{s}_{\cB}(b)$ and  $\mathfrak{r}_{\cB}(b)=\mathfrak{r}_{\cB}(c)$,  so
        there 
        exist 
        $h\in\cH$ and $t\in\cG$ such that such that $s_{\cX}(x)=h\HleftX s_{\cX}(y)$ resp.\ $r_{\cX}(y)\XrightG t = r_{\cX}(z)$, so that
        \begin{align*}
            \linner{\cA}{a}{b}=
            \bigl([a(h\HleftB b^*)]  \BrightG  \cG, h\HrightB b^* \bigr)
            \quad\text{ and }\quad
            \rinner{\cC}{b}{c}=\bigl(b^* \BleftG t, \cH \HleftB [ (b^* \BrightG t)c]\bigr)
            .
        \end{align*}
    If we let $\Theta\coloneqq [a(h\HleftB b^*)]  \BrightG  \cG$, then
    \begin{align*}
        s_{\cA}\bigl(\linner{\cA}{a}{b}\bigr)
        &=
        (h\HrightB b^*)\inv \qHleftX s_{\cB/\cG} (\Theta)
        &&\text{(cf.\ Def.~\ref{def.ZSProduct.groupoid.left} and Rmk.~\ref{rm.unit space of ssp})}
        \\
        &=
        (h\HrightX y\inv)\inv \qHleftX s_{\cX/\cG}\bigl([x(h\HleftX y\inv )]\XrightG \cG \bigr)
        \\
        &=
        \bigl[
            (h\HrightX y\inv)\inv \HleftX s_{\cX}(x[h\HleftX y\inv ])
        \bigr]\XrightG \cG
        &&\text{(def'n of $s_{\cX/\cG}$ and $\qHleftX$)}.
    \end{align*}
    Since
    \begin{align*}
        s_{\cX}\bigl(x[h\HleftX y\inv]\bigr) 
        &=
        s_{\cX}(h\HleftX y\inv)
        \overset{\eqref{item:L10}}{=}
        [h\HrightX y\inv ] \HleftX r_{\cX}(y),
    \end{align*}
    it follows that
     \begin{align*}
        s_{\cA}\bigl(\linner{\cA}{a}{b}\bigr)
        &=
        r_{\cX}(y)\XrightG \cG
        =
        r_{\cB}(c)\XrightG \cG
        =
        \mathfrak{r}_{\cB}(c),
    \end{align*}
    so that $\linner{\cA}{a}{b}\cdot c$ is indeed defined. Moreover, we see that $t$ can act on the right of $a(h\HleftB b^*)$ and that 
    \begin{align*}
        s_{\cB}
        \bigl(
            [a(h\HleftB b^*)]\BrightG t
        \bigr)
        &=
         s_{\cX}\bigl(x[h\HleftX y\inv]\bigr)\XrightG t
        \overset{\ref{cond.compatible.c1}}{=}
         [h\HrightX y\inv ] \HleftX [r_{\cX}(y)\XrightG t] 
    \\& =
         [h\HrightX y\inv ] \HleftX r_{\cX}(z) 
         \overset{\eqref{item:L10}}{=}
         r_{\cX}\bigl( [h\HrightX y\inv ] \HleftX  z\bigr)
        .
    \end{align*}
    Thus, $[a(h\HleftB b^*)]\BrightG t$ is the (unique) element of $\Theta$ whose image under $s_{\cB}$ equals $r_{\cB} ([h\HrightB b^*] \HleftB c)$, so that
    \begin{align}\label{eq:lip:cdot c}
        \linner{\cA}{a}{b}\cdot c
        &=
        \Bigl(\bigl[a(h\HleftB b^*)\bigr]\BrightG t\Bigr) \bigl([h\HrightB b^*]\HleftB c\bigr)
        .
    \end{align}
    A similar argument shows that $a\cdot \rinner{\cC}{b}{c}$ is well defined and that
    \begin{align}\label{eq:lip:a cdot}
    a \cdot \rinner{\cC}{b}{c}
    = 
    \bigl(a\BrightG[b^* \BleftG t]\bigr) \Bigl(h\HleftB\bigl[(b^* \BrightG t)c\bigr]\Bigr).
    \end{align}
    We compute the first element of the product in \eqref{eq:lip:cdot c} to be
    \begin{align*}
        [a(h\HleftB b^*)]\BrightG t
        &=
        \Bigl(a\BrightG \bigl[(h\HleftB b^*)\BleftG t\bigr]\Bigr)
        \bigl[(h\HleftB b^*)\BrightG t\bigr]
        &&\text{(by \eqref{item:R4})}
        \\
        &=
        \bigl(a\BrightG [ b^*\BleftG t]\bigr)
        \bigl[h\HleftB (b^*\BrightG t)\bigr]
        &&\text{(by \ref{cond.compatible.c2} and \eqref{cond.compatible.Fell.BC1})}
    \intertext{and its second element to be}
        [h\HrightB b^*]\HleftB c
        &=
        \bigl[h\HrightB (b^*\BrightG t)\bigr]\HleftB c
        &&\text{(by \ref{cond.compatible.c3})},
    \intertext{so that it follows from \eqref{eq:lip:a cdot} that}
        \linner{\cA}{a}{b}\cdot c
        &=
        \bigl(a\BrightG [ b^*\BleftG t]\bigr)
        \bigl[h\HleftB (b^*\BrightG t)\bigr] \Bigl(\bigl[h\HrightB (b^*\BrightG t)\bigr]\HleftB c\Bigr)
        \\
        &=
        \bigl(a\BrightG [ b^*\BleftG t]\bigr)
        \Bigl(h\HleftB \bigl[(b^*\BrightG t) c\bigr]\Bigr)
        =
        a \cdot \rinner{\cC}{b}{c}
        &&\text{(by \ref{item:leftFell.4})}.
        \qedhere
    \end{align*}
\end{proof}

\begin{lemma}[Regarding \ref{item:FE:SMEs}]\label{lem:thm:SMEs}
    With the induced actions, 
    each $B(x)$ is a $A\bigl(\mathfrak{r}
    (x)\bigr)\sme
    C\bigl(\mathfrak{s}
    (x)\bigr)$-\ib.
\end{lemma}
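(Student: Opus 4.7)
The strategy is to identify $B(x)=\cB_x$, equipped with the structures from Theorem~\ref{thm.imprimitivity.main}, with the natural \ib\ arising from the saturated Fell bundle $\cB$ itself: for any saturated Fell bundle, each fiber $\cB_x$ is canonically a $\cB_{r_\cX(x)}\sme\cB_{s_\cX(x)}$-\ib\ with actions given by bundle multiplication and with inner products $ab^*$ on the left and $a^*b$ on the right.

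First I will construct natural $*$-isomorphisms of C*-algebras
\[
    \phi_r\colon \cB_{r_\cX(x)} \to A(\mathfrak{r}(x)), \qquad a\mapsto \bigl(a\BrightG\cG,\ \rho_\cX(r_\cX(x))\bigr),
\]
\[
    \phi_s\colon \cB_{s_\cX(x)} \to C(\mathfrak{s}(x)), \qquad c\mapsto \bigl(\sigma_\cX(s_\cX(x)),\ \cH\HleftB c\bigr).
\]
Injectivity of $\phi_r$ will use freeness of the $\cG$-action: if $a\BrightG\cG=a'\BrightG\cG$ with $a,a'\in\cB_{r_\cX(x)}$, then $a'=a\BrightG t$ for some $t$, and comparing fibers forces $r_\cX(x)\XrightG t=r_\cX(x)$, so $t$ is a unit and $a'=a$ by \eqref{item:R2}. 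Surjectivity is elementary. To see that $\phi_r$ is a $*$-homomorphism, note that when the $\cH$-component is the unit $h=\rho_\cX(r_\cX(x))$, the multiplication and involution in $\cA$ from Definition~\ref{df.ss.product.bundle.left} collapse via \ref{item:leftFell.3} and \eqref{item:L2} to give $(a_1\BrightG\cG,h)(a_2\BrightG\cG,h) = ((a_1a_2)\BrightG\cG,h)$ and $(a\BrightG\cG,h)^*=(a^*\BrightG\cG,h)$. The argument for $\phi_s$ is symmetric.

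Next I will verify that, under these identifications, the four structures defined in Theorem~\ref{thm.imprimitivity.main} reduce to the natural operations on $\cB$. For the left action: writing a unit $(\Theta,h)\in A(\mathfrak{r}(x))$ as $(a_0\BrightG\cG,h)$, observe that $h$ is a unit so $h\HleftB b=b$ by \ref{item:leftFell.3}; the requirement $s_\cB(a)=r_\cB(b)=r_\cX(x)$ on the chosen representative $a\in\Theta$ then forces $a=a_0$ by the same freeness argument as above, whence $(\Theta,h)\cdot b=a_0 b$. For the left inner product with $a,b\in\cB_x$: the element $h$ with $h\HleftX s_\cB(b)=s_\cB(a)$ is the unit $\rho_\cX(s_\cX(x))$, so \ref{item:leftFell.3} and \eqref{item:L2} give $h\HleftB b^*=b^*$ and $h\HrightB b^*=h$, whence $\lip\cA<a,b>=([ab^*]\BrightG\cG,h)=\phi_r(ab^*)$. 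The computations for the right action and right inner product are entirely analogous.

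With these identifications in hand, the remaining \ib\ axioms for $\cB_x$ are immediate consequences of the corresponding properties of the saturated Fell bundle $\cB$: positivity of $aa^*$ and $a^*a$ from (F10), fullness of both inner products from saturation of $\cB$, and the compatibility relation from associativity of multiplication in $\cB$ (also guaranteed abstractly by Lemma~\ref{lem:thm:ips-imprim condition}). The main obstacle is the second paragraph: showing that $\phi_r$ and $\phi_s$ are $*$-isomorphisms requires carefully leveraging the unit identities \eqref{item:L2}, \eqref{item:R2}, and \ref{item:leftFell.3} together with freeness of both actions to make every twisted formula in $\cA$ and $\cC$ collapse to an untwisted one on $\cB$; everything else is then routine.
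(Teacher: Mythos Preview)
Your approach is essentially identical to the paper's: define the same map $\psi=\phi_r$, $a\mapsto(a\BrightG\cG,\rho_\cX(x))$, observe it is a $*$-isomorphism onto $A(\mathfrak r(x))$, and check that it intertwines the $\cA$-action on $B(x)$ with ordinary left multiplication by $\cB_{r_\cX(x)}$; the paper then appeals to the fact that $\cB_x$ is already a $\cB_{r_\cX(x)}\sme\cB_{s_\cX(x)}$-\ib. You are in fact more thorough than the paper, which only explicitly checks the left action and leaves the inner products implicit.

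One small slip in your inner-product verification: with $a,b\in\cB_x$ the element $h$ satisfying $h\HleftX s_\cB(b)=s_\cB(a)$ is the unit $\rho_\cX(x^{-1})$, not $\rho_\cX(x)$, and $x^{-1}=q_\cB(b^*)$ is not a unit, so \eqref{item:L2} does not apply to $h\HrightB b^*$. What you actually need is \eqref{item:L7}, which gives $h\HrightX x^{-1}=\rho_\cX(x^{-1})\HrightX x^{-1}=\rho_\cX(x)$; this is the correct second component and is exactly what matches $\phi_r(ab^*)$. With that one correction the argument goes through as written.
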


\begin{proof}
    For $x\in\cX$, we have (see the definitions of $\mathfrak{s}$ and $\mathfrak{r}$ in Proposition~\ref{prop.actions.X}):
    \[
    \mathfrak{r}(x)=r_{\cX}(x)\XrightG \cG \quad\text{and}\quad
    \mathfrak{s}(x)=\cH\HleftX s_{\cX}(x).\]
    Recall that here, we have identified the unit spaces of $(\cX/\cG)\bowtie \cH$ and  $\cG\bowtie (\cH\backslash \cX)$ with those of $\cX/\cG$ resp.\ $\cH\backslash\cX$; cf.\ Remark~\ref{rm.unit space of ssp}. Thus, if we want to think of $\mathfrak{r}(x)$ and $\mathfrak{s}(x)$ in $(\cX/\cG)\bowtie \cH$ resp.\ $\cG\bowtie (\cH\backslash \cX)$, we must  write
    \[
    \mathfrak{r}(x)
    =
    \bigl(r_{\cX}(x)\XrightG \cG, \rho_{\cX}(x)\bigr)  \quad\text{and}\quad
    \mathfrak{s}(x)
    =
     \bigl(\sigma_{\cX}(x) ,\cH \HleftX s_{\cX}(x)\bigr) ,\]
     where we have used that
     $\rho_{\cX}\z\circ r_{\cX} = \rho_{\cX}$ and $\sigma_{\cX}\z\circ s_{\cX} = \sigma_{\cX}$ by definition of the right-hand sides.
     
     Now, recall that $\cB$ is a Fell bundle equivalence between $\cB$ and itself; in particular, we know that each $B(x)$ is a $B\bigl(r_{\cX}
    (x)\bigr)\sme
    B\bigl(s_{\cX}
    (x)\bigr)$-\ib. We claim that the fiber $A(\mathfrak{r}(x))$ is isomorphic to $B(r_{\cX}(x))$ and likewise that $C(\mathfrak{s}(x))$ is isomorphic to $B(s_{\cX}(x))$, and that these isomorphisms turn the canonical
      $B(r(x))\sme B(s(x))$-\ib\ $B(x)$
    into our
      bi-Hilbertian $A(\mathfrak{r}(x))\sme C(\mathfrak{s}(x))$-module $B(x)$,
    proving that the latter is an \ib\ also. We will do so for the fibre $A(\mathfrak{r}(x))$ of $\cA=(\cB/\cG) \BbowtieH \cH$.
    
    \smallskip 
    Define 
    \[\psi\colon B(r_{\cX}(x)) \to A(\mathfrak{r}(x))
    \quad\text{by}\quad \psi(a)=(a \BrightG  \cG, \rho_{\cX}(x)).
    \]
    This map is clearly linear, $*$-preserving, surjective, and injective, since the norm on $\cA_{\mathfrak{r}(x)}$ is inherited from $\cB_{r(x)}$. Therefore, $\psi$ defines an isomorphism of C*-algebras. 
    
    Notice that this isomorphism indeed turns the left $A(\mathfrak{r}(x))$-action on $B(x)$ into the left $B(r_{\cX}(x))$-multiplication on $B(x)$: if $b\in B(x)$ and $a\in \cB_{r_{\cX}(x)}$, then 
    \[
        s_{\cB}(a) = s_{\cX} (r_{\cX}(x))
        =
        r_{\cX}(x)
        =
        r_{\cB}(b)
        \overset{\eqref{cond.compatible.Fell.BC3}}{=}
        r_{\cB}(\rho_{\cX}(x)\HleftB b),
    \]
    proving that $a$ is the unique element in $a\BrightG \cG$ such that $s_{\cB}(a)=r_{\cB}(\rho_{\cX}(x)\HleftB b)$, so that the definition of the left $\cA$-action on $\cB$ implies 
    \[
    \psi(a)\cdot b
    =
    (a \BrightG  \cG, \rho_{\cX}(x))\cdot b=ab,\]
    as claimed.
\end{proof}

\begin{proof}[Proof of Theorem~\ref{thm.imprimitivity.main}]
    The groupoids $(\cX/\cG)\bowtie\cH$ and $\cG\bowtie(\cH\backslash\cX)$ are \LCH: In Proposition~\ref{prop.gpd structure on orbit space}, we have seen that the quotient of \LCH\ groupoids is again \LCH, and clearly so is the \ssp\ of such groupoids. 
    
    We have seen that $\cX$ is a groupoid equivalence between $(\cX/\cG) \bowtie \cH$ and $\cG\bowtie (\cH\backslash \cX)$ (Theorem~\ref{thm.groupoid.eq} and Proposition~\ref{prop.actions.X}), and that $\cA$ and $\cC$ are Fell bundles by Definition~\ref{df.ss.product.bundle.left} and Remark~\ref{rm.df.ss.product.bundle.right}. 
    Moreover, as $\cB$ is assumed to be saturated, it follows from Remark~\ref{rm.saturated is inherited by quotient} that $\cH\backslash\cB$ and $\cB/\cG$ are saturated also. Consequently, it follows from  Remark~\ref{rm.saturated is inherited by bowtie} that $\cA$ and $\cC$ are saturated, and so we are dealing with the right ingredients.
    
    We have then checked that all conditions in  Definition~\ref{df.FBequivalence} are satisfied. Indeed,
    \begin{itemize}[leftmargin = 2cm]
        \item[Re \ref{item:FE:actions}:] Lemma~\ref{lem:thm:actions} shows that the formulas in~\ref{thm.imprimitivity.main}\ref{thm:it:left action} and~\ref{thm.imprimitivity.main}\ref{thm:it:right action} define actions in the sense of Definition~\ref{df.USCBb-action}, and Lemma~\ref{lem:thm:actions commute} shows that they commute.
        \item[Re \ref{item:FE:ip}:]  Lemma~\ref{lem:thm:ips} shows that the formulas in~\ref{thm.imprimitivity.main}\ref{thm:it:left ip} and~\ref{thm.imprimitivity.main}\ref{thm:it:right ip} define inner products, while Lemma~\ref{lem:thm:ips-imprim condition} shows that they satisfy the im\-prim\-i\-tiv\-ity condition \ref{item:FE:ip:compatibility}, and finally
        \item[Re \ref{item:FE:SMEs}:]  Lemma~\ref{lem:thm:SMEs} shows that each $B(x)$ is an \ib.\qedhere
    \end{itemize}
\end{proof}

\begin{corollary}\label{cor.Fell bdl equiv plus Haar makes SME}
We assume all conditions in Assumption~\ref{all assumptions}. 
\txtrepeat{Assumption on Haar in cor.gpd equiv plus Haar makes SME}, 
and that $\cH$ and $\cG$ also have Haar systems.  Then
the Fell bundle C*-algebras $\textrm{C}^*((\cB/\cG) \BbowtieH \cH)$ and $\textrm{C}^*(\cG\GbowtieB (\cH\backslash \cB))$ are Morita equivalent.
\end{corollary}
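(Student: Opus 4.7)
The plan is to combine Theorem~\ref{thm.imprimitivity.main}, which provides the Fell bundle equivalence between $\cA=(\cB/\cG) \BbowtieH \cH$ and $\cC=\cG\GbowtieB (\cH\backslash \cB)$ via $\cB$, with the Fell bundle version of the Muhly--Renault--Williams equivalence theorem (i.e., \cite[Theorem 6.4]{MW2008}), which states that whenever two saturated Fell bundles over equivalent groupoids endowed with Haar systems are equivalent, their C*-algebras are Morita equivalent. To invoke this, the only missing ingredient is the existence of Haar systems on the underlying base groupoids $(\cX/\cG)\bowtie \cH$ and $\cG\bowtie (\cH\backslash \cX)$.

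First, I would observe that the base groupoid equivalence is already established by Theorem~\ref{thm.groupoid.eq} (via $\cX$ with momentum maps $\mathfrak{r},\mathfrak{s}$ from Proposition~\ref{prop.actions.X}), so the topological backdrop is already set. Next, Haar systems on the two \ssp\ groupoids are produced exactly as in Corollary~\ref{cor.gpd equiv plus Haar makes SME}: the $\XrightG$-invariant right Haar system on $\cX$ descends to a right Haar system on $\cX/\cG$ by Lemma~\ref{lem.orbit.Haar.right}, Lemma~\ref{lm.quotient Haar system is invariant} guarantees that its associated left Haar system is $\qHleftX$-invariant, and Proposition~\ref{prop.ZSProduct.Haar.left} then combines this with the given Haar system on $\cH$ to yield a Haar system on $(\cX/\cG)\bowtie \cH$. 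The same argument with the roles of $\cH$ and $\cG$ swapped (using the \ssra\ versions sketched in Subsection~\ref{ssec.right ssa}) produces a Haar system on $\cG\bowtie(\cH\backslash \cX)$.

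With these Haar systems in place on the base groupoids, and with $\cA$ and $\cC$ being saturated Fell bundles (Remarks~\ref{rm.saturated is inherited by quotient} and~\ref{rm.saturated is inherited by bowtie}, since~$\cB$ is assumed saturated in Assumption~\ref{all assumptions}), Theorem~\ref{thm.imprimitivity.main} gives precisely the equivalence needed to apply \cite[Theorem 6.4]{MW2008}, whose conclusion is exactly that $\textrm{C}^*(\cA)$ and $\textrm{C}^*(\cC)$ are Morita equivalent.

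The only potential obstacle is a bookkeeping issue: one must confirm that the Haar systems produced on $(\cX/\cG)\bowtie \cH$ and $\cG\bowtie(\cH\backslash \cX)$ are compatible with the equivalence structure on $\cX$ described in Proposition~\ref{prop.actions.X} (so that the hypotheses of \cite[Theorem 6.4]{MW2008} are met without additional ado). Since the constructions in Lemmas~\ref{lem.orbit.Haar.right} and~\ref{lm.quotient Haar system is invariant} and in Proposition~\ref{prop.ZSProduct.Haar.left} are all canonical with respect to the $\cH$- and $\cG$-actions, this compatibility is automatic, and the proof then amounts to citing the already-assembled tools in the right order.
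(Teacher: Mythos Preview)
Your proposal is correct and follows essentially the same approach as the paper: invoke Theorem~\ref{thm.imprimitivity.main} for the Fell bundle equivalence, note saturation, produce Haar systems on the two \ssp\ groupoids, and apply \cite[Theorem 6.4]{MW2008}. The paper's proof is slightly more compressed in that it simply cites Corollary~\ref{cor.gpd equiv plus Haar makes SME} for the existence of the Haar systems rather than retracing Lemmas~\ref{lem.orbit.Haar.right}, \ref{lm.quotient Haar system is invariant} and Proposition~\ref{prop.ZSProduct.Haar.left}, but the content is identical. Your final paragraph about ``compatibility'' of the Haar systems with the equivalence structure is unnecessary: \cite[Theorem 6.4]{MW2008} requires only that the base groupoids carry Haar systems and that the Fell bundles be equivalent via an equivalence over a groupoid equivalence---no compatibility condition between the Haar systems and the equivalence is needed.
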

Recall from Corollary~\ref{cor:ssp gpd:r-discrete, etale} that the assumptions regarding Haar systems are satisfied if $\cX,\cH,\cG$ are \etale.

\begin{proof}
    All Fell bundles in sight are saturated, since $\cB$ is saturated.
    By Theorem~\ref{thm.imprimitivity.main}, the Fell bundles $(\cB/\cG) \BbowtieH \cH$ and $\cG\GbowtieB (\cH\backslash \cB)$ are equivalent. Recall from Corollary~\ref{cor.gpd equiv plus Haar makes SME} that both groupoids $(\cX/\cG) \bowtie \cH$ and $\cG\GbowtieB (\cH\backslash \cX)$ allow Haar systems, so that the claim now follows from an application of 
    \cite[Theorem 6.4]{MW2008}.
\end{proof}

One immediate application is the one-sided im\-prim\-i\-tiv\-ity theorem by setting $\cG=\{e\}$. 
\begin{corollary}\label{cor.imprimitivity.one.side}Let~$\cX$ be a groupoid and~$\cB$ be a Fell bundle over $\cX$. 
Suppose~$\cH$ has a \ssla\ on the Fell bundle~$\cB$, and that the action of $\cH$ on $\cX$ is free and proper.
Then~$\cB$ is a Fell bundle equivalence between $\cB \BbowtieH \cH$ and $\cH\backslash \cB$. In particular, 
    if $\cX$ has a $\HleftX$-invariant Haar system and $\cH$ admits any Haar system, then
$\textrm{C}^*(\cB \BbowtieH \cH)$ and $\textrm{C}^*(\cH\backslash \cB)$ are Morita equivalent. 
\end{corollary}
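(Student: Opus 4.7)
The plan is to deduce Corollary~\ref{cor.imprimitivity.one.side} from the main Theorem~\ref{thm.imprimitivity.main} by specializing $\cG=\{e\}$, the trivial group, analogously to how Corollary~\ref{cor.gprd.one.sided} was obtained from Theorem~\ref{thm.groupoid.eq}. First I would equip $\cB$ with the only possible right $\{e\}$-action: let $\sigma_{\cB}\colon\cB\to\{e\}$ be the constant map and define $b\BrightG e=b$ and $b\BleftG e=e$. Conditions \ref{item:leftFell.1}--\ref{item:leftFell.5} hold trivially, and this \ssra\ is (vacuously) free and proper. By Example~\ref{ex:ssla of trivial gp and gpd:sometimes a something} the underlying groupoid $\cX$ is thus a $(\cH,\{e\})$-\sspa, since the source map of $\{e\}$ is trivially open and the \intuneadj\ Conditions~\ref{cond.compatible.c0}--\ref{cond.compatible.c3} hold because the right $\cG$-actions are constant. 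On the Fell bundle level, the compatibility axiom \eqref{cond.compatible.Fell.BC1} reduces to $h\HleftB b=h\HleftB b$, and \eqref{cond.compatible.Fell.BC2}--\eqref{cond.compatible.Fell.BC3} follow automatically as noted in Assumption~\ref{all assumptions}. Hence every hypothesis of Theorem~\ref{thm.imprimitivity.main} is satisfied.

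Next I would invoke the theorem to conclude that $\cB$ implements a Fell bundle equivalence between $(\cB/\{e\})\BbowtieH\cH$ and $\{e\}\GbowtieB(\cH\backslash\cB)$, and then identify these two bundles with $\cB\BbowtieH\cH$ and $\cH\backslash\cB$, respectively. Mirroring the groupoid-level identifications in Example~\ref{ex:ssla of trivial group and groupoid:bowtie} and Example~\ref{ex:ssla of trivial group and groupoid:quotient}, the map $b\BrightG\{e\}\mapsto b$ is a Fell bundle isomorphism $\cB/\{e\}\cong\cB$ under which the induced action $\qHleftB$ of Proposition~\ref{prop.right.action.quotient} becomes the original action $\HleftB$; consequently $(\cB/\{e\})\BbowtieH\cH\cong\cB\BbowtieH\cH$. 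Similarly, $\{e\}\GbowtieB(\cH\backslash\cB)$ consists of pairs $(e,\Xi)$, and the projection onto the second coordinate furnishes a Fell bundle isomorphism to $\cH\backslash\cB$. Transporting the actions and inner products of Theorem~\ref{thm.imprimitivity.main} through these isomorphisms yields the claimed equivalence structure on $\cB$.

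For the C*-algebraic statement, I would apply Corollary~\ref{cor.Fell bdl equiv plus Haar makes SME}. The trivial group $\{e\}$ admits the (unique) counting-measure Haar system, and the right Haar system on $\cX$ associated to any $\HleftX$-invariant left Haar system is automatically $\XrightG$-invariant in the sense of Definition~\ref{df.lambda G-invariance} because $\XrightG$ is the identity map. All remaining hypotheses of Corollary~\ref{cor.Fell bdl equiv plus Haar makes SME} are present, and so $\textrm{C}^*(\cB\BbowtieH\cH)$ and $\textrm{C}^*(\cH\backslash\cB)$ are Morita equivalent. The only real work is the bookkeeping needed for the identifications in the previous paragraph; no new analytic or algebraic difficulties arise beyond what was resolved for the symmetric theorem.
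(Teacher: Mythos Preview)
Your proposal is correct and follows exactly the approach the paper indicates: the paper simply states that Corollary~\ref{cor.imprimitivity.one.side} is ``one immediate application \dots\ by setting $\cG=\{e\}$,'' and you have spelled out precisely those details, including the identifications $\cB/\{e\}\cong\cB$ and $\{e\}\GbowtieB(\cH\backslash\cB)\cong\cH\backslash\cB$ that parallel Examples~\ref{ex:ssla of trivial group and groupoid:bowtie} and~\ref{ex:ssla of trivial group and groupoid:quotient} on the groupoid level. One tiny remark: when you cite Example~\ref{ex:ssla of trivial gp and gpd:sometimes a something}, note that it treats the $(\{e\},\cG)$ case, so you are implicitly invoking its left--right mirror (just as the paper does in the proof of Corollary~\ref{cor.gprd.one.sided}).
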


\begin{example}[combination of previous examples]\label{ex:combo of -ex:ZS of CP- and -ex:CP with quotient}
    \txtrepeat{assumptions of ex:CP with quotient}
    We have stated in Example~\ref{ex:CP with quotient} that   
    \begin{equation}\label{eq:quot FB}
        \cH\backslash\cB (\cA, \cX, \tilde{\alpha})
        \cong
        \cB (\cA, \cH\backslash\cX, \alpha )
         ,
    \end{equation}
    where
    $\tilde{\alpha}=\alpha\circ q$ for $q$ the quotient map.
    On the other hand, if we let $p\colon \cX\bowtie\cH\to \cX$ be the projection onto the first component, then $(\cA, \cX\bowtie \cH, \tilde{\alpha}\circ p)$ is a groupoid dynamical system  on $\cA$ whose restriction to $\cX$ is $\tilde{\alpha}$. By Example~\ref{ex:ZS of CP}, $\cH$ thus has a \ssla\ on $\cB(\cA, \cX, \tilde{\alpha})$ given by $h\HleftB (a,x) \coloneqq (a,h\HleftX x)$, and we have
     \begin{equation}\label{eq:ZS FB}
        \cB(\cA, \cX, \tilde{\alpha}) \BbowtieH \cH \cong \cB(\cA, \cX\bowtie \cH, \tilde{\alpha}\circ p).
    \end{equation}
    By Corollary \ref{cor.imprimitivity.one.side}, the Fell bundles on the left-hand sides of~\eqref{eq:quot FB} and~\eqref{eq:ZS FB} are equivalent, so that
    \(
        \cB (\cA, \cH\backslash\cX, \alpha ) 
     \) and \(
        \cB(\cA, \cX\bowtie \cH, \tilde{\alpha}\circ p)
    \) are also equivalent. If the groupoids have appropriate Haar systems (for example, if they are \etale), then this implies that the groupoid crossed product
    \(
        \cA\rtimes_{\alpha }(\cH\backslash\cX)
    \) is Morita equivalent to
    \( \cA\rtimes_{\tilde{\alpha}\circ p}( \cX\bowtie \cH)\).
\end{example}

\section{Examples on \DR\ Groupoids}\label{sec.Deaconu}

One interesting class of \ss\ action arises from \DR\ groupoids \cite[Section 3]{SW2016}, and we devote the last section to examples arising from this class of groupoids. It is observed in \cite[Proposition 5.1]{BPRRW2017} that a \DR\ groupoid generated by a pair of $*$-commuting endomorphisms has a \ZS\ product structure. We will describe this as \ssp\ in more detail, and apply our main result on equivalent groupoids (Theorem~\ref{thm.groupoid.eq}) in this context. 

We first give a brief overview of \DR\ groupoids. For $Y$ a topological space, we say a map $\sigma\colon Y\to Y$ is an {\em endomorphism} if it is a surjective local homeomorphism, and we denote the collection of all endomorphisms on $Y$ by $\Endo(Y)$.
We note that an endomorphism may not be injective. Suppose $\theta\colon  \mathbb{N}^{k} \to \Endo(Y)$ is a semigroup action on $Y$ by endomorphisms. The  \DR\ groupoid, denoted $Y\rtimes_\theta \mathbb{N}^{k}$, is defined as
\[Y\rtimes_\theta \mathbb{N}^{k} = \left\{(x,p-q,y)\in Y\times \mathbb{Z}^{k} \times Y: \theta_p(x) = \theta_q(y)\right\}\]
with multiplication and inverse given by
\begin{align*}
    (x,p-q,y) (y, m-n,z) &= (x, (p+m)-(q+n), z)
    ,\\
    (x,p-q,y)\inv &= (y, q-p, x).
\end{align*}
Its range and source maps are therefore given by
\begin{align*}
    r(x,p-q,y)&=(x,0,x), \\
    s(x,p-q,y)&=(y,0,y),
\end{align*}
and its unit space is identified as $\{(x,0,x):x\in Y\}\approx Y$.
    We give $Y\rtimes_{\theta}\mathbb{N}^{k}$ the topology induced by the basic open sets $ Z_{\theta}(U, m, n, V )$, defined for open subsets $U,V\subseteq Y$ and vectors $m,n\in\mathbb{N}^{k}$ by
    \[
        Z_{\theta}(U, m, n, V ) \coloneqq \{(x, m - n, y) : x \in U , y \in V \text{ and } \theta_{m} x = \theta_{n} y\}.
    \]
This makes $Y\rtimes_{\theta}\mathbb{N}^{k}$ a \LCH\ \etale\ groupoid \cite[Lemma 3.1.]{SW2016}.

\smallskip

To two commuting elements  $S, T\in \Endo(Y)$, we can naturally associate an $\mathbb{N}^2$-action on $Y$ given by $\theta_{p,m}(x)=T^p S^{m} x$. We let $\cK=Y\rtimes_\theta \mathbb{N}^2$ be the corresponding \DR\ groupoid. Each of the endomorphisms $S$ and $T$ corresponds to an $\mathbb{N}$-action on $Y$, so we can define their respective \DR\ groupoid as
\begin{align*}
    \cH &= Y\rtimes_T \mathbb{N} =\left\{(x, p-q, y)\in Y\times \mathbb{Z}
    \times 
    Y: T^p x= T^q y\right\}, \\
    \cX %
    &= Y\rtimes_S \mathbb{N} =\left\{(x, m-n, y)\in Y\times \mathbb{Z}
    \times 
     Y: S^{m} x= S^{n} y\right\}. 
\end{align*}

From now on, we fix $S$ and $T$ and further assume that they {\em $*$-commute}: not only do we have $ST=TS$, but whenever $Sx=Ty$ for some $x,y\in Y$, then there exists a unique $z\in Y$ such that $Tz=x$ and $Sz=y$. Note that, for all integers $p,q\geq 1$,
$S^p, T^q$  are also $*$-commuting. 
It was observed in \cite[Proposition 5.1]{BPRRW2017} that, in this setting, $\cK$ can be realized as the \ZS\ product groupoid $\cX %
    \bowtie\cH$.
The proof uses a unique decomposition property, but did not describe the actions of $\cX %
    $ and $\cH$ on each other 
explicitly, so we start by giving such a description. 

\begin{lemma}\label{lemma.Deaconu.actions} Let $\cH$ and $\cX %
    $ be the \DR\ groupoids described above. Then the following maps define a \ssla\ of $\cH$ on $\cX %
    $, where $w\in Y$ is the unique element that satisfies $S^{n} w = S^{m} x$ and $T^p w = T^q z$:
\begin{align*}
    \cH\cart \cX %
    \colon &&(x, p-q, y)\HleftX (y, m-n, z) & = (x, m-n, w)\in \cX %
    \\
    \cH\calb \cX %
    \colon && (x, p-q, y) \HrightX (y, m-n, z) & = (w, p-q, z)\in \cH
\end{align*}
\end{lemma}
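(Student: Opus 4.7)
The plan has four main steps: (i) show $w$ is well defined, (ii) verify $\HleftX$ and $\HrightX$ land in the claimed groupoids, (iii) establish continuity, and (iv) check the axioms \eqref{item:L1}--\eqref{item:L6}. Throughout, the momentum map is the range map $\rho_{\cX}=r_{\cX}\colon \cX\to\cX\z \cong Y=\cH\z$, so that composability of $(h,x)\in\cH\bfp{s}{\rho}\cX$ reduces to the middle entry of $x$ matching the last entry of $h$.

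For (i), given $(x,p-q,y)\in\cH$ and $(y,m-n,z)\in\cX$ with $T^p x=T^q y$ and $S^m y=S^n z$, I want to apply the $*$-commutativity of $S^n$ and $T^p$ (which follows from that of $S,T$ by a short induction) to produce a unique $w\in Y$ with $S^n w=S^m x$ and $T^p w=T^q z$. The compatibility needed to invoke $*$-commutativity is
\[
T^p(S^m x)=S^m(T^p x)=S^m(T^q y)=T^q(S^m y)=T^q(S^n z)=S^n(T^q z),
\]
which produces $w$ uniquely. Step (ii) is then immediate: $(x,m-n,w)$ is in $\cX$ because $S^m x=S^n w$, and $(w,p-q,z)$ is in $\cH$ because $T^p w=T^q z$.

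For (iii), continuity needs to be argued using the basic open sets $Z_{\theta}(U,m,n,V)$. Fix a net $(x_i,p-q,y_i)\to (x,p-q,y)$ in a basic open set and $(y_i,m-n,z_i)\to (y,m-n,z)$; the key observation is that the unique $w_i$ produced by $*$-commutativity depends continuously on the data, because $*$-commutativity of a local homeomorphism comes from locally inverting $T^p$ on a neighborhood of $S^m x$ inside $S^{-n}(S^m x)$ and similarly for $S^n$, so $w_i\to w$ by a standard local-homeomorphism lifting argument. Continuity of $\HleftX$ and $\HrightX$ then follows since they are given by coordinate-wise continuous formulas in $(x,p,q,y,m,n,z,w)$.

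For (iv), I would verify the axioms in the order \eqref{item:L1}, \eqref{item:L2}, \eqref{item:L5}, \eqref{item:L3}, \eqref{item:L4}, \eqref{item:L6}. The first three are direct bookkeeping on the defining identities $S^n w=S^m x$, $T^p w=T^q z$ (e.g.\ $r_{\cH}(h\HrightX x)=w=\rho_\cX((h\HleftX x)^{-1})$ falls out immediately, and units act trivially). The content of \eqref{item:L5} and \eqref{item:L3} is a uniqueness statement: given $h_1 h_2\in\cH$ and $x\in\cX$, the element $w$ obtained in one step coincides with the element obtained by iteratively applying the two-step construction, which follows from uniqueness of $*$-commutativity lifts. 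The main obstacle I anticipate is checking \eqref{item:L4} and \eqref{item:L6}, because these couple the left and right actions and require tracking four intermediate lifts $w,w',w''$ under $*$-commutativity for $(x,y)\in\cX^{(2)}$; the clean way is to set up the three lifts dictated by the two sides of each identity and invoke uniqueness of $*$-commutativity to identify them. Once this is done, the lemma is complete.
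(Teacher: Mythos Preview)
Your proof is correct but takes a more direct route than the paper. Rather than verifying \eqref{item:L1}--\eqref{item:L6} by hand, the paper observes that $Y\rtimes_\theta\mathbb{N}^2$ is already known to be an \emph{internal} \ZS\ product of the subgroupoids $\cH$ and $\cX$ by \cite[Proposition~5.1]{BPRRW2017}, and that by \cite[Proposition~3.4]{BPRRW2017} the actions of such a matched pair are uniquely determined by the single decomposition identity $hg=(h\HleftX g)(h\HrightX g)$ inside the ambient groupoid. Thus, after establishing the existence of $w$ (exactly your step~(i)), the paper only needs the one-line computation
\[
(x,(p-q,0),y)(y,(0,m-n),z)=(x,(p-q,m-n),z)=(x,(0,m-n),w)(w,(p-q,0),z)
\]
in $Y\rtimes_\theta\mathbb{N}^2$, which simultaneously yields all the axioms and continuity (the latter being inherited from the \etale\ structure of the ambient groupoid). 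Your approach has the merit of being self-contained and of making the role of $*$-commutativity in each axiom explicit, at the cost of the bookkeeping you anticipate for \eqref{item:L4} and \eqref{item:L6}; the paper's approach trades that bookkeeping for a dependence on the cited structural results.
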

\begin{proof} 
First, the element $w\in Y$ exists because
 \[
T^p (S^{m} x)=S^{m} (T^p x) = S^{m}  (T^q y)  = T^q (S^{m} y) = T^q (S^{n} z) = S^{n}(T^q  z).
\]
We apply the $*$-commuting condition for $T^p$ and $S^{n}$ to obtain the desired $w$. 

From \cite[Proposition 5.1]{BPRRW2017}, $Y\rtimes_\theta \mathbb{N}^2$ is an internal \ZS\ product of the groupoids $\cH$ and $\cX %
    $. Here, we embed $\cH$ and $\cX %
    $ as subgroupoids of $Y\rtimes_\theta \mathbb{N}^2$ by identifying $(x,k,y)\in \cH$ and $(y,\ell,z)\in\cX$ as $(x,(k,0),y)$ resp.\ $(y,(0,\ell), z)$ in $ Y\rtimes_\theta \mathbb{N}^2$. 

It follows from \cite[Proposition 3.4]{BPRRW2017} that the corresponding \ss\ actions are uniquely determined by the equation
\[gh=(h\HleftX g)(h\HrightX g), \quad h\in\cH, g\in\cX .\]
Therefore, it suffices to verify that the \ssla\ of $\cH$ on $\cX %
    $ satisfies this equation. 

Pick any $x,y,z\in Y$ and $p,q,m,n\in \mathbb{Z}$ such that $$(x,(p-q,0),y)\in \cH\subseteq Y\rtimes_\theta \mathbb{N}^2\quad\text{and}\quad(y,(0, m-n),z)\in \cG\subseteq Y\rtimes_\theta \mathbb{N}^2.$$ If $w\in Y$ is the unique element that satisfies $S^{n} w= S^{m} x$ and $T^p w=T^q z$, then
\begin{align*}
    (x,(p-q,0),y)(y, (0,m-n), z) &= (x, (p-q, m-n), z) \\
    &= (x, (0, m-n), w)(w, (p-q,0), z). \qedhere
\end{align*}
\end{proof}

For a map $T\colon Y\to Y$, we say that $x\in Y$ is a {\em periodic point} for $T$ if $T^{k} x = x$ for some $k\in\mathbb{N}^{\times}$. If no such $x$  exists,  we call $T$ {\em non-periodic}.

\begin{lemma}\label{lemma.Deaconu.free} The \ssla\ $\HleftX$ defined in Lemma~\ref{lemma.Deaconu.actions} is free if and only if $T$ is non-periodic. 
\end{lemma}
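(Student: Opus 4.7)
The plan is to prove both directions by unpacking the defining formula for $\HleftX$ from Lemma~\ref{lemma.Deaconu.actions} and reducing freeness to a statement about fixed points of powers of $T$.

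First I would tackle the forward direction. Suppose $T$ is non-periodic, and assume that $h \HleftX g = g$ for some $h \in \cH$ and $g \in \cX$. Write $g = (y, m-n, z)$ and $h = (x, p-q, y)$ (both compatible with the source-range constraints for $h \HleftX g$ to make sense). By Lemma~\ref{lemma.Deaconu.actions}, $h \HleftX g = (x, m-n, w)$ where $w$ is the unique element of $Y$ with $S^n w = S^m x$ and $T^p w = T^q z$. Equating this with $g$ forces $x = y$. Then $h = (y, p-q, y)$ being an element of $\cH$ means $T^p y = T^q y$. If $p \neq q$, WLOG $p > q$, so $T^{p-q}(T^q y) = T^q y$, exhibiting $T^q y$ as a periodic point of $T$, contradicting our assumption. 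Hence $p = q$, so $h$ is a unit of $\cH$.

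For the reverse direction, I would argue by contrapositive. Suppose $T$ has a periodic point, i.e., there exist $z_0 \in Y$ and $k \in \mathbb{N}^{\times}$ with $T^k z_0 = z_0$. Set $g = (z_0, 0, z_0) \in \cX^{(0)} \subseteq \cX$ (this trivially lies in $\cX$ because $S^0 z_0 = S^0 z_0$) and $h = (z_0, k, z_0) \in \cH$ (which lies in $\cH$ precisely because $T^k z_0 = T^0 z_0 = z_0$). Applying the formula from Lemma~\ref{lemma.Deaconu.actions} with $(m,n,p,q) = (0,0,k,0)$, the element $w$ is uniquely determined by $S^0 w = S^0 z_0$ (so $w = z_0$) and $T^k w = T^0 z_0 = z_0$ (which is consistent). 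Thus $h \HleftX g = (z_0, 0, z_0) = g$, while $h = (z_0, k, z_0)$ is not a unit of $\cH$ since $k \geq 1$. Hence $\HleftX$ is not free.

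I expect the main (mild) obstacle to be keeping track of the equivalence-class nature of elements of a \DR\ groupoid: the integer middle coordinate $p - q \in \mathbb{Z}$ is what is intrinsic, while the representing pair $(p,q) \in \mathbb{N}^{2}$ is not. One must therefore verify that ``$T^{p}y = T^{q}y$ for some representing pair $(p,q)$ with $p \neq q$'' is genuinely equivalent to ``$T^{q}y$ is periodic for $T$,'' and also be careful to observe that the $*$-commutation assumption is what guarantees the uniqueness of $w$ used above. Once these small points are made explicit, both directions are short and direct.
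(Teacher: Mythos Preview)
Your proposal is correct and follows essentially the same approach as the paper's proof: both directions hinge on the observation that $h\HleftX g=g$ forces the first coordinates to agree, reducing the question to whether $T^{p}y=T^{q}y$ with $p\neq q$ is possible. Your write-up is in fact slightly more careful than the paper's in that you first derive $x=y$ before invoking $p\neq q$, and you explicitly flag the representing-pair subtlety; otherwise the arguments are the same.
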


\begin{proof} 
Suppose $\HleftX$ is not free, so there exists $x,y,z$ and $p\neq q$ such that 
$(x,p-q,y)\HleftX (y,m-n,z)=(y,m-n,z)$. By definition of $\HleftX$, this equality forces $x=y$. Since $(x,p-q,x)\in\cH$ by assumption, this implies $T^p x=T^q x$, so since $p\neq q$, $T$ has a periodic point.

Conversely, assume $T$ has  a periodic point $x$, so there exists $k>0$ with $T^{k} x= x$. In this case, $(x,k,x)\in \cH\setminus \cH\z$ and $(x,0,x)\in \cX $. One can easily verify that $(x,k,x)\HleftX (x,0,x)=(x,0,x)$.
\end{proof}

While the action $\HleftX$ in Lemma~\ref{lemma.Deaconu.actions} may not be a proper map in general, the examples on certain classes of $2$-graphs that we shall consider later satisfy this property. With properness, Corollary~\ref{cor.gprd.one.sided} implies that the \ssp\ groupoid $\cH\bowtie \cX %
    \cong Y\rtimes \mathbb{N}^2$ is equivalent to the quotient groupoid $\cH\backslash \cX %
    $, 
    which the authors conjecture is another \DR\ groupoid.

\begin{conjecture}\label{conj.DCGroupoid}
    Partition $Y$ into the equivalence classes given by $[z]_{T}=\cup_{p,q\in\mathbb{N}}\{w\in Y: T^p w=T^q z\}$. On the quotient space $Y_{T}$, define $\widehat{S}\colon Y_{T}\to Y_{T}$  by $\widehat{S}([z]_T)=[Sz]_T$.  If $T$ is non-periodic, then the map
    \begin{equation}\label{eq:Phi}
        \Phi\colon\cH\backslash\cX %
        \to Y_{T}\rtimes_{\widehat{S}}\mathbb{N},
        \quad
        \cH\HleftX(y,k,z)\mapsto
        ([y]_{T}, k, [z]_{T}),
    \end{equation}
    is an (algebraic) isomorphism of groupoids. If, furthermore, the \ssla\ $\HleftX$ defined in Lemma~\ref{lemma.Deaconu.actions} is proper  and $\widehat{S}$ is locally injective (so that both groupoids are \LCH), then $\Phi$ is a homeomorphism.
\end{conjecture}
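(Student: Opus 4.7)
The proof plan breaks into verifying that $\Phi$ is an algebraic groupoid isomorphism (using only non-periodicity of $T$) and then upgrading to a homeomorphism under the remaining hypotheses.

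\emph{Algebraic set-up, well-definedness, homomorphism, surjectivity.} First I would verify $[\cdot]_T$ is an equivalence relation (transitivity uses that $T$'s commute with their own powers) and that $\widehat S$ is well-defined: $T^p w = T^q z$ gives $T^p(Sw) = S(T^p w) = S(T^q z) = T^q(Sz)$. Well-definedness of $\Phi$ is then immediate from Lemma~\ref{lemma.Deaconu.actions}: if $h=(x,p-q,y)\in\cH$, then $h\HleftX(y,m-n,z)=(x,m-n,w)$ with $T^p x = T^q y$ and $T^p w = T^q z$, so $[x]_T=[y]_T$, $[w]_T=[z]_T$, while the middle coordinate is preserved. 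The homomorphism property reduces to checking composition: in $\cH\backslash\cX$ it is induced from $(y,k,z)(z,k',z'')=(y,k+k',z'')$ in $\cX$ (Proposition~\ref{prop.gpd structure on orbit space}), which matches the DR composition in $Y_T\rtimes_{\widehat S}\mathbb{N}$; inversion is similar. For surjectivity, given $([y]_T,k,[z]_T)$ with $\widehat S^m[y]_T=\widehat S^n[z]_T$ and $m-n=k$, pick $p,q\in\mathbb{N}$ with $T^p S^m y = T^q S^n z$; by $ST=TS$ this is $S^m(T^p y) = S^n(T^q z)$, so $(T^p y, k, T^q z)\in\cX$ is a pre-image under $\Phi$.

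\emph{Injectivity --- the main obstacle.} Suppose $(y,k,z),(y',k,z')\in\cX$ with $[y]_T=[y']_T$ and $[z]_T=[z']_T$; one must produce $h=(y',P-Q,y)\in\cH$ with $h\HleftX(y,k,z)=(y',k,z')$. Enlarging $m,n$ by iterating $S$, one may assume both $S^m y = S^n z$ and $S^m y' = S^n z'$ hold with the same $(m,n)$, so that the condition $S^n w = S^m y'$ in Lemma~\ref{lemma.Deaconu.actions} is met automatically by $w=z'$. It remains to find $(P,Q)$ such that $T^P y' = T^Q y$ \emph{and} $T^P z' = T^Q z$ simultaneously. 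Non-periodicity of $T$ turns this into a single integer equation: if $T^a y = T^b y'$ and $T^{a'} y = T^{b'} y'$, then $T^{a+b'} y' = T^{b+a'} y'$, and applying non-periodicity at $T^{\min(a+b', b+a')} y'$ forces $a-b=a'-b'$, so there is a canonical offset $\delta(y,y')\in\mathbb{Z}$; likewise $\delta(z,z')$. Injectivity reduces to $\delta(y,y')=\delta(z,z')$. Applying $S^m$ to $T^a y=T^b y'$ and using the $\cX$-conditions yields $S^n T^a z = S^n T^b z'$; combining with an independent relation $T^c z = T^d z'$ from $[z]_T=[z']_T$ gives $S^n T^{a+d} z = S^n T^{b+c} z$. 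The crux is to promote this to $T^{a+d} z = T^{b+c} z$, because then non-periodicity of $T$ at $z$ forces $a+d=b+c$, i.e., $\delta(z,z')=a-b=\delta(y,y')$. I expect the promotion to require the $*$-commutativity of $S^n$ with large $T^\ell$: the equation $S^n T^{a+d} z = S^n T^{b+c} z$ exhibits two elements in a common $S^n$-fiber that differ by a $T$-power, and $*$-commutativity should lift them to a common $T^\ell$-fiber where non-periodicity applies. Making this lifting argument rigorous is the technically most delicate step of the proof.

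\emph{Topological upgrade.} Under the added hypotheses that $\HleftX$ is proper and $\widehat S$ is locally injective, Proposition~\ref{prop.gpd structure on orbit space} gives that $\cH\backslash\cX$ is a \LCH\ \etale\ groupoid, and local injectivity of $\widehat S$ makes $Y_T\rtimes_{\widehat S}\mathbb{N}$ a \LCH\ \etale\ groupoid. Continuity of $\Phi$ is clear from the factorization $\cX \to Y\times\mathbb{Z}\times Y \to Y_T\times\mathbb{Z}\times Y_T$ combined with the universal property of the quotient $\cX\to\cH\backslash\cX$. For openness, a basic set $Z_{\widehat S}(U,m,n,V)\subseteq Y_T\rtimes_{\widehat S}\mathbb{N}$ pulls back to a set of the form $Z_S(\widetilde U,m,n,\widetilde V)\cap\cX$ in $\cX$, where $\widetilde U,\widetilde V\subseteq Y$ are pre-images of $U,V$; since the quotient $\cX\to\cH\backslash\cX$ is open (Remark~\ref{rmk:pi open}, using that $\cH$ is \etale), the image of a basic open set in $\cH\backslash\cX$ is open downstairs. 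Combined with the algebraic bijection, this yields the claimed homeomorphism.
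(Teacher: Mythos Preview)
This statement is a \emph{Conjecture} in the paper, not a theorem: immediately after stating it, the authors write that ``it is easy to show that $\Phi$ is a continuous bijection that preserves the groupoid structure,'' but that they ``found no reason for $\Phi$ to be open.'' So the algebraic isomorphism and continuity are regarded as routine, while openness is precisely the part the authors could not prove. Your write-up inverts this assessment, treating injectivity as the delicate step and openness as routine.

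On injectivity, you have made it harder than it is. Once you reach $S^n T^{a+d} z = S^n T^{b+c} z$, simply commute $S$ and $T$ to get $T^{a+d}(S^n z)=T^{b+c}(S^n z)$; non-periodicity of $T$ at the point $S^n z$ immediately gives $a+d=b+c$, so $\delta(y,y')=\delta(z,z')$. No $*$-commutation ``lifting'' is needed.

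On openness, your argument has a genuine gap. You describe the \emph{preimage} of $Z_{\widehat S}(U,m,n,V)$ under $\Phi\circ q$, which is a continuity computation, and then conclude that images of basic opens are open. Openness of $\Phi$ is equivalent to openness of $\Phi\circ q$ (since $q$ is open and surjective), so what you actually need is that $(\Phi\circ q)\bigl(Z_S(U',m,n,V')\bigr)$ is open in $Y_T\rtimes_{\widehat S}\mathbb{N}$. The natural candidate is $Z_{\widehat S}(\pi(U'),m,n,\pi(V'))$, but the image is in general only a \emph{subset} of this: an element $([y]_T,m-n,[z]_T)$ of the latter merely satisfies $T^p S^m y'=T^q S^n z'$ for some $y'\in U'$, $z'\in V'$ and some $p,q$, which does not produce a pair $(y'',z'')\in U'\times V'$ with $S^m y''=S^n z''$ on the nose; the required witnesses $T^p y', T^q z'$ need not lie in $U',V'$. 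This is exactly the obstruction the authors flag, and your sketch does not address it.
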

While it is easy to show that $\Phi$ is a continuous bijection that preserves the groupoid structure, we found no reason for $\Phi$ to be open. We are furthermore unsure under which circumstances $\HleftX$ is proper  or $\widehat{S}$ locally injective. If the conjecture is true, then it would follow from Corollary~\ref{cor.gprd.one.sided} that the \DR\ groupoids $Y\rtimes_\theta \mathbb{N}^2$ and $Y_{T} \rtimes_{\widehat{S}} \mathbb{N}$ are equivalent.

\bigskip

We now find a concrete example from a class of $2$-graphs for which we can describe the quotient explicitly. C*-algebras of higher rank graphs were first introduced by Kumjian and Pask \cite{KumjianPask2000}. A {\em $k$-graph} is a small category $\Lambda$ with a functor $d\colon\Lambda\to\mathbb{N}^d$ that satisfies the following factorization property: whenever $d(\lambda)=m+n$ for $m,n\in\mathbb{N}^d$, there exist unique $\mu,\nu\in\Lambda$ with $\lambda =\mu\nu$, $d(\mu)=m$, and $d(\nu)=n$.

One can treat a $2$-graph as a directed graph on the vertex set $\Lambda^0=d\inv (0)$, all of whose edges are colored in one of two colors, each corresponding to one of the two copies of $\mathbb{N}$ in $\mathbb{N}^2$; 
we choose red (depicted as solid edges and labeled with $e$'s) and blue (depicted as dashed edges and labeled with $f$'s). The factorization property ensures that every blue-red path can be uniquely written as a red-blue path. We will use $r$ and $s$ to denote the range and source of an edge in $\Lambda$. A graph $\Lambda$ is {\em row finite} if for each vertex $v\in\Lambda^0$, there are finitely many edges with range $v$. 
A vertex $v$ is called a {\em source} if there is no red or blue edge that has range $v$. 

In order to define the infinite path space of a $2$-graph $\Lambda$, we need some notation. Let $\Omega_2$  be the category with unit space $\mathbb{N}^2$ and morphisms $$\Omega_2^*=\{\bigl((n_1,m_1), (n_2,m_2)\bigr): n_1\leq n_2, m_1\leq m_2\}.$$ Define $r\bigl((n_1,m_1), (n_2, m_2)\bigr)=(n_2,m_2)$ and $s\bigl((n_1,m_1), (n_2,m_2)\bigr)=(n_1,m_1)$. With the degree map $d\colon \Omega_2^* \to \mathbb{N}^2$ given by $d\bigl((n_1, m_1), (n_2, m_2)\bigr)=(n_2-n_1, m_2-m_1)$, $\Omega_2$ is a $2$-graph.

Now, for any $2$-graph $\Lambda$, define its infinite path space by
\[\Lambda^\infty = \{f\colon \Omega_2\to \Lambda : f\text{ is a $2$-graph morphism}\}.\]
Fix a $2$-graph $\Lambda$ and let $Y\coloneqq \Lambda^\infty$. Let $\sigma_b,\sigma_r\colon Y\to Y$ be the shift maps along blue and red edges respectively. When the $2$-graph is row-finite and has no sources, then since $\sigma_b$ and $\sigma_r$ commute, we can consider the \DR\ groupoid $Y\rtimes_\sigma \mathbb{N}^2$. In \cite{KumjianPask2000}, this groupoid is called the {\em path groupoid}, and Kumjian and Pask show that its C*-algebra is isomorphic to the  higher-rank graph C*-algebra $C^*(\Lambda)$ defined using \CK\ relations.

\medskip

We now seek to apply our result to certain $2$-graphs. 
Recall that a $2$-graph is {\em $1$-coaligned} if, 
given a red edge $e_{1}$ and a blue edge $f_{1}$ with the same source, there exists a unique red edge $e_{2}$ and a unique blue edge $f_{2}$ such that $f_{2} e_{1} = e_{2} f_{1}$.
It was observed in \cite{MW:arxiv} that this condition implies that  the maps $\sigma_b, \sigma_r$ $*$-commute. Let 
$\cH=Y\rtimes_{\sigma_b} \mathbb{N}$ and $\cX=Y\rtimes_{\sigma_r} \mathbb{N}$. By \cite[Proposition 5.1]{BPRRW2017} resp.\ Lemma~\ref{lemma.Deaconu.actions}, the path groupoid $Y\rtimes_{\sigma} \mathbb{N}^2$ can be realized as the \ssp\ $\cX\bowtie\cH$. 
By Lemma~\ref{lemma.Deaconu.free}, the associated \ssla\ is free if and only if $\sigma_b$ is non-periodic, which happens if and only if there is no blue cycle. 

We now consider the following class of $2$-graphs. Let $V=\sqcup_{n\in\mathbb{Z}} V_n$, where $V_n$ are disjoint non-empty sets of finitely many vertices. Consider a $2$-graph $\Lambda$ that satisfies the following conditions, the first two of which are similar to those considered in \cite{PRRS2006}:
\begin{enumerate}[label=\textup{(\arabic*)}]
    \item\label{item:direction of blues} Each blue edge $f$ has $s(f)\in V_n$ and $r(f)\in V_{n+1}$ for some $n\in\mathbb{Z}$. 
    \item\label{item:red cycles} Each vertex in $V_n$ is on exactly one red cycle, whose vertices are all in $V_n$. 
    \item\label{item:eventually constant} There exists $N$ such that $|V_n|=1$ for all $n\geq N$, and there exists exactly one blue edge from $V_n$ to $V_{n+1}$ for all $n\geq N$. 
\end{enumerate}

Suppose $\Lambda$ is a $2$-graph that satisfies all three condition; an example is given in Figure~\ref{fig:example}.

\begin{figure}[h]
    \centering

    \begin{tikzpicture}[scale=1, every node/.style={scale=1}]
    \foreach \y in {-2, 0, 2}{
        \node at (0, \y) {$\bullet$};
        \node at (4, \y) {$\bullet$};
    }
    \foreach \y in {-3, -1, 1, 3}{
        \node at (2, \y) {$\bullet$};
    }
    \node at (6,0) {$\bullet$};
    \node at (8,0) {$\bullet$};
    
    \draw[-Stealth, red] plot [smooth]  coordinates {(0,-2) (0.1,-2.3) (0.1, -2.6) (0,-2.8) (-0.1, -2.6) (-0.1,-2.3) (0, -2)};
    
    \draw[-Stealth, red] plot [smooth]  coordinates {(0,0) (0.1,0.3) (0.2, 1) (0.1, 1.7) (0,2)};
    \draw[-Stealth, red] plot [smooth]  coordinates {(0,2) (-0.1,1.7) (-0.2, 1) (-0.1, 0.3) (0,0)};
    
    \draw[-Stealth, red] plot [smooth]  coordinates {(2,3) (2-0.1,2.7) (2-0.2, 2) (2-0.1, 1.3) (2,1)};
    \draw[-Stealth, red] plot [smooth]  coordinates {(2,1) (2-0.1,0.7) (2-0.2, 0) (2-0.1, -0.7) (2,-1)};
    \draw[-Stealth, red] plot [smooth]  coordinates {(2,-1) (2+0.2,-0.4) (2+0.4, 1) (2+0.2, 2.4) (2,3)};
    \draw[-Stealth, red] plot [smooth]  coordinates {(2,-3) (2+0.1,-1+-2.3) (2+0.1, -1+-2.6) (2,-1-2.8) (2-0.1, -1-2.6) (2-0.1,-1-2.3) (2, -3)};
    
    \draw[-Stealth, red] plot [smooth]  coordinates {(4,2) (4+0.1,2-0.3) (4+0.1, 2+-0.6) (4,2-0.8) (4-0.1, 2-0.6) (4-0.1,2-0.3) (4, 2)};
    \draw[-Stealth, red] plot [smooth]  coordinates {(4,0) (4-0.1,-0.3) (4-0.2, -1) (4-0.1, -1.7) (4,-2)};
    \draw[-Stealth, red] plot [smooth]  coordinates {(4,-2) (4.1,-1.7) (4.2, -1) (4.1, -0.3) (4,0)};
    
    \foreach \x in {6, 8} {
        \draw[-Stealth, red] plot [smooth]  coordinates {(\x,0) (\x+0.1,-0.3) (\x+0.1, -.6) (\x,-.8) (\x-0.1, -.6) (\x-0.1,-.3) (\x, 0)};
    }
    
    \foreach \x in {-2, 0, 2} {
        \foreach \y in {-3, -1, 1, 3} {
            \draw[-Stealth, dashed, blue] plot [smooth]  coordinates {(0,\x) (2,\y)};
        }
    }
    \foreach \x in {-0.2,  0.2} {
        \draw[-Stealth, dashed, blue] plot [smooth]  coordinates {(0,-2) (1,-2.5+\x) (2,-3)};
    }
    \foreach \y in {-1, 1, 3} {
        \draw[-Stealth, dashed, blue] plot [smooth]  coordinates {(2,\y) (4,2)};
    }
    \foreach \y in {0, -2} {
        \foreach \x in {-0.2, 0.2} {
            \draw[-Stealth, dashed, blue] plot [smooth]  coordinates {(2,-3) (3,\y*0.5-1.5+\x) (4,\y)};
        }
    }
    \foreach \y in {-2, 0, 2}{
        \draw[-Stealth, dashed, blue] plot  coordinates {(-0.5,\y) (0, \y)};
    }
    \draw[-Stealth, dashed, blue] plot [smooth]  coordinates {(2,-3) (4,2)};
    \foreach \y in {-2,0,2}{
        \draw[-Stealth, dashed, blue] plot [smooth]  coordinates {(4,\y) (6,0)};
    }
    \draw[-Stealth, dashed, blue] plot coordinates {(6,0) (8,0)};
    \draw[-Stealth, dashed, blue] plot coordinates {(8,0) (8.5,0)};
    \node at (9,0) {$\cdots$};
    \node at (-1,0) {$\cdots$};
    \end{tikzpicture}
\caption{An example of a $2$-graph satisfying Conditions~\ref{item:direction of blues}--\ref{item:eventually constant}}\label{fig:example}
\end{figure}
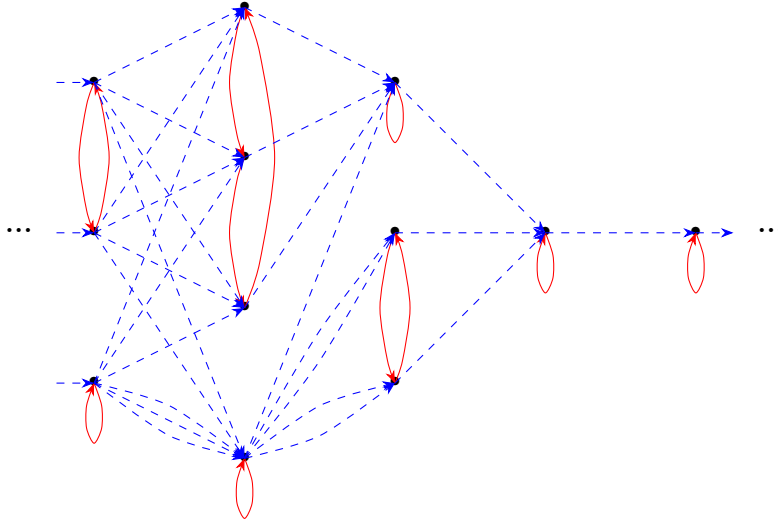

By  Condition~\ref{item:eventually constant}, we can let $V_n=\{v_n\}$  for $n\geq N$. Condition~\ref{item:red cycles} implies that $v_n$ must be on a loop of a red edge. Since there is no loop for blue edges by Condition~\ref{item:direction of blues}, $\sigma_b^{k}(x)\neq x$ for any infinite path $x$ and any $k>0$. Therefore, by Lemma~\ref{lemma.Deaconu.free}, the action $\HleftX$ is free. One can also verify that the associated \ssla\ $\HleftX$ is proper.

In this situation, we can prove that Conjecture~\ref{conj.DCGroupoid} is vacuously true. Observe that any two infinite paths $x,y\in Y$ are blue-shift equivalent: $x\sim_{\sigma_b} y$. This is because once we removed enough blue edges, the source of the infinite path will  eventually be one of the $v_n$ for $n\geq N$, and there is exactly one infinite path whose source is $v_n$. Therefore, the quotient space $Y_{\sigma_b}$ consists of a single point, the induced map $\widehat{\sigma_r}$ is automatically a local homeomorphism, and the \DR\ groupoid $Y_{\sigma_b}\rtimes_{\widehat{\sigma_r}} \mathbb{N}$   is precisely the group $\mathbb{Z}$. Discreteness of $\mathbb{Z}$ forces the bijection $\Phi$ in Conjecture~\ref{conj.DCGroupoid} to be open, so that $\cH\backslash\cX \cong \mathbb{Z}$.

Since the group C*-algebra of $\mathbb{Z}$ is $C(\mathbb{T})$, we reach the following conclusion.

\begin{corollary}\label{cor.rank2graph.main} Let $\Lambda$ be a $2$-graph that satisfies Conditions~\ref{item:direction of blues} through~\ref{item:eventually constant}. Then its C*-algebra $C^*(\Lambda)$ is Morita equivalent to $C(\mathbb{T})$. 
\end{corollary}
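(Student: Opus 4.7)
The plan is to string together the machinery established in this section with the known structural results for the $2$-graph path groupoid. Specifically, I will exhibit $C^*(\Lambda)$ as the C*-algebra of the \ssp\ groupoid $\cX \bowtie \cH$, then apply Corollary~\ref{cor.gprd.one.sided} to reduce to $C^*(\cH\backslash\cX)$, and finally identify the quotient with $\mathbb{Z}$.

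First I would assemble the ingredients. By Kumjian--Pask \cite{KumjianPask2000}, $C^*(\Lambda) \cong C^*(Y\rtimes_\sigma \mathbb{N}^2)$ where $Y=\Lambda^\infty$, and Conditions~\ref{item:direction of blues}--\ref{item:eventually constant} together with 1-coalignedness imply that $\sigma_b$ and $\sigma_r$ $*$-commute. Thus by \cite[Prop.~5.1]{BPRRW2017} (and our Lemma~\ref{lemma.Deaconu.actions}), $Y\rtimes_\sigma\mathbb{N}^2 \cong \cX\bowtie\cH$, where $\cX,\cH$ are the étale \DR\ groupoids generated by $\sigma_r,\sigma_b$. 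To apply Corollary~\ref{cor.gprd.one.sided}, I need that the \ssla\ $\HleftX$ from Lemma~\ref{lemma.Deaconu.actions} is free and proper, and that the source maps of $\cX,\cH$ are open; the latter is automatic since these groupoids are étale (so I can also invoke Corollary~\ref{cor:ssp gpd:r-discrete, etale} for an $\HleftX$-invariant Haar system). Freeness follows from Lemma~\ref{lemma.Deaconu.free}: Condition~\ref{item:direction of blues} forbids any blue cycle, so $\sigma_b$ is non-periodic.

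Properness of $\HleftX$ will require a short verification. Given convergent nets $x_i \to x$ and $(x_i,p_i-q_i,y_i)\HleftX x_i \to z$ in $\cX$, I would argue that the exponents $(p_i,q_i)$ are eventually constant by using the basic open sets $Z_\sigma(U,p,q,V)$ of the \DR\ topology together with the local homeomorphism property of $\sigma_b$, and then pass to a convergent subnet of $y_i$ by extracting it from the convergent data in $Y$. This is routine but needs to be written out carefully.

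Next, I would identify $\cH\backslash\cX$ with the discrete group $\mathbb{Z}$. Condition~\ref{item:eventually constant} forces any infinite path to eventually coincide (along the blue direction) with the unique infinite path sourced at $v_n$ for some $n\geq N$, so any two elements of $Y$ are blue-shift equivalent; hence $Y_{\sigma_b}$ is a singleton and the degree map $\cH\backslash\cX \to \mathbb{Z}$, $\cH\HleftX(y,k,z)\mapsto k$, is a well-defined algebraic groupoid isomorphism. It remains to verify that this bijection is a homeomorphism; since $\mathbb{Z}$ is discrete and the quotient map $\cX\to\cH\backslash\cX$ is continuous, it suffices to note that each $k\in\mathbb{Z}$ has open preimage in $\cX$ (given by the étale structure of $\cX$ combined with the degree map), which passes to open fibers in the quotient. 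Thus $\cH\backslash\cX$ is isomorphic to $\mathbb{Z}$ as a topological groupoid.

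Finally, applying Corollary~\ref{cor.gprd.one.sided} yields that $C^*(\cX\bowtie\cH)$ and $C^*(\cH\backslash\cX)$ are Morita equivalent (the required Haar systems exist automatically by Corollary~\ref{cor:ssp gpd:r-discrete, etale}). Combining with the identifications $C^*(\cX\bowtie\cH)\cong C^*(\Lambda)$ and $C^*(\cH\backslash\cX) \cong C^*(\mathbb{Z}) \cong C(\mathbb{T})$ gives the conclusion. The main obstacle I anticipate is the topological step in showing properness of $\HleftX$ and the homeomorphism claim $\cH\backslash\cX \cong \mathbb{Z}$; the algebraic content is essentially immediate from the structural results collected in this section.
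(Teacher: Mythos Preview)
Your approach is correct and essentially identical to the paper's: realize $C^*(\Lambda)$ as $C^*(\cX\bowtie\cH)$, verify freeness (via Lemma~\ref{lemma.Deaconu.free} and the absence of blue cycles) and properness, apply Corollary~\ref{cor.gprd.one.sided}, and identify $\cH\backslash\cX\cong\mathbb{Z}$ via the observation that $Y_{\sigma_b}$ is a singleton. The paper handles the homeomorphism step slightly more directly---since $\mathbb{Z}$ is discrete, any map out of it is continuous, so the inverse of the continuous bijection $\Phi$ is automatically continuous---and, like you, does not spell out properness, though note that Condition~\ref{item:eventually constant} is essential there (the paper's subsequent non-example shows $\HleftX$ can fail to be proper without it, so your sketch should invoke it explicitly).
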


\begin{example} 
We will consider a specific, very simplistic $2$-graph. For each $n\in \mathbb{Z}$, we let $V_n=\{v_n\}$, and  we assume that there is exactly one blue edge $f_n$ coming out of $v_n$ with $r(f_n)=v_{n+1}$. In order for Condition~\ref{item:red cycles} to be satisfied, $v_n$ is on a red loop; we will call it $e_n$. This takes care of Conditions~\ref{item:direction of blues} and~\ref{item:eventually constant}. The factorization relation must be given by $f_n e_n = e_{n+1} f_n$.

The \CK\ relations establish that the $2$-graph C*-algebra $C^*(\Lambda)$ is the universal C*-algebra generated by projections $P_n$ and partial isometries $E_n$ and $F_n$ such that
\begin{enumerate}
    \item $E_n E_n^*=E_n^* E_n = P_n$,
    \item $F_n F_n^* = P_{n+1}$ and $F_n^* F_n=P_n$,
    \item $F_n E_n=E_{n+1} F_n$,
    \item $\sum P_n=I$. 
\end{enumerate}

Let  $\{\delta_n: n\in\mathbb{Z}\}$ be the canonical orthonormal basis of $\ell^2(\mathbb{Z})$. Let $U\in\mathcal{B}(\ell^2(\mathbb{Z}))$ be the unilateral shift, so that $U\delta_{n}=\delta_{n+1}$ and $C^*(U)=C(\mathbb{T})$. Let $I$ be the identity map on $\ell^2(\mathbb{Z})$ and let~$\varepsilon_{n}$ be the rank-one projection onto the subspace spanned by $\delta_{n}$.
Consider the following operators on $\ell^2(\mathbb{Z}) \otimes \ell^2(\mathbb{Z})$:
\begin{align*}
    P_n' &= I\otimes \varepsilon_n 
    , 
    &
    E_n' &= U\otimes \varepsilon_n 
    , 
    &
    F_n' &= I\otimes (U\circ \varepsilon_n)
    .
\end{align*}
One can easily verify that the family $\{E_n', P_n', F_n'\}$ satisfies the \CK\ relations, so by the universal property of $C^*(\Lambda)$, there exists a unique $*$-homomorphism $C^*(\Lambda) \to C^*(E_n',F_n',P_n')=: B$ which maps $E_{n}\mapsto E_{n}', F_{n}\mapsto F_{n}',P_{n}\mapsto P_{n}' $.

There exists a natural gauge-action $\beta$ of $\mathbb{T}^2$ on $B$ where $\beta_{z_1,z_2}$ sends $E_n'\mapsto z_1 E_n'$ and $F_n'\mapsto z_2 F_n'$. 
Since $P_{n}'\neq 0$, it follows from the gauge-invariant uniqueness theorem \cite[Theorem 3.4]{KumjianPask2000} that the map $C^*(\Lambda) \to  B$ is an isomorphism. Note that $B$ is precisely $C(\mathbb{T})\otimes \cK\bigl(\ell^2(\mathbb{Z})\bigr)$ which is known to be Morita equivalent to $C(\mathbb{T})$, just as Corollary~\ref{cor.rank2graph.main} predicted.
\end{example}

\begin{example} The assumption that  there is exactly one blue edge from $V_n$ to $V_{n+1}$ for large enough~$n$ is needed for the action $\HleftX$ to be proper. To see this, consider the following simple non-example where $V_n=\{v_n\}$, and for each $n$, there is a red edge $e_n$ with $s(e_n)=r(e_n)=v_n$, and two blue edges $f_n^1, f_n^2$ where $s(f_n^i)=v_n$ and $r(f_n^i)=v_{n+1}$. Assume the $2$-graph relation is given by $f_n^i e_n = e_{n+1} f_n^i$ for all $n\in\mathbb{Z}$ and $i\in\{1,2\}$. 

We first prove that $\HleftX$ is not proper. First, we notice that each vertex $v_n$ has a unique red loop $e_n$. Therefore, its infinite path $\mu\in\Lambda^\infty$ is uniquely determined by an infinite blue path. For simplicity, we treat $\Lambda^\infty$ as the infinite path space of blue paths. Define $\mu_n=f_1^1f_2^1\cdots f_n^1 f_{n+1}^2 f_{n+2}^2 \cdots$, which is the infinite path that uses $f_j^1$ for $1\leq j\leq n$ and $f_j^2$ for $j>n$. For $i\in\{1,2\}$, let $\nu^i=f_1^i f_2^i \cdots$ be the infinite path that always uses $f_j^i$. It is clear that $\mu_n\to\nu^1$. Now, let $h_n=(\mu_n, n, \nu^2)\in \cH=Y\rtimes_{\sigma_b} \mathbb{N}$, and consider $x = (\nu^2, 0, \nu^2) \in \cX= Y\rtimes_{\sigma_r}\mathbb{N}$. We have that
\[h_n\HleftX x = (\mu_n, 0, \mu_n) \to (\nu^1, 0, \nu^1) \in \cX.\]
However, the sequence $h_n$ has no convergent subsequence in $\cH=Y\rtimes_{\sigma_b} \mathbb{N}$, since their ranges $\mu_n$ converge to $\nu^1$ while their sources are always $\nu^2$, which is not tail equivalent to $\nu^1$. The action $\HleftX$ is not proper. 

But  $\sigma_r$ is injective on $Y$ (to be precise, it is the identity map on $Y$), and thus 
the map
$\widehat{\sigma_r}$ 
described in Conjecture~\ref{conj.DCGroupoid}
is locally injective, so that $Y_{\sigma_b} \rtimes_{\widehat{\sigma_r}} \mathbb{N}$ is a \LCH\ groupoid. We are uncertain whether the groupoid $Y_{\sigma_b} \rtimes_{\widehat{\sigma_r}} \mathbb{N}$ 
is equivalent to $Y\rtimes_\sigma\mathbb{N}^2$.
\end{example}
 
 We  point out that the Morita equivalence among graph and higher-rank graph C*-algebras is an active area of research. The recent work of Eilers et al.\ \cite{ERRS2021} completely classified all the moves on a finite graph that preserve Morita equivalence. Recent work on higher rank graphs \cite{EFGGGP2022} identified several moves that preserve Morita equivalence. Our class of examples do not fall into the moves in \cite{EFGGGP2022}, and we hope our example will further this line of research.

\appendix

\section{Exercises in Topology}

Above, the most frequently used topological fact is Fell's Criterion, which we repeat here for convenience.
\begin{proposition}[Fell's Criterion; {\cite[Prop.\ 1.1]{Wil2019}}]\label{Fell's criterion}
    Let $f\colon X\to Y$ be a surjective map between topological spaces. Then $f$ is open if and only if, whenever $\{y_{i}\}_{i\in I}$ is a net in $Y$ that converges to some $f(x)$, there exists a subnet $\{y_{j}\}_{j\in J}$ which allows a lift $\{x_{j}\}_{j\in J}$ in $X$ under $f$ that converges to $x$.
\end{proposition}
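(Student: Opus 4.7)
The statement is a standard characterization of open surjections via nets, so the proof plan is fairly canonical: handle the two implications separately, with the net-lifting direction being a routine application of openness plus a diagonal-style subnet construction, and the other direction being a short argument by contradiction using that preimages of convergent nets ``witness'' closedness.

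For the direction ``$f$ open $\Rightarrow$ the lifting property holds,'' I would fix a net $\{y_i\}_{i\in I}$ converging to $f(x)$ and build the subnet indexed by the product directed set $J\coloneqq I\times \cN_x$, where $\cN_x$ is the neighborhood filter at $x$ ordered by reverse inclusion. For each pair $j=(i,U)\in J$, openness of $f$ makes $f(U)$ an open neighborhood of $f(x)$, so there exists $i'\geq i$ with $y_{i'}\in f(U)$; pick such an $i'$ and any preimage $x_j\in U\cap f^{-1}(y_{i'})$, and set $y_j\coloneqq y_{i'}=f(x_j)$. The map $j\mapsto i'$ is cofinal in $I$, so $\{y_j\}_{j\in J}$ is a subnet of $\{y_i\}_{i\in I}$, and by construction $x_j\in U$ eventually (as soon as the $\cN_x$-coordinate refines a given neighborhood), so $x_j\to x$ in $X$.

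For the converse, I would argue contrapositively: suppose $f$ is not open, so there exists an open $U\subseteq X$ with $f(U)$ not open in $Y$. Then we can find $x\in U$ such that $f(x)$ is not an interior point of $f(U)$, meaning every neighborhood of $f(x)$ meets $Y\setminus f(U)$. Using $\cN_{f(x)}$ as a directed set, pick for each neighborhood $V$ a point $y_V\in V\setminus f(U)$; then $y_V\to f(x)$. By the hypothesized lifting property, a subnet $y_{V_j}$ lifts to $x_j\in X$ with $f(x_j)=y_{V_j}$ and $x_j\to x$. Since $U$ is open and $x\in U$, eventually $x_j\in U$, giving $y_{V_j}=f(x_j)\in f(U)$, contradicting $y_{V_j}\in Y\setminus f(U)$.

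The only mildly delicate step is the subnet construction in the forward direction, which requires being careful that the auxiliary directed set $I\times\cN_x$ really produces a subnet in the Willard/Kelley sense (i.e., the map $J\to I$, $(i,U)\mapsto i'$, is monotone and cofinal). Everything else is bookkeeping; surjectivity of $f$ is used only to guarantee that preimages $f^{-1}(y_{i'})$ are nonempty in the first direction and to produce the point $x$ with $f(x)=y$ in the second. I do not anticipate any serious obstacle; the result is essentially \cite[Prop.\ 1.1]{Wil2019} and the proof can be cited or sketched in a few lines.
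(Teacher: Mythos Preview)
Your proof is correct and follows the standard argument; the paper itself does not supply a proof for this proposition but simply cites \cite[Prop.~1.1]{Wil2019}, so there is nothing to compare against. One small point worth tightening: the map $J\to I$, $(i,U)\mapsto i'$, that you construct is cofinal in the Kelley sense (for every $i_0$ there is $j_0$ beyond which all images dominate $i_0$), but as written it need not be monotone, so it yields a Kelley subnet rather than a Willard subnet---this is harmless here, but since you flag ``Willard/Kelley'' explicitly you should either commit to Kelley's definition or adjust the choice of $i'$ to enforce monotonicity.
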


   The next lemma is an immediate consequence of $(2)\implies (1)$ in \cite[Theorem 18.1.]{Munkres2000}.
\begin{lemma}
\label{lem:exercise in top:cts maps}
    If $f\colon X\to Y$ is a function, then the following are equivalent.
    \begin{enumerate}[label=\textup{(\arabic*)}]
    
        \item\label{it:fcts} $f$ is continuous.
        \item\label{it:subnet} If $\{x_{i}\}_{i\in I}$ is a net in $X$ which converges to $x$, then there exists a subnet $\{f(x_{j})\}_{j\in J}$ of $\{f(x_{i})\}_{i\in I}$ which converges to $f(x)$.
    \end{enumerate} 
\end{lemma}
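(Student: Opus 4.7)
The plan is to prove the equivalence by establishing each implication separately, noting that $(1)\Longleftrightarrow(2)$ is a slight weakening of the standard net-characterization of continuity (which replaces ``there exists a convergent subnet'' with ``the whole net converges'').

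For $(1)\Longrightarrow(2)$, I would simply invoke the standard result \cite[Theorem 18.1]{Munkres2000} (or its net analogue): if $f$ is continuous and $x_i\to x$ in $X$, then $f(x_i)\to f(x)$ in $Y$. Since a net is trivially a subnet of itself, taking $J=I$ gives a convergent subnet, as required.

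For the substantive direction $(2)\Longrightarrow(1)$, I would argue by contrapositive. Suppose $f$ fails to be continuous at some point $x\in X$. Then there exists an open neighborhood $V$ of $f(x)$ such that $f^{-1}(V)$ is not a neighborhood of $x$, meaning that every open neighborhood $U$ of $x$ contains some point $x_U$ with $f(x_U)\notin V$. Order the system $\mathcal{N}_x$ of open neighborhoods of $x$ by reverse inclusion to form a directed set, and use the axiom of choice to select such points $\{x_U\}_{U\in\mathcal{N}_x}$. By construction $x_U\to x$, yet every element of the net $\{f(x_U)\}_{U\in\mathcal{N}_x}$ lies in $Y\setminus V$. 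Since $V$ is a neighborhood of $f(x)$, no subnet of $\{f(x_U)\}$ can converge to $f(x)$, contradicting $(2)$.

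There is no real obstacle here; the only point requiring a modicum of care is observing that the hypothesis in $(2)$ is a priori weaker than the classical net-continuity criterion (one only demands the existence of a convergent subnet, not convergence of the whole net), but the contrapositive argument exploits the fact that if every subnet must converge to $f(x)$, then in particular \emph{some} subnet must eventually lie in any neighborhood of $f(x)$, which is enough to derive a contradiction from a net whose image avoids $V$ entirely.
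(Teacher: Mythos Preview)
Your proof is correct. The paper does not give a proof at all; it merely remarks that the lemma is an immediate consequence of the implication $(2)\Rightarrow(1)$ in \cite[Theorem~18.1]{Munkres2000} (the closure characterization $f(\overline{A})\subseteq\overline{f(A)}$ of continuity), leaving it to the reader to observe that the subnet hypothesis in the lemma implies Munkres' closure condition. Your argument instead proves the non-trivial direction $(2)\Rightarrow(1)$ directly via the contrapositive, constructing a neighbourhood-indexed net whose image avoids a fixed neighbourhood of $f(x)$. The two routes are equivalent in spirit---both unpack the standard net characterization of continuity---but yours is self-contained and does not require the reader to chase the reference, at the cost of a few more lines.
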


\section{Cheat sheet}\label{sec.appendix.cheatsheet}

For the convenience of the reader, we repeat the notation for our different actions and what properties they have.
Most actions and maps are summarized in Diagram~\eqref{eq:diag}.

\begin{diag}[ht]
    \centering

    \begin{tikzpicture}[scale=1.5, every node/.style={scale=1}]
    \node at (-2,0) {$\cH$};
    \node at (-2,-3) {$\cH\z $};
    
    \draw[>=stealth, ->] (-2, -0.2) -- (-2, -2.8);
    \draw[>=stealth, ->] (2, -0.2) -- (2, -2.8);
    \draw[>=stealth, ->] (-1.15, -2.15) -- (-1.85, -2.8);
    \draw[>=stealth, ->] (1.15, -2.15) -- (1.85, -2.8);
    \node at (-1.65, -2.45) {$\tilde{\rho}$};
    \node at (1.7, -2.45) {$\tilde{\sigma}$};
    
    \draw[>=stealth, ->] (-0.05, -1.15) -- (-0.05, -2.8) -- (-0.25, -3) -- (-1.65, -3);
    \draw[>=stealth, ->] (0.05, -1.15) -- (0.05, -2.8) -- (0.25, -3) -- (1.65, -3);
    \node at (-1.25, -2.85) {$\rho_\cX$};
    \node at (1.25, -2.85) {$\sigma_\cX$};
    
    \node at (2,0) {$\cG$};
    \node at (2,-3) {$\cG\z $};
    
    \node at (0, -1) {$\cX$};
    
    \node at (-1, -2) {$\cX/\cG$};
    \node at (1,-2) {$\cH\backslash \cX$};
    \draw[>={stealth[left]}, <->] (-1.85,-0.05) -- (-0.15,-0.95);
    \draw[>=stealth, <->] (1.85,-0.05) -- (0.15,-0.95);
    
    \node at (-1.45, -0.15) {$\HrightX$};
    \node at (-0.55, -0.85) {$\HleftX$};
    \node at (1.45, -0.15) {$\XleftG$};
    \node at (0.55, -0.85) {$\XrightG$};
    
    \node at (-1.45, 0.15) {$\HrightB$};
    \node at (-0.55, 0.85) {$\HleftB$};
    \node at (1.45, 0.15) {$\BleftG$};
    \node at (0.55, 0.85) {$\BrightG$};
    
    \node at (-1.65, -0.45) {$\qHrightX$};
    \node at (-1.4, -1.55) {$\qHleftX$};
    \node at (1.65, -0.45) {$\qXleftG$};
    \node at (1.4, -1.55) {$\qXrightG$};
    
    \node at (-1.4, 1.55) {$\qHleftB$};
    \node at (1.4, 1.55) {$\qBrightG$};
    
    \draw[dashed, >=stealth, ->] (-0.15,-1.05) -- (-0.9,-1.8);
    \draw[dashed, >=stealth, ->] (0.15,-1.05) -- (0.9,-1.8);
    \draw[>=stealth, <->] (-1.95,-.2) -- (-1.1,-1.8);
    \draw[>=stealth, <->] (1.95,-.2) -- (1.1,-1.8);

    \node at (0, 1) {$\cB$};
    
    \node at (-1, 2) {$\cB/\cG$};
    \node at (1,2) {$\cH\backslash \cB$};
    \draw[>=stealth, <->] (-1.85,0.05) -- (-0.15,0.95);
    
    \draw[>=stealth, <->] (1.85,0.05) -- (0.15,0.95);

    \draw[dashed, >=stealth, ->] (-0.15,1.05) -- (-0.9,1.8);
    \draw[dashed, >=stealth, ->] (0.15,1.05) -- (0.9,1.8);
    
    \draw[>=stealth, <->] (-1.95,.2) -- (-1.1,1.8);
    \draw[>=stealth, <->] (1.95,.2) -- (1.1,1.8);
    
    \draw[>=stealth, ->] (0,.8) -- (0,-.8);
    
    \draw[>=stealth, ->] (1,1.8) -- (1,-1.8);
    \draw[>=stealth, ->] (-1,1.8) -- (-1,-1.8);

    \node at (-3.1,-1) {$(\cX/\cG)\bowtie \cH$};
    \node at (3.1,-1) {$\cG\bowtie (\cH\backslash \cX)$};
    \node at (-3.1,1) {$(\cB/\cG)\BbowtieH \cH$};
    \node at (3.1,1) {$\cG\GbowtieB (\cH\backslash \cB)$};
    \draw[>=stealth, ->] (3.1,.8) -- (3.1,-.8);
    \draw[>=stealth, ->] (-3.1,.8) -- (-3.1,-.8);
    
    \end{tikzpicture}
    \caption{Summary of all actions in this paper}\label{eq:diag}
\end{diag}

\subsection{The two-way actions of \texorpdfstring{$\cH$}{H} and \texorpdfstring{$\cX$}{X}}

We use the notation
\begin{align*}
    \cH\cart \cX\colon &&\cH \bfp{s
    }{\rho
    } \cX\ni (h,x) &\mapsto h\HleftX x \in \cX \\
    \cH\calb \cX\colon &&\cH \bfp{s
    }{\rho
    } \cX\ni (h,x) &\mapsto h\HrightX x \in \cH
\end{align*}

Their joint properties are (see Definition~\ref{df.left.selfsimilar} and the following pages):
    \eqrepeat{item:L1}
    \eqrepeat{item:L2}
    \eqrepeat{item:L3}
    \eqrepeat{item:L4}
    \eqrepeat{item:L5}
    \eqrepeat{item:L6}
    \eqrepeat{item:L7}
    \eqrepeat{item:L8-new}
    \eqrepeat{item:L9}
    \eqrepeat{item:L10}

\subsection{The two-way actions of \texorpdfstring{$\cX$}{X} and \texorpdfstring{$\cG$}{G}}

We use the notation
\begin{align*}
    \cX\calt \cG\colon &&\cX \bfp{\sigma
    }{r
    } \cG\ni (x,s) &\mapsto x\XrightG s \in \cX\\
        \cX \carb \cG\colon &&\cX\bfp{\sigma
        }{r
        } \cG\ni (x,s) &\mapsto x\XleftG s \in \cG
\end{align*}

Their joint properties are (see Definition~\ref{df.right.selfsimilar} and the following pages):
    \eqrepeat{item:R1}
    \eqrepeat{item:R2}
    \eqrepeat{item:R3}
    \eqrepeat{item:R4}
    \eqrepeat{item:R5}
    \eqrepeat{item:R6}
    \eqrepeat{item:R7}
    \eqrepeat{item:R8-new}
    \eqrepeat{item:R9}
    \eqrepeat{item:R10}

\subsection{Actions that are \intune}
In Definition~\ref{df.compatible}, we defined two sets of actions $\cH\arrowlssa\cX\arrowrssa\cG$ as above to be {\em \intune} if they satisfy
\begin{enumerate}[leftmargin=1.5cm, label=\textup{(C\arabic*)}, start=0]
    \item  $\sigma_{\cX}(h\HleftX x) = \sigma_{\cX}(x)$ in $\cG\z$ and $
        \rho_{\cX}(x)
        =
        \rho_{\cX}(x\XrightG s)$ in $\cH\z$,
        \item $h\HleftX (x\XrightG s)=(h\HleftX x)\XrightG s$
            in $\cX$,
        \item $(h\HleftX x)\XleftG s=x\XleftG s$
            in $\cG$, and
        \item $h\HrightX (x\XrightG s)=h\HrightX x$
            in $\cH$.
\end{enumerate}    

\subsection{The induced actions on quotients} The above induce the following (see also Proposition~\ref{prop.ssp actions on quotients}):
%
%
    \begin{align*}
        \cH \cart (\cX/\cG)\colon&&\cH \bfp{s}{\tilde{\rho}} (\cX/\cG)
        \ni (h,x\XrightG \cG) &\mapsto h\qHleftX (x\XrightG \cG)
        \coloneqq  (h \HleftX x)\XrightG \cG \in \cX/\cG
        \\
        \cH \calb (\cX/\cG)\colon&&\cH \bfp{s}{\tilde\rho} (\cX/\cG)\ni (h,x\XrightG \cG)
        &\mapsto h\qHrightX (x\XrightG \cG) \coloneqq 
        h\HrightX x \in \cH
    \end{align*}
    
    \begin{align*}
        (\cH\backslash \cX) \calt \cG\colon
        &&
        (\cH\backslash \cX) \bfp{\tilde\sigma}{r} \cG\ni (\cH\HleftX x, s)
        &\mapsto (\cH\HleftX x) \qXrightG s\coloneqq \cH\HleftX (x\XrightG s) \in \cH\backslash \cX
        \\
        (\cH\backslash \cX) \carb \cG\colon
        &&
        (\cH\backslash \cX) \bfp{\tilde\sigma}{r} \cG\ni (\cH\HleftX x, s)
        &
        \mapsto (\cH\HleftX x) \qXleftG s\coloneqq x\XleftG s \in \cG
    \end{align*}

\subsubsection{Fell bundles et cetera}\label{ssec:Fell bdl properties}

\begin{definition}[{\cite[Definition 2.8]{BE2012}}]
    An \usc\ Banach bundle $\cB=(q_{\cB}\colon B\to\cG)$ over a (\LCH\ \etale) groupoid~$\cG$ is called {\em Fell bundle} 
    if it comes with continuous maps
    \[
        \cdot\;\colon
         \cB^{(2)}:=
         \left\{
            (a,b)\in B\times B  : (q_{\cB}(a),q_{\cB}(b))\in \cG^{(2)}
         \right\}
        \to
        B
        \quad\text{and}\quad
        {}^{\ast}\colon B \to B
    \]
    such that:
    \begin{enumerate}[label=\textup{(F\arabic*)}]
        \item\label{cond.F1} For each $(x,y)\in \cG^{(2)}$, $\cB_{x}\cdot \cB_y\subseteq\cB_{xy}$, i.e.\ $q_{\cB}(b\cdot c)=q_{\cB}(b) q_{\cB}(c)$ for all $(b,c)\in  \cB^{(2)}$.
        \item\label{cond.F2} The multiplication is bilinear.
        \item\label{cond.F3} The multiplication is associative, whenever it is defined.
        \item\label{cond.F4} If $(b,c)\in  \cB^{(2)}$, then $\|b\cdot c\|\leq\|b\|\|c\|$, where the norm is the Banach space norm of the respective fiber. 
        \item\label{cond.F5} For any $x\in \cG$, $\cB_{x}^*\subseteq \cB_{x\inv }$.
        \item\label{cond.F6} The involution map $b\mapsto b^*$ is conjugate linear.
        \item\label{cond.F7} If $(b,c)\in  \cB^{(2)}$, then $(b\cdot c)^*=c^*\cdot  b^*$. 
        \item\label{cond.F8} For any $b\in B$, $b^{**}=b$.
        \item\label{cond.F9} For any $b\in B$, $\|b^*\cdot  b\|=\|b\|^2=\|b^*\|^2$. 
        \item\label{cond.F10} For any $b\in B$, $b^*\cdot  b\geq 0$ in the fiber of~$\cB$ over $s_{\cB}(b)$. 
    \end{enumerate}
    We call~$\cB$ {\em saturated} if we have an equality of sets in Condition~\ref{cond.F1}. We will often write~$bc$ for~$b\cdot c$.
\end{definition}

\begin{definition}[{\cite{MW2008}}]\label{df.USCBb-action}
    Suppose that $\cA=(q_{\cA}\colon A\to \cG)$ is a Fell bundle over a (\LCH\ \etale) groupoid $\cG$, $X$ is a left $\cG$-space with momentum map $\rho_{\cX}\colon \cX\to \cG\z$, and $\cM=(q_{\cM}\colon M \to X)$ is a \USCBb; we let $\rho_{\cM}\coloneqq \rho_{\cX}\circ q_{\cM}$.  Then we say that {\em $\cA$
    acts on \textup(the left\textup) of $\cM$} if there is a continuous map 
    \(
      \cA\bfp{s}{\rho} \cM \to \cM,\,(a,m)\mapsto
    a\cdot m,\) such that
    \begin{enumerate}[leftmargin=2cm,label=\textup{(FA\arabic*)}]
    \item\label{item:FA:fiber} $q_{\cM}(a\cdot m)=q_{\cA}(a)\cdot q_{\cM}(m)$,
    \item\label{item:FA:assoc} $a\cdot (a\cdot m)=(ab)\cdot m$ for all appropriate $a\in A$, and
    \item\label{item:FA:norm} $\norm{a\cdot m}\leq\norm{a}\,\norm{m}$. 
    \end{enumerate}
\end{definition}

\begin{definition}[{\cite[Definition 6.1]{MW2008}}]\label{df.FBequivalence}
  Suppose that $\cH$, $\cG$ are \LCH\ \etale\ groupoids, that $\cX$ is an $(\cH ,\cG )$-equivalence with momentum maps $\rho_{\cX}\colon \cX\to \cH\z$ resp.\ $\sigma_{\cX}\colon \cX\to \cG\z$, that $\cA=(q_{\cA}\colon A\to \cH)$ and $\cC=(q_{\cC}\colon C\to \cG)$ are saturated Fell
  bundles, and that $\cM=(q_{\cM}\colon M\to \cX)$ is a \USCBb;  we let $\rho_{\cM}\coloneqq \rho_{\cX}\circ q_{\cM}$ and $\sigma_{\cM}\coloneqq \sigma_{\cX}\circ q_{\cM}$.  We say that $\cM$ is an {\em $\cA\sme\cC$-equivalence}
  if the following conditions hold.
  \begin{enumerate}[leftmargin=1.2cm,label=\textup{(FE\arabic*)}]
  \item\label{item:FE:actions} There is a left $\cA$-action and a right $\cC$-action on $\cM$
    such that $a\cdot (m\cdot c)=(a\cdot m)\cdot c$ for all $a\in A $,
    $m\in M $, and $c\in C $, wherever it makes sense.
  \item\label{item:FE:ip} There are sesquilinear maps
  \[
  \begin{tikzcd}[row sep=tiny, column sep = small]
    \lip\cA<\cdot,\cdot>\colon&[-15pt] M \bfp{{\sigma}
    }{\sigma
    } M \ar[r]& A, && \rip\cC<\cdot,\cdot>\colon&[-15pt] M \bfp{{\rho}
    }{\rho
    } M \ar[r]& C
    \\
    &(m_{1},m_{2})\ar[r,mapsto]& \lip\cA<m_{1},m_{2}>, && &(m_{1},m_{2})\ar[r,mapsto]& \rip\cC<m_{1},m_{2}>
  \end{tikzcd}
  \]
  such that for all appropriately chosen $m_{i}\in M$, $a\in A$, and $c\in C$, we have
    \begin{enumerate}[leftmargin=.5cm,label=\textup{(FE2.\alph*)}]
    \item\label{item:FE:ip:fiber}
        $q_{\cM}(m_{1})=q_{\cA}\bigl(\lip\cA<m_{1},m_{2}>\bigr) \HleftX q_{\cM}(m_{2})$
        and  
        $q_{\cM}(m_{1})\XrightG q_{\cC}\bigl(\rip\cC<m_{1},m_{2}>\bigr)= q_{\cM}(m_{2})$,
    \item\label{item:FE:ip:adjoint} $\lip\cA<m_{1},m_{2}>^{*}=\lip\cA<m_{2},m_{1}>$\; and \;
      $\rip\cC<m_{1},m_{2}>^{*}=\rip\cC<m_{2},m_{1}>$,
    \item\label{item:FE:ip:C*linear} $\lip\cA<a\cdot m_{1},m_{2}>=a\lip\cA<m_{1},m_{2}>$\; and \;
      $\rip\cC<m_{1},m_{2}\cdot c>=\rip\cC<m_{1},m_{2}>c$, and
    \item\label{item:FE:ip:compatibility} $\lip\cA<m_{1},m_{2}>\cdot m_{3}=m_{1}\cdot\rip\cC<m_{2},m_{3}>$.
    \end{enumerate}
  \item\label{item:FE:SMEs} With the actions coming from \ref{item:FE:actions} and the inner products
    coming from \ref{item:FE:ip}, each $M(x)$ is a $A\bigl({\rho_{\cX}}
    (x)\bigr)\sme
    C\bigl({\sigma_{\cX}}
    (x)\bigr)$-\ib.
  \end{enumerate}
\end{definition}

\subsection{\Ss\ actions on Fell bundles}
 
A continuous
map \[\mvisiblespace\HleftB\mvisiblespace \colon \cH \bfp{s
}{\rho
} \cB\to \cB\]
is a \ssla\ of the groupoid $\cH$ on the Fell bundle $\cB=(q_{\cB}\colon B\to\cX)$ if it satisfies (see also Definition~\ref{df.ss.FellBundle.action.left}):
\begin{enumerate}[label=\textup{(B\arabic*)}]
    \item For any $(h,x)\in \cH \bfp{s}{\rho} \cX$, the map $h\HleftB\mvisiblespace$ maps $\cB_x$ into $\cB_{h\HleftX x}$ and is linear.
    \item For any 
   	 $(k,h)\in\cH^{(2)}$, we have $k\HleftB (h \HleftB \mvisiblespace)=(kh)\HleftB \mvisiblespace$.
    \item For any $u\in \cH\z $, the map $u \HleftB\mvisiblespace$ is the identity.
    \item For any $(b,c)\in \cB^{(2)}$ such that $(h,bc)\in \cH \bfp{s
    }{\rho
    } \cB$, we have
    \[h\HleftB (bc)=(h\HleftB b)\left[ (h\HrightX q_{\cB}(b) )\HleftB c\right].\]
    \item For any 
    	$(h,b)\in \cH \bfp{s
    	}{\rho
    	} \cB$,  we have
    \[(h\HleftB b)^* = [h\HrightX q_{\cB}(b) ]\HleftB b^*.\]
\end{enumerate}

\end{document}